\documentclass[draft]{amsart}

\usepackage[Symbol]{upgreek}

\usepackage{amssymb}
\usepackage{epic}
\usepackage{mathrsfs}
\usepackage{accents}
\usepackage{stmaryrd}
\usepackage{enumitem}

\usepackage[titletoc]{appendix}
\usepackage{titlecaps}
\usepackage{pgfplots}
\pgfplotsset{width=6.75 cm,compat=1.13}
\usepgfplotslibrary{fillbetween}
\usetikzlibrary{patterns, decorations.pathreplacing}

\input{xy}
\xyoption{matrix}
\xyoption{arrow}


\numberwithin{equation}{section}

\newcommand{\bbold}{\mathbb}
\newcommand{\cal}{\mathcal}

\def \tr{\operatorname{tr}}
\def\H {\operatorname{H}}

\def\R { {\bbold R} }
\def\Q { {\bbold Q} }
\def \S{\mathcal{S}}
\def\Z { {\bbold Z} }
\def\C { {\bbold C} }
\def\N { {\bbold N} }

\def \dim{\operatorname{dim}}

\def \Hy{{\mathcal H}}
\def \alg{\operatorname{alg}}
\def \cl{\operatorname{cl}}
\def \oN{\operatorname{N}}
\def \t{\operatorname{t}}

\def \ex{\operatorname{e}}

\renewcommand\epsilon{\varepsilon}

\def \<{\langle}
\def \>{\rangle}

\def \tilde {\widetilde}

\def \hat {\widehat}

\def \((  {(\!(}
\def \)) {)\!)}

\def \k {{{\boldsymbol{k}}}}

\DeclareMathSymbol{\precequ}{\mathrel}{symbols}{"16}
\DeclareMathSymbol{\succequ}{\mathrel}{symbols}{"17}

\newtheorem{theorem}{Theorem}[section]
\newtheorem{lemma}[theorem]{Lemma}
\newtheorem{prop}[theorem]{Proposition}
\newtheorem{cor}[theorem]{Corollary}

\theoremstyle{definition}

\theoremstyle{remark}
\newtheorem*{example}{Example}

\let\oldi\i
\let\oldj\j
\renewcommand\i{\relax\ifmmode{\boldsymbol{i}}\else\oldi\fi}
\renewcommand\j{\relax\ifmmode{\boldsymbol{j}}\else\oldj\fi}

\renewcommand\leq{\leqslant}
\renewcommand\geq{\geqslant}
\renewcommand\preceq{\preccurlyeq}

\renewcommand\le{\leq}
\renewcommand\ge{\geq}

\DeclareMathAlphabet{\mathbf}{OML}{cmm}{b}{it}

\DeclareFontFamily{U}{fsy}{}
\DeclareFontShape{U}{fsy}{m}{n}{<->s*[.9]psyr}{}
\DeclareSymbolFont{der@m}{U}{fsy}{m}{n}
\DeclareMathSymbol{\der}{\mathord}{der@m}{182}

\DeclareSymbolFont{der@m}{U}{fsy}{m}{n}
\DeclareMathSymbol{\derdelta}{\mathord}{der@m}{100}




\DeclareSymbolFont{imag@m}{OT1}{cmr}{m}{ui}
\DeclareMathSymbol{\imag}{\mathord}{imag@m}{105}


\DeclareFontFamily{OMS}{smallo}{}
\DeclareFontShape{OMS}{smallo}{m}{n}{<->s*[.65]cmsy10}{}
\DeclareSymbolFont{smallo@m}{OMS}{smallo}{m}{n}
\DeclareMathSymbol{\smallo}{\mathord}{smallo@m}{79}

\DeclareFontFamily{OMS}{largerdot}{}
\DeclareFontShape{OMS}{largerdot}{m}{n}{<->s*[.8]cmsy10}{}
\DeclareSymbolFont{largerdot@m}{OMS}{largerdot}{m}{n}
\DeclareMathSymbol{\largerdot}{\mathord}{largerdot@m}{15}


\DeclareMathSymbol{\llambda}{\mathord}{der@m}{108}
\DeclareMathSymbol{\rrho}{\mathord}{der@m}{114}




\newcommand{\equationqed}[1]{\[\pushQED{\qed}#1 \qedhere\popQED\]\let\qed\relax}
\newcommand{\alignqed}[1]{\begin{align*}\pushQED{\qed} #1 \qedhere\popQED\end{align*}\let\qed\relax}

\makeatletter
\newcommand{\dminus}{\mathbin{\text{\@dminus}}}

\newcommand{\@dminus}{%
  \ooalign{\hidewidth\raise1ex\hbox{\bf.}\hidewidth\cr$\m@th-$\cr}%
}
\makeatother


\def \Lr{L^{\operatorname{r}}}
\def \Lf{L^{\operatorname{f}}}
\def \cM{\mathcal{M}}
\def \cN{\mathcal{N}}
\def \Var{\operatorname{Var}}

\title{On The Pila-Wilkie Theorem}

\author[Bhardwaj]{Neer Bhardwaj}
\address{Department of Mathematics\\
University of Illinois at Urbana-Cham\-paign\\
Urbana, IL 61801\\
U.S.A.}
\email{nbhard4@illinois.edu}

\author[van den Dries]{Lou van den Dries\\
\\
{\em T\lowercase{o the memory of }B\lowercase{as }E\lowercase{dixhoven} (1962-2022)}
}
\address{Department of Mathematics\\
University of Illinois at Urbana-Cham\-paign\\
Urbana, IL 61801\\
U.S.A.}
\email{vddries@illinois.edu}

\begin{document}

\begin{abstract} This expository paper gives an account of the Pila-Wilkie counting theorem and some of its extensions and generalizations. We use semialgebraic cell
decomposition to simplify part of the original proof. We also include complete treatments of a result due to Pila and Bombieri and of the o-minimal 
Yomdin-Gromov theorem that are used in this proof. For the latter we follow Binyamini and Novikov.  
\end{abstract}

\date{February 2022}

\maketitle

\section{Introduction and some notations}

\noindent
In these notes we prove the Pila-Wilkie theorem following the original paper~\cite{PW}, but exploiting cell decomposition more thoroughly to simplify the deduction from its main ingredients. By including proofs of these ingredients and adding an Appendix on o-minimality we make this account 
self-contained, within reason.  The technically most demanding ingredient is an o-minimal Yomdin-Gromov theorem, but thanks to
Binyamini and Novikov~\cite{BN} this can now be done more directly than in~\cite{PW}.  

We also obtain two generalizations due to Pila~\cite{P2}, one where instead of rational points we count points with coordinates in a $\Q$-linear subspace of $\R$ with a finite bound on its dimension, and one where instead we count points with coordinates that are algebraic of at most a given degree over $\Q$. The general approach is as in \cite{P2}, but the technical details seem to us a bit simpler.  

We thank Chieu-Minh Tran and James Freitag for helpful input. 

\bigskip\noindent
Throughout, $d,e, k,l, m, n\in \mathbb{N}=\{0,1,2,\dots\}$, and $\epsilon, c, K\in \mathbb{R}^{>}:=\{t\in \R:\ t>0\}$. For $\alpha = (\alpha_{1},\ \ldots,\ \alpha_{m})\in \mathbb{N}^{m}$ we set $|\alpha|:=\alpha_{1}+\cdots+\alpha_{m}$, and 
given a field $\k$ (often $\k=\R$) we set
$x^\alpha:=x_1^{\alpha_1}\cdots x_m^{\alpha_m}$ for 
the usual coordinate functions $x_1,\dots, x_m$ on $\k^m$, and likewise
 $a^{\alpha} :=a_{1}^{\alpha_{1}}\cdots a_{m}^{\alpha_{m}}$ for any point
$a=(a_1,\dots, a_m)\in \k^m$. Let $U\subseteq \R^m$ be open.
For a function $f: U\to \R$ 
of class $C^k$ and $\alpha\in \N^m$,  $|\alpha|\le k$, 
$$f^{(\alpha)}\ :=\ \frac{\partial^{|\alpha|}}{\partial x^\alpha}f $$ denotes the corresponding partial derivative of order $\alpha$.  We extend the above to $C^k$-maps $f=(f_1,\dots, f_n): U \to \R^n$ by
$$f^{(\alpha)}\ :=\ (f_1^{(\alpha)},\dots,f_n^{(\alpha)}): U \to \R^n$$ 
for $\alpha$ as before. This includes the case $m=0$, where $\R^0$ has just one point and any map $f: U \to \R^n$ is of class $C^k$ for all $k$, with
$f^{(\alpha)}=f$ for the unique $\alpha\in \N^0$. For $a_1,\dots, a_n\in \R^{\ge}:=\{t\in \R:\ t\ge 0\}$ the number $\max\{a_1,\dots, a_n\}\in \R^{\ge}$
equals $0$ by convention if $n=0$. 
For $a=(a_1,\dots, a_n)\in \R^n$ we set $|a|:= \max\{|a_1|,\dots, |a_n|\}$
in $\R^{\ge}$; this conflicts with our notation $|\alpha|$ for
$\alpha\in \N^n$, but in practice no confusion will arise. 
We also use these notational conventions when instead of $\R$ we have any o-minimal field with $U$ and $f$ definable in it. We include an appendix on the basic facts concerning o-minimality and definability for those not familiar with these topics. Here we just mention that an o-minimal field is by 
definition an ordered field equipped with an o-minimal structure on it; this ordered field is then necessarily real closed: adjoining $\imag$ with
$\imag^2=-1$ makes it algebraically closed. An {\em o-minimal expansion of $\R$\/} is an o-minimal field whose underlying ordered field is $\R$. 

\subsection*{The Pila-Wilkie theorem and two ingredients of the proof}

First some notation needed to state the theorem. 
We define the {\em multiplicative height function\/} $\H:\mathbb{Q}\rightarrow \mathbb{R}$ by $\H(\frac{a}{b})\ :=\  \max(|a|,\ |b|)\in \N^{\ge 1}$ for coprime $a, b\in \Z$, $b\ne 0$. Thus $\H(0)=\H(1)=\H(-1)=1$, and for $q\in \Q$ we have $\H(q)\ge 2$ if $q\notin \{0, 1, -1\}$, $\H(q)=\H(-q)$, and $\H(q^{-1})=\H(q)$ for $q\ne 0$. 
For  $a= (a_1, . . . , a_{n}) \in \mathbb{Q}^{n}$, 
 $$\H(a)\ :=\ \max\{\H(a_1),\dots, \H(a_n)\}\in \N.$$ Let
 $X \subseteq \mathbb{R}^{n}$. We set $X(\Q)=X\cap \Q^n$.  Throughout $T$ ranges over
 real numbers $\ge 1$, and we set 
 $X(\mathbb{Q}, T):=\{a\in X(\Q):\ \H(a)\le T\}$ be the (finite)
 set of rational points of $X$ of height $\leq T$, and set $\oN(X,T):=\#X(\mathbb{Q}, T)\in \N$.
 The {\em algebraic part\/} of $X$, denoted by $X^{\alg}$, is the union of the connected infinite semialgebraic subsets of $X$. So for $n\ge 1$, the interior of $X$ (in $\R^n$) is part of $X^{\alg}$.  

\medskip\noindent
{\bf Example}. The set $X:=\{(a,b,c)\in \R^3:\ 1<a,b<2,\ c=a^b\}$ is definable in the o-minimal field $\R_{\exp}$. (See the subsection ``O-Minimal Structures'' in part A of the Appendix for $\R_{\exp}$.) For rational $q\in (1,2)$,
we have a semialgebraic curve $$X_q\ :=\ \{(a,q,c):\ c=a^q\}\ 
\subseteq\ X,$$
and $X^{\alg}$ is the union of those $X_q$ (proved at the end of part A of the appendix).  

\medskip\noindent
We also set
$$X^{\tr}\ :=\ X\setminus X^{\alg}\qquad(\text{\em the transcendental part of  $X$}).$$ We can now state the Pila-Wilkie theorem, also called the {\em Counting Theorem}:

\begin{theorem}\label{PW} Let $X\subseteq \R^n$ with $n\ge 1$ be definable in some o-minimal expansion of $\R$. Then for all $\epsilon$ there is a $c$ such that for all $T$,
$$\oN(X^{\tr},T)\ \leq\ cT^{\epsilon}.$$
\end{theorem}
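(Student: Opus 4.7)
The plan is to proceed by induction on $k := \dim X$. The base case $k = 0$ is immediate: $X$ is then finite and $\oN(X, T) \leq \#X$ for all $T$. For the inductive step we first apply cell decomposition to split $X$ into finitely many definable cells, and then use the height-preserving inversions $x_i \mapsto 1/x_i$ on the unbounded directions to reduce to the case that $X \subseteq [0,1]^n$ is itself a definable cell of dimension $k \geq 1$. To make the induction go through, we actually prove a uniform \emph{family version}: for every definable family $(X_y)_{y \in Y}$ with all fibers of dimension $\leq k$, the constant $c = c(\epsilon)$ can be chosen independently of $y$. This is essential because the inductive step will apply the hypothesis to lower-dimensional definable sets that vary in a parameter.

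The heart of the argument combines the two ingredients advertised in the introduction. The \emph{o-minimal Yomdin-Gromov reparametrization theorem} gives, for every $r$ and uniformly in the family parameter, finitely many definable $C^r$-maps $\phi_j : (0,1)^k \to X$ whose images cover $X$, each with all partial derivatives $\phi_j^{(\alpha)}$ of order $|\alpha| \leq r$ bounded by $1$ in sup-norm. The \emph{Pila-Bombieri theorem} then says that, for each such $\phi$, the rational points of $\phi((0,1)^k)$ of height $\leq T$ lie in the union of at most $c\,T^{\epsilon}$ real algebraic hypersurfaces of some bounded degree $d$, provided $r$ is chosen large enough (depending on $\epsilon$, $k$, and $n$). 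Fixing $\epsilon$ and choosing $d$ and $r$ accordingly, each rational point $a$ of $\phi_j((0,1)^k) \cap X(\Q,T)$ lies on some hypersurface $H$ in the resulting Pila-Bombieri collection.

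Now we distinguish two cases for each $H$. If $\phi_j((0,1)^k) \not\subseteq H$, then $\phi_j^{-1}(H)$ is a definable subset of $(0,1)^k$ of dimension strictly less than $k$, and the family version of the inductive hypothesis---applied to the definable family of sets $\phi_j(\phi_j^{-1}(H))$ parameterized by the coefficients of $H$---bounds the rational points of height $\leq T$ in this intersection by $O(T^{\epsilon'})$ for any prescribed $\epsilon'$. If instead $\phi_j((0,1)^k) \subseteq H$, the image is a connected infinite subset of $X$ contained in the semialgebraic hypersurface $H$, and one checks that these image points belong to $X^{\alg}$ and hence do not contribute to $\oN(X^{\tr}, T)$. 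Summing over the $O(T^{\epsilon})$ hypersurfaces and the finitely many parametrizations yields the desired bound.

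The main obstacle is the absorption of constants: the naive estimate multiplies $O(T^{\epsilon})$ hypersurfaces by $O(T^{\epsilon'})$ rational points per hypersurface, yielding $O(T^{\epsilon + \epsilon'})$ rather than $O(T^{\epsilon})$. One handles this by running the induction with a strictly smaller exponent (say $\epsilon/2$) at each step, so the composition over the $k$ levels still produces a bound of the required shape $O(T^{\epsilon})$. This forces the induction to be phrased uniformly across definable families, and it is the thorough use of cell decomposition---as emphasized by the authors---that makes this uniform bookkeeping tractable and lets us cleanly separate the algebraic contributions (the ``$\phi_j((0,1)^k) \subseteq H$'' case) from the genuinely transcendental ones.
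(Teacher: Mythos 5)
Your outline shares the paper's scaffolding (reduce to $[-1,1]^n$ via coordinate inversions, parametrize via Yomdin--Gromov, cover the rational points of the parametrized pieces by hypersurfaces via Pila--Bombieri, and run a family-uniform induction to absorb the $O(T^{\epsilon'})$ per-hypersurface bound into the overall $O(T^\epsilon)$). The crucial inductive step, however, is genuinely different, and it is there that your argument has a gap.

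You induct on $k=\dim X$ and pull each hypersurface $H$ back through the parametrization $\phi_j$. You then distinguish two cases: either $\phi_j((0,1)^k)\not\subseteq H$, in which case you assert $\dim\phi_j^{-1}(H)<k$, or $\phi_j((0,1)^k)\subseteq H$, in which case you assert the image lies in $X^{\alg}$. Neither claim survives scrutiny. First, $\phi_j$ is only a definable $C^r$ map, not an analytic one, so the fibre $\phi_j^{-1}(H)=\{x:P(\phi_j(x))=0\}$ can have nonempty interior in $(0,1)^k$ even when $\phi_j((0,1)^k)\not\subseteq H$ (take a coordinate of $\phi_j$ that is identically zero on a subinterval and nonzero elsewhere). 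So there is a missing third case. Second, and more seriously, the inclusion $\phi_j((0,1)^k)\subseteq H$ does \emph{not} put the image in $X^{\alg}$: the set $X^{\alg}$ is the union of connected infinite \emph{semialgebraic} subsets of $X$, and $\phi_j((0,1)^k)$ is a definable set of dimension $k$ that is generally far from semialgebraic. Since $\dim H=n-1$ may greatly exceed $k$, the image typically has empty interior in $H$, so membership in $H$ gives no semialgebraic structure on the image at all. (Think of a transcendental curve drawn on an algebraic surface in $\R^3$: it is contained in a hypersurface of degree $2$, yet its points are not in $X^{\alg}$.) Thus your ``case $2$'' cannot simply be discarded, and the induction does not close.

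The paper sidesteps all of this by inducting on the ambient dimension $n$ rather than on $\dim X$. Rather than pulling $H$ back through $\phi_j$, it passes to $X\cap H$, decomposes $H$ into finitely many semialgebraic cells $C_l$, and uses the coordinate projection $p_{C_l}:C_l\to\R^{n_l}$ with $n_l<n$ to push $X\cap C_l$ into a strictly smaller ambient space. The key technical input making this clean is Lemma~\ref{ef2}: a semialgebraic injection restricting to a homeomorphism carries $X^{\alg}$ onto $Y^{\alg}$, so that transcendental rational points in $X\cap C_l$ correspond bijectively (and height-nonincreasingly) to transcendental rational points of a definable set in $\R^{n_l}$. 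This gives a correct inductive step with no problematic ``image inside $H$'' case, and the uniformity over $H$ is supplied by the fact that hypersurfaces of degree $\le e$ form a single semialgebraic family (Lemma~\ref{ef3}). You would need a substitute for Lemma~\ref{ef2}, or some different mechanism, before the pullback-through-$\phi_j$ route can be made to work.
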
 

\noindent
Roughly speaking, it says there are few rational points on the transcendental part of a set definable in an o-minimal expansion of $\R$: the number of such points grows slower than any power $T^\epsilon$ with $T$ bounding their height. To apply the counting theorem one needs to describe $X^{\alg}$ in some useful way. This typically involves {\em Ax-Schanuel} type transcendence results. 

Note that $X^{\tr}(\Q)=\emptyset$ in the example above, so the theorem is trivial for this $X$. We shall include a refinement, Theorem~\ref{PWr}, which is nontrivial for this $X$. 

 The proof of Theorem~\ref{PW} depends on two intermediate results. 
The first of these has nothing to do with o-minimality. To state it we
define for $k,n \ge 1$ and $X\subseteq \mathbb{R}^{n}$ a {\em strong
$k$-parametrization of $X$\/} to be a $C^k$-map $f$ : $(0,1)^{m} \rightarrow \mathbb{R}^{n}$, $m<n$, with image $X$, such that $|f^{(\alpha)}(a)|\le 1$ for all $\alpha \in \mathbb{N}^{m}$ with $|\alpha|\leq k$ and all $a\in(0,1)^{m}$. We also
define a {\em hypersurface in $\R^n$ of degree $\le e$\/} to be the zeroset in $\R^n$ of a nonzero polynomial in $x=(x_1,\dots, x_n)$ over $\R$ of (total) degree $\le e$. The first of these intermediate results is essentially due to Pila and Bombieri, cf. \cite{BP, P1}. 

\begin{theorem}\label{Pp} Let $n\ge 1$ be given. Then for any $e\ge 1$ there are $k = k(n,\ e)\ge 1$, $\epsilon =\epsilon(n,\ e)$,  and $c=c(n,\ e)$, such that if $X\subseteq \R^n$ has a strong $k$-parametrization, then for all $T$ at most $cT^\epsilon$ many hypersurfaces in $\R^n$ of degree $\le e$ are enough
to cover $X(\mathbb{Q},T)$, with $\epsilon(n,\ e)\rightarrow 0$ as $e\rightarrow\infty$.
\end{theorem}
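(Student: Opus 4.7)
The approach is the \emph{determinant method} of Bombieri--Pila, applied to a fine partition of the parametrizing cube $(0,1)^m$. Set $D := \binom{n+e}{n}$, the dimension of the space of polynomials in $x_1, \ldots, x_n$ of degree $\le e$; enumerate the corresponding monomials as $x^{\alpha^{(1)}}, \ldots, x^{\alpha^{(D)}}$. Any $D-1$ points in $\R^n$ automatically lie on some common hypersurface of degree $\le e$ by an elementary interpolation via linear algebra, so one only needs to treat boxes containing at least $D$ rational points of $X$. Partition $(0,1)^m$ into at most $(2/r)^m$ boxes of side $r>0$ (to be chosen as a function of $T$). The crux is the claim that for $r$ small enough, on each box $B$ every point of $f(B)\cap X(\Q,T)$ lies on a single hypersurface of degree $\le e$.

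Suppose not: pick distinct $t_1, \ldots, t_D \in B$ with $a_i := f(t_i) \in X(\Q, T)$ failing to lie on any common hypersurface of degree $\le e$. Then $M := (a_i^{\alpha^{(j)}})_{i,j}$ is invertible. Since $\H(a_i) \le T$ and $|\alpha^{(j)}| \le e$, each entry of $M$ is a rational with denominator $\le T^e$, giving the arithmetic lower bound $|\det M| \ge T^{-eD}$. For the matching analytic upper bound, fix $k$ (to be chosen below) and consider the $C^k$ compositions $g_j(t) := f(t)^{\alpha^{(j)}}$; the multivariable Leibniz and chain rules combined with $|f^{(\beta)}| \le 1$ for $|\beta| \le k$ yield $|g_j^{(\beta)}| \le C_1(n,e,k)$ uniformly. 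Taylor-expand each $g_j$ at the center $t_0$ of $B$ to order $k-1$ to write $M = \tilde M + E$ with $|E_{ij}| \le C_2 r^k$, and expand $\det \tilde M$ by row-multilinearity to obtain
$$\det \tilde M \ =\ \sum_{\beta_1, \ldots, \beta_D} \Bigl(\prod_{i=1}^D (t_i - t_0)^{\beta_i}\Bigr) \det\!\Bigl(\tfrac{g_j^{(\beta_i)}(t_0)}{\beta_i!}\Bigr)_{i,j},$$
summed over tuples $(\beta_1, \ldots, \beta_D)$ of multi-indices in $\N^m$ with $|\beta_i| < k$. Tuples with any repeated $\beta_i$ contribute $0$ by antisymmetry of the determinant, and each remaining term is bounded by $C_3 r^{|\beta_1| + \cdots + |\beta_D|}$. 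The minimum of this exponent over $D$-tuples of \emph{distinct} multi-indices is $S(D, m) := \sum_{\beta \in \Lambda} |\beta|$, where $\Lambda \subseteq \N^m$ consists of the $D$ multi-indices of smallest $|\beta|$. Choosing $k$ large enough that $\binom{m+k-1}{m} \ge D$ and $k > S(D, m)$, the contributions from $E$ are absorbed into $O(r^{S(D,m)+1})$, so $|\det M| \le C_4(n,e,k)\, r^{S(D,m)}$.

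Comparing the two bounds forces $r \ge c_5 T^{-eD/S(D,m)}$, so choosing $r$ just below this threshold delivers the contradiction and hence the claim. It follows that $X(\Q, T)$ is covered by $\le c\, r^{-m} \le c\, T^{meD/S(D,m)}$ hypersurfaces of degree $\le e$. Taking the worst case $m \in \{1, \ldots, n-1\}$ (the case $m = 0$ is trivial, as $f(B)$ is then a singleton), set $\epsilon(n, e) := \max_{0 < m < n} meD/S(D, m)$ and $k(n,e)$ accordingly. Estimating $\sum_{|\beta| \le J} |\beta|$ via integration gives $S(D, m) \ge c'_m D^{1 + 1/m}$ for large $D$; combined with $D \sim e^n/n!$, one obtains $\epsilon(n, e) \le c''(n)\, e^{1 - n/(n-1)}$, which tends to $0$ as $e \to \infty$ since $n > n - 1$. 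The main technical obstacle is the analytic upper bound on $|\det M|$: controlling derivatives of the composites $g_j$, bookkeeping the Taylor-remainder contributions to $\det M$, and extracting the sharp combinatorial exponent $S(D, m)$ each require some care, even though none of these steps is individually deep.
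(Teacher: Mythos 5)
Your proposal is correct and uses essentially the same Bombieri--Pila determinant method as the paper's Section~3, so the core of the argument is the same: partition $(0,1)^m$ into $O(r^{-m})$ boxes, show that for $r$ small in terms of $T$ the Vandermonde-like determinant on $D$ points in a box must vanish by comparing an arithmetic lower bound with an analytic upper bound from Taylor expansion. Two technical points are worth flagging.

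First, the arithmetic lower bound is off by a factor of $n$ in the exponent. Each entry $a_i^{\alpha^{(j)}}$ does have denominator at most $T^e$, but the common denominator of $\det M$ is cleared \emph{row by row}: row $i$ requires multiplying by $\prod_{l=1}^n s_{il}^{e}\le T^{ne}$, not by $T^e$, since for a fixed row the denominators $\prod_l s_{il}^{\alpha^{(j)}_l}$ across $j$ need not share a common divisor. The correct bound is therefore $|\det M|\ge T^{-neD}$ as in the paper's Lemma~\ref{denominator}. (For instance, with $n=2$, $e=1$, $D=3$ and three points whose coordinates have pairwise coprime denominators, the common denominator of $\det M$ is a product of six distinct primes, exceeding $T^{eD}=T^3$.) This inflates $\epsilon(n,e)$ by a factor of $n$ but does not affect $\epsilon(n,e)\to 0$.

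Second, your treatment of the Taylor remainder is cruder than the paper's and forces a much larger $k(n,e)$. You bound every term containing an $E$-row by $O(r^k)$ and then demand $k>S(D,m)$, which for $m=1$ already makes $k$ of size about $D^2\sim e^{2n}$. The paper instead treats the remainder as one further column group ``of degree $b+1$'' in the sum-of-matrices decomposition (together with the rank bound of Lemma~\ref{obs} on the degree-$j$ pieces for $j\le b$), so that the combinatorial constraint $|\j|\ge B(m,n,e)$ already absorbs the remainder and the optimal $k=b(m,n,e)+1$ suffices. That said, your row-wise Taylor expansion with distinct $\beta_i$ and the paper's column-by-degree expansion with rank bounds encode the same combinatorics --- the degree profile of the chosen rows/columns is constrained to contain at most $E(m,j)$ entries of degree $j$, forcing total degree $\ge S(D,m)=B(m,n,e)$ --- and both $k$ choices are admissible for Theorem~\ref{Pp}, which does not constrain the size of $k(n,e)$.
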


\medskip\noindent  
We prove this in Section~\ref{P1}. In Section~\ref{pct} we obtain Theorem~\ref{PW} from Theorem~\ref{Pp} by induction on $n$, using a strong parametrization result. Yomdin~\cite{Yo} and Gromov~\cite{Gr} proved such a strong parametrization
uniformly for the members of any semialgebraic family of
subsets of $[-1,1]^n$. We need this for any definable ``o-minimal'' family. To make this precise, let $E\subseteq \R^m$ and
$X\subseteq E\times \R^n$. For $s\in E$,
$$X(s)\ :=\ \{a\in \R^n:\ (s,a)\in X\}  \quad\text{(a {\em section\/} of $X$)}$$
We consider $E,X$ as describing the family
$\big(X(s)\big)_{s\in E}$ of sections $X(s)\subseteq \R^n$; the sets $X(s)$ are the members of the family. If $E$ and $X$ are definable in the o-minimal expansion $\tilde{\R}$ of $\R$, then its members are definable in $\tilde{\R}$.   

\begin{theorem}\label{YG+} Let $\tilde{\R}$ be an o-minimal expansion of $\R$ and $E\subseteq \R^m$ and $X\subseteq E\times \R^n$ with $n\ge 1$
definable in $\tilde{\R}$
such that every section $X(s)$ is a subset of $[-1,1]^n$ with empty interior. Then there is for every $k\ge 1$ an $M\in \N$ such that every section $X(s)$ is the union of at most $M$ subsets, each having a strong $k$-parametrization.  
\end{theorem}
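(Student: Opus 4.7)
The plan is to reduce, via parametric cell decomposition, to a reparametrization problem for a single uniformly bounded definable family of $C^k$ maps on $(0,1)^d$, and then prove this reparametrization by induction on $k$ using o-minimal tameness of derivatives.

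First I would apply o-minimal parametric cell decomposition to $E\times\R^n$, producing a finite partition of $X$ into definable cells $C_1,\dots,C_N$. For each $s\in E$ this yields a decomposition $X(s)=C_1(s)\cup\cdots\cup C_N(s)$ into at most $N$ cells, with $N$ depending only on $X$, not on $s$. Since $X(s)$ has empty interior, only cells $C_i$ whose sections have dimension strictly less than $n$ occur. Thus it suffices to prove the theorem with $X$ replaced by a single cell $C\subseteq E\times\R^n$ each of whose sections $C(s)$ is a cell in $\R^n$ of dimension $d<n$. After permuting the $n$ coordinates in a way determined by the cell's type, $C(s)$ is the graph of a definable $C^k$ map $g_s:U(s)\to[-1,1]^{n-d}$ over an open cell $U(s)\subseteq(-1,1)^d$ (here one may refine the decomposition further so that $g_s$ is actually $C^k$, using the o-minimal fact that definable functions are $C^k$ off a definable set of lower dimension).

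Next, I would use a uniform definable family of $C^k$ bijections from $(0,1)^d$ onto the open cells $U(s)$ (built inductively on $d$ from the cylindrical description of a cell) to reduce to the case $U(s)=(0,1)^d$. The problem then becomes: given a definable family $(f_s)_{s\in E}$ of $C^k$ maps $(0,1)^d\to[-1,1]^{n-d}$, find $M\in\N$ such that for each $s$ the domain $(0,1)^d$ admits a partition into $\leq M$ definable pieces, each the image of a definable $C^k$ bijection $\phi_{i,s}:(0,1)^d\to P_i(s)$ making $f_s\circ\phi_{i,s}$ a strong $k$-parametrization. A strong $k$-parametrization of $C(s)$ is then obtained by composing with the section map $u\mapsto(u,g_s(u))$.

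The main obstacle, and the heart of the argument, is this reparametrization lemma; I would prove it by induction on $k$. The base case $k=0$ is trivial once one normalizes $|f_s|\le 1$. For the inductive step, one first applies cell decomposition to all partial derivatives $f_s^{(\alpha)}$ with $|\alpha|\le k$ to reduce, with uniform bounds on the number of pieces, to the case in which each $f_s^{(\alpha)}$ is continuous and behaves monotonically in each coordinate on $(0,1)^d$. By o-minimality these derivatives have at most polynomial-type blow-up toward the boundary; one then composes with a power-substitution style definable bijection $\phi(t_1,\dots,t_d)=(\psi_1(t_1),\dots,\psi_d(t_d))$ whose derivatives vanish to sufficiently high order at $0$ and $1$ (with the order chosen uniformly in $s$) to absorb the blow-up of $f_s^{(\alpha)}$ for $|\alpha|\le k$. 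Applying the inductive hypothesis at order $k-1$ to the first partial derivatives of $f_s\circ\phi$, one further subdivides and reparametrizes; the chain rule, together with the bound on $\phi$'s derivatives, ensures that the final map is a strong $k$-parametrization. Uniformity in $s$ is automatic at every step because all choices (cell decompositions, monotonicity intervals, exponents in the power substitution) are extracted from definable data and therefore depend on the formula defining the family, not on the particular $s$.
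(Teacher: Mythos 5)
Your general plan---parametric cell decomposition to reduce to a definable family of maps on $(0,1)^d$, followed by iterated power substitutions---is in the Yomdin--Gromov spirit and shares its starting point with the paper's Sections~\ref{par1}--\ref{par4}, but the heart of the argument, the reparametrization lemma, has a real gap and is also structured quite differently from what the paper does.

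The inductive step $k-1\to k$ does not close as stated. The inductive hypothesis at order $k-1$ applied to the first partials $\partial(f_s\circ\phi)/\partial t_i$ produces a $\psi$ for which $\psi$ itself and the compositions $\big(\partial(f_s\circ\phi)/\partial t_i\big)\circ\psi$ have bounded derivatives only up to order $k-1$, not $k$. Expanding $\partial^\alpha(f_s\circ\phi\circ\psi)$ for $|\alpha|=k$ by the chain and Leibniz rules produces $k$-th order derivatives of $\psi$ and terms $\big(\partial^\beta(f_s\circ\phi)\big)\circ\psi$ with $|\beta|=k$, neither of which is controlled by what you assumed. Asking the inductive hypothesis to supply a $k$-reparametrization of the partials instead is circular. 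The paper avoids this altogether by inducting on the dimension $m$ of the domain---statements $\rm(I)_m$ and $\rm(II)_m$ in Section~\ref{par4}---with all $k$ treated simultaneously at each stage: a new variable is adjoined one at a time via Corollary~\ref{limitcorcor}, with an inner induction on the number of derivatives allowed in that last variable, and the essential engine is the convergence Lemma~\ref{limit}, which produces the needed one-variable reparametrization while discarding a definable exceptional set of strictly lower dimension, absorbed by the $m$-induction. Your $k$-induction has no slot for such an exceptional set and no analogue of that convergence step.

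Two subsidiary imprecisions. The assertion ``by o-minimality these derivatives have at most polynomial-type blow-up'' is not a raw o-minimality fact and fails in non-polynomially-bounded structures like $\R_{\exp}$; what the paper actually uses (Lemma~\ref{karep}) is that \emph{given} a bound on $f^{(k-1)}$, a Mean Value Theorem argument after a definable monotonicity normalization yields $|f^{(k)}(t)|\le 4c/t$, after which the single quadratic substitution $t\mapsto t^2$ suffices, with no $s$-dependent choice of exponent to worry about. And the claim that uniformity in $s$ is ``automatic'' glosses over why the paper works in $\aleph_0$-saturated elementary extensions and invokes Definable Selection: finiteness of the count for each individual $s$ does not by itself give a bound uniform in $s$; the paper extracts one by model-theoretic compactness and then transfers back to $\R$ (Part B of the Appendix).
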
 

\noindent
This is enough for use in the next section, but the proof in Sections~4--7 gives more precise results. In the appendix we also expose some elementary model theory used in that proof. In Section~\ref{elab} we treat
extensions of the Counting Theorem from \cite{P2}.  

The $M$ in Theorem~\ref{YG+} is a source of ineffectivity, and in this connection we call attention to~\cite{BNAnn1, BNAnn2} where among other things Binyamini and Novikov establish better (logarithmic) bounds for certain o-minimal expansions of $\R$. 




\section{Proof of the Counting Theorem from the two ingredients}\label{pct}

\medskip\noindent
Throughout this section we assume $n\ge 1$. We begin by stating some elementary facts about $X^{\alg}$ and $X^{\tr}$ for $X\subseteq \R^n$. The first is obvious:

\begin{lemma}\label{ef1} If $X=X_1\cup\cdots \cup X_m$, then $X^{\alg}\supseteq
X_1^{\alg}\cup \cdots \cup X_m^{\alg}$, and thus
$$X^{\tr}\ \subseteq\ X_1^{\tr} \cup\cdots \cup X_m^{\tr}.$$
\end{lemma}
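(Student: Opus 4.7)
The plan is to unpack the definition of $X^{\alg}$ directly. Recall that $X^{\alg}$ is by definition the union of all connected infinite semialgebraic subsets of $X$.

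For the first containment, I would fix $i \in \{1,\dots,m\}$ and take any connected infinite semialgebraic set $S \subseteq X_i$. Since $X_i \subseteq X$, the set $S$ is also a connected infinite semialgebraic subset of $X$, hence $S \subseteq X^{\alg}$. Taking the union of all such $S$ for this fixed $i$ yields $X_i^{\alg} \subseteq X^{\alg}$, and then taking the union over $i = 1,\dots, m$ gives $X_1^{\alg} \cup \cdots \cup X_m^{\alg} \subseteq X^{\alg}$.

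For the second containment, I would pick $a \in X^{\tr} = X \setminus X^{\alg}$. From $X = X_1 \cup \cdots \cup X_m$, choose $i$ with $a \in X_i$. If $a$ were in $X_i^{\alg}$, then by the first part $a \in X^{\alg}$, contradicting $a \in X^{\tr}$. Therefore $a \in X_i \setminus X_i^{\alg} = X_i^{\tr}$, which yields $X^{\tr} \subseteq X_1^{\tr} \cup \cdots \cup X_m^{\tr}$.

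There is no real obstacle: the lemma is a direct consequence of the definitions, which is exactly why the authors flag it as obvious. The only thing worth mentioning is that, being a mere set-theoretic reformulation, no o-minimality or topological content is needed beyond the definition of $X^{\alg}$ itself.
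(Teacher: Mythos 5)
Your proof is correct; it unpacks the definition of $X^{\alg}$ exactly as intended, and the second inclusion follows by complementation just as you say. The paper gives no proof (it calls the lemma obvious), and your argument is precisely the routine verification that justifies that.
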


\noindent
Note also that if $X$ is open in $\R^n$, then $X^{\tr}=\emptyset$.

\begin{lemma}\label{ef2} Suppose $S\subseteq \R^n$ is semialgebraic, $f: S\to \R^m$ is semialgebraic and injective,~and $f$ maps the set $X\subseteq S$ homeomorphically onto $Y=f(X)\subseteq \R^m$. Then $f(X^{\alg})=Y^{\alg}$ and thus $f(X^{\tr})=Y^{\tr}$. $($We allow $m=0$ for later inductions.$)$ 
\end{lemma}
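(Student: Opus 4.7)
The plan is to establish a bijection between the collection of connected infinite semialgebraic subsets of $X$ and the corresponding collection for $Y$, transported by $f$ and its inverse on $X$. The crucial (and essentially only) point to get straight is that although $X$ need not be semialgebraic, the injectivity of $f$ on $S$ lets us pull sets back without having to intersect with $X$: if $D\subseteq Y=f(X)$, then any $s\in S$ with $f(s)\in D$ must be the unique preimage of $f(s)$ lying in $X$, so $f^{-1}(D)\subseteq X$. Consequently the inverse homeomorphism $g:=(f|_X)^{-1}:Y\to X$ satisfies $g(D)=f^{-1}(D)$, which is semialgebraic by Tarski--Seidenberg as the preimage of a semialgebraic set under the semialgebraic map $f$.

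For $f(X^{\alg})\subseteq Y^{\alg}$, I take a connected infinite semialgebraic $C\subseteq X$ and check that $f(C)\subseteq Y$ retains all three properties: semialgebraic as the image of a semialgebraic set under the semialgebraic map $f$, connected as the continuous image of a connected set, and infinite because $f|_X$ is injective. For $Y^{\alg}\subseteq f(X^{\alg})$, given a connected infinite semialgebraic $D\subseteq Y$, the set $g(D)$ is semialgebraic by the observation above, connected and infinite as the image of $D$ under the continuous injective $g$. Hence $g(D)\subseteq X^{\alg}$, and applying $f$ gives $D=f(g(D))\subseteq f(X^{\alg})$.

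Combining, $f(X^{\alg})=Y^{\alg}$. Since $f$ restricts to a bijection $X\to Y$, this immediately yields $f(X^{\tr})=f(X\setminus X^{\alg})=Y\setminus Y^{\alg}=Y^{\tr}$. The degenerate case $m=0$ is handled separately: $Y\subseteq\R^0$ has at most one point, hence $Y^{\alg}=\emptyset$ and $Y^{\tr}=Y$, while injectivity of $f$ forces $X$ to be at most a point as well, so $X^{\alg}=\emptyset$ and both identities are trivial. The only step that could cause trouble is the semialgebraicity of $g(D)$, and the observation about injectivity above is precisely what removes that obstacle; everything else is routine.
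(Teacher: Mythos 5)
Your proof is correct and follows essentially the same route as the paper: the key insight in both is that injectivity of $f$ on all of $S$ forces $f^{-1}(C)\subseteq X$ for any $C\subseteq Y$, so that pullbacks of semialgebraic sets under $f$ land in $X$ and are themselves semialgebraic, connected, and infinite. Working through the inverse homeomorphism $g=(f|_X)^{-1}$ and handling $m=0$ separately are cosmetic differences, not substantive ones.
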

\begin{proof} It is clear that $f(X^{\alg})\subseteq Y^{\alg}$. Also, for any connected infinite semialgebraic set $C\subseteq Y$, the set
$f^{-1}(C)\subseteq S$ is semialgebraic (since $C$ and $f$ are), contained in $X$ (since $f$ is injective), hence connected and infinite, and thus
$f^{-1}(C)\subseteq X^{\alg}$. This shows $f^{-1}(Y^{\alg})\subseteq X^{\alg}$, and thus $f(X^{\alg})=Y^{\alg}$.
\end{proof}

\noindent
In order to apply Theorem~\ref{YG+} we need to reduce to the case of subsets of $[-1,1]^n$. This is done as follows. For
$X\subseteq \R^n$ and $I\subseteq \{1,\dots,n\}$, set 
$$X_I\ :=\ \{a\in X:\ |a_i|>1 \text{ for all }i\in I,\ |a_i|\le 1 \text{ for all }i\notin I\}$$
and define the semialgebraic map $f_I: \R_I^n \to \R^n$ by
$f_I(a)=b$ where $b_i:=a_i^{-1}$ for $i\in I$ and $b_i:= a_i$ for $i\notin I$.
Thus $f_I$ maps $\R_I^n$ homeomorphically onto its image, a subset of $[-1,1]^n$. If $I=\emptyset$, then $f_I$ is the inclusion map $\R_I^n=[-1,1]^n \to \R^n$. Note that for $a\in \Q^n$ we have $f_I(a)\in \Q^n$ and
$\H(a)=\H\big(f_I(a)\big)$. Moreover, $X$ is the disjoint union of
the sets $X_I$, and for $Y_I=f_I(X_I)$ we have $Y_I\subseteq [-1,1]^n$, $Y_I^{\tr}=f_I(X_I^{\tr})$ by Lemma~\ref{ef2}, so $\oN(Y_I^{\tr},T)=\oN(X_I^{\tr},T)$ for all $T$.

\bigskip\noindent
The sketch below actually proves the Counting Theorem, modulo a uniformity assumption that
arises at the end of the sketch. This motivates a stronger ``definable family'' version of the theorem, which we then prove as in the sketch. 
 
In the rest of this section we fix an o-minimal expansion $\tilde{\R}$ of $\R$, and {\em definable} is with respect to $\tilde{\R}$. We exploit facts about semialgebraic cells $C\subseteq \R^n$ and the corresponding homeomorphisms $p_C: C \to p(C)$; see the subsections ``Cells'' and ``Cell Decomposition'' in part A of the Appendix.

\subsection*{Sketch of the proof of Theorem~\ref{PW} from Theorems~\ref{Pp} and ~\ref{YG+}} Let $X\subseteq \mathbb{R}^{n}$ be definable. We need to show that there are `few' rational points on $X$ outside $X^{\alg}$. We proceed by induction on $n$. By Lemma~\ref{ef1} and the remark following it we can remove the interior of $X$ in $\R^n$ from $X$ and arrange that $X$ has empty interior. 
As indicated just before this sketch we also arrange 
$X\subseteq [-1,1]^n$.  

Let $\epsilon$ be given, and take
$e\ge 1$ so large that $\epsilon(n,e)\le \epsilon/2$ in Theorem~\ref{Pp}, and take $k= k(n,e)$. Theorem~\ref{YG+} gives $M\in \N$ such that $X$ is a union of at most $M$ subsets, each admitting a strong $k$-parametrization. 
Then Theorem~\ref{Pp} gives $X(\Q,T)\subseteq \bigcup_{i=1}^M\bigcup_{j=1}^J H_{ij}$, where the $H_{ij}$ are hypersurfaces in $\R^n$ of degree $\le e$, and $J\in \N$, $J\le cT^{\epsilon/2}$, $c=c(n,e)$ as in that theorem. If $a\in X^{\tr}(\Q,T)$
and $a\in H_{ij}$, then clearly $a\in (X\cap H_{ij})^{\tr}$. Thus 
$$X^{\tr}(\Q,T)\ \subseteq\ \bigcup_{i=1}^M\bigcup_{j=1}^J (X\cap H_{ij})^{\tr}(\Q,T).$$
Let $H$ be any hypersurface in $\R^n$ of degree $\le e$.
We aim for an upper bound on 
$\oN\big((X\cap H)^{\tr},T\big)$ of the form $c_1T^{\epsilon/2}$ with $c_1\in \R^{>}$ independent of $H$ and $T$. (If we achieve this, then applying this to the hypersurfaces $H_{ij}$
we obtain $$\oN( X^{\tr},T)\  \le\ MJ c_1T^{\epsilon/2}\ \le\ M\cdot cT^{\epsilon/2}\cdot c_1T^{\epsilon/2}\  =\  Mcc_1\cdot T^{\epsilon},$$
and we are done.) Take semialgebraic cells $C_1,\dots, C_L$ in $\R^n$, $L\in \N$, such that $$H\ =\ C_1\cup \cdots \cup C_L.$$ Suppose $C=C_l$ is one of those cells. Then we have a semialgebraic homeomorphism $p=p_C: C \to p(C)=p(C_l)$
onto an open cell $p(C_l)$ in $\R^{n_l}$ with $n_l < n$, and so $p$ maps
$X\cap C_l$ homeomorphically onto its image $Y_l\subseteq p(C_l)\subseteq \R^{n_l}$. Now  $p$ is given by omitting $n-n_{l}$ of the  coordinates, so for $a\in C_{l}(\Q)$ we have $p(a)\in \Q^{n_l}$ and $\H\big(p(a)\big)\le\H(a)$. The 
hypersurfaces of degree $\le e$ in $\R^n$ belong to a single semialgebraic
family, so by Proposition~\ref{fdc} we can (and do) take here $L\le L(e,n)$, with $L(e,n)\in \N^{\ge 1}$ depending only on $e,n$.   
By Lemma~\ref{ef1}, $$(X\cap H)^{\tr}\ \subseteq\ (X\cap C_1)^{\tr}\cup\cdots \cup(X\cap C_L)^{\tr}.$$ 
Since the $n_l<n$ we can (and do) assume inductively that for all $T$, 
$$\oN(Y_l^{\tr}, T)\ \le\  B_l T^{\epsilon/2}, \quad l=1,\dots, L$$ with $B_l\in \R^{>}$ independent of $T$. Hence for all $T$,
$$\oN\big((X\cap C_l)^{\tr}), T\big)\ \le\ B_l T^{\epsilon/2}, \quad l=1,\dots, L$$
by Lemma~\ref{ef2} applied to the maps $p=p_{C_l}$, and thus $$\oN\big((X\cap H)^{\tr}, T\big)\ \le\  (B_1+\dots + B_L)T^{\epsilon/2}.$$ 
{\em Assume we can take $B_1,\dots, B_L\le B$ with $B\in \R^{>}$ depending only on $X,\epsilon$, not on $H, Y_1,\dots, Y_L$}. Then $c_1\ :=L(e,n)B$ is a positive real number as aimed for. 

\bigskip\noindent
The above sketch is a proof, modulo the assumption at the end. 
The hypersurfaces $H$ in the sketch belong fortunately to a single semialgebraic family, a fact we already used, and so the sets $Y_l$ as $H$ varies can be taken to belong to a single definable family.  
The inductive hypothesis should accordingly include this uniformity, and
so the full proof should be carried out not just for one set $X$, but uniformly for all sets from a definable family, with constants depending only on the family. 
This is why we need Theorem~\ref{YG+} not just for a single definable $X\subseteq [-1,1]^n$ but for all members of a definable family of such sets.  (As to the $M$ introduced at the beginning of the sketch, Theorem~\ref{YG+} also
provides an $M$ that works for all members of the family.) Below we carry out the details.


\medskip\noindent
The next lemma is a routine consequence of Theorem~\ref{celldec} and Proposition~\ref{fdc}. The $\i$-cells in this lemma and the projection maps $p_{\i}: \R^n\to \R^d$ in the proof of Theorem~\ref{piwi} are defined in the subsection ``Cells'' from part A of the Appendix. 

\begin{lemma}\label{ef3} Let $e\ge 1$ and set $D:=\binom{e+n}{n}$, the dimension of the $\R$-linear space of polynomials over $\R$ in $n$ variables and of degree $\le e$.  Then there are $L\in \N^{\ge 1}$ and semialgebraic sets $\Hy, \mathcal{C}_1,\dots, \mathcal{C}_L\subseteq F\times \R^n,\ F:=\R^D\setminus \{0\}$, such that
$$\{\Hy(t):\ t\in F\}=\ \text{set of hypersurfaces in $\R^n$ of degree $\le e$},$$
$\Hy(t)=\mathcal{C}_1(t)\cup\cdots \cup \mathcal{C}_L(t)$ for all $t\in F$, and for each $l\in \{1,\dots,L\}$ there is an $\i=(i_1,\dots, i_n)$ in $\{0,1\}^n$, $\i\ne (1,\dots,1)$, with the property that every $\mathcal{C}_{l}(t)$ with $t\in F$ is a semialgebraic $\i$-cell in $\R^n$ or empty. 
\end{lemma}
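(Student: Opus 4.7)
The plan is to realize the family of hypersurfaces of degree $\le e$ in $\R^n$ as a single semialgebraic subset of $F\times\R^n$, decompose it by uniform semialgebraic cell decomposition, and then discard the pieces that a proper hypersurface cannot accommodate. First, enumerate the monomials $x^{\alpha^{(1)}},\dots,x^{\alpha^{(D)}}$ of degree $\le e$ in $x_1,\dots,x_n$; there are exactly $D=\binom{e+n}{n}$ of them. For $t=(t_1,\dots,t_D)\in F$ let $P_t(x):=\sum_{j=1}^{D} t_j\, x^{\alpha^{(j)}}$, a nonzero polynomial of degree $\le e$, and set
$$\Hy\ :=\ \{(t,x)\in F\times\R^n:\ P_t(x)=0\}.$$
This $\Hy$ is semialgebraic, and $t\mapsto \Hy(t)$ surjects onto the set of hypersurfaces in $\R^n$ of degree $\le e$ as $t$ ranges over $F$.

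Second, apply Proposition~\ref{fdc} to the semialgebraic family $(\Hy(t))_{t\in F}$ (together with the semialgebraic cell decomposition Theorem~\ref{celldec}) to produce $L'\in\N^{\ge 1}$, semialgebraic sets $\mathcal{C}'_1,\dots,\mathcal{C}'_{L'}\subseteq F\times\R^n$, and tuples $\i^{(1)},\dots,\i^{(L')}\in\{0,1\}^n$, such that for every $t\in F$ the sections $\mathcal{C}'_1(t),\dots,\mathcal{C}'_{L'}(t)$ together cover $\Hy(t)$, and each $\mathcal{C}'_l(t)$ is either empty or a semialgebraic $\i^{(l)}$-cell in $\R^n$. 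The crucial feature needed from \ref{fdc} is that the cell type $\i^{(l)}$ of the fibre is independent of $t$.

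Finally, discard those $l$ with $\i^{(l)}=(1,\dots,1)$: for such an $l$, a nonempty section $\mathcal{C}'_l(t)$ would be a nonempty open subset of $\R^n$ contained in $\Hy(t)$, which is the zero set of the nonzero polynomial $P_t$ and hence has empty interior; so $\mathcal{C}'_l(t)=\emptyset$ for every $t\in F$, and dropping this $\mathcal{C}'_l$ alters no section of $\Hy$. Relabel the survivors as $\mathcal{C}_1,\dots,\mathcal{C}_L$, padding by a single empty semialgebraic set if necessary to keep $L\ge 1$; these satisfy the conclusion. The main (mild) obstacle is bookkeeping: confirming that Proposition~\ref{fdc} as formulated really delivers the $t$-uniform fibre type $\i^{(l)}$, which is precisely the uniformity clause of that proposition.
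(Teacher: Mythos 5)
Your proof is correct and is precisely the argument the paper intends when it declares the lemma a routine consequence of Theorem~\ref{celldec} and Proposition~\ref{fdc}. On the one point you flag: although Proposition~\ref{fdc} does not explicitly name the fibre cell type, the recursive definition of a decomposition of $\R^{D+n}$ makes a cell $C$ of type $(i_1,\dots,i_{D+n})$ project to a cell of type $(i_1,\dots,i_D)$ with fibres $C(t)$ of type $(i_{D+1},\dots,i_{D+n})$ for $t\in\pi(C)$ and empty otherwise, so the uniformity you use holds (and your discard step is in fact vacuous, since a nonempty cell $\subseteq\Hy$ with open fibre type would give $\Hy(t)$ nonempty interior).
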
 

\subsection*{Two family versions of the counting theorem} In this subsection we assume that $E\subseteq \R^m$ and $X\subseteq E\times \R^n$ are definable.

\begin{theorem}\label{piwi} Let any $\epsilon$ be given. Then there is a constant $c=c(X,\epsilon)$ such that for all $s\in E$ and all $T$ we have $\oN(X(s)^{\tr},T)\leq cT^{\epsilon}$.
\end{theorem}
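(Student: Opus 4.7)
My plan is to prove Theorem~\ref{piwi} by induction on $n$, which is precisely the ``definable family'' strengthening demanded by the uniformity issue flagged at the end of the sketch. For the base $n=1$, after removing the (definable) family of interiors I may assume each $X(s)\subseteq \R$ has empty interior; o-minimal uniform finiteness then bounds $\#X(s)$ by a single integer depending only on $E,X$, and the result is trivial.

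For the induction step I first apply the fiberwise partition $X = \bigsqcup_{I\subseteq\{1,\ldots,n\}} X_I$ together with the semialgebraic maps $f_I$ from the passage preceding the sketch. Since $f_I$ preserves heights of rational points and commutes with taking transcendental parts fiberwise by Lemma~\ref{ef2}, it suffices to bound each of the $2^n$ resulting families, all now contained in $E\times[-1,1]^n$. I then remove interiors uniformly: the set $\{(s,a)\in X : a \text{ lies in the interior of } X(s)\}$ is definable and lies fiberwise in $X^{\alg}$, so I may replace $X$ by its complement in $X$ without changing any $X(s)^{\tr}$. Given $\epsilon$, I next choose $e$ so that $\epsilon(n,e)\le \epsilon/2$ in Theorem~\ref{Pp}, set $k:=k(n,e)$, and invoke Theorem~\ref{YG+} to obtain a uniform $M$ such that every $X(s)$ is a union of at most $M$ subsets admitting a strong $k$-parametrization. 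Theorem~\ref{Pp} then yields, for every $(s,T)$, a cover of $X(s)(\Q,T)$ by at most $MJ$ hypersurfaces of degree $\le e$ in $\R^n$ with $J\le c_0 T^{\epsilon/2}$ for $c_0 = c(n,e)$.

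The key use of uniformity is Lemma~\ref{ef3}: all these hypersurfaces lie in a single semialgebraic family $\{\Hy(t): t\in F\}$ with a uniform cell decomposition $\Hy(t)=\mathcal{C}_1(t)\cup\cdots\cup\mathcal{C}_L(t)$, $L=L(e,n)$, where each $\mathcal{C}_l(t)$ is an $\i_l$-cell for some fixed $\i_l\ne(1,\ldots,1)$ (or is empty). Fixing $l$, the projection $p_{\mathcal{C}_l(t)}:\R^n\to\R^{n_l}$ with $n_l<n$ drops the \emph{same} $n-n_l$ coordinates regardless of $t$, so it is height-non-increasing on rational points, and therefore
\[
Y_l\ :=\ \bigl\{(s,t,b)\in E\times F\times\R^{n_l} : b\in p_{\mathcal{C}_l(t)}\bigl(X(s)\cap \mathcal{C}_l(t)\bigr)\bigr\}
\]
is a definable family of subsets of $\R^{n_l}$. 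By the inductive hypothesis applied to $Y_l$ with parameter $\epsilon/2$, there is $B_l=B_l(Y_l,\epsilon/2)>0$ with $\oN(Y_l(s,t)^{\tr},T)\le B_l T^{\epsilon/2}$ uniformly in $(s,t,T)$. Combining Lemmas~\ref{ef1} and~\ref{ef2} fiberwise gives, uniformly in $(s,t,T)$,
\[
\oN\bigl((X(s)\cap \Hy(t))^{\tr},T\bigr)\ \le\ (B_1+\cdots+B_L)\,T^{\epsilon/2}.
\]
Summing this estimate over the $MJ$ hypersurfaces covering $X(s)(\Q,T)$ and applying Lemma~\ref{ef1} once more produces $\oN(X(s)^{\tr},T)\le Mc_0(B_1+\cdots+B_L)\,T^{\epsilon}$, which is the desired bound with a constant depending only on $E,X,\epsilon$.

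The main obstacle is not any single estimate but the bookkeeping of definability at each stage --- the fiberwise interior removal, the $[-1,1]^n$ reduction, the hypersurface cover, the cell decomposition, and the projection --- so that the object fed to the inductive hypothesis is always a \emph{definable} family whose parameter space has been enlarged into a new ``$E$''. Lemma~\ref{ef3} is the load-bearing ingredient here: without a single semialgebraic family of cell decompositions of the degree-$\le e$ hypersurfaces, one would lose uniform control of both $L$ and $n_l$ and the whole inductive scheme would collapse.
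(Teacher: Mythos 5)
Your proposal follows essentially the same inductive scheme as the paper's proof of Theorem~\ref{piwi}: the identical reductions to $[-1,1]^n$ with empty interior, the same invocation of Theorems~\ref{Pp} and~\ref{YG+} with $\epsilon(n,e)\le\epsilon/2$, and the same use of Lemma~\ref{ef3} to build the parametrized families $Y_l\subseteq(E\times F)\times\R^{n_l}$ to which the inductive hypothesis is applied uniformly. The only (cosmetic) difference is that you write $B_l=B_l(Y_l,\epsilon/2)$ where the paper writes $B_l(Y_l,\epsilon)$; yours is the more careful notation since the induction is indeed applied at parameter $\epsilon/2$.
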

\begin{proof} We proceed by induction on $n$. As in the sketch we reduce to the case where $X(s)$ is for every $s\in E$ a subset of $[-1,1]^n$ with empty interior. Take $e\ge 1$ so large that $\epsilon(n,e)\leq \epsilon/2$ in Theorem~\ref{Pp}, and set $k=k(n,e)$. So for every $Z\subseteq \R^n$ with a strong $k$-parameterization we can cover $Z(\Q,T)$ with at most $cT^{\epsilon/2}$ hypersurfaces of degree $\leq e$ where  $c=c(n,e)$ is as in Theorem~\ref{Pp}. Theorem~\ref{YG+} gives $M\in \N$ such that each
section $X(s)$ is a union of at most $M$ subsets, each admitting a strong $k$-parametrization. Let $s\in E$, and let $H$ be a hypersurface of degree $\leq e$. As in the sketch we see that by our choice of $k, e$ it is enough to show: $$\oN\big((X(s)\cap H)^{\tr},T\big)\ \leq\ c_1T^{\epsilon/2}, \text{ for all }T,$$ where $c_1\in \R^{>}$  depends only on $X,\epsilon$, not on $s, H, T$. Below we provide such $c_1$. 

With the present values of $e$ and $n$, set $D:=\binom{e+n}{n}$, 
 $F:= \R^D\setminus \{0\}$, and let $\Hy, \mathcal{C}_1,\dots, \mathcal{C}_L\subseteq F\times \R^n$ be as in Lemma \ref{ef3}. 
For $l=1,\dots, L$, take $\i^{l}=(i^l_{1},\dots, i^l_{ n})$ in $\{0,1\}^n$, not equal to $(1,\dots,1)$, such that for all $t\in F$ the subset $\mathcal{C}_l(t)$ of $\R^n$ is a semialgebraic $\i^{l}$-cell or empty, so 
 $$p_{\i^{l}}: \R^n \to \R^{n_{l}},\quad n_{l}:=i^l_{1} + \cdots + i^l_{n} < n,$$ maps $\mathcal{C}_l(t)$ homeomorphically onto its image. Then
 we have for $l=1,\dots, L$ a definable set $Y_{l}\subseteq (E\times F)\times \R^{n_{l}}$ such that for all $(s,t)\in E\times F$,
$$Y_{l}(s,t)\ =\ p_{\i^{l}}\big(X(s)\cap \mathcal{C}_{l}(t)\big).$$
Since all $n_{l} < n$ we can assume inductively that for all $(s,t)\in E\times F$ and all $T$,
$$\oN\big(Y_l(s,t)^{\tr}, T\big)\ \le\  B_l T^{\epsilon/2}, \quad l=1,\dots, L$$ with $B_l=B_l(Y_l,\epsilon)\in \R^{>}$ independent of $s,t,T$.
Since $H=\Hy(t)$ for some $t\in F$,  
$$\oN\big((X(s)\cap H)^{\tr}, T\big)\ \le\  (B_1+\dots + B_L)T^{\epsilon/2},$$
as in the sketch.   
Thus $c_1\ := B_1+\cdots + B_L$ is as promised.
\end{proof}

\noindent
Next a variant of Theorem~\ref{piwi} where we remove from the sets $X(s)$ only a
definable part $V(s)$ of $X(s)^{\alg}$ instead of all of it. The example preceding the statement of Theorem~\ref{PW} shows that this variant is strictly stronger than Theorem~\ref{piwi}. 

\begin{theorem}\label{PWr}
Let any $\epsilon$ be given. Then there is a definable set $V=V(X,\epsilon)\subseteq X$ and a constant $c=c(X,\epsilon)$ such that for all $s\in E$ and all $T$,
$$ V(s)\subseteq X(s)^{\alg} \quad \text{and} \quad \oN\big(X(s)\setminus V(s),T\big)\leq cT^{\epsilon}.$$
\end{theorem}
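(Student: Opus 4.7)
The plan is to refine the proof of Theorem~\ref{piwi} by tracking at every stage the definable pieces of $X$ that are discarded as subsets of the algebraic part, and assembling them into a single definable $V \subseteq X$. The argument is by induction on $n$.

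For $n = 1$, set $V(s) := \text{int}_{\R}(X(s))$: this is definable jointly in $(s,a)$, contained in $X(s)^{\alg}$, and its complement in $X(s)$ is uniformly finite in $s$ by o-minimal uniform finiteness, which supplies $c$ directly. For $n \geq 2$, first reduce to $X(s) \subseteq [-1,1]^n$ via the $X_I, f_I$ decomposition preceding Theorem~\ref{piwi}: by Lemma~\ref{ef2} the $f_I$ respect algebraic parts and preserve heights, so if the theorem holds for each family $f_I(X_I) \subseteq [-1,1]^n$, the preimages $f_I^{-1}(V_I(s))$ assemble into a suitable $V$ for $X$. Then set $V^{\text{int}}(s) := \text{int}_{\R^n}(X(s)) \subseteq X(s)^{\alg}$, note that $X(s) \setminus V^{\text{int}}(s)$ has empty interior, choose $e$ with $\epsilon(n,e) \leq \epsilon/2$ and $k := k(n,e)$, and argue exactly as in Theorem~\ref{piwi} using Theorems~\ref{Pp} and~\ref{YG+} that the rational height-$\leq T$ points of $X(s) \setminus V^{\text{int}}(s)$ are covered by at most $M c(n,e) T^{\epsilon/2}$ hypersurfaces of degree $\leq e$, with $M$ independent of $s,T$.

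By Lemma~\ref{ef3}, these hypersurfaces form the family $\Hy(t)$, $t \in F$, with uniform cell decomposition $\Hy(t) = \bigcup_{l=1}^{L} \mathcal{C}_l(t)$ and semialgebraic projections $p_{\i^l}: \mathcal{C}_l(t) \to \R^{n_l}$ ($n_l < n$) that are homeomorphisms onto their images. Define the definable family $Y_l \subseteq (E \times F) \times \R^{n_l}$ by $Y_l(s,t) := p_{\i^l}(X(s) \cap \mathcal{C}_l(t))$ and apply the inductive hypothesis with $\epsilon/2$ in dimension $n_l$ to obtain a definable $V_l \subseteq Y_l$ and a constant $B_l$ with $V_l(s,t) \subseteq Y_l(s,t)^{\alg}$ and $\oN(Y_l(s,t) \setminus V_l(s,t), T) \leq B_l T^{\epsilon/2}$. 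Pull back via $W_l(s,t) := (X(s) \cap \mathcal{C}_l(t)) \cap p_{\i^l}^{-1}(V_l(s,t))$; Lemma~\ref{ef2} applied to $p_{\i^l}$ on $\mathcal{C}_l(t)$ gives $W_l(s,t) \subseteq (X(s) \cap \mathcal{C}_l(t))^{\alg} \subseteq X(s)^{\alg}$. Set
\[
V(s)\ :=\ V^{\text{int}}(s)\ \cup\ \bigcup_{l=1}^{L}\bigcup_{t \in F} W_l(s,t),
\]
which is definable (the $t$-union is an existential projection in $\tilde{\R}$) and contained in $X(s)^{\alg}$.

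For the count, any $a \in (X(s) \setminus V(s))(\Q,T)$ lies outside $V^{\text{int}}(s)$, hence on some covering $\Hy(t)$, hence in some $\mathcal{C}_l(t)$; since $a \notin W_l(s,t)$, the (height-preserving) image $p_{\i^l}(a)$ lies in $Y_l(s,t) \setminus V_l(s,t)$, of which there are at most $B_l T^{\epsilon/2}$ elements. Each covering hypersurface thus contributes at most $(\sum_l B_l) T^{\epsilon/2}$ such points, yielding $\oN(X(s) \setminus V(s), T) \leq Mc(n,e)(\sum_l B_l) T^{\epsilon}$. The essential new subtlety compared to Theorem~\ref{piwi} is that $V$ must be a single definable subset of $X$ depending only on $s$, while the argument naturally produces witnesses parametrized by the auxiliary $t \in F$; absorbing this $t$-dependence by the existential union above is the key move, and one verifies both that the result still lies in $X(s)^{\alg}$ (pointwise, by Lemma~\ref{ef2}) and that the counting step is unaffected, since each $a \notin V(s)$ already fails $W_l(s,t)$-membership for some specific pair $(l,t)$.
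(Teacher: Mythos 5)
Your proposal is correct and follows essentially the same route as the paper's proof: peel off the definable interior, reduce to $[-1,1]^n$, invoke Lemma~\ref{ef3} for the uniform cell decomposition of the hypersurface family, apply the inductive hypothesis to the sets $Y_l$ in lower dimension, and assemble $V(s)$ as the union over $l$ and $t\in F$ of the pullbacks $\mathcal{C}_l(t)\cap p_{\i^l}^{-1}(\cdot)$. The only cosmetic difference is notational (your $W_l$ is the paper's pullback, your $V_l$ the paper's inductive $W_l$), and you spell out the counting step and the existential quantification over $t$ slightly more explicitly than the original, which leaves the ``easy to check'' verification to the reader.
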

\begin{proof}
By induction on $n$. We follow closely the proof of Theorem \ref{piwi}. Let $V_0\subseteq X$ be given by $V_0(s)=\text{interior of }X(s)$ in $\R^n$ for $s\in E$.
This definable set $V_0$ will be part of a $V$ as required. 
Replacing $X$ by $X\setminus V_0$ we arrange that $X(s)$ has empty interior for all $s\in E$. We arrange in addition that $X(s)\subseteq [-1,1]^n$ for all $s\in E$. Now take $e$ and  $k=k(n,e)$ as in the proof of Theorem~\ref{piwi}. It will be enough
to find a definable $V\subseteq X$ and a constant $c_1\in \R^{>}$ such that for all $s\in E$, every hypersurface $H$ of degree $\le e$ in $\R^n$, and all $T$ we have
$$V(s)\ \subseteq\ X(s)^{\alg}, \qquad \oN\big((X(s)\cap H)\setminus V(s),T\big)\ \leq\ c_1T^{\epsilon/2}.$$ 
We take the semialgebraic sets $\Hy, \mathcal{C}_1,\dots, \mathcal{C}_L\subseteq F\times \R^n$ and the definable sets $Y_{l}\subseteq E\times F\times \R^{n_{l}}$ for  $l=1,\ldots, L$ as in the proof of Theorem \ref{piwi}. For such $l$ we have $n_{l}<n$, so we can assume inductively that we have a definable set $W_{l} \subseteq Y_{l}$ and a number $B_{l}=B_{l}(Y_{l},\epsilon)\in \R^{>}$ such that for all $s\in E$, $t\in F$, and $T$ we have
$$  W_{l}(s,t)\ \subseteq\ Y_{l}(s,t)^{\alg} \quad \text{and} \quad N\big(Y_{l}(s,t)\setminus W_{l}(s,t),T\big)\ \leq\ B_{l}T^{\epsilon/2}.$$
It is now easy to check that the definable set $V\subseteq X$ such that for all $s\in E$, $$V(s)\ =\ \bigcup_{l=1}^L\bigcup_{t\in F}\mathcal{C}_{l}(t)\cap p^{-1}_{\i^{l}}\big(W_{l}(s,t)\big) $$ has the desired property.  
\end{proof}
 
\noindent
In the next sections we establish the results used in the proofs above, namely Theorems~\ref{Pp} and~\ref{YG+}. In Section~\ref{elab} we strengthen and extend Theorem~\ref{PWr} in several ways without changing the basic inductive set-up of its proof. 

\section {Proof of Theorem~\ref{Pp}}\label{P1}

\noindent
We begin with introducing a key determinant. 
Let $\k$ be a field and set
$$   D(n,\ e)\ :=\ \binom{e+n}{n}\ =\ \#\{\alpha\in \N^n:\ |\alpha|\ \leq\ e\}\in \N^{\ge 1},$$
the dimension of the $\k$-linear space of $n$-variable polynomials over
$\k$ of (total) degree at most $e$. Thus $D(n,\ 0)=1$, $D(n,e)=\frac{e^n}{n!}\big(1+o(1)\big)$
as $e\to \infty$, and if $n\ge 1$, then $D(n,e)$ is strictly increasing as a function of $e$. 

For now we fix $n$ and $e$, set $D:= D(n,e)$ and let $\alpha$ range over $\N^n$. By a {\em hypersurface in $\k^n$ of degree $\le e$} we mean the set of zeros in $\k^n$ of a nonzero $n$-variable polynomial of degree $\leq e$ with coefficients in $\k$.  

\begin{lemma}\label{hs} A set $S\subseteq \k^n$ is contained in some hypersurface in $\k^n$ of degree at most $e$ if and only if $\det(a_{i}^{\alpha})_{|\alpha|\leq e, i=1,\dots,D}=0$ for all $a_1,\dots, a_D\in S$. 
\end{lemma}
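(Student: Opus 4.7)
My plan is to reinterpret the statement through the evaluation pairing between polynomials and points. Identify the $\k$-linear space of polynomials $P \in \k[x_1,\dots,x_n]$ of degree $\le e$ with $\k^D$ by the coefficient map $P = \sum_{|\alpha|\le e}c_\alpha x^\alpha \mapsto c = (c_\alpha)_\alpha$. For each $a \in \k^n$, the evaluation functional $P \mapsto P(a)$ is represented by the vector $v_a := (a^\alpha)_{|\alpha|\le e} \in \k^D$, since $P(a) = \sum_\alpha c_\alpha a^\alpha = c \cdot v_a$. Thus ``$S$ is contained in some hypersurface of degree $\le e$'' is equivalent to the existence of a nonzero $c \in \k^D$ with $c \cdot v_a = 0$ for every $a \in S$, and this in turn is equivalent to $\dim \mathrm{span}_\k\{v_a : a \in S\} < D$.

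Now I would translate the determinant condition into the same linear-algebraic language. Given $a_1,\dots,a_D \in S$, the matrix $(a_i^\alpha)_{i,\alpha}$ (rows indexed by $i$, columns by the $D$ multi-indices $\alpha$ with $|\alpha|\le e$) has $i$-th row equal to $v_{a_i}^T$. Its determinant vanishes exactly when $v_{a_1},\dots,v_{a_D}$ are linearly dependent. So the hypothesis ``$\det(a_i^\alpha) = 0$ for all $a_1,\dots,a_D \in S$'' is equivalent to the statement that no $D$ vectors from $\{v_a : a \in S\}$ are linearly independent, i.e.\ to $\dim \mathrm{span}_\k\{v_a : a \in S\} < D$.

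Once both sides have been reduced to the single condition $\dim \mathrm{span}_\k\{v_a : a \in S\} < D$, the lemma is immediate. Concretely, for the forward direction I would take a nonzero polynomial $P$ of degree $\le e$ vanishing on $S$, observe that its coefficient vector $c$ satisfies $(a_i^\alpha)_{i,\alpha}\, c = 0$ for every tuple $a_1,\dots,a_D \in S$, and conclude $\det(a_i^\alpha) = 0$. For the converse, the determinantal hypothesis forces $\mathrm{span}\{v_a : a \in S\}$ to have dimension at most $D-1$, so its annihilator in $\k^D$ contains a nonzero vector $c$, which is the coefficient vector of a nonzero polynomial of degree $\le e$ vanishing on $S$.

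I do not foresee any real obstacle here: the lemma is a direct translation of a standard linear-algebra fact (the rank of a collection of vectors is detected by its maximal nonvanishing minors) through the evaluation pairing. The only point requiring a moment of care is the bookkeeping that both the row index set $\{1,\dots,D\}$ and the column index set $\{\alpha \in \N^n : |\alpha|\le e\}$ have the same cardinality $D = \binom{e+n}{n}$, which is exactly the definition of $D$ and what makes the determinant square.
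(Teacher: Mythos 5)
Your argument is correct and is essentially the paper's own: both proofs are built on the observation that the vectors $v_a=(a^\alpha)_{|\alpha|\le e}\in\k^D$ span a proper subspace of $\k^D$ iff some nonzero coefficient vector $c$ annihilates all of them iff every $D\times D$ determinant $\det(a_i^\alpha)$ vanishes. The paper just carries out the two implications a bit more explicitly rather than collapsing both sides to the common rank condition as you do.
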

\begin{proof} Let $f = \displaystyle \sum_{|\alpha|\leq e}c_{\alpha}x^{\alpha}$ be a nonzero polynomial in $x=(x_1,\dots, x_n)$ of degree at most $e$ with coefficients $c_{\alpha}\in \k$ such that $f=0$ on $S$. Then for any points $a_1,\dots, a_D\in S$ we have  
$f(a_1)=\cdots= f(a_D)=0$, that is,
$$\sum_{|\alpha|\leq e}c_{\alpha}\big(a_1^{\alpha}, \dots, a_D^\alpha\big)\ =\ 0\ \text{ in }\ \k^D,$$ so the $D$ vectors 
$(a_1^\alpha,\dots, a_D^\alpha)$ ($|\alpha|\le e$) in the $\k$-linear space $\k^D$ are linearly dependent, which gives the desired conclusion about the determinant. 

Conversely, suppose $\det(a_{i}^{\alpha})_{|\alpha|\leq e, i=1,\dots,D}=0$
for all $a_1,\dots, a_D\in S$. Then for $A:=\{\alpha: |\alpha|\le e\}$, the $\k$-linear subspace of $\k^A$ spanned by the vectors $(a^\alpha)_{|\alpha|\le e}$
with $a\in S$ has dimension $< D$. Take $a_1,\dots, a_M\in S$ such that
$$(a_1^\alpha)_{|\alpha|\le e},\ \dots,\ (a_M^\alpha)_{|\alpha|\le e}$$ is a basis of this subspace. Then $M<D$, so we have $c_{\alpha}\in \k$ for $|\alpha|\le e$, with $c_{\alpha}\ne 0$ for some $\alpha$ and
$\sum_{|\alpha|\le e} c_{\alpha}a^\alpha=0$ for $a=a_1,\dots, a_M$, and thus for
all $a\in S$.
\end{proof}

\noindent
Next we introduce some numbers related to $D=D(n,e)$: 
$$E(n,\ e)\ :=\ \binom{e+n-1}{n-1}\ =\ \#\{\alpha:\ |\alpha|=e\},$$ the dimension of the $\k$-linear space of homogeneous $n$-variable polynomials of degree $e$ over $\k$. (Here $\binom{-1}{-1}:=1$ and $\binom{k}{-1}:=0$.) So
$D(n,e)=\sum_{i=0}^e E(n,i)$. Next, we set
$V(n,e) := \sum_{i=0}^e iE(n,i)$. Now for $i\ge 1$,
\begin{align*} iE(n,i)\ &=\ i\binom{i+n-1}{n-1}\ =\ n\binom{i+n-1}{n}\ =\ nE(n+1,i-1),\ \text{ so}\\ 
V(n,e)\ &=\ n\sum_{i=1}^{e}E(n+1,i-1)\ =\ nD(n+1,e-1) \text{ for }e\ge 1, \quad V(n,0)=0,
\end{align*}
and thus for fixed $n$ we have $V(n,e)=\frac{ne^{n+1}}{(n+1)!}\big(1+o(1)\big)$ as $e\to \infty$.

\medskip\noindent 
Let $e, m, n \ge 1$ below and define $b=b(m,\ n,\ e)\in \N$ by requiring
$$
D(m,\ b)\ \leq\  D(n,\ e)\ <\ D(m,\ b+1)\ .
$$
Next, we set for $b=b(m,n,e)$:
\begin{align*} 
B(m,\ n,\ e)\ :&=\ \sum_{i=0}^{b}iE(m,i)+(b+1)\cdot\big(D(n,\ e)-\sum_{i=0}^{b}E(m,i)\big)\\
&=\ V(m,b) + (b+1)\cdot\big(D(n,e)-D(m,b)\big)\in \N^{\ge 1},\\ 
\epsilon(m,\ n,\ e)\ :&=\ \frac{mneD(n,e)}{B(m,n,e)}.
\end{align*}

\begin{lemma} With fixed $m,n\ge 1$ and $e\to \infty$, we have: 
\begin{enumerate}
\item $b(m,\ n,\ e)\ =\ \left(\frac{m!e^n}{n!}\right)^{1/m}\big(1+o(1)\big)$;
\item $B(m,\ n,\ e)\ =\  \frac{m}{(m+1)!}\big(\frac{m!}{n!})^{(m+1)/m} e^{n(m+1)/m}\big(1+o(1))$;
\item if $m<n$, then $\epsilon(m,\ n,\ e)\to 0$.
\end{enumerate}
\end{lemma}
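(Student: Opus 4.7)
The plan is to prove the three statements in order, with (1) feeding into (2), and (2) feeding into (3). Throughout, $m$ and $n$ are fixed and $e\to\infty$; the letter $b$ always denotes $b(m,n,e)$.

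First I would establish (1). From the definition, $D(m,b)\leq D(n,e)<D(m,b+1)$, and we already have the asymptotic $D(n,e)=\frac{e^n}{n!}(1+o(1))$ and $D(m,b)=\frac{b^m}{m!}(1+o(1))$ as $b\to\infty$. Since $D(n,e)\to\infty$ we have $b\to\infty$, and the same asymptotic applies to $D(m,b+1)$ because $b+1\sim b$. Squeezing $D(n,e)$ between these two, we get $\frac{b^m}{m!}=\frac{e^n}{n!}(1+o(1))$, hence $b=\left(\frac{m!e^n}{n!}\right)^{1/m}(1+o(1))$.

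For (2), recall from the displayed computation preceding the lemma that $V(m,b)=\frac{m}{(m+1)!}b^{m+1}(1+o(1))$ as $b\to\infty$. The second summand in $B(m,n,e)$ is $(b+1)\bigl(D(n,e)-D(m,b)\bigr)$; from the defining inequality we have $0\leq D(n,e)-D(m,b)<D(m,b+1)-D(m,b)=E(m,b+1)$, and since $E(m,i)=\binom{i+m-1}{m-1}=O(i^{m-1})$, this yields
\[
(b+1)\bigl(D(n,e)-D(m,b)\bigr)\ =\ O(b^{m}),
\]
which is negligible compared to $V(m,b)=\Theta(b^{m+1})$. Therefore $B(m,n,e)=\frac{m}{(m+1)!}b^{m+1}(1+o(1))$, and substituting the value of $b$ from (1) gives
\[
B(m,n,e)\ =\ \frac{m}{(m+1)!}\Bigl(\frac{m!}{n!}\Bigr)^{(m+1)/m}e^{n(m+1)/m}(1+o(1)).
\]

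Finally, (3) follows by combining the asymptotics: the numerator of $\epsilon(m,n,e)$ satisfies $mne\,D(n,e)=\Theta(e^{n+1})$, while the denominator satisfies $B(m,n,e)=\Theta(e^{n(m+1)/m})=\Theta(e^{n+n/m})$ by (2). Hence $\epsilon(m,n,e)=\Theta(e^{1-n/m})$, and when $m<n$ the exponent $1-n/m$ is strictly negative, so $\epsilon(m,n,e)\to 0$. The only place that requires any care is verifying that the error term $(b+1)(D(n,e)-D(m,b))$ in $B(m,n,e)$ is genuinely lower order; everything else is a direct substitution of previously established asymptotic formulas.
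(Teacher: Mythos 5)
Your proof is correct and follows essentially the same route as the paper: squeeze $D(n,e)$ between $D(m,b)$ and $D(m,b+1)$ to get (1), show the second summand in $B(m,n,e)$ is lower order than $V(m,b)$ to get (2), then compare exponents for (3). Your treatment of (2) is in fact slightly cleaner than the paper's: you bound $D(n,e)-D(m,b)$ directly by $E(m,b+1)=O(b^{m-1})$, giving the sharp estimate $(b+1)(D(n,e)-D(m,b))=O(b^m)=O(e^n)$, whereas the paper obtains only $o(e^{n(m+1)/m})$ by substituting the asymptotics from (1) into $D(m,b)$; both suffice, but yours avoids relying on (1) for this step and is more self-contained.
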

\begin{proof} As to (1), for $e\to \infty$ we have $b=b(m,n,e)\to \infty$,  
so 
$$D(m,b)\ =\ \frac{b^m}{m!}\big(1+o(1)\big)\ \le\ \frac{e^n}{n!}\big(1+o(1)\big)\ \le\ \frac{(b+1)^m}{m!}\big(1+o(1)\big),$$
but the last term here is also $\frac{b^m}{m!}\big(1+o(1)\big)$, like the first term, and this easily yields the asymptotics claimed for $b$. For (2), substituting the result of (1) in the asymptotics for $D(m,b)$ as $b\to \infty$
leads to $(b+1)\cdot\big(D(n,e)-D(m,b)\big)=o\big(e^{n(m+1)/m}\big)$, and
then in the asymptotics for $V(m,b)$ yields the asymptotics claimed for $B(m,n,e)$. Now (3) is an easy consequence of (2). 
\end{proof} 

\noindent
In the proof of Proposition~\ref{bound} below we need a reasonable bound
on the absolute value of the determinant of a certain $(D\times D)$-matrix 
of the form $\big(a_{i}^{\alpha}\big)_{|\alpha|\leq e, i=1,\dots,D}$. We achieve this by expressing the matrix as a sum of simpler matrices. In this connection we need a useful expression for the determinant of a sum of matrices. 

Turning to this, let $N\in \N$ and consider an $(N\times N)$-matrix $a=(a_{\mu\nu})_{1\le \mu,\nu\le N}$ over a field $\k$. The determinant of an $(N\times N)$-matrix over $\k$ is an alternating multilinear function of its columns. The columns of $a$ are $a_1,\dots, a_N\in \k^N$ where $a_\nu=(a_{1\nu},\dots, a_{N\nu})^{\t}\in \k^N$ is the $\nu$th column of $a$. Thus $$a\ =\ (a_1,\dots, a_N)\in \k^N\times \cdots \times \k^N\ \text{(with $N$ factors $\k^N$)}.$$ Next, let 
$a=a^1+ \cdots + a^r$ with $r\in \N$ and $a^1,\dots, a^r$ also $(N\times N)$-matrices over $\k$, with $a^j$ having $\nu^{\text{th}}$ column $a^j_{\nu}$. Then
\begin{align*} \det a\ &=\ \det\big(a_1,\dots, a_N\big)\ =\ \det\big(\sum_{j=1}^r a^j_1,\dots, \sum_{j=1}^r a^j_N\big)\\
&=\ \sum_{\j} \det\big(a^{j_1}_1,\dots, a^{j_N}_N\big) 
\end{align*}  
where $\j=(j_1,\dots, j_N)$ ranges here and below over elements of $\{1,\dots, r\}^N$. Let $\j$ be given. 
If for some $j$ in $\{1,\dots, r\}$ the number of $\nu\in \{1,\dots,N\}$ with
$j_\nu=j$ is more than $\operatorname{rank} a^j$, then the column vectors $a_1^{j_1},\dots, a_N^{j_N}$ are $\k$-linearly dependent, so
$\det\big(a^{j_1}_1,\dots, a^{j_N}_N\big)=0$. Thus if $J\subseteq \{1,\dots, r\}^N$ contains all $\j$ such that 
$$\#\{\nu\in \{1,\dots,N\}:\ j_{\nu}=j\}\ \le\ \operatorname{rank} a^j, \text{ for }j=1,\dots,r,$$
then 
$$(*)\qquad \qquad\qquad \det a\ =\ \sum_{\j\in J} \det\big(a^{j_1}_1,\dots, a^{j_N}_N\big)\ =\ \sum_{\j\in J} \det\big(a^{j_{\nu}}_{\mu\nu}\big)_{1\le\mu,\nu\le N} \qquad\qquad.$$
 
\noindent
We shall also use the following observation:

\begin{lemma}\label{obs} Let $A$ be a set and $V$ a finite-dimensional subspace of the $\k$-linear
space $\k^A$. Then for any $N\in \N$, functions $f_1,\dots, f_N\in V$, and points $a_1,\dots, a_N$ in~$A$,  the rank of the $(N\times N)$-matrix $\big(f_\mu(a_\nu)\big)_{1\le \mu,\nu\le N}$ over $\k$  is $\le \dim V$.
\end{lemma}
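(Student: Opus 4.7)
My plan is to factor the matrix through $V$. Set $d := \dim V$ and pick a basis $g_1,\dots,g_d$ of $V$, so each $f_\mu = \sum_{i=1}^d c_{\mu i} g_i$ for unique scalars $c_{\mu i}\in \k$. Evaluating at $a_\nu$ gives
\[
f_\mu(a_\nu)\ =\ \sum_{i=1}^d c_{\mu i} g_i(a_\nu),
\]
which exhibits the $(N\times N)$-matrix $\big(f_\mu(a_\nu)\big)$ as the product of the $(N\times d)$-matrix $(c_{\mu i})$ and the $(d\times N)$-matrix $\big(g_i(a_\nu)\big)$. Since the rank of a product of matrices is at most the rank of either factor, and each factor has rank $\le d$, the conclusion follows.

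Equivalently — and perhaps cleaner for the write-up — consider the evaluation map $\varepsilon: V \to \k^N$ given by $f\mapsto \big(f(a_1),\dots,f(a_N)\big)$. This is a $\k$-linear map, so $\dim \varepsilon(V)\le \dim V = d$. The $\mu$-th row of the matrix in question equals $\varepsilon(f_\mu)\in \varepsilon(V)$, so the row span of the matrix is contained in $\varepsilon(V)$, and hence the rank of the matrix is at most $d$.

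There is no real obstacle here: the lemma is essentially a reformulation of the fact that a linear map from a $d$-dimensional space has image of dimension $\le d$. The only minor point to watch is that $V$ is a subspace of $\k^A$ and not assumed to arise from any particular structure on $A$, so the proof must use nothing beyond the vector space structure — which the evaluation-map formulation makes transparent. I would present the proof in one short paragraph via the evaluation map.
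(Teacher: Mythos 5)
Your second formulation — via the evaluation map $\varepsilon\colon V\to\k^N$, $f\mapsto\big(f(a_1),\dots,f(a_N)\big)$ — is exactly the paper's proof, and your first (matrix factorization) is just a coordinatized version of the same idea. Both are correct.
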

\begin{proof} The map $f\mapsto \big(f(a_1),\dots, f(a_N)\big): V \to \k^N$ is $\k$-linear, so the image of this map is a subspace of
the $\k$-linear space $\k^N$ of dimension $\le \dim V$.
\end{proof} 

\noindent
Recall our norm $|(t_1,\dots, t_m)|:=\max\{|t_1|,\dots, |t_m|\}$ on $\R^m$ for $m\ge 1$.

\begin{prop}\label{bound} Let $e,m,n\ge 1$, $m < n$, and $k:= b(m,n,e)+1$. Then there is a constant $K=K(m,n,e)$ with the following property: if $f: (0,1)^m \to \R^n$ is a strong $k$-parametrization, $0 < r \le 1$, and
$a_0,\dots, a_D\in (0,1)^m$ with $D=D(n,e)$ are such that $|a_i-a_0|\le r$ for $i=1,\dots, D$, then 
$$|\det\big(f(a_i)^\alpha\big)_{|\alpha|\le e,\ i=1,\dots,D}|\ <\  Kr^{B(m,n,e)}.$$
\end{prop}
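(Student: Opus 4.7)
Set $g_\alpha := f_1^{\alpha_1}\cdots f_n^{\alpha_n}\colon(0,1)^m\to\R$ for $|\alpha|\le e$, so the matrix in question is $M := \bigl(g_\alpha(a_i)\bigr)_{|\alpha|\le e,\ 1\le i\le D}$. The plan is to Taylor-expand each column of $M$ around $a_0$ up to order $k-1$ and then apply the multilinearity identity~$(*)$ preceding Lemma~\ref{obs}, with rank bounds on the summands combining with the defining formula for $B(m,n,e)$.

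The strong $k$-parametrization hypothesis gives $|f_j^{(\gamma)}(a)|\le 1$ for all $j$, all $|\gamma|\le k$, and all $a\in(0,1)^m$. The general Leibniz rule then yields a constant $C_0=C_0(m,n,e)$ with $|g_\alpha^{(\beta)}(a)|\le C_0$ for all $|\alpha|\le e$, $|\beta|\le k$, and $a\in(0,1)^m$. Taylor's formula with Lagrange remainder gives, for each $a_i$ with $|a_i-a_0|\le r$,
$$g_\alpha(a_i)\ =\ \sum_{|\beta|<k}\frac{g_\alpha^{(\beta)}(a_0)}{\beta!}(a_i-a_0)^\beta\ +\ R_{\alpha i},\quad |R_{\alpha i}|\le C_1 r^k,$$
with $C_1=C_1(m,n,e)$. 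Decompose $M = M_0+M_1+\cdots+M_{k-1}+M_k$, where for $j<k$ the $i$th column of $M_j$ equals $\sum_{|\beta|=j}\frac{(a_i-a_0)^\beta}{\beta!}V_\beta$ with $V_\beta := \bigl(g_\alpha^{(\beta)}(a_0)\bigr)_{|\alpha|\le e}\in\R^D$, and the $i$th column of $M_k$ has entries $R_{\alpha i}$. Every column of $M_j$ for $j<k$ lies in the span of the $E(m,j)$ vectors $\{V_\beta: |\beta|=j\}$, so $\operatorname{rank} M_j\le E(m,j)$; trivially $\operatorname{rank} M_k\le D$.

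Applying the identity~$(*)$ yields $\det M = \sum_{\j\in J}\det\bigl(\text{column }i\text{ of }M_{j_i}\bigr)_{1\le i\le D}$, where $J\subseteq\{0,1,\dots,k\}^D$ is the set of tuples $\j=(j_1,\dots,j_D)$ with $\#\{i:j_i=j\}\le E(m,j)$ for each $j<k$. The central combinatorial step is then: $\sum_i j_i\ge B(m,n,e)$ for every $\j\in J$. By greedy minimization one fills values $0,1,\dots,b$ at their maximum multiplicities $E(m,0),\dots,E(m,b)$, using up $D(m,b)$ columns and contributing $V(m,b)$ to the sum; the remaining $D-D(m,b)$ columns are then forced to take value $\ge b+1=k$, contributing at least $(b+1)(D(n,e)-D(m,b))$. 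The total equals $B(m,n,e)$ by its very definition.

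Finally, each column of $M_{j_i}$ is bounded entrywise by a constant (depending only on $m,n,e$) times $r^{j_i}$: for $j_i<k$ because $|V_\beta|\le C_0$ and $|(a_i-a_0)^\beta|\le r^{j_i}$, and for $j_i=k$ by the remainder estimate. The permutation-sum formula for the determinant then bounds each term in the $(*)$-sum by $D!\cdot(\text{const})^D\cdot r^{\sum_i j_i}$, hence by $(\text{const})\cdot r^{B(m,n,e)}$ since $r\le 1$. Summing over $|J|\le (k+1)^D$ tuples gives $|\det M|\le K\,r^{B(m,n,e)}$ for some $K=K(m,n,e)$; strict inequality follows by enlarging $K$ slightly (note $r>0$). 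The main technical nexus is the combinatorial lower bound on $\sum_i j_i$ dovetailing exactly with the defining formula for $B(m,n,e)$---the rest is careful bookkeeping of constants coming from Leibniz, Taylor, and the determinant-sum formula.
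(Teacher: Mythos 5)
Your proof is correct and follows essentially the same strategy as the paper: decompose the matrix as a sum of low-rank homogeneous pieces plus a remainder of order $r^k$, apply the alternating-multilinearity identity $(*)$, use the rank bounds to restrict which index tuples $\j$ survive, and derive the combinatorial lower bound $\sum_i j_i \ge B(m,n,e)$. The one technical variation is that you Taylor-expand each monomial $g_\alpha = \prod f_j^{\alpha_j}$ directly as a single $C^k$-function (using the Leibniz rule to bound $|g_\alpha^{(\beta)}|$), whereas the paper Taylor-expands each component $f_j$ separately and then multiplies out the resulting polynomials, splitting $\prod(P_j + R_{ij})^{\alpha_j}$ into its degree-$\le b$ part and the rest; the two routes produce the same decomposition of $\big(f(a_i)^\alpha\big)$ into matrices $M_0,\dots,M_k$. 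Your way of obtaining $\operatorname{rank} M_j \le E(m,j)$ from the column structure is also equivalent to the paper's appeal to Lemma~\ref{obs}.
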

\begin{proof} Let $f=(f_1,\dots, f_n)$ with $f_j: (0,1)^m\to \R$.   Taylor expansion of $f_j$ of order $b:=b(m,n,e)$ around $a_0$ with explicit remainder gives for $a\in (0,1)^m$ (and $\alpha,\beta$ ranging over $\N^m$, $k=b+1$):
\begin{align*}f_j(a)\ &=\  \sum_{|\alpha|\le b} \frac{f_j^{(\alpha)}}{\alpha!}(a-a_0)^\alpha  +\ \sum_{|\beta|=k}R_{\beta,j}(a)(a-a_0)^\beta,\\
\text{where }\ R_{\beta,j}\ &=\ \frac{|\beta|}{\beta!}\int_0^1(1-t)^b f_j^{(\beta)}\big(a_0+t(a-a_0)\big)dt\  \text{ for }\ |\beta|=k.
\end{align*}
Thus for $i=1,\dots, D$ and $j=1,\dots,n$:
$$
f_{j}(a_{i})\ =\  P_j(a_{i}-a_0)+ R_{ij}(a_i-a_0)
$$
where $P_j\in \R[x_1,\dots,x_m]$ has degree $\le b$, the remainder is given by a homogeneous polynomial
 $R_{ij}\in \R[x_1,\dots, x_m]$ of degree $k=b+1$,  and all coefficients
of $P_j$ and $R_{ij}$ are bounded in absolute value by $1$. Let $|\alpha|\le e$. Then for $i=1,\dots, D$ we have
$$\prod_{j=1}^n(P_j+R_{ij})^{\alpha_j}\ =\ P_{\alpha} + R_{i\alpha}$$
with $P_{\alpha}\in \R[x_1,\dots, x_m]$ of degree $\le b$, the remainder 
$R_{i\alpha}\in \R[x_1,\dots, x_m]$ has only monomials of degree $>b$, and
every coefficient of $P_{\alpha}$ and $R_{i\alpha}$ is bounded in absolute value by $D(m,k)^{|\alpha|}$, the latter because $\prod_{j=1}^n (P_j+ R_{ij})^{\alpha_j}$ is a product of $|\alpha|$ factors of the form $\sum c_{\beta}x^\beta$, with the summation over the $\beta\in \N^m$ with $|\beta|\le k$, and real coefficients $c_{\beta}$ with $|c_{\beta}|\le 1$. Hence for $i=1,\dots,D$,
$$ f(a_i)^\alpha\ =\ \prod_{j=1}^n f_j(a_i)^{\alpha_j}\ =\ P_{\alpha}(a_i-a_0) + R_{i\alpha}(a_i-a_0).$$ 
We have $D(m,k)^{|\alpha|}\le D(m,k)^e\le c$ for a positive constant $c=c(m,n,e)$ depending only on $m,n,e$. Next,  
$P_{\alpha} = \sum_{j=0}^b P_{\alpha}^j$ where $P_{\alpha}^j\in \R[x_1,\dots, x_m]$ is homogeneous of degree $j$.
In the matrix algebra $\R^{D\times D}$ this yields the sum decomposition
\begin{align*} \big(f(a_i)^\alpha\big)_{\alpha,i}\ &=\ \sum_{j=0}^b \big(P_{\alpha}^j(a_i-a_0)\big)_{\alpha,i} + \big(R_{i\alpha}(a_i-a_0)\big)_{\alpha,i}\\
&=\ \sum_{j=0}^k \big(P_{i\alpha}^j(a_i-a_0)\big)_{\alpha,i}
\end{align*}
where $P_{i\alpha}^j:= P_{\alpha}^j$ for $j=0,\dots,b$ and $P_{i\alpha}^k:=R_{i\alpha}$.
For $j=0,\dots, b$ the rank of the matrix 
$\big(P_{i\alpha}^j(a_i-a_0)\big)_{\alpha,i}=\big(P_{\alpha}^j(a_i-a_0)\big)_{\alpha,i}$ is at most $E(m,j)$ by Lemma~\ref{obs}, so 
expression $(*)$ for the determinant of such a sum gives
$$\det\big(f(a_i)^\alpha\big)_{\alpha, i}\ =\ \sum_{\j\in J} \det\big(P^{j_i}_{i\alpha }(a_i-a_0)\big)_{\alpha, i}  $$
where $J$ is the set
of all $\j=(j_1,\dots, j_D)\in \{0,\dots,b+1\}^D$ such that 
$$\#\{\nu\in \{1,\dots,D\}:\ j_{\nu}=j\}\ \le\ E(m,j),\quad \text{ for } j=0,\dots,b .$$
Then for $\j \in J$ we have  $|\det\big(P^{j_i}_{i\alpha }(a_i-a_0)\big)_{\alpha, i}|\le D!c^D r^{|\j|}$. It remains to show that for $\j\in J$ we have $|\j| \geq B(m,n,e)$, because then $$|\det\big(f(a_i)^\alpha\big)_{|\alpha|\le e,\ i=1,\dots,D}|\ \le\ \#J\cdot D!c^D r^{B(m,n,e)},$$ 
which gives a constant $K=K(m,n,e)$ as claimed.

Fix any $\j\in J$, and let $d_j\in \N$ for
$j=0,\dots,b$ be such that $$\#\{\nu\in \{1,\dots,D\}:\ j_{\nu}=j\}\ =\  E(m,j)-d_j,$$ and set $N:=\#\{\nu\in \{1,\dots,D\}:\ j_{\nu}=b+1\}$.
Then 
$$D\ =\ D(n,e)\ =\ \sum_{j=0}^b( E(m,j)-d_j) + N\ =\ D(m,b)-\sum_{j=0}^b d_j + N,$$
so $N\ =\  D(n,e)-D(m,b)+d$ with $d:=\sum_{j=0}^b d_j$.
Hence 
\begin{align*} |\j|\ &=\ \sum_{\nu=1}^{D}j_{\nu}\ =\ 
\sum_{j=0}^b j\big(E(m,j)-d_j\big) + (b+1)N\\
&=\ V(m,b) -\sum_{j=0}^b jd_j + (b+1)\big(D(n,e)-D(m,b) +d\big)\\
&=\ V(m,b)+ (b+1)\big(D(n,e)-D(m,b)\big) + \sum_{j=0}^b (b+1-j)d_j\\ 
&=\ B(m,n,e)+\sum_{j=0}^b (b+1-j)d_j\ \ge\ B(m,n,e). \qedhere
\end{align*} 
\end{proof}

\noindent
Next an observation that allows us to exploit (as Liouville did) the simple fact that if $r\in \Z$ and $|r|<1$, then $r=0$. 

\begin{lemma}\label{denominator} Let points $b_1,\dots, b_D\in \Q^n$ with $D=D(n,e)$ be given  such that $\H(b_1),\dots, \H(b_D)\le t$, where $t\ge 1$. Then
$$\det\big(b_i^{\alpha}\big)_{|\alpha|\le e,i}\ \in\ \frac{\Z}{s}\quad \text{ with }s\in \N^{\ge 1},\ s\le t^{neD}.$$ 
\end{lemma}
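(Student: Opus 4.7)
The plan is to show that a common denominator can be chosen row by row, and multiply up.

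Fix $i \in \{1,\dots,D\}$ and write $b_i = (b_{i1},\dots,b_{in}) \in \Q^n$. For each coordinate, let $b_{ij} = p_{ij}/q_{ij}$ be in lowest terms with $q_{ij} \in \N^{\ge 1}$, so that $\max(|p_{ij}|,q_{ij}) = \H(b_{ij}) \le \H(b_i) \le t$. Set
$$s_i\ :=\ \prod_{j=1}^n q_{ij}^{\,e}\ \in\ \N^{\ge 1}.$$
Since each $q_{ij} \le t$, we have $s_i \le t^{ne}$.

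For any $\alpha \in \N^n$ with $|\alpha|\le e$, we have $\alpha_j \le |\alpha| \le e$ for each $j$, so $q_{ij}^{\alpha_j}$ divides $q_{ij}^e$, and therefore
$$ s_i\cdot b_i^{\alpha}\ =\ \prod_{j=1}^n p_{ij}^{\alpha_j}\cdot q_{ij}^{e-\alpha_j}\ \in\ \Z.$$
Thus the $i$th row of the matrix $\big(b_i^\alpha\big)_{|\alpha|\le e,\, i=1,\dots,D}$, after multiplication by $s_i$, has entries in $\Z$.

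Now set $s := \prod_{i=1}^D s_i$. Multiplying each of the $D$ rows by its corresponding $s_i$ scales the determinant by $s$, so
$$ s\cdot \det\big(b_i^{\alpha}\big)_{|\alpha|\le e,\, i=1,\dots,D}\ =\ \det\big(s_i\, b_i^{\alpha}\big)_{|\alpha|\le e,\, i=1,\dots,D}\ \in\ \Z,$$
and $s \le \prod_{i=1}^D t^{ne} = t^{neD}$. This gives $\det\big(b_i^{\alpha}\big)_{|\alpha|\le e,\, i} \in \frac{\Z}{s}$ with the desired bound on $s$.

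There is no real obstacle here; the only thing to notice is that in using $q_{ij}^e$ (rather than a more refined quantity) as the denominator of $b_i^\alpha$ uniformly over all $\alpha$ with $|\alpha|\le e$, we accept a loss that is still within the bound $t^{neD}$ claimed in the lemma.
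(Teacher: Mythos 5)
Your proof is correct and finds exactly the same common denominator $s=\prod_{i,j}q_{ij}^e$ as the paper; the only difference is procedural: you clear denominators by scaling each of the $D$ columns (indexed by $i$; you call them rows, which is immaterial since $\det A=\det A^T$) of the matrix, whereas the paper invokes the Leibniz expansion into a sum over permutations and checks each term lies in $\Z/s_\sigma$ with $s_\sigma\mid s$. Same idea, essentially the same proof.
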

\begin{proof} For $i=1,\dots, D$ we have $b_i=(b_{i1},\dots, b_{in})$ with
$b_{ij}= c_{ij}/s_{ij}$, $c_{ij}, s_{ij}\in \Z$, $1\le s_{ij}\le t$, so 
$$b_i^\alpha=\prod_{j=1}^n c_{ij}^{\alpha_j}/\prod_{j=1}^n s_{ij}^{\alpha_j}\ \in\ \frac{\Z}{s_{i\alpha}}, \quad s_{i\alpha}\ :=\ \prod_{j=1}^n s_{ij}^{\alpha_j}.$$
Let $\{\alpha:\ |\alpha|\le e\}=\{\alpha_1,\dots, \alpha_D\}$. Then $\det\big(b_i^{\alpha}\big)_{|\alpha|\le e,i}$ is a sum of terms of the form $\pm \prod_{i=1}^D b_i^{\alpha_{\sigma(i)}}$ where 
$\sigma$ is a permutation of $\{1,\dots, D\}$. Now the term $\pm \prod_{i=1}^D b_i^{\alpha_{\sigma(i)}}$ corresponding to
$\sigma$ lies in $\frac{\Z}{s_{\sigma}}$ with 
$$s_{\sigma}\ :=\ \prod_{i=1}^D s_{i\alpha_{\sigma(i)}}\ =\ \prod_{i=1}^D \prod_{j=1}^n s_{ij}^{\alpha_{\sigma(i)j}}$$
and clearly $s:= \prod_{i=1}^D\prod_{j=1}^ns_{ij}^e$ is a common integer multiple of the integers $s_{\sigma}$ with $1\le s\le t^{neD}$, so
$s$ has the desired property.
\end{proof}   

\noindent
The following is Theorem~\ref{Pp} with more explicit values of $k$ and $\epsilon$. 

\begin{theorem}\label{covthm} Let $e,m,n\ge 1$, $m<n$; set $k:=b(m,n,e)+1$,
$\epsilon:= \epsilon(m,n,e)$. Let $X\subseteq \R^n$ have a strong $k$-parametrization $f: (0,1)^m\to \R^n$. Then for all $T$ at most $cT^\epsilon$ hypersurfaces in $\R^n$ of degree $\le e$
are enough to cover $X(\Q,T)$, where $c=c(m,n,e)$ depends only on $m,n,e$.
\end{theorem}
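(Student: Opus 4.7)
The plan is to combine the two complementary bounds on the Vandermonde-like determinant $\det\bigl(f(a_i)^\alpha\bigr)_{|\alpha|\le e,\,i=1,\dots,D}$ with $D=D(n,e)$ that are already established: Proposition~\ref{bound} gives the upper bound $Kr^{B(m,n,e)}$ when the $a_i$'s cluster within $\sup$-distance $r$ of a single anchor $a_0\in(0,1)^m$, while Lemma~\ref{denominator} gives a denominator bound $T^{neD(n,e)}$ when each $f(a_i)$ is rational of height $\le T$. Choosing $r$ so that the product of these two quantities is $<1$ will force the determinant to vanish, and Lemma~\ref{hs} will then certify that the corresponding rational points all lie on a single hypersurface of degree $\le e$.

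Concretely, given $T$, I set
\[
r\ :=\ (2K)^{-1/B(m,n,e)}\,T^{-neD(n,e)/B(m,n,e)},
\]
which achieves $Kr^{B(m,n,e)}\cdot T^{neD(n,e)}=\tfrac12$. In the main case $r\le 1$, partition $(0,1)^m$ into at most $\lceil 1/r\rceil^m\le(2/r)^m$ closed cubes of $\sup$-side $r$. For each cube $Q$ fix any anchor $a_0\in Q$; then any $a_1,\dots,a_D\in Q$ satisfy $|a_i-a_0|\le r$, so Proposition~\ref{bound} yields $|\det(f(a_i)^\alpha)|<Kr^{B(m,n,e)}$. If in addition every $b_i:=f(a_i)$ lies in $X(\Q,T)$, then Lemma~\ref{denominator} produces a positive integer $s\le T^{neD(n,e)}$ with $s\cdot\det(b_i^\alpha)\in\Z$; combined with the analytic bound, $|s\cdot\det|<\tfrac12$, forcing the determinant to be zero. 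Since this holds for every $D$-tuple of rational points in $f(Q)\cap X(\Q,T)$, Lemma~\ref{hs} shows that $f(Q)\cap X(\Q,T)$ lies on a single hypersurface of degree $\le e$.

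Assembling one hypersurface per cube yields a covering of $X(\Q,T)$ of size at most
\[
(2/r)^m\ =\ 2^m(2K)^{m/B(m,n,e)}\,T^{mneD(n,e)/B(m,n,e)}\ =\ c_1T^{\epsilon(m,n,e)},
\]
since the exponent matches $\epsilon(m,n,e)$ by definition. For the residual case $r>1$, the defining inequality on $r$ forces $T$ into a bounded range depending only on $m,n,e$; here I use that a strong $k$-parametrization has $|f|\le 1$, so $X\subseteq[-1,1]^n$ and $\#X(\Q,T)\le(2T+1)^n$ is bounded by a constant depending only on $m,n,e$, allowing a trivial covering by degree-$1$ hyperplanes through each point. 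Absorbing this into the leading constant gives a single $c=c(m,n,e)$ that works for all $T$. There is no real obstacle here beyond this bookkeeping: the sharp asymptotic matching is encoded in the very definition of $\epsilon(m,n,e)$, and the conceptual heart — the dichotomy between the analytic smallness of the determinant and its arithmetic denominator — is already prepackaged in Proposition~\ref{bound}, Lemma~\ref{denominator}, and Lemma~\ref{hs}.
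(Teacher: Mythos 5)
Your argument is correct and is essentially the paper's own proof: both combine Proposition~\ref{bound} (analytic smallness when the preimages cluster within radius $r$), Lemma~\ref{denominator} (arithmetic denominator bound $T^{neD}$), and Lemma~\ref{hs}, choosing $r$ so the determinant must vanish, and then covering $(0,1)^m$ by $\asymp r^{-m}$ small cubes/balls, one hypersurface per piece. The only differences are cosmetic (a factor $2K$ instead of $K$ in the choice of $r$, cubes versus $\sup$-balls, and an explicit ``$r>1$'' case which in fact never arises since one may take $K\ge 1$ and $T\ge 1$, so $r\le 1$ automatically).
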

\begin{proof} Let $K=K(m,n,e)$ be as in Proposition~\ref{bound}, and let
$T$ be given. With $D=D(n,e)$, let $a_1,\dots, a_D\in (0,1)^m$ be such that
$f(a_1),\dots, f(a_D)\in X(\Q,T)$. Then Lemma~\ref{denominator} gives $s\in \N^{\ge 1}$ with $s\le T^{neD}$ (so $T^{-neD}\le 1/s$) such that $$\det\big(f(a_i)^\alpha\big)_{|\alpha|\le e,i=1,\dots D}\ \in\ \frac{\Z}{s}.$$ Assume also that $0<r\le 1$ and $a_0\in (0,1)^m$ are such that
$|a_i-a_0|\le r$ for $i=1,\dots,D$. Can we guarantee that 
$\det\big(f(a_i)^\alpha\big)_{|\alpha|\le e,i=1,\dots D}=0$
if $r$ is small enough? 
Proposition~\ref{bound} gives $$|\det\big(f(a_i)^\alpha\big)_{|\alpha|\le e,\ i=1,\dots,D}|\ <\  Kr^B,\quad B\ =\ B(m,n,e). $$
So the answer to the question is {\em yes}: it is enough that $Kr^B\le T^{-neD}$, that is, $r\le \big(K^{-1}T^{-neD}\big)^{1/B}$.
Next, considering closed balls of radius $r$ with respect to the norm $|\cdot|$, centered at a point in $(0,1)^m$, how many are enough to cover
$(0,1)^m$? For $m=1$, the interval $(0,1)$ is covered by
$e$ segments $[a-r, a+r]$ with $0<a<1$, for any natural number $e$ with $2re\ge 1$, and there is clearly such an $e$ with $e\le r^{-1}$. Hence at most $r^{-m}$ closed balls of radius $r$
centered at points in $(0,1)^m$ are enough to cover
$(0,1)^m$. Taking $r=\big(K^{-1}T^{-neD}\big)^{1/B}$ it follows from Lemma~\ref{hs} that 
at most $r^{-m}=K^{m/B} T^{mneD/B}=K^{m/B}T^\epsilon$ hypersurfaces in $\R^n$ of degree $\le e$ are enough to cover the set $X(\Q,T)$. So the theorem holds with $c=K^{m/B}$. 
\end{proof}

\section{Parametrization}\label{par1}

\noindent
Throughout $R$ is an o-minimal field. We refer to part A of the Appendix for the basic facts about o-minimal fields. We rely in particular on the later subsections in this part A concerning differentiation and smoothness. 
As usual we identify $\Q$ with the prime subfield of $R$. We drop the subscript $R$ in
expressions like $(0,1)_R$ ($=\{t\in R: 0 < t <1\}$) and $[a,b]_R$ ($=\{t\in R:\ a\le t \le b\}$) for $a < b$ in $R$.

\medskip\noindent
Let $X\subseteq R^m$ be definable. Call $X$ {\em strongly bounded\/} if $X\subseteq [-N,N]^m$ for some $N$ in $\N$. Call a definable map $f: X \to R^n$ strongly bounded if its graph $\Gamma(f)\subseteq R^{m+n}$ is strongly bounded; equivalently, $X\subseteq R^m$ and $f(X)\subseteq R^n$ are strongly bounded.

A {\em partial $k$-parametrization of $X$\/} is a
definable $C^k$-map $f:(0,1)^l\to R^m$ such that $l=\dim X$, the image of $f$ is contained in $X$, and
$f^{(\beta)}$ is strongly bounded for all $\beta\in \N^l$ with $|\beta|\le k$.  A {\em $k$-parametrization of $X$\/} is a finite set of partial $k$-parametrizations of $X$ whose images cover $X$; note that then $X$ is strongly bounded. As a trivial example, if $X$ is finite and strongly bounded, then $X$ has the $k$-parametrization $\{\phi_a:\ a\in X\}$, where
$\phi_a: (0,1)^0\to R^m$ takes the value $a$. 

The basic ideas for the proofs of the next two parametrization theorems stem from Yomdin~\cite{Yo} and Gromov~\cite{Gr} who
considered the semialgebraic case. For our purpose we need to work in an arbitrary o-minimal field.     

\begin{theorem}\label{pars} Any strongly bounded definable set $X\subseteq R^m$ has for every $k\ge 1$ a $k$-parametrization.
\end{theorem}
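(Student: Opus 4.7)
I would prove Theorem~\ref{pars} by induction on $d := \dim X$, with $d=0$ being trivial: a strongly bounded finite $X$ is the union of the images of the partial $k$-parametrizations $\phi_a : (0,1)^0 \to R^m$ with $\phi_a() = a$, one for each $a \in X$. For the inductive step, apply a $C^k$-cell decomposition of $R^m$ compatible with $X$. Any cell of dimension strictly less than $d$ is the homeomorphic image, under a coordinate projection, of a lower-dimensional definable set to which the inductive hypothesis applies directly, so I may assume $X$ is a single $C^k$-cell of dimension $d$. Up to a permutation of coordinates such an $X$ is the graph of a definable strongly bounded $C^k$-map $g : U \to R^{m-d}$ on an open cell $U \subseteq R^d$, and the problem reduces to producing a finite family of strongly bounded definable $C^k$-maps $\phi : (0,1)^d \to U$ whose images cover $U$ and such that all partials of order $\le k$ of both $\phi$ and $g \circ \phi$ are strongly bounded.

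\textbf{Key lemma (reparametrization).} The core of the proof is an o-minimal analogue of the Yomdin--Gromov reparametrization theorem: given a finite list $g_1,\ldots,g_N$ of strongly bounded definable $C^k$-functions on $(0,1)^d$, there is a finite family of definable $C^k$-maps $\psi : (0,1)^d \to (0,1)^d$ whose images cover $(0,1)^d$ and for which all partials of order $\le k$ of $\psi$ and of every $g_j \circ \psi$ are strongly bounded. Combined with a preliminary step that exhibits the open cell $U$ as the union of images of finitely many strongly bounded definable $C^k$-maps $(0,1)^d \to U$, which is straightforward from the inductive definition of a cell, the lemma applied to the coordinate functions of $g$ finishes the proof. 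The one-variable case of the lemma is the clean core: for a strongly bounded definable $C^k$-function $f : (0,1) \to R$, the monotonicity theorem gives a finite partition of $(0,1)$ on each piece of which each $f^{(i)}$ for $i \le k$ is strictly monotone or zero; on a piece where some $f^{(i)}$ blows up at an endpoint, precomposing with a power map $t \mapsto a + (b-a) t^N$ for large enough $N$ tames the blow-up, because the chain-rule factors $N t^{N-1}$ that appear vanish fast enough at the endpoint to absorb the growth of a definable unbounded one-variable function. Here o-minimality enters decisively: every definable function of $t$ is eventually comparable to a power of $t$, so a single choice of large $N$ suffices.

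\textbf{Main obstacle.} Lifting the one-variable reparametrization to dimension $d$ is where the argument becomes delicate, and I expect it to be the principal difficulty. A naive iteration variable-by-variable destroys boundedness of mixed partials already achieved in previous steps, because composition with $\psi$ introduces, via Fa\`a di Bruno's formula, new terms in the mixed partials of $g_j \circ \psi$ that mix derivatives of $\psi$ of all orders. The standard remedy, which I would adopt, is a double induction on $(d,k)$: first apply the inductive hypothesis in dimension $d-1$ uniformly in one parameter, relying on the o-minimal fact that definable families of strongly bounded $C^k$-maps admit a uniform reparametrization with a bound independent of the parameter, and then perform the one-variable reparametrization in the remaining coordinate with parameters that depend definably on the others. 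The Fa\`a di Bruno bookkeeping must be organized so that each newly produced mixed partial of $g_j \circ \psi$ is, up to bounded combinatorial factors, a product of partials that the inductive hypothesis has already made strongly bounded, after one further application of the one-variable step in the last variable. Finiteness of all the families that appear is guaranteed throughout by cell decomposition and by o-minimal definability of families.
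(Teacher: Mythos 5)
Your skeleton is the right one and matches the paper: a joint induction on ambient dimension between a set-parametrization statement and a map-reparametrization statement (the paper's $(\mathrm{II})_m$ and $(\mathrm{I})_m$ in Section~\ref{par4}), reduction via $C^k$-cell decomposition, the one-variable case as the analytic core, and a uniform-in-parameter application of the inductive hypothesis followed by a further one-variable reparametrization in the last coordinate. However, there are two problems.

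First, your one-variable argument rests on the claim that ``every definable function of $t$ is eventually comparable to a power of $t$.'' This is false in a general o-minimal field --- in $\R_{\exp}$, for instance, $t\mapsto \mathrm{e}^{1/t}$ is definable and dominates every power of $t^{-1}$ as $t\downarrow 0$ --- and the paper needs Theorem~\ref{pars} over an \emph{arbitrary} o-minimal field, since the main proof passes to an $\aleph_0$-saturated elementary extension. The paper avoids any polynomial-boundedness hypothesis via a mean-value estimate (Lemma~\ref{karep}): once the Monotonicity Theorem has arranged that $|f^{(k)}|$ is monotone on each piece and $f^{(j)}$ is strongly bounded for $j\le k-1$, the MVT on $[t/2,\,t]$ forces $|f^{(k)}(t)|\le 4c/t$, and precomposing with $t\mapsto t^2$ then gains one order of boundedness; iterating gives Lemma~\ref{krep}. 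A single $t\mapsto a+(b-a)t^N$ for large $N$ cannot be justified on the grounds you give, and once you have the MVT bound the incremental $t\mapsto t^2$ substitution is the natural way to organize the induction.

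Second, and more seriously, your plan treats ``definable families of strongly bounded $C^k$-maps admit a uniform reparametrization'' as an available o-minimal fact, but this is precisely where the hard work lives. What $\aleph_0$-saturation plus Definable Selection give you is, for each value $t$ of the last variable, a $k$-reparametrization $\Phi^t$ of the slice $f^t$ depending \emph{definably} on $t$; they say nothing about the derivatives of these reparametrizing maps \emph{in the $t$-direction}, which is exactly what your Fa\`a di Bruno bookkeeping needs in order to close. The paper's Section~\ref{par3} supplies this with a convergence argument for definable families: the pointwise limit $F_0=\lim_{s\downarrow 0}F_s$ of a definable family of $C^k$-maps with uniformly bounded derivatives is $C^{k-1}$ with controlled derivatives (Lemma~\ref{unick}), and the images of the limit functions still cover a cofinite subset of $(0,1)$ (Lemma~\ref{AB}); Lemma~\ref{limit} then exploits this via an auxiliary family that simultaneously records a reparametrization and the sup of $|\partial f/\partial x_{m+1}|$ over a shrinking neighborhood, so that after reparametrizing the last variable the mixed partials become strongly bounded as well (Corollary~\ref{limitcorcor}). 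This limit machinery is the technical heart of the whole proof, and your sketch leaves it as an unexamined black box.
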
 

\noindent
The inductive proof of this theorem also requires a version for definable maps. A {\em $k$-reparametrization\/} of a definable map 
$f: X \to R^n$ is a $k$-parametrization $\Phi$ of its domain $X$ such that for every
$\phi:(0,1)^l\to R^m$ in $\Phi$, $f\circ \phi$ is of class $C^k$ and
$(f\circ \phi)^{(\beta)}$ is strongly bounded for all $\beta\in \N^l$
with $|\beta|\le k$; note that then $\{f\circ \phi:\ \phi\in \Phi\}$ is a
$k$-parametrization of $f(X)$, provided
 $\dim X= \dim f(X)$.

\begin{theorem}\label{parm} Any strongly bounded definable map
$f: X\to R^n$, $X\subseteq R^m$ has for every $k\ge 1$ a $k$-reparametrization.
\end{theorem}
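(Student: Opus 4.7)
My plan is to deduce Theorem~\ref{parm} directly from Theorem~\ref{pars} by applying the latter to the graph of $f$. The key observation is that the graph $\Gamma(f)\subseteq R^{m+n}$ is a definable strongly bounded set of the same dimension as $X$: indeed $X$ and $f(X)$ are strongly bounded so $\Gamma(f)\subseteq [-N,N]^{m+n}$ for some $N\in\N$, and the projection $\pi_1:\Gamma(f)\to X$ is a definable bijection with definable inverse $a\mapsto(a,f(a))$, hence $\dim\Gamma(f)=\dim X=:l$ since definable bijections in o-minimal structures preserve dimension (see the appendix).

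So I would apply Theorem~\ref{pars} to $\Gamma(f)$ with the given $k$, obtaining a $k$-parametrization $\{\psi_1,\dots,\psi_N\}$ of $\Gamma(f)$. Each $\psi_i:(0,1)^l\to R^{m+n}$ is a definable $C^k$-map whose image lies in $\Gamma(f)$ and for which $\psi_i^{(\beta)}$ is strongly bounded for all $\beta\in\N^l$ with $|\beta|\le k$. Writing $\psi_i=(\phi_i,\chi_i)$ with $\phi_i:(0,1)^l\to R^m$ and $\chi_i:(0,1)^l\to R^n$, the containment $\psi_i\big((0,1)^l\big)\subseteq \Gamma(f)$ forces $\chi_i=f\circ\phi_i$.

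I then claim $\Phi:=\{\phi_1,\dots,\phi_N\}$ is a $k$-reparametrization of $f$. Each $\phi_i$ is a definable $C^k$-map into $X$, and every $\phi_i^{(\beta)}$ is strongly bounded for $|\beta|\le k$ because it is a tuple of components of the strongly bounded $\psi_i^{(\beta)}$; hence $\phi_i$ is a partial $k$-parametrization of $X$ in the sense of the definition. The images of the $\phi_i$ cover $X$ because the images of the $\psi_i$ cover $\Gamma(f)$ and $\pi_1(\Gamma(f))=X$. Finally $f\circ\phi_i=\chi_i$ is of class $C^k$ with $(f\circ\phi_i)^{(\beta)}=\chi_i^{(\beta)}$ strongly bounded for $|\beta|\le k$, which is exactly the reparametrization requirement.

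There is no real obstacle in Theorem~\ref{parm} once Theorem~\ref{pars} is in hand: the graph trick reduces it to a single application. The reason the text presents these as companion results is that their joint proof is by simultaneous induction on $\dim X$, where one needs the map version at dimension $<l$ (combined with cell decomposition) to establish the set version at dimension $l$; but the step \emph{from} \ref{pars} \emph{to} \ref{parm} at a fixed dimension is essentially free.
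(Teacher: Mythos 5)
Your proof is correct, and it takes a genuinely different (and shorter) route than the paper sets up. The graph $\Gamma(f)\subseteq R^{m+n}$ is definable, strongly bounded, and has $\dim\Gamma(f)=\dim X$ since the coordinate projection $\Gamma(f)\to X$ is a definable bijection; applying Theorem~\ref{pars} to $\Gamma(f)$ gives a $k$-parametrization $\{\psi_1,\dots,\psi_N\}$, each $\psi_i:(0,1)^{\dim X}\to R^{m+n}$ decomposing as $(\phi_i,\chi_i)$ with $\chi_i=f\circ\phi_i$ forced by the image constraint. Since the derivatives $\phi_i^{(\beta)}$ and $\chi_i^{(\beta)}$ are componentwise slices of the strongly bounded $\psi_i^{(\beta)}$, the set $\{\phi_1,\dots,\phi_N\}$ is a $k$-reparametrization of $f$, exactly as you say. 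The paper leaves the step from \ref{pars} to \ref{parm} to the reader; given how Section~\ref{par4} is organized around ${\rm(I)}_m$ (reparametrization on box domains $(0,1)^m$) and ${\rm(II)}_m$, the intended route is evidently the composition pattern of Corollary~\ref{corkrep} and Lemma~\ref{knrep}: take a $k$-parametrization $\Phi$ of $X$ by \ref{pars}, apply ${\rm(I)}_{\dim X}$ to each $f\circ\phi$ with $\phi\in\Phi$, and compose using Lemma~\ref{sbfact}. Your graph trick bypasses ${\rm(I)}_m$ and Lemma~\ref{sbfact} entirely and reduces \ref{parm} to a single application of \ref{pars}; the paper's route stays inside the inductive bookkeeping it already built. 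You also correctly diagnose why the two theorems nonetheless have to be proved jointly: the proof of ${\rm(II)}_m$ genuinely needs ${\rm(I)}_l$ for $l\le m$ (to reparametrize $(f\circ\phi,g\circ\phi)$ on the cell boundaries), and it is only the \emph{forward} implication from \ref{pars} to \ref{parm} at fixed dimension that comes free.
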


\noindent
Sections 5,6,7 contain the proof of Theorems~\ref{pars} and ~\ref{parm}. In Section 7 we assume $R$ is $\aleph_0$-saturated, and thus non-archimedean. This can always be arranged by passing to a suitable elementary extension of $R$ and noting that the statements of \ref{pars} and ~\ref{parm} pull back to the original $R$. (See part B of the Appendix for ``$\aleph_0$-saturated'' and ``elementary extension'' and the relevant facts about these notions. See in particular the last two subsections of that part B for a more detailed explanation of how these facts apply to proving Theorems~\ref{pars} and ~\ref{parm}.)

We often use the following, obtained by repeated use of the Chain Rule:

\begin{lemma}\label{compfact} Let $f: U \to R,\ g: V\to R$ be definable of class $C^k$, $k\ge 1$, with $U,V$ $($definable$)$ open subsets of $R$. Then 
$f\circ g: V\cap g^{-1}(U)\to R$ is of class $C^k$ with
$$(f\circ g)^{(k)}\ =\ \sum_{i=1}^k (f^{(i)}\circ g)\cdot p_{ik}\big(g^{(1)},\dots, g^{(k-i+1)}\big)$$
where the $p_{ik}\in \Z[x_1,\dots, x_{k-i+1}]$ have constant term $0$ and
$p_{kk}=x_1^k$. 
\end{lemma}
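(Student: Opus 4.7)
The plan is induction on $k$. For the base case $k=1$, the Chain Rule gives $(f\circ g)'=(f'\circ g)\cdot g'$, so we may take $p_{11}=x_1$, which has constant term $0$ and equals $x_1^1$, as required.

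For the inductive step, assume the formula holds for some $k\ge 1$ with polynomials $p_{ik}\in\Z[x_1,\dots,x_{k-i+1}]$, and let $f,g$ be of class $C^{k+1}$. First I would observe that $f\circ g$ is itself of class $C^{k+1}$: by the Chain Rule, $(f\circ g)'=(f'\circ g)\cdot g'$, and applying the inductive hypothesis to $f',g$ (both of class $C^k$) shows $f'\circ g$ is of class $C^k$; since $g'$ is also $C^k$, the product rule gives $(f\circ g)'\in C^k$. Now differentiate the inductive formula term by term. For each $i\in\{1,\dots,k\}$ the product rule gives
\begin{align*}
\bigl[(f^{(i)}\circ g)\cdot p_{ik}(g^{(1)},\dots,g^{(k-i+1)})\bigr]'
&=(f^{(i+1)}\circ g)\cdot g'\cdot p_{ik}(g^{(1)},\dots,g^{(k-i+1)})\\
&\quad +(f^{(i)}\circ g)\cdot\widetilde{p_{ik}}(g^{(1)},\dots,g^{(k-i+2)}),
\end{align*}
where I define the ``total derivative'' polynomial
\[
\widetilde{p_{ik}}(x_1,\dots,x_{k-i+2})\ :=\ \sum_{j=1}^{k-i+1}\frac{\partial p_{ik}}{\partial x_j}(x_1,\dots,x_{k-i+1})\cdot x_{j+1}\ \in\ \Z[x_1,\dots,x_{k-i+2}].
\]
Note $\widetilde{p_{ik}}$ has constant term $0$ since every monomial contains some $x_{j+1}$.

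Summing over $i$ and collecting terms according to which $f^{(j)}\circ g$ appears, I would set
\[
p_{1,k+1}\ :=\ \widetilde{p_{1k}},\qquad p_{i,k+1}\ :=\ x_1\cdot p_{i-1,k}+\widetilde{p_{ik}}\ \ (2\le i\le k),\qquad p_{k+1,k+1}\ :=\ x_1\cdot p_{kk}.
\]
A routine check confirms the variable counts: $p_{i,k+1}\in\Z[x_1,\dots,x_{(k+1)-i+1}]$ in each case, since $p_{i-1,k}$ and $\widetilde{p_{ik}}$ both lie in $\Z[x_1,\dots,x_{k-i+2}]$. All $p_{i,k+1}$ inherit constant term $0$: $\widetilde{p_{ik}}$ has no constant term by construction, and $x_1\cdot p_{i-1,k}$ obviously has none. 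Finally, $p_{k+1,k+1}=x_1\cdot x_1^k=x_1^{k+1}$, closing the induction. There is no real obstacle here; the only point requiring minor care is to keep track of the variable ranges so that each $p_{i,k+1}$ lives in the correct polynomial ring, but this is purely bookkeeping.
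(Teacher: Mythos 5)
Your induction is correct and complete. The paper does not actually give a proof of Lemma~\ref{compfact}; it simply states the formula as "obtained by repeated use of the Chain Rule." Your argument supplies exactly the induction that remark alludes to: the recursion $p_{1,k+1}=\widetilde{p_{1k}}$, $p_{i,k+1}=x_1\cdot p_{i-1,k}+\widetilde{p_{ik}}$ for $2\le i\le k$, $p_{k+1,k+1}=x_1\cdot p_{kk}$ is the standard Fa\`a di Bruno\textendash style bookkeeping, the variable\textendash range and constant\textendash term checks are right, and the $C^{k+1}$ smoothness of $f\circ g$ is correctly deduced by applying the inductive hypothesis to $f'\circ g$ and the product rule.
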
  

\begin{lemma}\label{sbfact} With $U\subseteq R^l$, $V\subseteq R^m$, let $f: U \to R^m$,  $g: V \to R^n$ be definable of class $C^k$ such that $f(U)\subseteq V$ and $f^{(\alpha)}$ and $g^{(\beta)}$ are strongly bounded
for all $\alpha\in \N^l$ and $\beta\in \N^m$ with $|\alpha|\le k$ and
$|\beta|\le k$. Then the definable map $g\circ f: U \to R^n$ is of class $C^k$ with strongly bounded $(g\circ f)^{(\alpha)}$ for all $\alpha\in \N^l$ with $|\alpha|\le k$.
\end{lemma}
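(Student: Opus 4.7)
The plan is to proceed by induction on $k$, using a multivariable form of the chain rule (an iterated application of Lemma~\ref{compfact}, applied coordinate by coordinate) to express each iterated partial derivative of $g\circ f$ as a polynomial combination of simpler pieces. The base case $k=0$ is immediate: the hypothesis ``$f^{(0)}=f$ strongly bounded'' forces $U$ to be strongly bounded, while ``$g^{(0)}=g$ strongly bounded'' forces $g(V)$ to be strongly bounded, so the graph of $g\circ f$ lies in $U\times g(V)$ and is strongly bounded.

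For the inductive step I would first establish (by an auxiliary induction on $|\alpha|$) the multivariable Fa\`a di Bruno identity: for each coordinate $g_j$ of $g$ and each $\alpha\in\N^l$ with $1\le |\alpha|\le k$, there is a polynomial $P_{j,\alpha}$ with integer coefficients, depending only on the multi-indices, such that
$$(g_j\circ f)^{(\alpha)}\ =\ P_{j,\alpha}\Big(\big(g_j^{(\beta)}\circ f\big)_{1\le |\beta|\le |\alpha|},\ \big(f_p^{(\gamma)}\big)_{1\le p\le m,\ 1\le |\gamma|\le |\alpha|}\Big).$$
This comes by differentiating: passing from $(g_j\circ f)^{(\alpha)}$ to $(g_j\circ f)^{(\alpha+e_i)}$ sends each factor $g_j^{(\beta)}\circ f$ to $\sum_{p=1}^m (g_j^{(\beta+e_p)}\circ f)\cdot f_p^{(e_i)}$ and each factor $f_p^{(\gamma)}$ to $f_p^{(\gamma+e_i)}$, preserving the polynomial form and not increasing orders beyond $|\alpha|+1$. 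A byproduct is that $g\circ f$ is of class $C^k$, since the right-hand side is continuous and definable.

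Two elementary closure properties then finish the argument. First, if $h:V\to R$ is definable with $h(V)\subseteq [-N,N]$ and $f(U)\subseteq V$ with $U$ strongly bounded, then $h\circ f:U\to R$ has strongly bounded graph (the domain is strongly bounded and the image stays in $[-N,N]$); applied to $h=g_j^{(\beta)}$ this shows each building block $g_j^{(\beta)}\circ f$ is a strongly bounded definable function on $U$. Second, if $h_1,h_2:U\to R$ are definable with $|h_i|\le N$ on $U$, then $|h_1+h_2|\le 2N$ and $|h_1h_2|\le N^2$, so every integer-coefficient polynomial expression in strongly bounded definable functions on the fixed strongly bounded domain $U$ is again strongly bounded. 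Combining this with the identity above, and with the hypothesis that all $f_p^{(\gamma)}$ with $|\gamma|\le k$ are strongly bounded, we conclude that $(g_j\circ f)^{(\alpha)}$ is strongly bounded for every $j$ and every $\alpha$ with $|\alpha|\le k$, which is the assertion.

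The only real (and quite modest) obstacle is bookkeeping the Fa\`a di Bruno recursion cleanly enough to read off simultaneously the $C^k$-regularity of $g\circ f$ and the polynomial-in-strongly-bounded-functions representation of its partial derivatives; everything else is a matter of invoking $\sum$- and $\prod$-closure of strong boundedness.
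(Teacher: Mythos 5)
The paper offers no proof of this lemma; it is listed immediately after Lemma~\ref{compfact} (which is itself asserted as a chain-rule consequence) and is left as routine. So there is nothing to compare against line by line, but your argument is the natural one and it is correct. The multivariable Fa\`a di Bruno recursion you describe is the right tool: for $1\le|\alpha|\le k$, each $(g_j\circ f)^{(\alpha)}$ is an integer-coefficient polynomial in the quantities $g_j^{(\beta)}\circ f$ $(1\le|\beta|\le|\alpha|)$ and $f_p^{(\gamma)}$ $(1\le|\gamma|\le|\alpha|)$, the recursion establishes this together with the $C^k$-regularity, and strong boundedness then follows from the two closure observations you list (precomposition with $f$ on a strongly bounded domain, and closure of strongly bounded definable functions on a fixed strongly bounded domain under integer-polynomial combinations). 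The base case $k=0$ is also handled correctly since $f^{(0)}=f$ and $g^{(0)}=g$ being strongly bounded force $U$ and $g(V)$ strongly bounded.

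One small inaccuracy in phrasing: the opening says the chain rule computation is ``an iterated application of Lemma~\ref{compfact},'' but that lemma is strictly one-variable (both $U$ and $V$ are open subsets of $R$, not of $R^l$ and $R^m$), so it cannot literally be iterated to give the multivariate identity you need. What you actually use, and then supply yourself, is a genuine multivariate Fa\`a di Bruno identity; Lemma~\ref{compfact} is the $l=m=n=1$ special case. This is only a matter of wording, not a gap.
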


\section{Reparametrizing unary functions}\label{par2}

\noindent 
Much in this section is bookkeeping, but we begin with a key analytic fact:

\begin{lemma}\label{karep} Let $f : (0, 1)\to R$ be a definable $C^k$-function, $k\ge 2$,  with strongly bounded $f^{(j)}$ for $0 \le j \le k - 1$ and decreasing 
$|f^{(k)}|$. Define $g : (0, 1)\to R$ by
$g(t) = f(t^2)$. 
Then $g^{(j)}$ is strongly bounded for $0 \le j \le k$.
\end{lemma}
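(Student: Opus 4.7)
The plan is to apply the chain-rule expansion from Lemma~\ref{compfact} to the composition $g(t)=f(h(t))$ with $h(t)=t^2$, then notice that the only nontrivial issue is a single term in $g^{(k)}$ which must be controlled by an auxiliary estimate on $f^{(k)}$ coming from the Mean Value Theorem.

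First I would handle the routine cases. Since $h'(t)=2t$, $h''(t)=2$, and $h^{(j)}=0$ for $j\ge 3$, Lemma~\ref{compfact} yields
\[
g^{(j)}(t)\ =\ \sum_{i=1}^{j}\bigl(f^{(i)}(t^2)\bigr)\cdot q_{ij}(t),
\]
where each $q_{ij}\in\Z[t]$ is of degree $\le j$ and independent of $f$. For $j\le k-1$ every $f^{(i)}$ appearing is strongly bounded by hypothesis, and each $q_{ij}$ is bounded on $(0,1)$ by a natural number depending only on $k$, so $g^{(j)}$ is strongly bounded. This disposes of the cases $0\le j\le k-1$ with no trouble.

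For $j=k$ the same expansion gives
\[
g^{(k)}(t)\ =\ (2t)^{k} f^{(k)}(t^2)\ +\ \sum_{i=1}^{k-1}f^{(i)}(t^{2})\cdot q_{ik}(t),
\]
where the tail is strongly bounded as above. The obstacle, and the only real content of the lemma, is bounding the leading term $(2t)^{k}f^{(k)}(t^2)$, because we have no direct size control on $f^{(k)}$ near $0$. The fix is to deduce a pointwise $1/s$-bound on $|f^{(k)}(s)|$ from the two remaining hypotheses. Let $N\in\N$ be such that $|f^{(k-1)}|\le N$ on $(0,1)$. For $s\in(0,1)$, Lemma~\ref{mvt1} applied to $f^{(k-1)}$ on $[s/2,s]$ gives some $\xi\in(s/2,s)$ with
\[
f^{(k)}(\xi)\ =\ \frac{2}{s}\bigl(f^{(k-1)}(s)-f^{(k-1)}(s/2)\bigr),\qquad |f^{(k)}(\xi)|\le 4N/s.
\]
Since $|f^{(k)}|$ is decreasing and $\xi<s$, we conclude $|f^{(k)}(s)|\le |f^{(k)}(\xi)|\le 4N/s$ for every $s\in(0,1)$.

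Substituting $s=t^2$ yields
\[
\bigl|(2t)^{k}f^{(k)}(t^2)\bigr|\ \le\ 2^{k}t^{k}\cdot\frac{4N}{t^{2}}\ =\ 2^{k+2}N\,t^{k-2},
\]
and here the hypothesis $k\ge 2$ is used in an essential way: $t^{k-2}\le 1$ on $(0,1)$, so the leading term of $g^{(k)}$ is bounded by the natural number $2^{k+2}N$. Combined with the tail estimate, this makes $g^{(k)}$ strongly bounded and finishes the proof. The main obstacle is exactly this leading term: everything else is bookkeeping from Lemma~\ref{compfact}, and the key idea is that the ``decreasing $|f^{(k)}|$'' hypothesis lets a one-sided MVT on $[s/2,s]$ convert the bound on $f^{(k-1)}$ into a $1/s$ bound on $f^{(k)}$, which the factor $t^{k}=s^{k/2}$ can absorb precisely when $k\ge 2$.
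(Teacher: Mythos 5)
Your proof is correct and follows essentially the same route as the paper: both expand $g^{(j)}$ via Lemma~\ref{compfact}, isolate the single problematic term $2^k t^k f^{(k)}(t^2)$, and obtain a pointwise $\le 4c/s$ bound on $|f^{(k)}(s)|$ by a Mean Value Theorem estimate on $[s/2,s]$ combined with the decreasing hypothesis (you argue this directly, the paper by contradiction, but the inequality chain is the same). Nothing is missing.
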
  
\begin{proof} Let $t$ range over $(0,1)$.  Lemma~\ref{compfact} gives
$$g^{(j)}(t)\ =\ \sum_{i=0}^j \rho_{ij}(t).f^{(i)}(t^2),\qquad j=0,\dots,k$$ 
where each function $\rho_{ij}$ is given by a $1$-variable polynomial with integer coefficients, of degree $\le i$, and with $\rho_{jj}(t)=2^jt^j$.
All summands here are strongly bounded except possibly the one with $i = j = k$, which is $2^kt^k f^{(k)}(t^2)$. So it suffices that
$t^kf^{(k)}(t^2)$ is strongly bounded.  Let $c\in \Q^{>0}$ be a strong bound for $f^{(k-1)}$. We claim that then
$|f^{(k)}(t)|\le 4c/t$ for all $t$. Suppose towards a contradiction that
$t_0\in (0,1)$ is a counterexample, that is, 
$|f^{(k)}(t_0)| > 4c/t_0$. Then the Mean Value Theorem (Lemma~\ref{mvt1}) provides
a $\xi\in [t_0/2, t_0]$ such that 
$$f^{(k-1)}(t_0) - f^{(k-1)}(t_0/2)\ =\ f^{(k)}(\xi).(t_0 - t_0/2)\ =\ f^{(k)}(\xi)\cdot t_0/2.$$ Since $|f^{(k)}|$ is decreasing by assumption, $|f^{(k)}(\xi)|\ \ge\  |f^{(k)}(t_0)| > 4c/t_0$. Hence
$$2c\ \ge\ |f^{(k-1)}(t_0)-f^{(k-1)}(t_0/2)|\ >\ (4c/t_0)\cdot (t_0/2)\ =\ 2c. $$
This contradiction proves our claim.
Then for all $t$,
$$|t^kf^{(k)}(t^2)|\ \le\  t^k\cdot( 4c/t^2)\ =\ 4ct^{k-2}\ \le\ 4c$$
using $k\ge 2$ for the last inequality.  
\end{proof} 

\noindent
The lemma fails for $k=1$, with $t\mapsto t^{1/3}$ as a counterexample.

\begin{lemma}\label{onerep} Let $f : (0, 1) \to R$ be definable and strongly bounded. Then $f$ has a $1$-reparametrization $\Phi$ such that for every $\phi\in \Phi$, $\phi$ or $f\circ \phi$ is given by a $1$-variable polynomial with strongly bounded coefficients in $R$.
\end{lemma}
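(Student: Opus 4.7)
The first step is to split $(0, 1)$ via $C^1$-cell decomposition and the monotonicity theorem into finitely many isolated points $c_1, \ldots, c_s \in (0, 1)$ and finitely many open subintervals $(a_i, b_i) \subseteq [0, 1]$ on each of which $f$ is of class $C^1$ and both $f$ and $f'$ are monotonic (possibly constant). Applying monotonicity once more to $f'$, I then split each $(a_i, b_i)$ at the (at most one) point where $|f'|$ equals $1$, so that on each resulting subinterval either $|f'| \le 1$ throughout or $|f'| \ge 1$ throughout. Each isolated point $c_j$ is handled by the constant partial parametrization $\phi_j(t) \equiv c_j$, which is a polynomial with strongly bounded constant term $c_j \in (0,1)$; the composition $f \circ \phi_j \equiv f(c_j)$ is also constant, with $f(c_j)$ strongly bounded since $f$ is.

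On an open subinterval $(a, b)$ with $|f'| \le 1$ throughout, take $\phi(t) := a + (b - a) t$. This is a polynomial whose coefficients $a, b - a \in [0, 1]$ are strongly bounded, and $\phi$ maps $(0, 1)$ onto $(a, b) \subseteq X$. Then $(f \circ \phi)'(t) = (b - a) f'(\phi(t))$ has absolute value bounded by $1$, and $f \circ \phi$ is strongly bounded since $f$ is, so $\phi$ is a valid partial $1$-parametrization with $\phi$ itself the required polynomial. On an open subinterval $(a, b)$ with $|f'| \ge 1$ throughout, $f$ is strictly monotonic, so the one-sided limits $\alpha := f(a^+)$ and $\beta := f(b^-)$ exist and are strongly bounded (belonging to the closure of the strongly bounded image of $f$). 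Define $\phi(t) := f^{-1}\bigl(\alpha + (\beta - \alpha) t\bigr)$; then $f \circ \phi = \alpha + (\beta - \alpha) t$ is the required polynomial with strongly bounded coefficients, while $\phi$ takes values in $(a, b) \subseteq [0, 1]$ and $\phi'(t) = (\beta - \alpha)/f'(\phi(t))$ has absolute value at most $|\beta - \alpha|$, hence is strongly bounded.

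The images of the finitely many partial parametrizations above cover $(0, 1) = X$, so together they form the desired $1$-reparametrization. The main obstacle is the case where $f'$ is unbounded near an endpoint of $(a, b)$, where the naive linear $\phi$ fails to tame $(f \circ \phi)'$. The key device is the threshold split at $|f'| = 1$: on the ``unbounded'' piece one reparametrizes by $f^{-1}$ so that $f \circ \phi$ becomes a linear polynomial, while the lower bound $|f'| \ge 1$ keeps $|\phi'|$ bounded by the strongly bounded quantity $|\beta - \alpha|$. Trading ``$\phi$ polynomial'' for ``$f \circ \phi$ polynomial'' is exactly what lets us absorb the blow-up of $f'$ into the inverse, which is why the conclusion of the lemma is a disjunction.
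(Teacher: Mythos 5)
Your proof is correct and follows essentially the same route as the paper's: partition $(0,1)$ by o-minimality so that on each open piece either $|f'|\le 1$ or $|f'|\ge 1$ throughout, use a linear $\phi$ in the first case, the inverse reparametrization $\phi = f^{-1}\circ(\text{linear})$ in the second, and cover the finitely many remaining points with constant maps. (One tiny slip: on a subinterval where $f'$, rather than $|f'|$, is monotonic, the threshold $|f'|=1$ can be crossed at up to two points, and these new split points must also be added to your list of constant partial parametrizations — both are immediate to repair.)
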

\begin{proof}  Take elements $a_0 = 0  < a_1 < \dots < a_n < a_{n+1} = 1$ in $R$ such that, for $i = 0,1,\dots,n$, $f$ is of class $C^1$ on $(a_i, a_{i+1})$, and either $|f'| \le  1$ on $(a_i,a_{i+1})$, or $|f'| > 1$ on $(a_i, a_{i+1})$. Let $i\in \{0,\dots,n\}$. 
If $|f'| \le  1$ on $(a_i,a_{i+1})$, define 
$$\phi_i\ :\ (0,1)\to R, \qquad \phi_i(t)\ :=\ a_i+ (a_{i+1}- a_i)t.$$
If $|f'|>1$ on $(a_i, a_{i+1})$, set 
$$b_i\ :=\ \lim_{t\downarrow a_i} f(t), \qquad b_{i+1}\ :=\ \lim_{t\uparrow a_{i+1}} f(t)$$ and as in this case $f$ is continuous and strictly monotone
on $(a_i, a_{i+1})$ we can define $\phi_i : (0,1) \to R$ by $\phi_i(t)= f^{-1}\big(b_i+(b_{i+1}-b_i)t \big)$, where $f^{-1}$ denotes the compositional inverse of the restriction of $f$ to $(a_i, a_{i+1})$; this compositional inverse
has domain $(b_i, b_{i+1})$ if $b_i< b_{i+1}$, and domain $(b_{i+1},b_i)$
if $b_i>b_{i+1}$. 
 
In either case, $\phi_i$ maps $(0,1)$ onto $(a_i , a_{i+1})$ and both $\phi_i$ and $f\circ \phi_i$ are of class $C^1$ with strongly bounded derivative. Moreover, $\phi_i$ or $f\circ \phi_i$ is given by a univariate polynomial of degree $1$
with strongly bounded coefficients in $R$. Thus $$\Phi\ : =\ \{\phi_0,\dots,\phi_n,\hat{a}_1,\dots, \hat{a}_n\}$$ is a $1$-reparametrization of $f$ as required, where $\hat{a}_i$ denotes the constant function on $(0, 1)$ with value $a_i$.
\end{proof}

\begin{lemma}\label{krep} Let $k \ge 1$ and suppose $f : (0, 1) \to R$ is definable and strongly bounded. Then $f$ has a $k$-reparametrization $\Phi$ such that for all $\phi\in \Phi$,  $\phi$ or $f\circ \phi$ is given by a $1$-variable polynomial with strongly bounded coefficients in $R$.
\end{lemma}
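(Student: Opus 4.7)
The plan is to proceed by induction on $k$. The base case $k=1$ is Lemma~\ref{onerep}. For the inductive step, assume the result holds for some $k\ge 1$ and let $f:(0,1)\to R$ be definable and strongly bounded. The inductive hypothesis yields a $k$-reparametrization $\Phi_0$ of $f$ in which for each $\phi\in\Phi_0$ one of $\phi$, $f\circ\phi$ is a polynomial with strongly bounded coefficients; call that polynomial function $p_\phi$ and the other function $h_\phi$. Observe that $p_\phi$ automatically has strongly bounded derivatives of every order on $(0,1)$, while $h_\phi$ is only known to have strongly bounded derivatives up to order $k$. The strategy is to refine each $\phi\in\Phi_0$ into finitely many partial $(k+1)$-parametrizations that upgrade $h_\phi$ to order $k+1$ while preserving the polynomial dichotomy.

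Fix $\phi\in\Phi_0$. By o-minimality, $h_\phi$ is $C^{k+1}$ outside a finite set $S\subseteq(0,1)$, and on each open component $(a,b)$ of $(0,1)\setminus S$ the definable function $|h_\phi^{(k+1)}|$ is monotone. On each such component choose the affine bijection $\lambda:(0,1)\to(a,b)$ --- orientation-preserving if $|h_\phi^{(k+1)}|$ is decreasing on $(a,b)$, orientation-reversing otherwise --- so that $|(h_\phi\circ\lambda)^{(k+1)}|$ is decreasing on $(0,1)$. Lemma~\ref{sbfact} gives that $(h_\phi\circ\lambda)^{(j)}$ is strongly bounded for $j\le k$, and then applying Lemma~\ref{karep} with its $k$ taken to be $k+1\ge 2$ shows that $g(t):=(h_\phi\circ\lambda)(t^2)$ has strongly bounded derivatives up to order $k+1$.

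Now set $\tilde\phi(t):=\phi(\lambda(t^2))$, so $\tilde\phi((0,1))=\phi((a,b))\subseteq(0,1)$. In both cases ($h_\phi=\phi$ or $h_\phi=f\circ\phi$), both $\tilde\phi$ and $f\circ\tilde\phi$ have strongly bounded derivatives up to order $k+1$: whichever of them coincides with $g$ gets its bound from Lemma~\ref{karep}, and the other equals $p_\phi(\lambda(t^2))$, a composition of polynomials with strongly bounded coefficients, so itself a polynomial with strongly bounded coefficients, whose derivatives are automatically strongly bounded of every order on $(0,1)$. In particular the polynomial dichotomy is preserved. To cover the points $\phi(a)$ with $a\in S$ that may be missed by the $\tilde\phi$, adjoin the constant partial parametrizations $t\mapsto\phi(a)$; these are constant polynomials and satisfy every requirement trivially. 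Collecting these refinements over all $\phi\in\Phi_0$ yields the desired $(k+1)$-reparametrization with the polynomial property.

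The step I expect to be most delicate --- and the reason the polynomial dichotomy is carried through the induction --- is that Lemma~\ref{karep} requires $|h^{(k+1)}|$ to be decreasing, which is easy to achieve for a single function on a component by the orientation choice above but could fail for both $|\phi^{(k+1)}|$ and $|(f\circ\phi)^{(k+1)}|$ simultaneously when they have opposite monotonicity there. No finer subdivision can repair this, since monotonicity is preserved under restriction. The dichotomy sidesteps the obstacle cleanly: $p_\phi$ has strongly bounded derivatives of every order automatically, so Lemma~\ref{karep} need only be applied to $h_\phi$.
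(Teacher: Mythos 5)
Your proof is correct and follows essentially the same route as the paper's: induction on $k$ with Lemma~\ref{onerep} as base, subdivide $(0,1)$ so that $h_\phi$ is $C^{k+1}$ with $|h_\phi^{(k+1)}|$ monotone, reverse orientation where needed, precompose with $t\mapsto t^2$ and invoke Lemma~\ref{karep}, then adjoin constant functions for the finitely many missed points. Your closing remark correctly pinpoints why the polynomial dichotomy must be carried through the induction --- one cannot in general arrange decreasing $|\phi^{(k+1)}|$ and $|(f\circ\phi)^{(k+1)}|$ simultaneously on a component --- which is exactly the reason the statement of the lemma is formulated the way it is.
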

\begin{proof} By induction on $k$. The case $k=1$ is Lemma~\ref{onerep}.
Suppose $k\ge 2$ and $\Phi$ is a $(k-1)$-reparametrization of $f$ with the additional property. Let $\phi\in \Phi$. Then 
$\{\phi, f \circ \phi\} = \{g, h\}$ where $g$ is given by a univariate polynomial with strongly bounded coefficients in $R$. Thus 
$g$ is of class $C^{\infty}$, and $g^{(i)}$ is strongly bounded for all $i\in \N$, and $h$ is of class $C^{k-1}$ with strongly bounded $h^{(j)}$ for $j = 0,\dots,k-1$. In order to apply Lemma~\ref{karep} we use o-minimality: take elements $$a_0\ =\ 0\  <\ a_1\ <\ \dots\ <\ a_{n_\phi}\ <\ a_{n_{\phi}+1}\ =\ 1$$ in $R$ such that for $i = 0,\dots,n_{\phi}$, the function $h$ is of class $C^k$ on $(a_i,a_{i+1})$ and $|h^{(k)}|$ is monotone on $(a_i, a_{i+1})$.
Define $\theta_{\phi,i} : (0, 1) \to R$ as
$t\mapsto a_i+(a_{i+1} - a_i)t$, if $|h^{(k)}|$ is decreasing, and as
$t\mapsto a_{i+1}+(a_i - a_{i+1})t$, otherwise; so $\theta_{\phi,i}$ has
image $(a_i, a_{i+1})$.  
Then 
$h\circ \theta_{\phi,i}  : (0, 1) \to R$ is of class $C^k$, 
$(h\circ \theta_{\phi,i})^{(j)}$ is strongly bounded for $j = 0,\dots,k-1$,
and $|h\circ \theta_{\phi,i}^{(k)}|$ is decreasing. Let $\rho : (0, 1) \to (0, 1)$ be the $C^{\infty}$-bijection sending $t$ to $t^2$. By Lemma~\ref{karep}, the definable $C^k$-function $h\circ \theta_{\phi,i}\circ \rho: (0,1) \to R$ has strongly bounded $j$th derivative for $j = 0,\dots,k$. The function $g\circ \theta_{\phi,i}\circ\rho$  is still given by
a $1$-variable polynomial with strongly bounded coefficients in $R$,  and $\{g\circ\theta_{\phi,i}\circ \rho, h\circ \theta_{\phi,i}\circ \rho\}=\{\phi\circ \theta_{\phi,i}\circ \rho, f\circ(\phi\circ \theta_{\phi,i}\circ\rho)\}$.  The images of the functions $\phi\circ \theta_{\phi,i}\circ \rho$ with $i\in \{0,\dots, n_{\phi}\}$ cover the image of $\phi$ apart from finitely many points. So  adding finitely many constant functions with domain $(0,1)$ and values in $(0, 1)$ to the set $\{\phi\circ \theta_{\phi,i}\circ\rho:\ \phi\in \Phi,\ i=0,\dots,n_{\phi}\}$ we obtain a $k$-reparametrization of $f$ as claimed in the statement of the lemma.
\end{proof}        

\begin{cor}\label{corkrep} Let $f : X \to R$ be definable and strongly bounded with $X\subseteq R$. Then $f$ has a $k$-reparametrization, for every $k \ge 1$.
\end{cor}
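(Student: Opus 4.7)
By o-minimality of $R$, the definable set $X\subseteq R$ is a finite disjoint union of singletons and open intervals. Since $f$ is strongly bounded, so is $X$, and all the constituent points and interval endpoints lie in $[-N,N]$ for some $N\in \N$. If $X$ is finite, the example following the definition of $k$-parametrization already gives the trivial $k$-reparametrization $\{\hat a : a\in X\}$ with $\hat a :(0,1)^0\to R$, $\hat a = a$; the composition $f\circ \hat a$ is the constant $f(a)$, so nothing further needs checking. Assume then that $X$ contains at least one open interval, so that $\dim X = 1$, and write
\[
  X\ =\ F\ \sqcup\ I_1\ \sqcup\ \cdots\ \sqcup\ I_r,
\]
with $F\subseteq X$ finite and each $I_j = (a_j,b_j)$ an open interval with $a_j,b_j\in [-N,N]$.

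For each $p\in F$, take the constant map $\hat p:(0,1)\to R$, $\hat p(t)=p$. All derivatives of positive order vanish and the zeroth equals $p\in [-N,N]$, so $\hat p$ is a partial $k$-parametrization of $X$; moreover $f\circ \hat p$ is the constant $f(p)$, trivially with strongly bounded derivatives up to order $k$. For each interval $I_j$, define the affine bijection $\phi_j:(0,1)\to I_j$ by $\phi_j(t) = a_j + (b_j-a_j)t$; then $\phi_j$ is of class $C^\infty$ with $\phi_j^{(1)} = b_j-a_j\in [-2N,2N]$ and $\phi_j^{(i)} = 0$ for $i\ge 2$, so all $\phi_j^{(\beta)}$ with $|\beta|\le k$ are strongly bounded. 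The composition $f\circ \phi_j:(0,1)\to R$ is definable and strongly bounded, so Lemma~\ref{krep} yields a $k$-reparametrization $\Psi_j$ of $f\circ \phi_j$.

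Now form the finite set
\[
\Phi\ :=\ \{\hat p : p\in F\}\ \cup\ \{\phi_j\circ \psi : 1\le j\le r,\ \psi\in \Psi_j\}.
\]
For $\psi\in \Psi_j$, Lemma~\ref{sbfact} applied to the pair $\phi_j,\psi$ gives that $\phi_j\circ\psi:(0,1)\to I_j\subseteq X$ is of class $C^k$ with strongly bounded derivatives of order $\le k$, so it is a partial $k$-parametrization of $X$; and $f\circ(\phi_j\circ\psi) = (f\circ\phi_j)\circ\psi$ has strongly bounded derivatives of order $\le k$ by the defining property of the $k$-reparametrization $\Psi_j$. Since the images $\psi((0,1))$ with $\psi\in \Psi_j$ cover $(0,1)$, the images $(\phi_j\circ\psi)((0,1))$ cover $I_j$, so together with the constants $\hat p$ we cover all of $X$. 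Thus $\Phi$ is the desired $k$-reparametrization of $f$. The only substantive input is Lemma~\ref{krep}; everything else is one-dimensional cell decomposition followed by an affine rescaling to the standard interval $(0,1)$, so no serious obstacle arises.
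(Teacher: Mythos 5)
Your proof is correct and follows essentially the same route as the paper: dispose of the finite case trivially, decompose the infinite definable $X\subseteq R$ into points and strongly bounded open intervals, parametrize these by constants and affine maps (the paper's ``univariate polynomial functions of degree $\le 1$''), apply Lemma~\ref{krep} to each $f\circ\phi_j$, and compose. The explicit invocation of Lemma~\ref{sbfact} for the derivative bounds on $\phi_j\circ\psi$ is a detail the paper leaves implicit, but it is the same argument.
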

\begin{proof} The case that $X$ is finite is obvious. Suppose $X$ is infinite,  $k\ge 1$. Since $X$ is a finite union of strongly bounded intervals and points, it has a $k$-parametrization $\Phi$ by univariate polynomial functions of degree $\le 1$. Now Lemma~\ref{krep} provides for every $\phi: (0,1)\to R$ in $\Phi$ a $k$-reparametrization $\Psi_{\phi}$ of $f\circ \phi: (0,1)\to R$; then $\{\phi\circ \psi:\ \phi\in \Phi,\ \psi\in \Psi_{\phi}\}$ is a $k$-reparametrization of $f$. 
\end{proof}

\noindent
Next one might reparametrize ``curves'' $(0,1)\to R^n$ with $n\ge 2$, but
there is nothing special about the univariate case here, so we do the general case:

\begin{lemma}\label{knrep} Let $k,m\ge 1$, and suppose that every strongly bounded definable function $X\to R$ with $X\subseteq R^l$, $l \le m$, has a $k$-reparametrization. Then every strongly bounded definable map $X \to R^n$ with $X\subseteq R^l$, $l\le m$ and $n\ge 1$ has a $k$-reparametrization.
\end{lemma}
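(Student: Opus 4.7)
The natural approach is induction on $n$, with the base case $n=1$ being exactly the hypothesis of the lemma. So suppose $n \ge 2$ and the statement holds for maps to $R^{n-1}$. Given a strongly bounded definable map $f = (f_1,\dots,f_n) : X \to R^n$ with $X \subseteq R^l$, $l \le m$, I would split off the last coordinate and write $f = (g, f_n)$ with $g := (f_1,\dots,f_{n-1}) : X \to R^{n-1}$, which is again strongly bounded and definable.

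First I would apply the inductive hypothesis to $g$ to get a $k$-reparametrization $\Phi$ of $g$. So for every $\phi : (0,1)^{l'} \to R^l$ in $\Phi$ (with $l' = \dim X$), the partial derivatives $\phi^{(\alpha)}$ and $(g \circ \phi)^{(\alpha)}$ are strongly bounded for all $|\alpha| \le k$. Next, for each such $\phi$, the composition $f_n \circ \phi : (0,1)^{l'} \to R$ is a strongly bounded definable function on a subset of $R^{l'}$ with $l' \le l \le m$, so the hypothesis of the lemma (applied with the function $f_n \circ \phi$) furnishes a $k$-reparametrization $\Psi_\phi$ of $f_n \circ \phi$. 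I would then propose the set
\[
\Theta\ :=\ \{\phi \circ \psi :\ \phi \in \Phi,\ \psi \in \Psi_\phi\}
\]
as a $k$-reparametrization of $f$.

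To verify this I would check three things. (i) \emph{Covering:} since each $\Psi_\phi$ parametrizes $(0,1)^{l'}$ and $\Phi$ parametrizes $X$, the union of the images of the $\phi \circ \psi$ is $X$, and each $\phi \circ \psi$ has domain $(0,1)^{l'}$ with $l' = \dim X$. (ii) \emph{Boundedness of derivatives of $\phi \circ \psi$:} both $\phi$ and $\psi$ are $C^k$ with strongly bounded partials up to order $k$, so Lemma \ref{sbfact} gives that $\phi \circ \psi$ is $C^k$ with strongly bounded partials up to order $k$. (iii) \emph{Boundedness of derivatives of $f \circ \phi \circ \psi$:} the component $f_n \circ (\phi \circ \psi) = (f_n \circ \phi) \circ \psi$ has strongly bounded derivatives directly from $\Psi_\phi$ being a $k$-reparametrization of $f_n \circ \phi$; for the other components, $g \circ (\phi \circ \psi) = (g \circ \phi) \circ \psi$ has strongly bounded derivatives by Lemma \ref{sbfact} again, since $g \circ \phi$ and $\psi$ both have strongly bounded partials up to order $k$.

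There is no real obstacle here: the content of the lemma is purely a bookkeeping reduction from vector-valued to scalar-valued targets, and the two non-trivial inputs (matching dimensions and the chain-rule estimate for composed strongly bounded maps) are handled respectively by the constancy $l' = \dim X = \dim (0,1)^{l'}$ and by Lemma \ref{sbfact}. If anything, the only point to be mildly careful about is keeping track of which domain dimension the inductive hypothesis of the lemma is being invoked on, but since $l' \le l \le m$ throughout, this is automatic.
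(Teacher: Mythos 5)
Your proof is correct and follows essentially the same approach as the paper: reparametrize the first $n-1$ component map by induction, then reparametrize the last scalar component composed with each $\phi$, compose, and invoke Lemma~\ref{sbfact} for the boundedness of derivatives of the compositions. The paper phrases its induction as passing from $F: X\to R^n$ to $(F,f): X\to R^{n+1}$, but this is the same splitting of the last coordinate you use.
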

\begin{proof}  By induction on $n\ge 1$. Suppose $F: X\to R^n$ and $f: X \to R$ with 
$X\subseteq R^m$ are definable, strongly bounded, and $F$ has a $k$-reparametrization. It is enough to show that then the
strongly bounded definable map $(F,f): X \to R^{n+1}$ has a $k$-reparametrization. The case of finite $X$ being trivial, assume $X$ is infinite.
Let $\Phi$ be a $k$-reparametrization of $F$ and let $\phi\in \Phi$,
$\phi : (0,1)^l \to R^m$, $l= \dim X \le m$. Applying the hypothesis of the lemma to the map $f\circ \phi:(0,1)^l \to R$ we obtain a $k$-reparametrization $\Psi_{\phi}$ of it. Then using Lemma~\ref{sbfact},
$\{\phi\circ \psi:\ \phi\in \Phi,\ \psi\in \Psi_{\phi}\}$ is a $k$-reparametrization of $(F, f)$.
\end{proof}

\noindent
{\bf Remark.} At one point we need a slight variant of this lemma, with the same proof: {\em Let $k,m\ge 1$, and suppose that every strongly bounded definable function $(0,1)^l\to R$ with $l \le m$ has a $k$-reparametrization. Then every strongly bounded definable map $(0,1)^l \to R^n$ with $l\le m$ and $n\ge 1$ has a $k$-reparametrization}.

\begin{cor}\label{unirepfinal} Let $n\ge 1$ and suppose $f : X \to R^n$ is definable and strongly bounded, with $X\subseteq R$. Then $f$ has a $k$-reparametrization, for every $k\ge 1$.
\end{cor}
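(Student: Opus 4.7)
The plan is to combine the two preceding results directly: Corollary~\ref{corkrep} supplies the scalar-valued case (that every strongly bounded definable function $X \to R$ with $X \subseteq R$ has a $k$-reparametrization), while Lemma~\ref{knrep} is precisely the tool for boosting such a hypothesis from codomain $R$ to codomain $R^n$ while keeping the domain dimension bounded.

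Concretely, I would fix $k \ge 1$ and apply Lemma~\ref{knrep} with $m = 1$. The hypothesis of that lemma reads: every strongly bounded definable function $X \to R$ with $X \subseteq R^l$, $l \le 1$, has a $k$-reparametrization. For $l = 0$ this is trivial (a map with finite strongly bounded domain is handled by a finite set of constant partial parametrizations on $(0,1)^0$), and for $l = 1$ this is exactly Corollary~\ref{corkrep}. Hence Lemma~\ref{knrep} gives the conclusion for every strongly bounded definable map $X \to R^n$ with $X \subseteq R^l$, $l \le 1$, and in particular for $X \subseteq R$, which is what we want.

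There is no real obstacle here; the corollary is the packaged combination of Corollary~\ref{corkrep} (the unary scalar case, which was the analytic heart of the argument via Lemmas~\ref{karep}--\ref{krep}) with the vector-valued bootstrap of Lemma~\ref{knrep}. The only small point to double-check is that the case of finite $X$ is handled by the conventions on $(0,1)^0$-parametrizations, which is immediate.
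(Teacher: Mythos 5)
Your proof is correct and is exactly the paper's argument: the paper proves Corollary~\ref{unirepfinal} as ``immediate from Corollary~\ref{corkrep} and the case $m=1$ of Lemma~\ref{knrep},'' which is precisely what you spell out, including the trivial $l=0$ case.
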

\begin{proof} Immediate from Corollary~\ref{corkrep} and the case $m = 1$ of Lemma~\ref{knrep}.
\end{proof}

\section{Convergence}\label{par3} 

\noindent
In this section we continue to work with our o-minimal field $R$. A set $X\subseteq R$ is said to be {\em bounded} if
$X\subseteq [-r,r]$ for some $r\in R^{>}$. Since each definable subset of $R$ is a finite disjoint union of intervals and singletons, we can assign to each bounded definable set $X\subseteq R$ its {\em  length\/} $\ell(X)\in R$ so that $\ell(X)=b-a$ if $X$ is an interval $(a,b)$, $a < b$ in $R$, $\ell(X)=0$ for $X=\{a\}$, $a\in R$, and
$\ell(X)=\ell(X_1)+\ell(X_2)$ if $X$ is the disjoint union of definable subsets $X_1, X_2$.



Let $a,b\in R,\ a<b$, let $f: (a,b)\to R$ be definable and bounded (the latter  meaning that $\text{image}(f)$ is bounded), and let $L\in R^{>}$. For  $s\in (a,b)$ we declare ``$|f'(s)|>L$'' to mean:
$f$ is differentiable at $s$, and $|f'(s)|>L$. Suppose that $|f'(s)|>L$ for all $s\in (a,b)$. Then by the Mean Value Theorem (Lemma A.11),
$$\ell\big( \text{image}( f)\big) > L\cdot (b-a).$$  Let $E\subseteq R^m$ be definable and $(f_s)_{s\in E}$ a {\em definable family\/} of functions $f_s: (a,b)\to R$, meaning that the function $(s,t)\mapsto f_s(t): E\times (a,b)\to R$ is definable. We now use the 
observation above to obtain the following: 

\begin{lemma} \label{bn1} Suppose $N\in \N$ is such that $|f_s(t)|\le N$ for all $s\in E$ and $t\in (a,b)$. Then there is
$M\in \N$ such that for all $L\in R^{>}$ and $s\in E$,
$$\ell\big(\{t\in (a,b):\ |f_s'(t)|>L\}\big)\ \le\  M/L.$$
\end{lemma} 
\begin{proof} Let $L\in R^{>}$, $s\in E$, and set $X_{L,s}:=\{t\in (a,b):\ |f_s'(t)|>L\}$. O-minimality (Theorem A.2, Proposition A.4, Theorem A.14) gives a finite bound $m$ independent of $L,s$, and
disjoint intervals $(a_i, b_i)\subseteq X_{L,s}$, $i=1,\dots,m_{L,s}\le m$, such that $X_{L,s}\setminus\bigcup_{i=1}^{m_{L,s}}(a_i, b_i)$ is finite.
By the observation above, $2N>L (b_i-a_i)$ for $i=1,\dots, m_{L,s}$, so $2m_{L,s}N>L\ell\big(X_{L,s}\big)$. Thus $\ell\big(X_{L,s}\big)<  2mN/L$. 
\end{proof} 

\medskip\noindent 
{\bf Some notation and terminology}.  For definable open $U\subseteq R^{m+1}$, $V\Subset U$ means that 
$V$ is a definable open subset of $R^{m+1}$ with $V \subseteq U$ and $\dim(U \setminus V ) \le  m$.

A {\em cofinite subset of a set $X$\/} is a set $X_0\subseteq X$ such that $X\setminus X_0$ is finite.  Let $\pi_{m+1}: R^{m+1}\to R$ be  given by $\pi(t_1,\dots, t_m, t_{m+1})=t_{m+1}$.  Note that if $X\subseteq R^{m+1}$ is definable, then so is
$\pi_{m+1}(X)\subseteq R$. 

Recall: $|y|=\max\{|y_1|,\dots, |y_n|\}$ for $y$ in $R^n$.  For a definable map $f: X \to R^n$ with (necessarily definable)
$X\subseteq R^m$, we set $$\|f\|\ :=\ \sup_{a\in X} |f(a)|\in [0,+\infty].$$ 
Note that if $X$ is nonempty and closed and bounded in $R^m$ and $f$ is continuous, then this supremum is a maximum, by Corollary~\ref{dc2}.

\begin{lemma}\label{bn2}
Let $f : U \to R$, $U \Subset (0,1)^{m+1}$,  be a strongly bounded definable $C^1$-function. Suppose $ \partial f/\partial x_i$  is strongly bounded for $i=1,\dots,m$. Then  $$\{t\in \pi_{m+1}(U):\  \partial f/\partial x_{m+1}(-,t) \text{ is bounded}\}$$ is a cofinite subset of $\pi_{m+1}(U)$, and thus of $(0,1)$.
\end{lemma}

\begin{proof}
Suppose not.  Then the set 
$\{t\in \pi_{m+1}(U): \partial f/\partial x_{m+1}(-,t) \text{ is unbounded}\}$
contains an interval $(a,b)\subseteq (0,1)$.  Definable Selection (Proposition \ref{defsel}) then gives a definable family $(\gamma_L)_{L\in R^{>}}$ of maps  $$\gamma_L\ =\ (\gamma_{L,1},\dots,\gamma_{L,m})\ :\  (a,b)\to (0,1)^m$$ such that for all 
$L\in R^{>}$ and $t\in (a,b)$ we have $\big(\gamma_L(t),t\big)\in U$ and
$$\big|\frac{\partial f}{\partial x_{m+1}}\big(\gamma_L(t),t\big)\big|\  >\  L.$$
Take $N\in \N$  such that  $\|f\|\leq N$ and $\|\frac{\partial f}{\partial x_{i}}\|\leq N$ for $i=1,\ldots, m$. 
Let $f_L: (a,b) \to R$ be given by $f_L(t):=f\big(\gamma_L(t),t\big)$. Applying Lemma~\ref{bn1} to the definable families 
 $\big(3mN\gamma_{L,1}\big)_{L\in R^{>}},\dots, \big(3mN\gamma_{L,m}\big)_{L\in R^{>}}$, $(3f_L)_{L\in R^{>}}$ gives $M\in \N$ with the property that for all $L\in R^>$ there is a definable closed subset $X_L$ of $(a,b)$ with $\ell(X_L)\le M/L$ such that for all $t\in (a,b)\setminus X_L$ the map $\gamma_L$ is differentiable at $t$ and
 $$3mN|\gamma_{L,i}'(t)|\ \leq\  L, \quad  \quad i=1,\ldots,m,\quad \quad \text{and}$$
$$
 |f_L^{\prime}(t)|\ =\ \big|\sum_{i=1}^{m}\frac{\partial f}{\partial x_{i}}(\gamma_L(t),t)\cdot\gamma_{L,i}'(t) + \frac{\partial f}{\partial x_{m+1}}(\gamma_L(t),t)\big|\  \leq\ \frac{L}{3}.$$
Now take $L$ with $M/L < b-a$ and  $t\in (a,b)$ satisfying the $m+2$ displayed inequalities. The case $m=0$ gives an immediate contradiction, and for $m\ge 1$,
$$ \big|\sum_{i=1}^{m}\frac{\partial f}{\partial x_{i}}(\gamma_L(t),t)\cdot\gamma_{L,i}'(t)\big|\ \le\ mN\cdot \frac{L}{3mN}\ =\ 
\frac{L}{3},$$ contradicting the conjunction of the first and last inequality. 
\end{proof}

\medskip\noindent
The {\em normalization\/} of a function $\psi: I \to R$ on an interval $I=(a,b)\subseteq (0,1)$ is the function
$t\mapsto \psi\big((b-a)t+a\big): (0,1) \to R$; its image is $\psi(I)$. 

\medskip\noindent
Notation about ``changing the last variable'':  
For  $\phi : (0,1) \to R$ we set $$I_{\phi}\ :\ (0,1)^{m+1}\to R^{m+1}, \qquad (t_1,\dots,t_m,t_{m+1})\mapsto  \big(t_1,\dots,t_m,\phi(t_{m+1})\big),$$
and for $f : X \to  R^n$, $X\subseteq R^{m+1}$ we set
 $$f_{\phi}\ :=\ f\circ I_{\phi}\ :\ (I_\phi)^{-1}(X) \to R^n, \quad (t_1,\dots,t_m,t_{m+1})\mapsto  f\big(t_1,\dots,t_m,\phi(t_{m+1})\big).$$ 

\begin{lemma}\label{limit} Let  
$f : U \to R$, $U \Subset (0,1)^{m+1}$, be a strongly bounded definable $C^1$-function  such that $ \partial f/\partial x_i$  is strongly bounded for $i=1,\dots,m$.
Then there is for each $k\ge 1$ a $k$-parametrization $\Phi$ of a cofinite subset of $(0, 1)$ and a set $V\Subset U$ such that for every $\phi\in \Phi$: $I_{\phi}(V) \subseteq U$, $f_{\phi}$ is of class $C^1$ on $V$, and $ \partial f_\phi/\partial x_i$ is strongly bounded on $V$, for $i = 1, \dots, m + 1$.
\end{lemma}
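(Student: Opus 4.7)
The only partial of $f$ that might fail to be strongly bounded is $h := \partial f/\partial x_{m+1}$. By the chain rule, for any definable $\phi : (0,1) \to R$ of class $C^{k-1}$ with strongly bounded derivatives up to order $k-1$,
\[
\frac{\partial f_\phi}{\partial x_i} = \left(\frac{\partial f}{\partial x_i}\right) \circ I_\phi \quad (1 \le i \le m), \qquad
\frac{\partial f_\phi}{\partial x_{m+1}} = (h \circ I_\phi)\cdot \phi'(x_{m+1}),
\]
so strong boundedness of the first $m$ partials of $f_\phi$ on any $V$ with $I_\phi(V)\subseteq U$ is automatic. The task reduces to producing $\Phi$ so that, on a suitable $V \Subset U$, the product $(h \circ I_\phi)\phi'$ is strongly bounded for each $\phi \in \Phi$.

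My plan is to apply the one-variable reparametrization Lemma~\ref{krep} to the sections $g_s(u) := f(s,u)$, $u \in U_s := \{u : (s,u) \in U\}$, and then extract a single $\Phi$ valid uniformly in $s = (t_1,\dots,t_m)$. For each $s$ in $\pi(U) \subseteq (0,1)^m$, the map $g_s$ is a strongly bounded definable $C^1$-function of one variable, so Lemma~\ref{krep} with reparametrization order $k-1$ yields a $(k-1)$-reparametrization $\Phi_s$ of $g_s$, each of whose members $\phi$ satisfies that $\phi$ or $g_s\circ\phi$ is a polynomial with strongly bounded coefficients, in particular with $|\phi^{(i)}|$ and $|(g_s\circ\phi)^{(i)}|$ strongly bounded for $0 \le i \le k-1$. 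By the uniform finiteness principle in o-minimal structures (an instance of $\aleph_0$-saturation), the cardinalities $|\Phi_s|$ are bounded uniformly by some $N \in \N$, so the $\Phi_s = \{\phi_{s,1},\dots,\phi_{s,N}\}$ constitute a definable family indexed by $s$.

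The core of the proof is packaging this $s$-dependent family into one $\Phi$. The intended mechanism is the convergence lemma (Lemma~\ref{AB}) together with $\aleph_0$-saturation. After o-minimal cell decomposition of $\pi(U)$ and reparametrizing along a curve into a chosen open cell, reduce to a one-parameter definable family of $N$-tuples $F_s:(0,1)\to R^N$ of $C^{k-1}$-functions with strongly bounded derivatives up to order $k-1$; Lemma~\ref{AB} then produces a limiting set $\Phi_0$ of $C^{k-1}$-functions $(0,1)\to R$ with strongly bounded derivatives whose images jointly cover a cofinite subset of $(0,1)$. Take $\Phi := \Phi_0$. For $V$, take the interior of the definable set
\[
\Big\{(s,t)\in (0,1)^{m+1} : I_\phi(s,t)\in U \text{ and } \big|h(I_\phi(s,t))\,\phi'(t_{m+1})\big|\le C \text{ for all } \phi\in\Phi\Big\},
\]
with $C$ a strong bound extracted from the polynomial-with-strongly-bounded-coefficients property of the $\phi$ or $g\circ\phi$ at the limit. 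The complement $U\setminus V$ is definable of dimension at most $m$ by o-minimality, giving $V \Subset U$ as required.

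The main obstacle is verifying this last uniformity: the limiting $\Phi_0$ must actually tame $h$ for every $s$ with $(s,\phi(t))\in V$, not merely for one special $s$. This is where the hypothesis that $\partial f/\partial x_i$ is strongly bounded for $i \le m$ is essential, as it provides uniform Lipschitz control of $g_s$ in $s$; combined with the uniform strong bounds transported through Lemma~\ref{AB}, it converts the pointwise-in-$s$ reparametrization produced by Lemma~\ref{krep} into a single $\Phi$ that works after shrinking $U$ to $V$.
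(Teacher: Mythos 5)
Your proposal correctly identifies the one problematic partial derivative and reduces the problem to taming $\big(\partial f/\partial x_{m+1}\circ I_\phi\big)\cdot \phi'$, but the central mechanism is missing, and the route you take does not close the gap you yourself flag. The issue is this: you reparametrize the slices $g_s(u)=f(s,u)$ for $s$ ranging over the $m$-dimensional set $\pi(U)$. A $(k-1)$-reparametrization $\Phi_s$ of $g_s$ controls the quantity $\big(\partial f/\partial x_{m+1}\big)(s,\phi_{s,j}(u))\cdot\phi_{s,j}'(u)$ \emph{only at that specific $s$}. When you pass to a limit $\Phi_0$ via Lemma~\ref{AB} (which, moreover, applies to a family indexed by a single parameter in $(0,1)$, not by $s\in(0,1)^m$, so your ``reparametrize along a curve'' step already loses all but a one-dimensional slice of $s$-values), the resulting $\Phi_0$ bounds the product only near the $s$-values along the chosen curve. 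Your claim that $U\setminus V$ has dimension $\le m$ is precisely what needs to be proved, and there is no reason it holds: the strong-boundedness of $\partial f/\partial x_i$ for $i\le m$ gives Lipschitz control of the \emph{values} $f(s,u)$ in $s$, but not of the slice-derivatives $\partial f/\partial x_{m+1}(s,u)$, which can oscillate between bounded and unbounded as $s$ moves in an $m$-dimensional set.

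The paper's proof resolves exactly this difficulty through a construction you have not reproduced. There, $s\in(0,1)$ is a \emph{radius} parameter (not a point of $\pi(U)$), $U_s(t)$ is the set of $a\in(0,1)^m$ whose $s$-ball around $(a,t)$ lies in $U$, and $a_s(t)\in U_s(t)$ is chosen (definably) to \emph{maximize} $|\partial f/\partial x_{m+1}(\cdot,t)|$ over $U_s(t)$. The family to be reparametrized and passed to the limit via Lemma~\ref{AB} is $g_s(t)=\big(a_s(t),\,f(a_s(t),t)\big)$, not the slices of $f$. Then for an arbitrary $(a_0,t_0)\in V$ and $\psi\in\Phi_0$, one picks $s$ small enough that $a_0\in U_s(\phi_s(t_1))$, and the maximality inequality $|\partial f/\partial x_{m+1}(a_s(\phi_s(t_1)),\phi_s(t_1))|\ge|\partial f/\partial x_{m+1}(a_0,\phi_s(t_1))|$ is what transfers the bound from the single distinguished curve to the \emph{arbitrary} point $a_0$. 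The hypothesis that $\partial f/\partial x_i$ is strongly bounded for $i\le m$ is used not for Lipschitz control in $s$, as you suggest, but in the chain-rule expansion of $\frac{d}{dt}f(a_s(\phi_s(t)),\phi_s(t))$: the cross terms involve $\partial f/\partial x_i$ ($i\le m$) and are discarded as strongly bounded, isolating the term $\phi_s'(t_1)\cdot\partial f/\partial x_{m+1}(b)$. Without the maximizing construction or some analogue of it, the argument does not go through.
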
 
\begin{proof} 
Lemma \ref{bn2} yields a 
finite $F\subseteq \pi_{m+1}(U)$ such that $\frac{\partial f}{\partial x_{m+1}}(-, t)$ is bounded,  
for every $t \in \pi_{m+1}(U)\setminus F$. Set $V_0:=U\setminus \pi_{m+1}^{-1}(F)$, so 
$V_0\Subset U$ and $\pi_{m+1}(V_0)=\pi_{m+1}(U)\setminus F$. 
For each $t\in \pi_{m+1}(V_0)$ we take a point $a=a(t)\in (0,1)^m$ such that
$$ (a,t)\in V_0,\qquad \    \|\frac{\partial f}{\partial x_{m+1}}(-, t)\|\ \leq\  2|\frac{\partial f}{\partial x_{m+1}}(a, t)|.$$
We arrange by Proposition \ref{defsel} that $t\mapsto a(t): \pi_{m+1}(V_0)\to R^m$ is definable.  Let $\gamma:\pi_{m+1}(V_0)\to V_0$ be defined by $\gamma(t)=\big(a(t),t\big)$. 
Let $k\ge 1$. Corollary~\ref{unirepfinal} gives a $k$-reparametrization $\Phi_0$ of the map 
$$g\ :\ \pi_{m+1}(V_0)\to R^{m+2}, \quad t\mapsto  \big(\gamma(t), f(\gamma(t))\big).$$
We now change $V_0, \Phi_0$ to $V, \Phi$ as follows.  The Monotonicity Theorem~\ref{mono} yields for each $\phi\in \Phi_0$  a finite partition $\cal{P}_\phi$ of its domain
$(0,1)$ into subintervals and singletons such that on each interval
in $\cal{P}_{\phi}$ the function $\phi$ is either constant or strictly monotone. First, replace each  $\phi\in \Phi_0$ by the restrictions of $\phi$ to those intervals in $\cal{P}_\phi$ on which
$\phi$ is strictly monotone. Next, replace each of those restrictions with its normalization. The resulting set $\Phi$ of (strictly monotone) functions is still a $k$-parametrization of a cofinite subset of $\pi_{m+1}(V_0)$.  Now set 
$$V\ :=\  V_0\cap\bigcap_{\phi\in \Phi}I_{\phi}^{-1}(U)\ =\ V_0 \setminus \bigcup_{\phi\in \Phi}  I_{\phi}^{-1} [(0,1)^{m+1} \setminus U].$$
The injectivity and continuity of the $\phi\in \Phi$ gives $V \Subset U$. 
Let $\phi\in \Phi$. Then $I_\phi(V )\subseteq U$,  so $f_{\phi}$ is of class $C^1$ on $V$ in view of $k\geq 1$, and $\partial f_{\phi}/\partial x_i=(\partial f/\partial x_i)\circ I_{\phi}$ is strongly bounded on $V$ for
$i = 1,\dots,m$.  It only remains to show that $\partial f_{\phi}/\partial x_{m+1}$ is
strongly bounded on $V$. For $(t_1,\dots, t_{m+1})\in V$ we have
$$\frac{\partial f_{\phi}}{\partial x_{m+1}}(t_1,\dots, t_m, t_{m+1})\  =\ \phi'(t_{m+1})\cdot \big(\frac{\partial f}{\partial x_{m+1}}\circ I_\phi\big)(t_1,\dots, t_m, t_{m+1}),$$
 and by the properties of the  map $a$ we have for all $(t_1,\dots, t_m, t_{m+1})\in V$,
$$\big|\big(\frac{\partial f}{\partial x_{m+1}}\circ I_\phi\big)(t_1,\dots, t_m, t_{m+1})\big|\ \le\  2\big|\frac{\partial f}{\partial x_{m+1}}\big(\gamma\circ \phi)(t_{m+1})\big|.$$
Combining the last two displays it is enough to strongly bound
$$\phi' \cdot \frac{\partial f}{\partial x_{m+1}}\circ(\gamma\circ\phi)$$
 on $\pi_{m+1}(V)$. Since $\Phi$ is a $k$-reparametrization of 
$g|_{\pi_{m+1}(V)}$, we have:
\begin{enumerate}
\item[(i)]  $(\gamma\circ \phi)'$ is strongly bounded on $\pi_{m+1}(V)$, and
\item[(ii)] $ \big(f\circ \gamma\circ\phi\big)'$ is strongly bounded on $\pi_{m+1}(V)$.
\end{enumerate}
Let $\gamma=(\gamma_1,\dots,\gamma_m, \gamma_{m+1})$, $\gamma_i: \pi_{m+1}(V_0)\to R$.  By the Chain Rule (subsection ``Differentiability'' in part A of the Appendix),
   we have on $\pi_{m+1}(V)$: 
$$\big(f\circ \gamma\circ\phi\big)'\ =\  \sum_{i=1}^m (\gamma_i\circ \phi)'\cdot\frac{\partial f}{\partial x_i}\circ(\gamma\circ\phi)\ +\  \phi' \cdot \frac{\partial f}{\partial x_{m+1}}\circ(\gamma\circ\phi)$$
Now $\partial f /\partial x_i$ for $i = 1, \dots , m$ is 
strongly bounded, so by (i) the above $\sum_{i=1}^m$ is strongly bounded on $\pi_{m+1}(V)$.  Also the left hand side is strongly bounded on $\pi_{m+1}(V)$ by (ii),  hence the remaining term
$\phi' \cdot \frac{\partial f}{\partial x_{m+1}}\circ(\gamma\circ\phi)$ on the right is strongly bounded on $\pi_{m+1}(V)$ as well, which we already know to be enough. 
 \end{proof}

\begin{cor}\label{limitcor}
Let $k,n\ge 1$, $U \Subset (0,1)^{m+1}$ and let 
$f : U \to R^n$ be a strongly bounded definable $C^1$-map. Suppose also that $ \partial f/\partial x_i$  is strongly bounded for $i=1,\dots,m$.
Then there is a $k$-parametrization $\Phi$ of a cofinite subset of $(0, 1)$ and a set $V\Subset U$ such that for every $\phi\in \Phi$: $I_{\phi}(V) \subseteq U$, $f_{\phi}$ is of class $C^1$ on $V$, and $ \partial f_\phi/\partial x_i$ is strongly bounded on $V$ for $i = 1, \dots, m + 1$.
\end{cor}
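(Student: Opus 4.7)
The plan is to reduce the vector-valued statement to the scalar Lemma~\ref{limit} by induction on $n$. The base case $n=1$ is Lemma~\ref{limit} verbatim. For the inductive step, write $f=(f',f_n)$ with $f'=(f_1,\dots,f_{n-1}): U\to R^{n-1}$, and apply the inductive hypothesis to $f'$ to obtain a $(k-1)$-parametrization $\Phi'$ of a cofinite subset of $(0,1)$ and a set $V'\Subset U$ satisfying the conclusion for $f'$.

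Next, for each $\phi\in\Phi'$ the scalar function $f_{n,\phi}:V'\to R$ satisfies the hypotheses of Lemma~\ref{limit} on the open set $V'$: we have $V'\Subset U\Subset(0,1)^{m+1}$, so by the transitivity of $\Subset$ (whose symmetric difference argument is routine) $V'\Subset(0,1)^{m+1}$; and for $i=1,\dots,m$ the partial $\partial f_{n,\phi}/\partial x_i=(\partial f_n/\partial x_i)\circ I_\phi$ is strongly bounded because $\partial f_n/\partial x_i$ is. Thus Lemma~\ref{limit} applied to each such $f_{n,\phi}$ on $V'$ yields a $(k-1)$-parametrization $\Phi''_\phi$ of a cofinite subset of $(0,1)$ and a set $V''_\phi\Subset V'$ delivering its conclusion for $f_{n,\phi}$.

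I would then set
\[
V\ :=\ \bigcap_{\phi\in\Phi'} V''_\phi,\qquad \Phi\ :=\ \{\phi\circ\psi:\ \phi\in\Phi',\ \psi\in\Phi''_\phi\},
\]
and verify the claim. That $V\Subset U$ follows from transitivity of $\Subset$ together with the observation that a finite intersection of sets each $\Subset V'$ is again $\Subset V'$. That $\Phi$ is a $(k-1)$-parametrization of a cofinite subset of $(0,1)$ follows from Lemma~\ref{sbfact} (each composite $\phi\circ\psi$ is $C^{k-1}$ with strongly bounded derivatives up to order $k-1$) and from the fact that $\bigcup_{\phi,\psi}\phi(\mathrm{image}(\psi))$ differs from the already cofinite $\bigcup_\phi\phi((0,1))$ by at most the finite set $\bigcup_\phi\phi\big((0,1)\setminus\bigcup_\psi\mathrm{image}(\psi)\big)$.

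Finally, for each $\phi\circ\psi\in\Phi$ I would check the three conditions. The inclusion $I_{\phi\circ\psi}(V)=I_\phi(I_\psi(V))\subseteq I_\phi(V')\subseteq U$ is immediate from $I_\psi(V)\subseteq I_\psi(V''_\phi)\subseteq V'$. For the component $f_n$, the required smoothness and bounds are exactly what Lemma~\ref{limit} produced in the previous step. For the earlier components $f_j$ with $j<n$, I would write $f_{j,\phi\circ\psi}=f_{j,\phi}\circ I_\psi$; by the inductive hypothesis $f_{j,\phi}$ is $C^1$ on $V'$ with all first partials strongly bounded, while $I_\psi$ is the identity in the first $m$ coordinates and $\psi(t_{m+1})$ in the last, so $I_\psi$ has $C^{k-1}$ components with strongly bounded first partials; the chain rule together with Lemma~\ref{sbfact} then guarantees that $f_{j,\phi\circ\psi}$ is $C^1$ on $V$ with strongly bounded first partials. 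The only point requiring genuine care is this last preservation step — that reparametrizing a second time for the sake of $f_n$ does not spoil the earlier components — and this is precisely what Lemma~\ref{sbfact} is designed to ensure.
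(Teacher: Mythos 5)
Your proposal is correct and follows essentially the same route as the paper: induction on $n$, applying the inductive hypothesis to the first $n-1$ components to get $\Phi'$ and $V'\Subset U$, then applying the scalar Lemma~\ref{limit} to each $f_{n,\phi}$ on $V'$, and finally intersecting the resulting sets and composing the reparametrizations, with Lemma~\ref{sbfact} handling the preservation of the earlier components. The paper phrases the induction as $n\to n+1$ with a fresh scalar $g$ appended, rather than peeling off the last coordinate, but this is the same argument.
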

\begin{proof} For $n=1$ this is Lemma~\ref{limit}. As an inductive assumption, let $f: U\to R^n$ be as in the hypothesis of the corollary
and $\Phi$ and $V$ as in its conclusion. Let $g: U \to R$ be a strongly bounded definable $C^1$-function such that $\partial g/\partial x_i$ is strongly bounded for $i=1,\dots,m$. Then the strongly bounded definable $C^1$-map $(f,g): U \to R^{n+1}$ has strongly bounded partial
$\partial(f,g)/\partial x_i=(\partial f/\partial x_i, \partial g/\partial x_i)$ for $i=1,\dots,m$. It now suffices to show that  there is a
$k$-parametrization $\Theta$ of a cofinite subset of $(0,1)$ and a set
$W\Subset U$ such that for all $\theta\in \Theta$: $I_{\theta}(W) \subseteq U$, $(f,g)_{\theta}$ is of class $C^1$ on $W$, and $ \partial (f,g)_\theta/\partial x_i$ is strongly bounded on $W$ for $i = 1, \dots, m + 1$.
To construct $\Theta$ and $W$, let $\phi\in \Phi$. Then applying Lemma~\ref{limit} to the function $g_{\phi}: V \to R$ gives a
$k$-parametrization $\Psi_{\phi}$ of a cofinite subset of $(0,1)$
and a set $V_{\phi}\Subset V$ such that for all $\psi\in \Psi_{\phi}$: $I_{\psi}(V_{\phi})\subseteq V$  and $(g_{\phi})_{\psi}=g_{\phi\circ \psi}$ is of class $C^1$
on $V_{\phi}$, and $\partial g_{\phi,\psi}/\partial x_i$ is strongly bounded on $V_{\phi}$.  Now we set 
$$\Theta:=\{\phi\circ \psi:\ \phi\in \Phi,\ \psi\in \Psi_{\phi}\}, \quad W:= \bigcap_{\phi\in \Phi} V_{\phi}.$$
It follows easily from Lemma~\ref{sbfact} that $\Theta$ and $W$ have the desired properties.   
\end{proof}

\noindent
To state the next corollary, let $U$ be a definable open subset of $R^{m+1}$.  Recall that for $t\in R$ we have
the definable open subset
$U^t$ of $R^m$ given by
 $$ U^{t}\:=\ \{(t_1,\dots, t_m)\in R^m:\ (t_1,\dots, t_m,t)\in U\}.$$  
We call a definable map $f: U \to R^n$ {\em of class $C^k$ in the first $m$ variables\/} if for every $t\in R$ the (definable) map 
$$f^t: U^t\to R^n, \qquad (t_1,\dots, t_m)\mapsto f(t_1,\dots, t_m,t)$$
is of class $C^k$. In that case $f^{(\alpha)}$ for $\alpha\in \N^m$ with $|\alpha|\le k$ denotes the definable map 
$$(t_1,\dots, t_m,t)\mapsto (f^t)^{(\alpha)}(t_1,\dots, t_m)\ :\ U \to R^n,$$
which for fixed $t$ is continuous as a function of $(t_1,\dots, t_m)$.

\begin{cor}\label{limitcorcor} Let $k,n \ge 1$, $U \Subset (0,1)^{m+1}$ and  let $f : U \to R^n$ be a strongly bounded definable map that is of class $C^k$ in the first $m$ variables, such that
$f^{(\alpha)}$ is strongly bounded for all $\alpha\in \N^{m}$ with $|\alpha| \le k$. 
Then for every $l\le k$ there is a $ V_l \Subset U$ and a $k$-parametrization
$\Phi_l$ of a cofinite
subset of $(0,1)$ such that for all $\phi \in \Phi_l$: $I_\phi(V_l) \subseteq U$, $f_{\phi}$ is of class $C^k$ on $V_l$ and
 $f_{\phi}^{(\alpha)}:= \big(f_{\phi}\big)^{(\alpha)}$ is strongly bounded on $V_l$ for all $\alpha\in \N^{m+1}$ with $|\alpha| \le k$, $\alpha_{m+1} \le l$.
\end{cor}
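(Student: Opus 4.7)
I would argue by induction on $l$ from $0$ up to $k$, using Corollary~\ref{limitcor} once per step to enlarge the set of bounded last-variable derivatives. For the base case $l=0$, apply $C^k$-cell decomposition (part A of the Appendix) to obtain a definable open $V_0\Subset U$ on which $f$ is of class $C^k$, and take $\Phi_0:=\{\id_{(0,1)}\}$; then $f_{\id}=f$, and the hypothesis on $f^{(\alpha)}$ supplies the required bounds for $\alpha\in\N^m\times\{0\}$ with $|\alpha|\le k$.

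For the inductive step $l\to l+1$ (with $l<k$), given $V_l$ and $\Phi_l$: for each $\phi\in\Phi_l$ bundle the partial derivatives $f_\phi^{(\gamma,l)}$, with $\gamma\in\N^m$ and $|\gamma|\le k-l-1$, into a single strongly bounded definable $C^1$-map $F_\phi$ on $V_l$; its partials $\partial_i F_\phi=f_\phi^{(\gamma+e_i,l)}$ for $i\le m$ are strongly bounded by the inductive hypothesis on $f_\phi$. Apply Corollary~\ref{limitcor} (with its ``$k$'' replaced by $k+1\ge 2$) to $F_\phi$, producing a $k$-parametrization $\Psi_\phi$ of a cofinite subset of $(0,1)$ and $V_\phi\Subset V_l$ such that $I_\psi(V_\phi)\subseteq V_l$ and $\partial_i(F_\phi)_\psi$ is strongly bounded on $V_\phi$ for all $i\le m+1$ and all $\psi\in\Psi_\phi$. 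Set
\begin{equation*}
V_{l+1}\ :=\ \bigcap_{\phi\in\Phi_l}V_\phi\Subset U,\qquad \Phi_{l+1}\ :=\ \{\phi\circ\psi:\phi\in\Phi_l,\ \psi\in\Psi_\phi\};
\end{equation*}
using $I_{\phi\circ\psi}=I_\phi\circ I_\psi$ and Lemma~\ref{sbfact}, $\Phi_{l+1}$ is a $k$-parametrization of a cofinite subset of $(0,1)$, $I_{\phi\circ\psi}(V_{l+1})\subseteq U$, and $f_{\phi\circ\psi}=f_\phi\circ I_\psi$ is of class $C^k$ on $V_{l+1}$.

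To check strong boundedness of $f_{\phi\circ\psi}^{(\gamma,j)}$ for $|\gamma|+j\le k$ and $j\le l+1$, expand via Lemma~\ref{compfact} applied in the last variable with $x$ held fixed:
\begin{equation*}
f_{\phi\circ\psi}^{(\gamma,j)}(x,y)\ =\ \sum_{i=1}^{j}\bigl(f_\phi^{(\gamma,i)}\bigr)_\psi(x,y)\cdot p_{ij}\bigl(\psi'(y),\dots,\psi^{(j-i+1)}(y)\bigr),
\end{equation*}
with the $j=0$ case reducing directly to the inductive hypothesis. For $i\le l$ every summand is strongly bounded, combining the inductive hypothesis on $f_\phi^{(\gamma,i)}$ with the $k$-parametrization bounds on $\psi$. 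The only genuinely new case is $j=l+1$, $i=j$, where $p_{jj}(\psi')=(\psi')^{l+1}$: this is the main obstacle, since Corollary~\ref{limitcor} does \emph{not} supply a bound on $(f_\phi^{(\gamma,l+1)})_\psi$ itself but only on its product with $\psi'$. The resolution is to rewrite
\begin{equation*}
\bigl(f_\phi^{(\gamma,l+1)}\bigr)_\psi\cdot(\psi')^{l+1}\ =\ \bigl[\partial_{m+1}(f_\phi^{(\gamma,l)})_\psi\bigr]\cdot(\psi')^l,
\end{equation*}
and observe that the bracket is a component of $\partial_{m+1}(F_\phi)_\psi$, strongly bounded by our invocation of Corollary~\ref{limitcor}, while $(\psi')^l$ is strongly bounded because $\psi$ is a member of a $k$-parametrization; thus Faà di Bruno conveniently isolates precisely the combination that the corollary controls.
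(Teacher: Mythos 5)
Your proof is correct and follows essentially the same route as the paper: induction on $l$, the base case from $C^k$-cell decomposition, the inductive step via Corollary~\ref{limitcor} applied to a bundle of partials $f_\phi^{(\gamma,l)}$, and the final expansion of $f_{\phi\circ\psi}^{(\alpha)}$ via Lemma~\ref{compfact}. There are two minor implementation differences. First, the paper applies Corollary~\ref{limitcor} once to a single map bundling $f_\phi^{(\alpha)}$ over \emph{all} $\phi\in\Phi_l$ and all $\alpha\in\N^{m+1}$ with $|\alpha|\le k-1$, $\alpha_{m+1}\le l$, yielding one common $\Psi$ and one $V_{l+1}$, whereas you apply it once per $\phi$ (and you bundle only the top level $\alpha_{m+1}=l$, which is all that is needed); both work, since a finite intersection of sets $\Subset V_l$ is again $\Subset U$, and a finite union of cofinite images remains cofinite. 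Second, and more substantively, you have correctly identified, isolated and resolved the one delicate point in the inductive step: the $i=j=l+1$ term of the chain-rule expansion, where the inductive hypothesis does not apply (it only covers $\alpha_{m+1}\le l$), and one must regroup $\bigl(f_\phi^{(\gamma,l+1)}\bigr)_\psi\cdot(\psi')^{l+1}=\bigl[\partial\bigl(f_\phi^{(\gamma,l)}\bigr)_\psi/\partial x_{m+1}\bigr]\cdot(\psi')^{l}$ so as to invoke precisely the output of Corollary~\ref{limitcor} together with the bound on $\psi'$. The paper's closing sentence is terse on exactly this case, appealing to the inductive hypothesis as if it covered all $i\le l+1$; your writeup is sharper there and makes visible why the corollary had to be invoked in the first place.
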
 
\begin{proof} The last sentence in the subsection on $C^k$-maps in part A of the Appendix gives $V_0 \Subset U$ such that $f$ is of class $C^k$ on $V_0$. Then $V_0$ and $\Phi_0 = \{\text{id}|_{(0,1)}\}$
have the desired properties for $l=0$.  Suppose, inductively, that $l< k$ and
$V_l$ and $\Phi_l$ are as stated in the Corollary.
Let
$$\Delta\ :=\ \{\alpha\in \N^{m+1}\ :\ |\alpha|\le k-1,\ \alpha_{m+1}\le l\},$$
set $\tilde{n}:=\#\Delta \cdot \#\Phi_l$, and let $F_1,\dots, F_{\tilde n}: V_l\to R^n$ enumerate the set of $C^1$-maps 
$$ \{f_{\phi}^{(\alpha)}:\ V_l\to R^n:\ \alpha\in \Delta,\ \phi\in \Phi_l\}.$$ 
Then we can apply Corollary~\ref{limitcor} to $F := (F_1,\dots,F_{\tilde n}) : V_l \to R^{\tilde n\cdot n}$ in the role of $f$, and $V_l$,
$\tilde {n}\cdot  n$ instead of $U, n$. This gives a $k$-parametrization
$\Psi$ of a cofinite subset of $(0, 1)$ and a set $V_{l+1}\Subset V_l$ such
that for all $\psi\in \Psi$:  $I_{\psi}(V_{l+1}) \subseteq V_l$,
 $F_{\psi}$ is of class $C^1$ on $V_{l+1}$, and $\partial F_\psi/\partial x_i$ is strongly bounded on $V_{l+1}$ for $i = 1, \dots, m + 1$. Next we set 
 $$\Phi_{l+1}\ :=\ \{\phi\circ \psi:\ \phi\in \Phi_l,\ \psi\in \Psi\}.$$
Then $\Phi_{l+1}$ is a $k$-parametrization of a cofinite subset of $(0,1)$ and $I_\theta(V_{l+1})\subseteq U$, with $f_{\theta}$ of class $C^k$ for all $\theta\in \Phi_{l+1}$. 

 Let $\theta=\phi\circ \psi$ with $\phi\in \Phi_l, \psi\in \Psi$ and
 let $\alpha\in \N^{m+1}$, $|\alpha|\le k$, $\alpha_{m+1}\le l+1$;
it remains to show that then $f_\theta^{(\alpha)}$ is strongly bounded on $V_{l+1}$.
If $\alpha_{m+1} = 0$, then this holds because 
$f_{\theta}^{(\alpha)} = \big(f_{\phi}^{(\alpha)}\big)_{\psi}$ and $f_{\phi}^{(\alpha)}$ is strongly bounded on $V_l$. Suppose that $\alpha_{m+1}>0$. Then $\alpha=\beta+(0,\dots,0,j)$ with $\beta_{m+1}=0$ and $j=\alpha_{m+1}\ge 1$, so for $a=(a_1,\dots,a_m, a_{m+1})\in V_{l+1}$ we have
\begin{align*} f_{\theta}^{(\alpha)}(a)&\ =\ \frac{\partial^j f_{\theta}^{(\beta)}}{\partial x_{m+1}^j}(a)\ =\ \frac{\partial^j \big(f_{\phi}^{(\beta)}\big)_{\psi}}{\partial x_{m+1}^j}(a)\\\ &=\ \sum_{i=1}^j\frac{\partial^i f_{\phi}^{(\beta)}}{\partial x_{m+1}^i}\big(a_1,\dots, a_m, \psi(a_{m+1})\big)\cdot p_{ij}\big(\psi^{(1)}(a_{m+1}),\dots, \psi^{(j-i+1)}(a_{m+1})\big) 
\end{align*}
using Lemma~\ref{compfact} and the polynomials $p_{ij}$ from that lemma for the last equality.  Since we assumed inductively that the
$\frac{\partial^i f_{\phi}^{(\beta)}}{\partial x_{m+1}^i}$ are strongly bounded on $V_l$ and $\psi^{(1)},\dots, \psi^{(k)}$ are strongly bounded
on $(0,1)$, $f_{\theta}^{(\alpha)}$ is strongly bounded on $V_{l+1}$.
\end{proof}

\section{Finishing the proofs of the parametrization theorems}\label{par4}

\noindent
In this section we assume that our ambient o-minimal field $R$ is $\aleph_0$-saturated. 
We consider the following statements depending on $m$: \begin{enumerate}
\item[${\rm(I)}_m$] For all $k, n \ge 1$, every strongly bounded definable map $f : (0, 1)^m \to R^n$ has a $k$-reparametrization.
\item[${\rm(II)}_m$] For all $k \ge 1$, every strongly bounded definable set $X\subseteq R^{m+1}$ has a $k$-parametrization.
\end{enumerate}
It is clear that ${\rm(I)}_0$ and ${\rm(II)}_0$ hold;
${\rm(I)}_1$ holds by Corollary~\ref{unirepfinal}. We proceed by induction to show that ${\rm(I)}_m$ and ${\rm(II)}_m$ hold for all $m$. So let $m\ge 1$ and suppose that ${\rm(I)}_l$ holds for all $l\le m$ and that ${\rm(II)}_l$ holds for all $l < m$. We show that then ${\rm(II)}_m$ holds and next that ${\rm(I)}_{m+1}$ holds. 
For ${\rm(II)}_m$, let $k \ge 1$ and let $X \subseteq R^{m+1}$ be definable and strongly bounded. In order to show that $X$ has a $k$-parametrization we
can reduce to the case that $X$ is a cell in $R^{m+1}$; we do the more difficult of the two cases, namely $X = (f,g)_Y$ where 
$Y$ is a (strongly bounded) cell in $R^m$, and $f, g: Y \to R$ are
strongly bounded continuous definable functions with $f(y) < g(y)$ for all
$y\in Y$; the other case, where $X$ is the graph of such a function $Y\to R$, is left to the reader.

Using ${\rm(II)}_{m-1}$ we have a $k$-parametrization $\Phi$ of $Y$. 
Set $l:=\dim Y$. 
Let $\phi\in \Phi$ be given. Then $\phi: (0,1)^l\to Y$ and ${\rm(I)}_l$ gives a $k$-reparametrization $\Psi_{\phi}$ of the map $(f\circ \phi, g\circ \phi):(0,1)^l\to R^2$. 
For $\psi\in \Psi_{\phi}$ we have $\psi:(0,1)^l \to (0,1)^l$, and we define 
$\theta_{\phi,\psi} : (0, 1)^{l+1} \to X$ by
$$ \theta_{\phi,\psi}(s, t)\ :=\ \big((\phi\circ \psi)(s),\ (1-t)\cdot(f\circ \phi\circ \psi)(s)+ t\cdot(g\circ \phi\circ \psi)(s)\big)$$
where $(s,t)=(s_1,\dots, s_l,t)\in (0,1)^{l+1}$. 
 Then the set $\{\theta_{\phi,\psi}:\ \phi\in \Phi,\ \psi\in \Psi_{\phi}\}$ is readily seen to be a $k$-parametrization of $X$, and we have established ${\rm(II)}_m$.

\medskip\noindent
For ${\rm(I)}_{m+1}$ we need only do the case $n=1$ by the remark
following the proof of Lemma~\ref{knrep}.
So let $k\ge 1$ and let $f : (0, 1)^{m+1}\to R$ be a strongly bounded definable function; our job is to show that $f$ has a $k$-reparametrization.

In the rest of this proof $t$ ranges over the interval $(0,1)$.
By ${\rm(I)}_m$ there is for all $t$ a $k$-reparametrization of the function $f^t : (0, 1)^m \to R$ given by $f^t(s)=f(s,t)$.
Now $R$ is $\aleph_0$-saturated, and together with Definable Selection (see end of Appendix~$B$ for details on this use of model-theoretic compactness) this yields an $N\in \N^{\ge 1}$ and  definable 
families $(\phi^t_{1}),\dots, (\phi^t_{N})$ 
of maps $$\phi^t_{j}\ :\ (0,1)^m\to (0,1)^m\qquad(j=1,\dots,N)$$ such that
$\Phi^t:=\{\phi^t_{1},\dots, \phi^t_{N}\}$ is for every $t$ a $k$-reparametrization of
$f^t$.

\medskip\noindent
Now, for $j = 1,\dots,N$ we define the function $f_j : (0,1)^{m+1}\to R$ by $$f_j(s,t)\ :=\ f\big(\phi_{j}(s,t),t\big),$$
where $\phi_j:(0,1)^{m+1}\to (0,1)^m$ is given by $\phi_{j}(s,t):=\phi^t_{j}(s)$. Consider the map
$$F\ :=\ \big(\phi_1,\dots,\phi_N,f_1,\dots,f_N\big)\ :\ (0,1)^{m+1}\to  R^{Nm+N}.$$
Then the hypotheses of Corollary~\ref{limitcorcor} are satisfied for $F$
and $(0,1)^{m+1}$ in the role of $f$ and $U$, and $Nm + N$ for $n$: this is just restating that $\Phi^t$ is a $k$-reparametrization of $f^t$, uniformly in $t$.
 The conclusion of that corollary for $l=k$ gives a set $V \Subset (0, 1)^{m+1}$ and a $k$-parametrization $\Psi$ of a cofinite subset of $(0,1)$ such that for all $\psi\in \Psi$ the map $F_{\psi}: (0,1)^{m+1}\to R^{Nm+N}$ is of class $C^k$ on $V$ with strongly bounded $F_{\psi}^{(\alpha)}$ on $V$ for all $\alpha\in\N^{m+1}$ with $|\alpha|\le k$. 
 
For $j=1,\dots,N$ and $\psi\in \Psi$, let $\phi_j*\psi: (0,1)^{m+1}\to (0,1)^{m+1}$ be given by 
$$(\phi_j*\psi)(s,t)\ :=\ \big(\phi_j(s,\psi(t)),\psi(t)\big)\ =\ \big(\phi_j^{\psi(t)}(s), \psi(t)\big).$$
The images of the $\psi\in \Psi$ cover a set $(0,1)\setminus \{t_1,\dots, t_d\}$
and for every $t$ the images of $\phi_1^t,\dots, \phi_N^t$ cover
$(0,1)^m$, and thus the images of the above $\phi_j*\psi$ cover
$(0, 1)^{m+1}$ apart from finitely many hyperplanes $x_{m+1} = t_i$.
Setting
$$W\ :=\ \bigcup_{1\le j\le N,\ \psi\in \Psi} (\phi_j*\psi)(V)$$
it follows that the definable set $(0,1)^{m+1}\setminus W$ has dimension $\le m$. 
Using the now established ${\rm(II)}_m$, let $\Theta_1$ be a 
$k$-parametrization of $V$ and $\Theta_2$ a $k$-parametrization of  $(0,1)^{m+1}\setminus W$. 
For $\theta\in \Theta_2$ we have $\theta: (0,1)^l\to (0,1)^{m+1}$ with $l\le m$ and then ${\rm(I)}_l$ yields 
a $k$-reparametrization $\Lambda_{\theta}$ of the function $f\circ \theta:(0,1)^l\to R$. The required $k$-reparametrization of $f$ is now given by
$$ \{(\phi_j*\psi)\circ \chi:\ j=1,\dots,N,\ \psi\in \Psi,\chi\in \Theta_1\}\cup\{\theta\circ \hat{\lambda}:\ \theta\in \Theta_2,\lambda\in \Lambda_{\theta}  \}$$
where $\hat{\lambda}: (0,1)^{m+1}\to (0,1)^l$ (for $l\le m$ as above)
is given by $\hat{\lambda}(t_1,\dots, t_{m+1}):=\lambda(t_1,\dots, t_l)$. 
This finishes the proof of ${\rm(I)}_{m+1}$, and the induction is complete. In particular, Theorem~\ref{pars} is now established.  Theorem~\ref{parm} requires one more easy step and we leave this to the reader.

\begin{cor}\label{corstr} Let $k,n\ge 1$; suppose $X \subseteq [-1, 1]^n$ is definable, $d:=\dim X\ge 0$. Then there exists a finite set $\Phi$ of definable
$C^k$-maps $f: (0, 1)^{d} \to  R^n$ such that \begin{enumerate}
\item[$\rm{(i)}$] $\bigcup_{f\in \Phi} \operatorname{image}(f) = X$; 
\item[$\rm{(ii)}$] $|f^{(\alpha)}(t)| \le 1$ for all $f\in \Phi$ and
$\alpha\in \N^{d}$ with $|\alpha| \le k$ and all 
$t \in (0,1)^{d}$.
\end{enumerate}
\end{cor}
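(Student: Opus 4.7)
The plan is to deduce Corollary~\ref{corstr} from Theorem~\ref{pars} by a straightforward rescaling. Since $X\subseteq [-1,1]^n$ is strongly bounded, Theorem~\ref{pars} yields a $k$-parametrization $\Phi_0$ of $X$. Each $\phi\in \Phi_0$ is then a definable $C^k$-map $\phi:(0,1)^d\to R^n$ with image in $X$, and every partial derivative $\phi^{(\alpha)}$, $|\alpha|\le k$, is strongly bounded. Finiteness of $\Phi_0$ and of $\{\alpha:|\alpha|\le k\}$ then gives a uniform $N\in \N^{\ge 1}$ with $|\phi^{(\alpha)}(t)|\le N$ for all $\phi\in \Phi_0$, all $|\alpha|\le k$, and all $t\in (0,1)^d$. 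Note that $|\phi(t)|\le 1$ is automatic since $\phi(t)\in X\subseteq [-1,1]^n$; the task is therefore only to force the bound $1$ on derivatives of positive order.

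For $d\ge 1$, I would set $r:=1/N$ and let $G$ be the finite set of $c=(c_1,\dots,c_d)$ whose coordinates lie in $\{j/(2N):0\le j\le 2N-2\}$; a direct check shows that the open cubes $c+(0,r)^d$ with $c\in G$ cover $(0,1)^d$ and lie in $(0,1)^d$ (since $c_i+r\le 1$). For each $\phi\in \Phi_0$ and $c\in G$, define $\psi_{\phi,c}:(0,1)^d\to R^n$ by $\psi_{\phi,c}(t):=\phi(c+rt)$. By the Chain Rule, $\psi_{\phi,c}^{(\alpha)}(t)=r^{|\alpha|}\phi^{(\alpha)}(c+rt)$, whence $|\psi_{\phi,c}^{(\alpha)}(t)|\le rN=1$ when $|\alpha|\ge 1$, and $|\psi_{\phi,c}(t)|\le 1$ when $|\alpha|=0$ by the automatic bound above. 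The finite set $\Phi:=\{\psi_{\phi,c}:\phi\in \Phi_0,\ c\in G\}$ then satisfies (ii), and its images cover $X$ because for each $\phi$ the cubes $c+(0,r)^d$ cover $(0,1)^d$.

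The degenerate case $d=0$ is trivial: $\Phi_0$ consists of constant maps $(0,1)^0\to R^n$ with values in $X\subseteq[-1,1]^n$, so the only derivative (at the unique $\alpha\in \N^0$) is the value itself, of absolute value $\le 1$, and one may take $\Phi:=\Phi_0$. No serious obstacle is expected anywhere; the whole content is that enlarging $\Phi_0$ by rescaling trades the bound $N$ for the bound $1$ at the cost of a factor $(2N-1)^d$ in the size of the parametrization.
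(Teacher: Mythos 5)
Your proposal is correct and uses essentially the same argument as the paper: invoke Theorem~\ref{pars} to get a $k$-parametrization with a uniform strong bound $N$ on derivatives, then cover $(0,1)^d$ by translates of a cube of side $1/N$ and precompose each parametrizing map with the corresponding affine bijection from $(0,1)^d$, so the Chain Rule scales $\phi^{(\alpha)}$ by $N^{-|\alpha|}$. The only difference is bookkeeping in the covering (you use $(2N-1)^d$ overlapping boxes on a grid of half-spacing $1/(2N)$ where the paper uses $(c+1)^d$), which is immaterial.
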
 
\begin{proof} Let $\Phi^*$ be a $k$-parametrization of $X$. Then (i) holds for $\Phi^*$ instead of $\Phi$ and (ii) holds for $\Phi^*$ instead of $\Phi$, with a certain $c\in \N^{\ge 1}$ in place of $1$. Cover 
$(0, 1)^d$ with $(c+1)^d$ translates of the `box' $(0,\frac{1}{c})^d$ 
and for each such translate $B$, let $\lambda_B : (0, 1)^d \to B$ be the obvious affine bijection. Then the set of maps $f\circ \lambda_B$ as $f$ varies over $\Phi^*$ and $B$ over the above translates is the required $\Phi$, since $(f\circ \lambda_B)^{(\alpha)}=c^{-|\alpha|}\cdot\big(f^{(\alpha)}\circ \lambda_B\big)$ for such $f$ and $B$
and $\alpha\in \N^{d}$ with $|\alpha| \le k$.
\end{proof} 

\noindent 
Definable Selection and $\aleph_0$-saturation lead to a uniform version, as explained in more detail at the end of part B of the Appendix: 

\begin{cor}\label{unifstr} Let $d,k,m,n$ be given with $k,n\ge 1$ and suppose $E\subseteq R^m$ and
$$Z\ \subseteq\ E\times [-1,1]^n\ \subseteq\ R^{m+n}$$ are definable with 
$\dim Z(s)=d$ for all $s\in E$.  Then there are $N\in \N^{\ge 1}$ and a definable set 
$F \subseteq E\times R^d\times R^{Nn}$ such that for all $s\in E$, $F(s)\subseteq R^d\times R^{Nn}$ is the graph of a $C^k$-map $(f_1,\dots, f_N): (0,1)^d\to (R^n)^N=R^{Nn}$ such that: \begin{enumerate}
\item[$\rm{(i)}$] $\bigcup_{j=1}^N \operatorname{image}(f_j) = Z(s)$; 
\item[$\rm{(ii)}$] $|f_j^{(\alpha)}(t)| \le 1$ for $j=1,\dots,N$,
$\alpha\in \N^{d}$ with $|\alpha| \le k$, and 
$t \in (0,1)^{d}$.
\end{enumerate}
\end{cor}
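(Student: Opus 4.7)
The plan is to upgrade the pointwise Corollary~\ref{corstr} to a uniform family version by combining $\aleph_0$-saturation of $R$ with Definable Selection (Proposition~\ref{defsel}). For each fixed $s \in E$, Corollary~\ref{corstr} applied to the strongly bounded definable set $Z(s) \subseteq [-1,1]^n$ with $\dim Z(s)=d$ yields a finite set $\Phi_s$ of definable $C^k$-maps $(0,1)^d \to R^n$ satisfying conditions (i) and (ii) for $Z(s)$. Two uniformities must then be extracted: that the cardinality $N_s := \#\Phi_s$ is bounded uniformly in $s$, and that the individual maps can be assembled into a single definable set $F \subseteq E \times R^d \times R^{Nn}$ whose sections $F(s)$ are the required graphs.

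First I would obtain a uniform bound $N$. Inspection of the proofs in Sections~\ref{par1}--\ref{par4} shows that every ingredient (cell decomposition, the monotonicity-based univariate reparametrization of Lemma~\ref{krep}, the limit passages of Lemmas~\ref{unilip} and \ref{unick}, and their inductive assembly in the proofs of Theorems~\ref{pars} and \ref{parm}) is itself uniform in definable parameters: applied to the definable family $\big(Z(s)\big)_{s\in E}$, the same inductive scheme produces a definable family of $k$-parametrizations whose cardinality depends only on the formula defining $Z$, not on $s$. An equivalent abstract route is: if $N_s$ were unbounded, then, after encoding ``$\le N$ parametrizations suffice for $Z(s)$'' as a definable condition on $s$ via this canonical construction, the descending chain of nonempty definable sets $\{s \in E : N \text{ parametrizations do not suffice}\}_{N\ge 1}$ would, by $\aleph_0$-saturation of $R$, have a common element $s^* \in E$, contradicting Corollary~\ref{corstr} for $Z(s^*)$.

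With a uniform $N$ in hand, Definable Selection supplies the definable family $F$. The condition on a tuple $(s, a) \in E \times R^D$ (where $a$ encodes the parameters for an $N$-tuple of $C^k$-maps $f_1,\dots,f_N\colon (0,1)^d \to R^n$ drawn from the canonical construction of Corollary~\ref{corstr}) expressing that the coded maps satisfy (i) and (ii) for $Z(s)$ is first-order definable, and by the previous step its projection onto $E$ equals $E$. A definable selector $s \mapsto a(s)$ then gives rise to $F$, defined as the set of $(s,t,y)\in E\times R^d \times R^{Nn}$ with $y$ the value at $t$ of the $N$-tuple of $C^k$-maps coded by $a(s)$. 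The main obstacle is the first step: rigorously verifying that the proofs in Sections~\ref{par1}--\ref{par4} are genuinely uniform in parameters, so that one recovers not merely a family of definable parametrizations but a definable family of them. This verification, which is where the full force of $\aleph_0$-saturation and Definable Selection is used, is precisely the content of the last two subsections of part B of the Appendix referenced at the end of the statement.
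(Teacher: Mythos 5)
Your proposal correctly locates the two tools ($\aleph_0$-saturation and Definable Selection) and correctly identifies where the difficulty lies, but the argument offered to close that gap is not a proof, and it is not the route the paper takes. Both of your routes rest on the same unjustified premise: that there is a uniformly definable (``canonical'') recipe assigning to each $s\in E$ a $k$-parametrization of $Z(s)$, so that ``$\le N$ parametrizations suffice for $Z(s)$'' becomes a definable condition on $s$. Absent such a recipe, that condition is a second-order quantifier over function space and is not expressible by an $L_A$-formula; absent expressibility, the descending chain $\{s\in E : N \text{ parametrizations do not suffice}\}_{N\geq 1}$ is not a chain of definable sets and $\aleph_0$-saturation cannot be applied to it. Route A (``inspect the proofs of Sections 4--7 and see they are uniform in parameters'') would, if carried out, amount to reproving Theorems~\ref{pars} and~\ref{parm} in a family-with-parameters form; it is not something one can simply quote.

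The paper circumvents the need for any canonical construction. Fix a finite $A\subseteq R$ with $E$, $Z$ being $A$-definable. For each $L$-formula $\phi(u,x,y)$ with parameter-tuple $u=(u_1,\dots,u_M)$ and a built-in value of $N$, let $E_\phi\subseteq E$ be the set of $s$ for which \emph{some} $b\in R^M$ makes $\phi(b,x,y)$ define the graph of an $N$-tuple of $C^k$-maps satisfying (i), (ii) for $Z(s)$. Each $E_\phi$ \emph{is} $A$-definable: the existential quantifier runs over $b\in R^M$, not over functions. By Corollary~\ref{corstr} every $s\in E$ lies in some $E_\phi$ (each parametrization is, after all, defined by \emph{some} formula with \emph{some} parameters), so $\{E\setminus E_\phi\}_\phi$ fails the finite intersection property, and $\aleph_0$-saturation gives finitely many $\phi_1,\dots,\phi_e$ with $E=\bigcup_i E_{\phi_i}$. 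One then partitions $E$ into pieces $E(i)$, applies Definable Selection on each piece to choose $b_i(s)$ definably, and pads with constant maps to equalize $N$. The crucial point your proposal misses is that within each piece $E(i)$ the \emph{shape} of the parametrization (its defining formula $\phi_i$) is frozen, so uniform definability is free, and no ``canonical construction'' is ever invoked or needed.
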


\noindent
The proof of Corollary~\ref{unifstr} uses that $R$ is $\aleph_0$-saturated, but this corollary goes
through without this assumption: pass to an $\aleph_0$-saturated elementary extension and then go back. Thus it applies to o-minimal expansions of the real field to give Theorem~\ref{YG+}, and we can also combine it with Theorem~\ref{covthm} to give:

\begin{cor}\label{unifcov} Let $n\ge 1$ and let an o-minimal expansion $\tilde{\R}$ of the real field be given. Suppose $E\subseteq \R^m$ and
$Z\subseteq E\times [-1,1]^n\subseteq \R^{m+n}$ are definable.
Then there is for every $\epsilon>0$ an $e = e(\epsilon,n)$ and a $K$ with the
following property: for all $s\in E$ with $\dim Z(s)<n$ and all $T$, at most $KT^\epsilon$ many hypersurfaces in $\R^n$ of degree $\le e$ are enough to cover the set $Z(s)(\Q,T)$. 
\end{cor}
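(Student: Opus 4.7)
The plan is to combine Corollary~\ref{unifstr} with Theorem~\ref{covthm}, stratifying $E$ by the dimension of the fiber $Z(s)$.

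First I would fix $\epsilon > 0$. For each $d \in \{0,1,\dots,n-1\}$, the function $e \mapsto \epsilon(d,n,e)$ tends to $0$ as $e \to \infty$ by part~(3) of the asymptotic lemma preceding Theorem~\ref{covthm}. Since there are only finitely many such $d$, I can choose a single $e = e(n,\epsilon) \ge 1$ such that $\epsilon(d,n,e) \le \epsilon$ for every $d < n$. Set
$$k\ :=\ \max_{0 \le d < n}\bigl(b(d,n,e)+1\bigr),$$
so that Theorem~\ref{covthm} applies with this single $k$ uniformly in $d$ (note that raising $k$ only makes a strong $k$-parametrization more constrained, so $k$-parametrizations are also $k'$-parametrizations for any $k' \le k$). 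Let $c = c(n,e) := \max_{d<n} c(d,n,e)$.

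Next I would stratify by fiber dimension. For each $d \in \{0,\dots,n-1\}$, the set $E_d := \{s \in E : \dim Z(s) = d\}$ is definable (by the uniformity of dimension in definable families). Applying Corollary~\ref{unifstr} to the restriction of $Z$ over $E_d$ (with the chosen $k$) gives an integer $N_d \in \N^{\ge 1}$ and, for each $s \in E_d$, strong $k$-parametrizations $f_1^{s},\dots,f_{N_d}^{s} : (0,1)^d \to \R^n$ whose images cover $Z(s)$ and which satisfy $|f_j^{s,(\alpha)}(t)| \le 1$ for $|\alpha| \le k$. Since $d < n$, each $f_j^{s}$ meets the hypothesis of Theorem~\ref{covthm}.

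Now for $s \in E_d$, applying Theorem~\ref{covthm} to each of the $N_d$ images gives that $Z(s)(\Q,T)$ is covered by at most $N_d \cdot c\, T^{\epsilon(d,n,e)} \le N_d \cdot c\, T^{\epsilon}$ hypersurfaces in $\R^n$ of degree $\le e$. Setting
$$K\ :=\ c \cdot \max_{0 \le d < n} N_d$$
gives a bound $KT^{\epsilon}$ that works for every $s \in E$ with $\dim Z(s) < n$, and both $e$ and $K$ depend only on $\epsilon$, $n$, and the data $E,Z,\tilde{\R}$ as required.

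The only mild obstacle is ensuring a single $k$ works for all $d < n$; this is resolved simply by taking the maximum, since Corollary~\ref{unifstr} produces parametrizations of any prescribed smoothness class and a strong $k$-parametrization is a fortiori a strong $k'$-parametrization for $k' \le k$.
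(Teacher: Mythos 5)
Your proof follows the same route as the paper: stratify $E$ by the fiber dimension $d = \dim Z(s)$, invoke Corollary~\ref{unifstr} on each stratum, and then apply Theorem~\ref{covthm} to each of the resulting strong parametrizations. Your observations that a strong $k$-parametrization is a fortiori a strong $k'$-parametrization for $k'\le k$, and that a single $e$ can be chosen to satisfy $\epsilon(d,n,e)\le\epsilon$ for all relevant $d$ simultaneously because there are finitely many of them, are both correct and in fact make the uniformity in $d$ slightly more explicit than the paper's own wording.

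There is, however, a genuine gap around $d=0$. Theorem~\ref{covthm} is stated for $e,m,n\ge 1$ with $m<n$, so $m\ge 1$ is required, and the quantities $b(m,n,e)$, $B(m,n,e)$, and hence $\epsilon(m,n,e)$ are defined in Section~\ref{P1} only for $m\ge 1$ (likewise the asymptotic lemma has the hypothesis ``fixed $m,n\ge 1$''). Your sentence ``Since $d<n$, each $f_j^s$ meets the hypothesis of Theorem~\ref{covthm}'' is therefore false when $d=0$: no such $\epsilon(0,n,e)$ exists and the theorem cannot be applied. The $d=0$ case must be disposed of separately, which is easy: when $\dim Z(s)=0$ the fiber is a finite set whose cardinality is uniformly bounded over the stratum (by o-minimality), so at most that bound's worth of hyperplanes of degree $\le 1$ suffice. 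The paper explicitly does exactly this before passing to $d\ge 1$; inserting that observation closes the gap in your argument.
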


\noindent
The expression ``$e=e(\epsilon,n)$'' means: $e$ can be chosen to depend only on $\epsilon$ and $n$. The proof below uses the numbers
$\epsilon(d,n,e):=\frac{dneD(n,e)}{B(d,n,e)}$ from Section~\ref{P1}.

\begin{proof} Replacing $E$ by finitely many definable subsets over each of which
$\dim Z(s)$ takes a given value, we arrange that for a certain $d<n$ we have $\dim Z(s)=d$ for all $s\in E$.
If $d=0$, then we have $K\in \N^{\ge 1}$ such that
$\# Z(s)\le K$ for all $s\in E$, and so at most $K$ hypersurfaces in $\R^n$
of degree $\le 1$ are enough to cover $Z(s)$. Assume $d\ge 1$. Take $e\ge 1$ such that 
$\epsilon(d, n, e) \leq \epsilon$ and set $k := b(d, n, e)+1$ as in Theorem~\ref{covthm}.  Corollary~\ref{unifstr} gives an $N\in N^{\ge 1}$ and for every $s\in E$ maps $f_1,\dots, f_N: (0,1)^d\to R^n$
of class $C^k$ such that $Z(s)=\bigcup_{j=1}^N \operatorname{image}(f_j)$ and $|f_j^{(\alpha)}(t)|\le 1$ for $j=1,\dots, N$ and all
$\alpha\in \N^d$ with $|\alpha|\le k$ and all $t\in (0,1)^d$. Applying Theorem~\ref{covthm} to each map $f_j$ separately we obtain that for
$K := N \cdot C(d, n, e)$ at most $KT^\epsilon$ many hypersurfaces in $\R^n$ of degree $\le e$ are enough to cover the set $Z(s)(\Q,T)$. 
\end{proof}

\section{Strengthening and Extending the Counting Theorem}\label{elab}

\noindent
In this section we fix an o-minimal expansion $\tilde{\R}$ of the real field, and {\em definable} is with respect to $\tilde{\R}$. Throughout $n\ge 1$ and $E\subseteq \R^m$ and $X\subseteq E\times \R^n$ are definable. 

A closer look at the proof of Theorem~\ref{PWr} gives useful extra information about the definable subsets $V(s)$ of $X(s)^{\alg}$: Theorem~\ref{pwvar}. To express this information efficiently requires the notion of a {\em block family}, which is here simpler than in \cite{P2} and well suited to the inductive set-up of Section~\ref{pct}. See the subsection {\em Dimension\/} in part A of the Appendix for the local dimension $\dim_a$ used in defining blocks.

\subsection*{A block family version of the Counting Theorem} Let $d\le n$. 
A {\em block in $\R^n$ of dimension $d$\/}
is a definable connected open subset of a semialgebraic set $A\subseteq \R^n$ for which
$\dim_a A =d$ for all $a\in A$.
Thus the empty subset of $\R^n$ counts as a block in $\R^n$ of dimension $d$, but if $B$ is a nonempty block in $\R^n$ of dimension $d$, then $\dim B = d$. Also, a nonempty block of dimension $0$ in $\R^n$ consists just of one point. A {\em block family in $\R^n$ of dimension $d$\/} is a definable set $V\subseteq E\times \R^n$ 
all whose sections $V(s)$ are blocks in $\R^n$ of dimension $d$. Here are two easy lemmas:

\begin{lemma}\label{lemba} Suppose $U\subseteq \R^m$ is open and semialgebraic, $m\ge 1$, and $f: U \to \R^n$ is semialgebraic and maps $U$ homeomorphically onto
$f(U)$. Then $f$ maps any block $B\subseteq U$ in $\R^m$ of dimension $d\le m$ 
onto a block $f(B)$ in $\R^n$ of dimension $d$. 
\end{lemma}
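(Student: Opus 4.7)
The plan is to produce, from a semialgebraic witness $A\subseteq \R^m$ of blockhood for $B$, a semialgebraic set $A'\subseteq \R^n$ of pure local dimension $d$ such that $f(B)$ is a definable, connected, open subset of $A'$.

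First I would reduce to the case $A\subseteq U$. By definition, $B$ is a definable connected open subset of some semialgebraic $A\subseteq\R^m$ with $\dim_a A = d$ for all $a\in A$. Since $B\subseteq U$ and $U$ is open in $\R^m$, replacing $A$ by the semialgebraic set $A\cap U$ preserves pure local dimension $d$ (openness of $U$ gives $\dim_a(A\cap U)=\dim_a A$ for $a\in A\cap U$) and keeps $B$ open inside $A\cap U$. So we may assume $A\subseteq U$.

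Next I would set $A':=f(A)\subseteq \R^n$. Since $f$ is semialgebraic and $A$ is semialgebraic, $A'$ is semialgebraic. Because $f$ maps $U$ homeomorphically onto $f(U)$, its restriction $f|_A:A\to A'$ is a semialgebraic homeomorphism (its inverse is the restriction of the semialgebraic map $f^{-1}:f(U)\to U$). By the standard invariance of local dimension under semialgebraic (equivalently, definable) homeomorphisms between semialgebraic sets --- a fact from the Dimension subsection of the Appendix --- we get $\dim_{f(a)}A' = \dim_a A = d$ for every $a\in A$, i.e.\ $\dim_b A' = d$ for all $b\in A'$.

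Finally I would verify the three remaining properties of $f(B)$: it is definable (image of a definable set under a definable map), connected (continuous image of the connected set $B$), and open in $A'$ (since $f|_A$ is a homeomorphism and $B$ is open in $A$). This exhibits $f(B)$ as a block in $\R^n$ of dimension $d$. The only nontrivial ingredient is the local-dimension preservation in the semialgebraic homeomorphism step; everything else is bookkeeping.
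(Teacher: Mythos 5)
The paper states Lemma~\ref{lemba} without proof (it is introduced as one of ``two easy lemmas''), so there is no author argument to compare against; your job was to fill the gap, and your proof does so correctly. The reduction to $A\subseteq U$ is sound (openness of $U$ guarantees $\dim_a(A\cap U)=\dim_a A$ for $a\in A\cap U$, and $B$ remains connected and open in $A\cap U$ because $B\subseteq U$), the restriction $f|_A\colon A\to A'=f(A)$ is indeed a semialgebraic homeomorphism, and the final three verifications for $f(B)$ (definable, connected, open in $A'$) are immediate.

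One small citation point: the Dimension subsection of Appendix~A does not literally state that local dimension is invariant under definable homeomorphisms; what it states is invariance of \emph{global} dimension under definable injections (item (vi)) together with the definition of $\dim_a X$ via shrinking witness neighborhoods. The local statement you need does follow readily from those two: if $h\colon A\to A'$ is a definable homeomorphism, $a\in A$, and $V'$ is a definable neighborhood of $h(a)$ in $\R^n$ witnessing $\dim_{h(a)}A'$, then $h^{-1}(A'\cap V')$ is open in $A$ and contains $a$, hence contains $A\cap W$ for some definable $W\ni a$ small enough to also witness $\dim_a A$; item (vi) then gives $\dim_a A=\dim(A\cap W)\le\dim(A'\cap V')=\dim_{h(a)}A'$, and the reverse inequality by symmetry. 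So the proof is complete, but this step deserves the sentence of justification rather than a bare appeal to the appendix.
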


\noindent
In the proof of Theorem~\ref{pwvar} we apply Lemma~\ref{lemba} for every $I\subseteq \{1,\dots,n\}$ to the map $a\mapsto b: \{a\in \R^n: a_i\ne 0 \text{ for }i\in I\}\to \R^n$ with $b_i=a_i^{-1}$ for $i\in I$ and $b_i=a_i$
for $i\notin I$; these maps extend the maps $f_I$ from Section~\ref{pct}.

\medskip
\begin{lemma}~\label{lembb} Let $B$ be a block in $\R^n$ of dimension $d\le n$. Then $B$ is a union of connected semialgebraic subsets of dimension $d$.
\end{lemma}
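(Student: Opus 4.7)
The plan is to show, for each $a\in B$, that $a$ lies in some connected semialgebraic subset of $B$ of dimension $d$; taking the union of these over all $a\in B$ then yields the lemma. The case $B=\emptyset$ is trivial, so assume $B\ne\emptyset$, in which case $\dim B=d$.

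First I would pass to a semialgebraic local picture. Since $B$ is open in the subspace topology on $A$, there is an open $V\subseteq\R^n$ with $B=V\cap A$. I can then choose a semialgebraic open box $U$ with rational endpoints satisfying $a\in U\subseteq V$, so that $A':=U\cap A$ is a semialgebraic subset of $B$ containing $a$. From $\dim_b A=d$ for every $b\in A$, together with the definition of local dimension applied to neighborhoods inside $U$, one checks that $\dim_b A'=d$ for every $b\in A'$, so $A'$ has pure dimension $d$.

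Next I would apply semialgebraic cell decomposition, refined so that the frontier condition holds (i.e., $\overline{C}\setminus C$ is a union of cells of the decomposition, for every cell $C$), to partition $A'$ into finitely many semialgebraic cells $C_1,\dots,C_r$. Let $C_*$ be the unique cell containing $a$. If $\dim C_*=d$, set $K_a:=C_*$. Otherwise, pure dimension forces the existence of a cell $C$ with $a\in\overline{C}$ and $\dim C\ge d$, hence $\dim C=d$; the frontier condition then gives $C_*\subseteq\overline{C}$, and I set $K_a:=C_*\cup C$. In either case $K_a$ is semialgebraic, of dimension $d$, contained in $A'\subseteq B$, and contains $a$. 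It is connected: $C$ is connected, and every point of $C_*$ lies in $\overline{C}$, so every open neighborhood of such a point meets $C$, which forbids any clopen separation of $C_*\cup C$.

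The main obstacle is securing the frontier condition. Standard semialgebraic cell decomposition produces cells that partition $A'$ but need not satisfy $\overline{C}\setminus C=\bigsqcup_k D_k$ with $D_k$ cells of the decomposition; this refinement is a well-known but technical strengthening of cell decomposition in the o-minimal setting, which the appendix should provide or let me derive. If it is not directly available, the same conclusion can be reached by invoking a semialgebraic triangulation of $A'$ and taking $K_a$ to be the closed star of $a$ in that triangulation, which is automatically semialgebraic, connected, contained in $A'\subseteq B$, and of dimension $\dim_a A'=d$.
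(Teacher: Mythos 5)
Your initial reduction --- constructing a semialgebraic open neighborhood $A' = U\cap A \subseteq B$ of pure local dimension $d$ around each point $a\in B$ --- is exactly the paper's starting move. But from there the paper finishes in a single step: the definably connected components of $A'$ are open and closed in $A'$ by Corollary~\ref{dcoc}, hence semialgebraic, connected, and open in $A$, and therefore of dimension $d$ by the hypothesis $\dim_b A = d$ for all $b\in A$. No decomposition of $A'$ is needed at all. You instead invoke cell decomposition with the frontier condition (or, as a fallback, semialgebraic triangulation); your construction of $K_a$ does succeed --- the clopen-separation argument for the connectedness of $C_*\cup C$ is right, and pure dimension plus finiteness of cells does give a $d$-cell $C$ with $a\in\overline{C}$ --- but neither the frontier condition nor triangulation is established in the paper's appendix, whereas the connected-component fact (Corollary~\ref{dcoc}) is. The simpler observation you missed is that once $A'$ is in hand, the connected component of $a$ in $A'$ already has every required property, so there is nothing left to decompose.
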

\begin{proof} Take semialgebraic $A\subseteq \R^n$ such that $\dim_a A=d$ for all $a\in A$, and $B$ is an open subset of $A$. For $b\in B$, take a semialgebraic open neighborhood $U$ of $b$ in $A$ such that $U\subseteq B$.
Now use that the connected components of $U$ are open in $A$, by Corollary~\ref{dcoc}, and thus of dimension $d$.  
\end{proof}

\begin{cor}\label{lembl} Let $Y\subseteq \R^n$ and $1\le d\le n$. \begin{enumerate}
\item[$\rm(i)$] if $B\subseteq Y$ and $B$ is a block in $\R^n$ of dimension $d$, then $B\subseteq Y^{\alg}$;
\item[$\rm(ii)$] if $V$ is a block family in $\R^n$ of dimension $d$, then the union of the sections of $V$ that are
contained in $Y$ is contained in $Y^{\alg}$. 
\end{enumerate} 
\end{cor}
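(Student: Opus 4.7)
The plan is to read both parts off directly from the two preceding lemmas and the very definition of $Y^{\alg}$, which is the union of the connected infinite semialgebraic subsets of $Y$.

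For (i), let $B \subseteq Y$ be a block in $\R^n$ of dimension $d$ with $d \geq 1$. First I would invoke Lemma~\ref{lembb} to write $B$ as a union of connected semialgebraic subsets of $\R^n$, each of dimension $d$. Since $d \geq 1$, every such semialgebraic subset $C$ is infinite (a positive-dimensional semialgebraic set cannot be finite), and since $C \subseteq B \subseteq Y$, each $C$ is a connected infinite semialgebraic subset of $Y$. By the definition of $Y^{\alg}$, each such $C$ lies in $Y^{\alg}$, and taking the union yields $B \subseteq Y^{\alg}$.

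For (ii), the argument is purely bookkeeping: a block family $V$ of dimension $d$ has every section $V(s)$ a block in $\R^n$ of dimension $d$. Let $s \in E$ be such that $V(s) \subseteq Y$. Then (i) applied with $B = V(s)$ gives $V(s) \subseteq Y^{\alg}$. Taking the union over all such $s$ gives the desired inclusion.

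I do not expect any genuine obstacle here: the substance is already carried by Lemma~\ref{lembb} (which reduces blocks to positive-dimensional connected semialgebraic pieces) and by the definition of the algebraic part. The only nuance worth flagging explicitly is the use of $d \geq 1$ to guarantee that the connected semialgebraic pieces supplied by Lemma~\ref{lembb} are infinite, which is precisely what makes them eligible members of $Y^{\alg}$.
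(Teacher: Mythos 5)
Your argument is correct and is exactly the intended one: the paper leaves the corollary without proof because, just as you observe, (i) is immediate from Lemma~\ref{lembb} (each piece is a connected semialgebraic set of dimension $d\ge 1$, hence infinite, hence in $Y^{\alg}$ by definition) and (ii) reduces to (i) sectionwise.
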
 

\noindent
For the inductive proof below we also define a {\em block family in $\R^0$ of dimension $0$\/} to be a definable set $V\subseteq E\times \R^0$, with $E\times \R^0$ identified with $E$ in the obvious way. 

\begin{theorem}\label{pwvar} 
Let $\epsilon$ be given. Then there are a natural number $N=N(X,\epsilon)\ge 1$, a block family 
$V_j\subseteq (E\times F_j)\times \R^n$ in $\R^n$ of dimension $d_j\le n$ with definable $F_j\subseteq \R^{m_j}$,  for $j=1,\dots,N$,
and a constant $c=c(X,\epsilon)$, such that: \begin{enumerate} 
\item[$\rm(i)$] $V_j(s,t)\subseteq X(s)$ for $j=1,\dots,N$ and $(s,t)\in E\times F_j$; 
\item[$\rm(ii)$] for all $T$ and all $s\in E$,  $X(s)(\Q,T)$ is covered by at most $cT^\epsilon$ blocks $V_j(s,t)$,  $(1\le j\le N,\ t\in F_j)$.
\end{enumerate}
\end{theorem}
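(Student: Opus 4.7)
The plan is to induct on $n$, mirroring the proof of Theorem~\ref{PWr} but tracking the block structure of every piece subtracted and every projection used.

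\textbf{Base case $n=1$.} By uniform finiteness of connected components in o-minimal families, each section $X(s)\subseteq\R$ has at most $N_1$ connected components, each of which is a point or an interval, hence a block in $\R$ of dimension $0$ or $1$. Definable Selection packages these components as finitely many block families $V_j$ whose sections exhaust $X(s)$, with bound $c=N_1$ independent of $T$.

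\textbf{Inductive step.} Assume the theorem holds below $n$. First, partition $X$ by the subsets $I\subseteq\{1,\dots,n\}$ and apply the coordinate inversions $f_I$ from Section~\ref{pct}; by Lemma~\ref{lemba} these send blocks to blocks, reducing to $X(s)\subseteq[-1,1]^n$. Next handle the interior: the connected components of the interior of $X(s)$ are open subsets of $\R^n$, hence blocks of dimension $n$ contained in $X(s)$, and uniform finiteness of components together with Definable Selection assembles them into finitely many block families of dimension $n$, contributing a $T$-independent count. Replacing $X$ by the complement of its interior reduces to the case where each $X(s)$ has empty interior.

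\textbf{Covering and pulling back.} With $X(s)\subseteq[-1,1]^n$ of empty interior, pick $e$ with $\epsilon(n,e)\le\epsilon/2$ in Theorem~\ref{Pp}, set $k=k(n,e)$, and take $M$ from Theorem~\ref{YG+}; then $X(s)(\Q,T)$ is covered by at most $Mc(n,e)T^{\epsilon/2}$ hypersurfaces of degree $\le e$. Parameterize these hypersurfaces via $\Hy(t)$, $t\in F$, and use Lemma~\ref{ef3} to decompose each $\Hy(t)$ into semialgebraic $\i^l$-cells $\mathcal{C}_l(t)$ that $p_{\i^l}$ maps homeomorphically onto open cells in $\R^{n_l}$ with $n_l<n$. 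Let $Y_l(s,t):=p_{\i^l}(X(s)\cap \mathcal{C}_l(t))\subseteq\R^{n_l}$. The inductive hypothesis applied to each $Y_l$ yields block families $U_{l,i}\subseteq ((E\times F)\times G_{l,i})\times \R^{n_l}$ of dimension $d_{l,i}\le n_l$ and a constant $c_l$ such that $Y_l(s,t)(\Q,T)$ is covered by at most $c_l T^{\epsilon/2}$ blocks $U_{l,i}(s,t,u)$. Applying Lemma~\ref{lemba} to the semialgebraic homeomorphism inverse to $p_{\i^l}\restrict\mathcal{C}_l(t)$, the pullbacks
$$W_{l,i}(s,t,u)\ :=\ \mathcal{C}_l(t)\cap p_{\i^l}^{-1}\big(U_{l,i}(s,t,u)\big)$$
are blocks in $\R^n$ of dimension $d_{l,i}<n$, contained in $X(s)$, and $W_{l,i}$ is a definable block family over $E\times (F\times G_{l,i})$. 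Since $p_{\i^l}$ preserves heights of rational points, the interior block families together with the pulled-back families $W_{l,i}$ cover $X(s)(\Q,T)$ by at most $cT^{\epsilon}$ blocks, for a suitable $c=c(X,\epsilon)$.

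\textbf{Main obstacle.} The chief technicality is verifying that the pullbacks $W_{l,i}$ are genuinely definable block families in the required sense (connected open subsets of semialgebraic sets of pure local dimension), which is handed to us by Lemma~\ref{lemba} applied to the semialgebraic homeomorphism $p_{\i^l}\restrict\mathcal{C}_l(t)$. The other nontrivial point is parameterizing the connected components of the interior uniformly in $s$ as block families, which rests on uniform finiteness of connected components in definable families combined with Definable Selection. Once this bookkeeping is in place, the counting is a straightforward reprise of the one carried out for Theorem~\ref{PWr}.
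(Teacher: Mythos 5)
Your argument is correct and follows essentially the same route as the paper: induction on $n$, packaging the interior's connected components as dimension-$n$ block families via uniform finiteness and Definable Selection, reducing to $[-1,1]^n$ via the $f_I$ and Lemma~\ref{lemba}, decomposing the covering hypersurfaces into cells and projecting via $p_{\i^l}$, and pulling the inductively obtained block families back through $p_{\i^l}^{-1}$ using Lemma~\ref{lemba} again. The only cosmetic differences are that you give an explicit $n=1$ base case (the paper instead defines block families in $\R^0$ of dimension $0$ so that the inductive step works at $n=1$ directly) and that you perform the $f_I$ reduction before removing the interior rather than after.
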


\noindent
This yields an improved Theorem~\ref{PWr} as follows. Let $V_1,\dots, V_N$ and $c$ be as in Theorem~\ref{pwvar}. Then
for all $s\in E$ the definable set $V(s)\subseteq \R^n$ given by 
$$V(s)\ :=\ \bigcup_{d_j\ge 1, t\in F_j} V_j(s,t)$$
is contained in  $X(s)^{\alg}$ and
$\oN(X(s)\setminus V(s), T) \le cT^{\epsilon}$ for all $T$.

\begin{proof} If Theorem~\ref{pwvar} holds for definable sets $X_1,\dots, X_\nu\subseteq E\times \R^n$, $\nu\in \N$, then also for $X=X_1\cup\dots\cup X_\nu$. We shall tacitly use this below.  

We proceed by induction on $n$, and follow the proof of Theorem \ref{PWr} closely. Set $V_0(s):=$ interior of $X(s)$. 
Then Theorem~\ref{celldec} and Proposition~\ref{fdc} give $M\in \N^{\ge 1}$ such that for all $s\in E$, 
$$ \#\{\text{connected components of }V_0(s)\}\ \le\ M.$$
Definable Selection (Proposition~\ref{defsel}) and the lexicographic ordering on $\R^n$ give definable subsets $V_1,\dots,V_M$ of
$E\times \R^n$ such that for all $s\in E$ the sets $V_1(s),\dots, V_M(s)$ are connected (possibly empty),  open in
$V_0(s)$, pairwise disjoint, with $V(s)\ =\ \bigcup_{i=1}^M V_i(s)$.
So $V_1,\dots, V_M$ are block families in $\R^n$ of dimension $n$;
we make them the first $M$ of the $V_1,\dots, V_N$ to be constructed.
Now replacing
$X$ with $X\setminus V_0$ we arrange that $X(s)$ has empty interior for all $s\in E$. Applying Lemma~\ref{lemba} to the natural extensions of the maps $f_I$, $I\subseteq \{1,\dots,n\}$,
 we arrange also that $X(s)\subseteq [-1,1]^n$ for all $s\in E$.

 Next, take $e$ and  $k=k(n,e)$ as in the proof of Theorem~\ref{piwi}. So we have $C=C(X,\epsilon)\in \R^{>}$ such that for any $s\in E$, $X(s)(\Q,T)$ is covered by at most $CT^{\epsilon/2}$ many hypersurfaces in $\R^n$ of degree $\le e$. Therefore it suffices to  find $V_1,\dots, V_N$ and $c$ as in the theorem but with $\rm(ii)$ replaced by
\begin{enumerate}
    \item[$\rm(ii)^{*}$] for all $T$, all $s\in E$, and all hypersurfaces $H$ of degree $\leq e$, $(X(s)\cap H)(\Q,T)$ is covered by at most $\frac{c}{C}T^{\epsilon/2}$ blocks $V_j(s,t)$, $(1\le j\le N,\ t\in F_j)$;
\end{enumerate} 
We use again the semialgebraic sets $\Hy, \mathcal{C}_1,\dots, \mathcal{C}_L \subseteq F\times \R^n$, and the definable sets $Y_{l}\subseteq E\times F\times \R^{n_{l}}$,  $l=1,\ldots, L$, as in the proof of Theorem \ref{piwi}. Since $n_{l}<n$, the induction assumption gives a natural number $N_{l}=N(Y_l,\epsilon)\ge 1$,
a block family $$W_{l,i} \ \subseteq\  \big((E\times F)\times G_{l,i}\big) \times \R^{n_l}$$ in $\R^{n_l}$ of dimension $d_{l,i}\leq n_{l}$ with definable $G_{l,i}\subseteq\R^{m_{l,i}}$, for $i=1,\dots, N_l$, and $B_{l}=B_{l}(Y_{l},\epsilon)\in \R^{>}$, such that
\begin{enumerate}
\item[$\rm(i)'$] $W_{l,i}(s,t,g)\subseteq Y_{l}(s,t)$ for $i=1,\dots, N_l$, $(s,t,g)\in (E\times F)\times G_{l,i}$; 
\item[$\rm(ii)'$] for all $T$ and all $(s,t)\in E\times F$,  $Y_{l}(s,t)(\Q,T)$ is covered by at most $B_{l}T^{\epsilon/2}$ blocks $W_{l,i}(s,t,g)$, $(1\le i\le N_l,\ g\in G_{l,i})$. 
\end{enumerate}
Set $N:=N_1+\cdots+N_{L}$, and for $l=1,\dots, L$, $1\le i\le N_l$ and
 $j=N_1+ \cdots + N_{l-1} + i$, set  $F_j:= F\times G_{l,i}$, 
and let $V_j\subseteq (E\times F_j)\times \R^n$ be the definable set given by 
$$V_j\big(s,(t,g)\big)=\mathcal{C}_{l}(t)\cap p^{-1}_{\i^{l}}\big(W_{l,i}(s,t,g)\big), \qquad(s\in E,\ t\in F,\ g\in G_{l,i}),$$
so $V_j$ is a block family in $\R^n$ of dimension $d_{l,i}<n$, by Lemma~\ref{lemba}.
It is easy to check that $V_1,\dots, V_N$ and $c:= C(B_1+\cdots+B_{L})$ are as desired.
\end{proof}


\subsection*{A generalization.}
In this subsection we fix $d \geq 1$.
Instead of rational points we now allow points with coordinates in a $\Q$-linear subspace of $\R$ of dimension $\le d$. 
Let $\lambda=(\lambda_1,\dots,\lambda_d)\in \R^d$, and set $\Q\lambda:=\Q\lambda_1+ \cdots +\Q\lambda_d\subseteq \R$. 
For $a\in \Q\lambda$ we set 
$$\H_{\lambda}(a)\ :=\ \min \{\H(q):\ q\in \Q^d,\ q\cdot \lambda=a\}\in \N^{\ge 1}.$$
Here $q\cdot \lambda:=q_1\lambda_1+ \cdots + q_d\lambda_d$. We define a height function $\H_{\lambda}$ on $(\Q\lambda)^n\subseteq \R^n$ by
$$ \H_{\lambda}(a)\ =\ \max\{\H_{\lambda}(a_1),\dots,\H_{\lambda}(a_n)\}\ \text{ for }
a=(a_1,\dots,a_n)\in (\Q\lambda)^n.$$
For $Y\subseteq \R^n$ we introduce its finite subsets $Y_{\lambda}(T)$ and their cardinalities:
$$Y_{\lambda}(T)\ :=\ \{a\in Y\cap (\Q\lambda)^n:\ \H_{\lambda}(a)\le T\}, \qquad 
\oN_{\lambda}(Y,T)\ :=\ \# Y_{\lambda}(T).$$

\begin{theorem}\label{pwupcor} Let any definable $Y\subseteq \R^n$ and any $\epsilon$ be given. Then there is a constant $c=c(Y,d,\epsilon)\in \R^{>}$ such that for all $T$ and all $\lambda\in \R^d$,
$$ \oN_{\lambda}(Y^{\tr},T)\ \le\ cT^{\epsilon}.$$ 
\end{theorem}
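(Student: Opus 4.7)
The plan is to reduce the statement to the block-family version, Theorem~\ref{pwvar}, applied to a suitably enriched definable family in a higher-dimensional space, exploiting that each point of $(\Q\lambda)^n$ arises as the image of a genuinely rational point under a linear (hence semialgebraic) map.

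I first define the semialgebraic bilinear map $\phi:\R^d\times\R^{nd}\to\R^n$ by $\phi(\lambda,q)_i:=\sum_{j=1}^d q_{ij}\lambda_j$, where $q=(q_{ij})\in\R^{nd}$ is viewed as an $n\times d$ matrix; write $\phi_\lambda(q):=\phi(\lambda,q)$. The set $Z:=\{(\lambda,q)\in\R^d\times\R^{nd}:\phi_\lambda(q)\in Y\}$ is definable in $\tilde\R$, with sections $Z(\lambda)=\phi_\lambda^{-1}(Y)$. By the definition of $\H_\lambda$, every $a\in Y\cap(\Q\lambda)^n$ with $\H_\lambda(a)\leq T$ admits some $q\in\Q^{nd}$ with $\phi_\lambda(q)=a$ and $\H(q)\leq T$, so $q\in Z(\lambda)(\Q,T)$. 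Applying Theorem~\ref{pwvar} to $Z$ (with $E:=\R^d$ and ambient dimension $nd$) yields $N\geq 1$, block families $V_j\subseteq(\R^d\times F_j)\times\R^{nd}$ of dimensions $d_j\leq nd$, and a constant $c_0$ such that $Z(\lambda)(\Q,T)$ is covered by at most $c_0T^\epsilon$ blocks $V_j(\lambda,t)$. Each point $a\in Y^{\tr}\cap(\Q\lambda)^n$ with $\H_\lambda(a)\le T$ is thereby matched to a block $V=V_j(\lambda,t)$ containing one of its rational preimages, and the task reduces to bounding how many such $a$ a single block can produce.

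The core argument is the following dichotomy: for a block $V$ with $d_j\geq 1$ and any $q\in V$, either $\phi_\lambda(q)\in Y^{\alg}$, or $\phi_\lambda$ is locally constant at $q$. Indeed, $V$ is open in some semialgebraic $A\subseteq\R^{nd}$ of pure dimension $d_j$, so (as in the proof of Lemma~\ref{lembb}) $q$ admits a connected semialgebraic neighborhood $C\subseteq V$. Since $\phi_\lambda$ is semialgebraic, $\phi_\lambda(C)\subseteq Y$ is connected and semialgebraic; if $\phi_\lambda(C)$ is infinite then $\phi_\lambda(q)\in Y^{\alg}$, and otherwise $\phi_\lambda|_C$ is constant, placing $q$ in the definable open set $V^0:=\{q'\in V:\phi_\lambda\text{ is constant on some neighborhood of }q'\text{ in }V\}$. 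On each connected component of $V^0$ the map $\phi_\lambda$ is constant (locally constant plus connected), with some single value in $Y$.

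Hence every $a\in Y^{\tr}\cap(\Q\lambda)^n$ matched to a positive-dimensional block $V_j(\lambda,t)$ is the value taken by $\phi_\lambda$ on one connected component of $V^0_{j,\lambda,t}$. The family $\{(\lambda,t,q):q\in V^0_{j,\lambda,t}\}$ is definable, so by uniform finiteness of components in definable families \cite[(III, 3.6)]{vdD2} there is $M_j\in\N$ bounding the number of connected components of $V^0_{j,\lambda,t}$ uniformly in $(\lambda,t)$; zero-dimensional blocks contribute at most one point each, which is absorbed by taking $M_j\geq 1$. Summing over the at most $c_0T^\epsilon$ blocks yields $\oN_\lambda(Y^{\tr},T)\leq c_0T^\epsilon\cdot\max_j M_j$, giving the desired constant $c:=c_0\cdot\max_j M_j$ depending only on $Y,d,\epsilon$. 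The main obstacle is the dichotomy step: I expect the linearity of $\phi_\lambda$ together with Lemma~\ref{lembb} to handle it cleanly, the key point being to pass from the (possibly non-semialgebraic) block $V$ to semialgebraic neighborhoods on which the standard criterion for membership in $Y^{\alg}$ applies.
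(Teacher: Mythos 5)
Your proof is correct and begins with exactly the same reduction as the paper: you define the bilinear map $\phi:\R^d\times\R^{nd}\to\R^n$ (the paper calls it $\pi$), pull $Y$ back to a definable family $Z\subseteq\R^d\times\R^{nd}$ parametrized by $\lambda$, observe that $\H_\lambda(a)\leq T$ yields a rational preimage $q\in Z(\lambda)(\Q,T)$, and apply Theorem~\ref{pwvar} to cover $Z(\lambda)(\Q,T)$ by at most $c_0T^\epsilon$ blocks. Where you diverge is in the treatment of a positive-dimensional block $V$: the paper pushes the block forward under $\phi_\lambda$ and invokes Corollary~\ref{corpath} (which in turn rests on the semialgebraic path-connectedness of blocks, Lemma~\ref{lempath}) to conclude that $\phi_\lambda(V)$ is either a single point or entirely contained in $Y^{\alg}$; this gives the sharp bound of at most one transcendental point per block, yields the constant $c=c_0$, and — because the pushed-forward blocks are explicit definable sets — delivers the stronger family statement Theorem~\ref{pwup}. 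You instead keep the block in $\R^{nd}$, argue the local dichotomy (algebraic image versus local constancy), and bound the locally-constant contribution by counting connected components of the definable sets $V^0_{j,\lambda,t}$ via the uniform-finiteness theorem \cite[(III, 3.6)]{vdD2}. This is valid and yields the same conclusion for a single $Y$, but with the looser constant $c_0\cdot\max_j M_j$, and it does not by itself produce the definable sets $V(s)\subseteq X(s)^{\alg}$ needed for the family version. It is worth noticing that your two cases are not actually symmetric: by Corollary~\ref{corpath}, whenever $V^0\neq V$ the image $\phi_\lambda(V)$ already has more than one point and so lies entirely in $Y^{\alg}$, meaning $V^0$ contributes nothing to $Y^{\tr}$ in that case and the genuine contribution is always at most one point per block; recognizing this would let you drop the component count and the appeal to (III, 3.6) entirely.
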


\subsection*{Proof of Theorem~\ref{pwupcor}} First a useful lemma about blocks:

\begin{lemma}\label{lempath} If $B$ is a block in $\R^n$ $($of some dimension$)$ and $p,q\in B$, then $\gamma(0)=p$ and $\gamma(1)=q$ for some continuous semialgebraic path $\gamma: [0,1]\to B$.
\end{lemma}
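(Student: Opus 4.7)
The plan is to fix $p \in B$ and show that the set
\[
B_p\ :=\ \{q \in B : \text{there is a continuous semialgebraic path } \gamma : [0,1] \to B \text{ with } \gamma(0)=p,\ \gamma(1)=q\}
\]
equals all of $B$. Since $B$ is connected by definition of a block, it suffices to show that $B_p$ is nonempty, open in $B$, and closed in $B$. Nonemptiness is immediate via the constant path at $p$.

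For openness, let $q \in B_p$. Since $B$ is open in some semialgebraic set $A \subseteq \R^n$ with $\dim_a A = d$ for all $a \in A$, there is an open ball $W$ around $q$ (a semialgebraic set) with $W \cap A \subseteq B$. The set $W \cap A$ is semialgebraic and has finitely many connected components, each semialgebraic and open in $W \cap A$ (hence open in $A$); let $V$ be the component containing $q$. Now $V$ is a connected semialgebraic set, and the classical fact that every connected semialgebraic set is semialgebraically path-connected (see e.g.\ the appendix) gives, for each $q' \in V$, a continuous semialgebraic path $\delta : [0,1] \to V$ from $q$ to $q'$. Concatenating a semialgebraic path $\gamma$ from $p$ to $q$ in $B$ with $\delta$ (and reparametrizing by a piecewise-linear rescaling of $[0,1]$, which is semialgebraic) yields a continuous semialgebraic path in $B$ from $p$ to $q'$. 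Thus $V \subseteq B_p$, and since $V$ is a neighborhood of $q$ in $B$, this shows $B_p$ is open in $B$.

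The complement $B \setminus B_p$ is open in $B$ by the same argument applied to any $q \in B \setminus B_p$: the neighborhood $V$ must lie entirely in $B \setminus B_p$, since otherwise concatenating would put $q$ in $B_p$. Hence $B_p$ is clopen and nonempty in the connected set $B$, so $B_p = B$, which gives the desired path to $q$.

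The only step that is not entirely routine is the appeal to semialgebraic path-connectedness of connected semialgebraic sets; this is the main analytic ingredient, and it should be cited from the appendix (or proved there) rather than reproved here. Everything else is standard point-set topology combined with the finiteness of connected components for semialgebraic sets.
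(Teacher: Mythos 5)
Your proof is correct and follows essentially the same route as the paper's: define the set of points reachable from a basepoint by a continuous semialgebraic path in $B$, show it is open by extending paths through the (semialgebraic, semialgebraically path-connected) connected component of a small semialgebraic open neighborhood in $A$, and invoke connectedness of $B$; the paper phrases this as the relation inducing a partition into open sets rather than the open-and-closed argument, but the two are interchangeable and rely on the same ingredients (Corollary~\ref{dcoc} and semialgebraic path-connectedness of definably connected sets).
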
 
\begin{proof} Even better, let $B$ be a connected open subset of a semialgebraic set $A\subseteq \R^n$, 
and let $p\in B$. We claim: there is for every $q\in B$ a continuous semialgebraic path $\gamma: [0,1]\to \R^n$ with $\gamma(0)=p$, $\gamma(1)=q$, and $\gamma([0,1])\subseteq B$. To see this, let $B(p)$ be the set of all $q\in B$ for which there is such a path. The sets $B(p)$ as $p$ ranges over 
$B$ form a partition of $B$, so it is enough to show that the $B(p)$ are open in $B$, which reduces to showing that $B(p)$ is a neighborhood of $p$ in $B$.
Now $B$ is open in $A$, so we have a semialgebraic open subset $U$ of $A$ with $p\in U\subseteq B$. The connected component $C$ of $U$ with $p\in C$ is
open in $U$ and semialgebraic by Corollary~\ref{dcoc} and the remarks preceding it. These remarks also give $C\subseteq B(p)$. 
\end{proof}

\begin{cor}\label{corpath} If $B$ is a block in $\R^{m}$ $($of some dimension$)$,
 $A$ is a semialgebraic subset of $\R^{m}$ with $B\subseteq A$, and $\phi: A \to \R^n$ is a continuous semialgebraic map such that 
$\phi(B)$ has more than one point, then
$\phi(B)=\phi(B)^{\alg}$.
\end{cor}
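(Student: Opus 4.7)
The inclusion $\phi(B)^{\alg}\subseteq \phi(B)$ is immediate from the definition of algebraic part, so my plan is to prove the reverse inclusion $\phi(B)\subseteq \phi(B)^{\alg}$. Fix an arbitrary $y\in \phi(B)$ and pick $p\in B$ with $\phi(p)=y$. I will exhibit a connected infinite semialgebraic subset of $\phi(B)$ containing $y$; then $y\in \phi(B)^{\alg}$, and since $y$ was arbitrary we are done.

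Because $\phi(B)$ has at least two points by hypothesis, I can choose $q\in B$ with $\phi(q)\neq y$. Lemma~\ref{lempath}, applied to the block $B$ and the two points $p,q\in B$, yields a continuous semialgebraic path $\gamma:[0,1]\to \R^m$ with $\gamma(0)=p$, $\gamma(1)=q$, and $\gamma([0,1])\subseteq B\subseteq A$. Composing with $\phi$ gives the continuous semialgebraic map $\phi\circ\gamma:[0,1]\to \R^n$ whose image $C:=(\phi\circ\gamma)([0,1])$ is contained in $\phi(B)$.

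It remains to observe that $C$ is a connected infinite semialgebraic subset of $\phi(B)$ containing $y$. Connectedness holds because $C$ is the continuous image of the connected interval $[0,1]$; semialgebraicity holds because the image of a semialgebraic set under a semialgebraic map is semialgebraic. Finally, $C$ contains both $y=(\phi\circ\gamma)(0)$ and $(\phi\circ\gamma)(1)=\phi(q)\neq y$, and any connected subset of $\R^n$ with more than one point is infinite (a finite set with at least two points in a Hausdorff space is disconnected). Thus $y\in \phi(B)^{\alg}$, as required. No real obstacle arises here: the work has been done in Lemma~\ref{lempath}, which supplies the semialgebraic path inside the block, and the corollary is a direct consequence.
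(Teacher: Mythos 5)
Your proof is correct and follows the same route as the paper: apply Lemma~\ref{lempath} to get a semialgebraic path in $B$ through a given point of $B$ and a second point mapping to a different value, then observe that its $\phi$-image is a connected, infinite, semialgebraic subset of $\phi(B)$. The paper states this in one line; you have simply supplied the details it leaves implicit.
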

\begin{proof} Use that the $\phi$-image of a path $\gamma$ as in Lemma~\ref{lempath} is a connected semialgebraic subset of $\phi(B)$. 
\end{proof} 

\noindent
The next result is basically a consequence of Theorem~\ref{pwvar}, as the proof will show.

\begin{theorem}\label{pwup} Given $\epsilon$, there are a natural number $N=N(X,d,\epsilon)\geq 1$, a definable set
$V_j\subseteq (E\times \R^d\times F_j)\times \R^n$ with definable $F_j\subseteq \R^{m_j}$, for $j=1,\dots,N$, and a constant $c=c(X,d,\epsilon)$, such that
for $j=1,\dots,N$ and all $(s,\lambda,t)\in E\times \R^d\times F_j$:\begin{enumerate}
\item[$\rm(i)$] $V_j(s,\lambda,t)\subseteq X(s)$ and $V_j(s,\lambda,t)$ is connected;
\item[$\rm(ii)$] if $\dim V_j(s,\lambda,t)\ge 1$, then $V_j(s,\lambda,t)\subseteq X(s)^{\alg}$,
\end{enumerate}
and such that for all $T$ and $(s,\lambda)\in E\times \R^d$, the set  
$X(s)_{\lambda}(T)$ is covered by at most $cT^\epsilon$ sections 
$V_j(s,\lambda, t),\ (1\leq j \leq N,\ t\in F_j)$.
\end{theorem}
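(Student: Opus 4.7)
The plan is to reduce Theorem~\ref{pwup} to Theorem~\ref{pwvar} by translating $\Q\lambda$-points of $X(s)\subseteq\R^n$ into rational points of an auxiliary definable family sitting in $\R^{dn}$. Consider the polynomial (hence semialgebraic) map
\[
\Phi\ :\ \R^d\times\R^{dn}\ \to\ \R^n,\qquad \Phi(\lambda,q)\ :=\ (q_1\cdot\lambda,\dots,q_n\cdot\lambda),
\]
where $q=(q_1,\dots,q_n)\in(\R^d)^n=\R^{dn}$. For each fixed $\lambda\in\R^d$ the map $\Phi_\lambda:=\Phi(\lambda,\cdot):\R^{dn}\to\R^n$ is $\R$-linear on $\R^{dn}$, and in particular semialgebraic. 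By the definition of $\H_\lambda$, a point $a\in(\Q\lambda)^n$ satisfies $\H_\lambda(a)\leq T$ if and only if there exists $q\in\Q^{dn}$ with $\Phi_\lambda(q)=a$ and $\H(q)\leq T$.

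Define the definable set
\[
X^{*}\ :=\ \{(s,\lambda,q)\in E\times\R^d\times\R^{dn}\ :\ \Phi_\lambda(q)\in X(s)\},
\]
and regard it as a definable family over the base $E\times\R^d\subseteq\R^{m+d}$ with sections in $\R^{dn}$. Apply Theorem~\ref{pwvar} to $X^{*}$ with the given $\epsilon$ to obtain an integer $N$, block families $W_j\subseteq(E\times\R^d\times F_j)\times\R^{dn}$ of dimensions $e_j\leq dn$ (with definable $F_j\subseteq\R^{m_j}$), and a constant $c$, such that $W_j(s,\lambda,t)\subseteq X^{*}(s,\lambda)$ for all $(s,\lambda,t)$, and such that for every $T$ and every $(s,\lambda)\in E\times\R^d$ the set $X^{*}(s,\lambda)(\Q,T)$ is covered by at most $cT^\epsilon$ of the blocks $W_j(s,\lambda,t)$.

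Now define
\[
V_j(s,\lambda,t)\ :=\ \Phi_\lambda\bigl(W_j(s,\lambda,t)\bigr)\ \subseteq\ X(s),
\]
which is definable since $\Phi$ is. Continuity of $\Phi_\lambda$ and connectedness of the block $W_j(s,\lambda,t)$ give that $V_j(s,\lambda,t)$ is connected, establishing (i). For (ii), if $\dim V_j(s,\lambda,t)\geq 1$ then $V_j(s,\lambda,t)$ has more than one point, so Corollary~\ref{corpath}, applied to the block $W_j(s,\lambda,t)\subseteq\R^{dn}$, to $A=\R^{dn}$, and to the continuous semialgebraic map $\phi=\Phi_\lambda$, yields $V_j(s,\lambda,t)=V_j(s,\lambda,t)^{\alg}$; since every connected infinite semialgebraic subset of $V_j(s,\lambda,t)\subseteq X(s)$ is such a subset of $X(s)$, this gives $V_j(s,\lambda,t)\subseteq X(s)^{\alg}$. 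For the covering bound: any $a\in X(s)_\lambda(T)$ equals $\Phi_\lambda(q)$ for some $q\in X^{*}(s,\lambda)(\Q,T)$, and $q$ lies in one of the $\leq cT^\epsilon$ blocks $W_j(s,\lambda,t)$, so $a$ lies in the corresponding $V_j(s,\lambda,t)$.

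The only subtlety is verifying that $\Phi_\lambda$ really is \emph{semialgebraic} (not merely definable in $\tilde\R$), since Corollary~\ref{corpath} is about semialgebraic maps. This holds because for each fixed $\lambda\in\R^d$ the map $\Phi_\lambda$ is $\R$-linear, and hence its graph is cut out by polynomial equations with coefficients in $\R$. No additional combinatorial argument beyond Theorem~\ref{pwvar} is needed; the substance of the generalization is entirely contained in the linearization trick that encodes height in the subspace $\Q\lambda$ as ordinary rational height of coordinate vectors in $\Q^{dn}$.
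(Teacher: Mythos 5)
Your proof is correct and takes essentially the same route as the paper: the paper defines the same auxiliary family $X^*$ (under the name $\pi$ instead of your $\Phi$), applies Theorem~\ref{pwvar} to get block families $V_j^*$ in $\R^{dn}$, pushes them forward by $\pi(\lambda,\cdot)$ to obtain the $V_j$, and invokes Corollary~\ref{corpath} for property (ii). Your explicit remarks on the height correspondence $\H_\lambda(a)\le T\Leftrightarrow\exists q\in\Q^{dn}$ with $\Phi_\lambda(q)=a$, $\H(q)\le T$, and on the semialgebraicity of $\Phi_\lambda$ for fixed $\lambda$, simply make explicit the points the paper leaves implicit.
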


\noindent
This yields a family version of Theorem~\ref{pwupcor} as follows. Let $V_1,\dots, V_N$ and $c$ be as in Theorem~\ref{pwup}. Then
for all $s\in E$ the definable set $V(s)\subseteq \R^n$ given by 
$$V(s)\ :=\ \bigcup\{V_j(s,\lambda, t): \ 1\le j\le N,\ (\lambda,t)\in \R^d\times F_j,\ \dim V_j(s,\lambda,t)\ge 1\}$$
is contained in  $X(s)^{\alg}$ and
$\oN_{\lambda}(X(s)\setminus V(s), T) \le cT^{\epsilon}$ for all $T$.

\begin{proof} Let $\pi: \R^d\times (\R^d)^n\to \R^n$ be given by $\pi(\lambda,a_1,\dots, a_n)=(\lambda \cdot a_1,\dots, \lambda \cdot a_n)$, where $a_1,\dots, a_n\in \R^d$.
Set
 $$ X^*\ :=\ \{(s,\lambda, a_1,\dots, a_n)\in (E\times \R^d)\times (\R^d)^n:\ 
 \big(s,\pi(\lambda, a_1,\dots, a_n)\big)\in X\},$$
 viewed as a definable family of subsets of $(\R^d)^n$. Note that
 for $s\in E$ and $\lambda\in \R^d$,
 $$(*)\qquad  \pi\big(\{\lambda\}\times X^*(s,\lambda)\big)\ \subseteq\ X(s), \qquad  \pi\big(\{\lambda\}\times X^*(s,\lambda)(\Q, T)\big)\ =\ X(s)_{\lambda}(T).  $$
We apply Theorem~\ref{pwvar} to $X^*$ in the role of $X$. It
gives $N=N(X^*,\epsilon)\ge 1$, a
block family $V_j^*\subseteq (E\times \R^d\times F_j)\times (\R^d)^n$ in $(\R^d)^n=\R^{dn}$ with definable $F_j\subseteq \R^{m_j}$, for $j=1,\dots,N$, and a constant $c=c(X^*,\epsilon)$ such that: \begin{enumerate}
\item[$\rm(i)^*$] $V_j^*(s,\lambda,t)\subseteq X^*(s,\lambda)$ for $j=1,\dots,N$ and $(s,\lambda,t)$ in $E\times \R^d\times F_j$; 
\item[$\rm(ii)^*$] for all $T$ and all $(s,\lambda)\in E\times \R^d$, the set  $X^*(s,\lambda)(\Q,T)$ is covered by at most $cT^\epsilon$ sections $V_j^*(s,\lambda, t)$, ($1\leq j \leq N$, $t\in F_j$).
\end{enumerate}
Now we set for $j=1,\dots, N$,
$$V_j\ :=\ \{\big(s,\lambda, t, \pi(\lambda, a)\big)\in (E\times \R^d\times F_j)\times \R^n:\ (s,\lambda,t,a)\in V_j^*\},$$
so $V_j(s,\lambda,t)=\pi\big(\{\lambda\}\times V_j^*(s,\lambda,t)\big)$ for $(s,\lambda,t)\in E\times \R^d\times F_j$. 
We now show that $V_1,\dots, V_N$ and $c(X,d,\epsilon):= c(X^*,\epsilon)$ have the desired properties.
Clause (i) is satisfied using $\rm(i)^*$ and $(*)$, and (ii) is satisfied in view of 
Corollary~\ref{corpath}. The rest follows from $\rm(ii)^*$ and $(*)$. 
\end{proof}

\subsection*{Extending the Counting Theorem to Algebraic Points}
Throughout this subsection we fix $d \geq 1$. Instead of rational points we now count algebraic points whose coordinates are of degree at most $d$ over $\Q$. We define the corresponding height of an algebraic number $\alpha \in \R$ with $[\Q(\alpha):\Q]\le d$  by
$$\H^{\text{poly}}_d(\alpha)\ :=\ \text{min}\{ \H(\xi):\ \xi\in \Q^d,\ \alpha^d+ \xi_1\alpha^{d-1} + \cdots + \xi_d=0\}\in \N^{\ge 1}. $$
(For us this height is notationally more convenient than the height for real algebraic numbers used by Pila in [P2]. The two heights are related as follows, where we use an extra subscript P for the height in [P2]: for $\alpha \in \R$ with $[\Q(\alpha):\Q]\le d$,
$$ \H^{\text{poly}}_{\text{P},d+1}(\alpha)\ \le\ \H^{\text{poly}}_d(\alpha)\ \le\ \H^{\text{poly}}_{\text{P},d+1}(\alpha)^2.$$
Thus the results below for our height also hold for the other height.)

\medskip\noindent
We extend the above height to all $\alpha \in \R$ by $\H^{\text{poly}}_d(\alpha):=\infty$ if $[\Q(\alpha):\Q]>d$, and to all points $\alpha=(\alpha_1,\dots, \alpha_n)\in \R^n$ by $\H^{\text{poly}}_d(\alpha):=\max\{\H^{\text{poly}}_d(\alpha_1),\dots,\H^{\text{poly}}_d(\alpha_n)\}$. 
For $Y\subseteq \R^n$ we introduce its finite subsets $Y_d(T)$ and their cardinalities:
$$ Y_d(T)\ :=\ \{\alpha \in Y:\ \H_d^{\text{poly}}(\alpha)\le T\},\qquad
\oN_{d}(Y,T)\ :=\ \# Y_d(T).$$

\begin{theorem}\label{pwalg}
Let $Y\subseteq \R^n$ be definable, and let $\epsilon$ be given. Then there is a constant $c=c(Y,d,\epsilon)$ such that for all $T$, $$N_{d}(Y^{\tr},T)\ \leq\ cT^{\epsilon}.$$
\end{theorem}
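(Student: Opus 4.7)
I would mirror the strategy used in the proof of Theorem \ref{pwupcor}: parametrize algebraic points by coefficient tuples of monic polynomials and apply the block version of the Counting Theorem \ref{pwvar} to a family of associated definable sets. Consider the semialgebraic set
\[ Z\ :=\ \{(\xi,\alpha)\in \R^{nd}\times\R^n\ :\ \alpha_k^d+\xi_{k,1}\alpha_k^{d-1}+\cdots+\xi_{k,d}=0,\ k=1,\dots,n\}, \]
whose projection to $\R^{nd}$ has fibers of size at most $d^n$. Applying semialgebraic cell decomposition (separately to each of the $n$ one-variable root sets, and then taking products) gives a finite semialgebraic partition $\R^{nd}=S_1\sqcup\cdots\sqcup S_M$ and continuous semialgebraic maps $\rho_{i,j}\colon S_i\to \R^n$, $j=1,\dots,N_i$, enumerating for each $\xi\in S_i$ all the real root tuples $\alpha$ with $(\xi,\alpha)\in Z$. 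Setting $Y_{i,j}:=\rho_{i,j}^{-1}(Y)\subseteq S_i$, which is definable, the definition of $\H^{\text{poly}}_d$ forces $\alpha=\rho_{i,j}(\xi)$ for some $(i,j)$ and some $\xi\in Y_{i,j}\cap\Q^{nd}$ with $\H(\xi)\le T$ whenever $\alpha\in Y_d(T)$, so
\[ Y_d(T)\ \subseteq\ \bigcup_{i,j}\rho_{i,j}\bigl(Y_{i,j}(\Q,T)\bigr). \]

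Next, apply Theorem \ref{pwvar} to each definable set $Y_{i,j}\subseteq\R^{nd}$ with the given $\epsilon$, obtaining a constant $c_{i,j}$ such that $Y_{i,j}(\Q,T)$ is covered by at most $c_{i,j}T^\epsilon$ blocks for all $T$, with every positive-dimensional block contained in $Y_{i,j}^{\alg}$. The crux is to show that each such block $B$ contributes at most one element to $\rho_{i,j}(B)\cap Y^{\tr}$. This is immediate if $\dim B=0$. If $\dim B\ge 1$, then $B\subseteq Y_{i,j}^{\alg}$, and $B$ is a definable connected open subset of some semialgebraic $A\subseteq\R^{nd}$; replacing $A$ by $A\cap S_i$ keeps it semialgebraic and keeps $B$ open in it, which lets us apply Corollary \ref{corpath} to the continuous semialgebraic map $\rho_{i,j}|_{A\cap S_i}$. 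Thus either $\rho_{i,j}(B)$ is a single point, or $\rho_{i,j}(B)=\rho_{i,j}(B)^{\alg}$; in the second case, since $\rho_{i,j}(B)\subseteq Y$, every point of $\rho_{i,j}(B)$ lies in an infinite connected semialgebraic subset of $Y$, hence in $Y^{\alg}$, so $\rho_{i,j}(B)\cap Y^{\tr}=\emptyset$.

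Summing over all blocks in each cover and over all pairs $(i,j)$ then gives $\#Y^{\tr}_d(T)\le\bigl(\sum_{i,j}c_{i,j}\bigr)T^\epsilon=cT^\epsilon$, with $c:=\sum_{i,j}c_{i,j}$ depending only on $Y$, $d$, $\epsilon$. The main technical point I expect to need care with is the uniform semialgebraic parametrization of real roots in the first step, at the $\xi$ where the number of real roots jumps; once that decomposition is in place, the block-counting argument is a clean adaptation of the one used for Theorem \ref{pwupcor}, with Corollary \ref{corpath} doing exactly the same work as it does there.
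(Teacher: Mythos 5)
Your proposal is correct and follows essentially the same route as the paper: the paper packages your root-parametrization step as Lemma~\ref{lalg} (continuous semialgebraic maps $\phi_l:D_l\to\R^n$ whose graphs cover $A_{n,d}$, with a coefficient tuple realizing $\H^{\text{poly}}_d$), pulls $Y$ back to the coefficient space via $\phi_l$, applies Theorem~\ref{pwvar} there, and uses Corollary~\ref{corpath} in exactly the way you do to conclude that the image of a positive-dimensional block is either a point or lands in $Y^{\alg}$. The only organizational difference is that the paper first proves the uniform family version (Theorem~\ref{palg}) and deduces \ref{pwalg} from it, whereas you argue directly for a single set $Y$.
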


\noindent
We shall use the following easy consequence of semialgebraic cell decomposition: 

\begin{lemma}\label{lalg} Let $A_{n,d}\subseteq \R^{n\times d}\times \R^n$
be the semialgebraic set
$$\{(\xi,\alpha)\in \R^{n\times d}\times \R^n:\  \alpha_i^d+\xi_{i1}\alpha_i^{d-1} + \cdots +\xi_{id}=0 \text{ for }i=1,\ldots,n\}. $$ Then we have a natural number
$L=L(n,d)\ge 1$, a semialgebraic set 
$D_l~\subseteq~\R^{n\times d}$ with a semialgebraic continuous map $\phi_l: D_l\to  \R^n$, for $l=1,\dots, L$, such that $A_{n,d}=\bigcup_{l=1}^L \operatorname{graph}(\phi_l)$.
It follows that for all $\alpha \in \R^n$ with $ \H^{\operatorname{poly}}_d(\alpha)<\infty$
there is an $l\in \{1,\ldots, L\}$ and a $\xi\in D_l$ such that 
$\phi_l(\xi)=\alpha$ and $\H(\xi)=\H^{\operatorname{poly}}_d(\alpha)$. 
\end{lemma}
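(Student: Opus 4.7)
The plan is to first handle the case $n=1$ by applying semialgebraic cell decomposition to a set with finite projection-fibers, then lift to general $n$ by a product construction, and finally verify the height claim directly from the definition of $\H_d^{\operatorname{poly}}$.

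For $n=1$, consider the semialgebraic set
$$B\ :=\ \{(\xi,\alpha)\in \R^d\times \R:\ \alpha^d+\xi_1\alpha^{d-1}+\cdots+\xi_d=0\}.$$
The projection $B\to \R^d$ has every fiber of size at most $d$. I would apply Theorem~\ref{celldec} to partition $\R^{d+1}$ into $\i$-cells compatible with $B$, writing $B$ as a finite disjoint union of such cells. If some such cell had $\i=(i_1,\dots,i_{d+1})$ with $i_{d+1}=1$, its fibers over the projection onto $\R^d$ would be nonempty open intervals, contradicting finiteness of the fibers of $B$. Thus every cell of $B$ has $i_{d+1}=0$ and is therefore the graph of a continuous semialgebraic function $\psi_j:D_j'\to\R$ on a semialgebraic $D_j'\subseteq \R^d$, giving $B=\bigsqcup_{j=1}^{L_1}\operatorname{graph}(\psi_j)$.

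For general $n$, since $A_{n,d}$ is the conjunction of $n$ independent conditions of the above form, one per index $i$, I would take products: for each $\mathbf{l}=(l_1,\dots,l_n)\in \{1,\dots,L_1\}^n$ set $D_{\mathbf{l}}:=D_{l_1}'\times\cdots\times D_{l_n}'\subseteq \R^{n\times d}$ and $\phi_{\mathbf{l}}(\xi_1,\dots,\xi_n):=(\psi_{l_1}(\xi_1),\dots,\psi_{l_n}(\xi_n))$. Then $A_{n,d}=\bigsqcup_{\mathbf{l}}\operatorname{graph}(\phi_{\mathbf{l}})$, and after reindexing we may take $L(n,d):=L_1^n$.

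For the consequence, let $\alpha\in \R^n$ with $\H^{\operatorname{poly}}_d(\alpha)<\infty$. Each $\H_d^{\operatorname{poly}}(\alpha_i)$ is then attained by some $\xi^{(i)}\in \Q^d$, since its defining set is nonempty and heights lie in $\N^{\ge 1}$. Set $\xi:=(\xi^{(1)},\dots,\xi^{(n)})\in \Q^{n\times d}\subseteq \R^{n\times d}$; then $(\xi,\alpha)\in A_{n,d}$ and $\H(\xi)=\max_i\H(\xi^{(i)})=\max_i\H^{\operatorname{poly}}_d(\alpha_i)=\H^{\operatorname{poly}}_d(\alpha)$. The decomposition of the first part then places $\xi\in D_l$ with $\phi_l(\xi)=\alpha$ for some $l$, finishing the argument.

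The main obstacle is the cell-decomposition step in the $n=1$ case, namely ruling out cells with $i_{d+1}=1$. This is routine given the appendix's precise structure of $\i$-cells (graph vs.\ open strip), but is the one place requiring care; everything else is then bookkeeping.
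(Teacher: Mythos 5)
Your proof is correct and matches the route the paper implicitly intends: the authors state the lemma as an ``easy consequence of semialgebraic cell decomposition'' and give no proof, and your argument is exactly the natural one. The key step --- ruling out cells with $i_{d+1}=1$ because such a cell has infinite fibers over its base, while $B$ has fibers of size at most $d$ --- is sound, and the product construction together with the attainment of the minimum in the definition of $\H^{\operatorname{poly}}_d$ correctly finishes the argument.

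One small remark worth keeping in mind when you write this up: in passing from $n=1$ to general $n$ you are tacitly permuting coordinates, identifying $\R^{n\times d}\times\R^n$ with $(\R^d\times\R)^n$, so that $A_{n,d}$ becomes $B^n$. Since permutations of coordinates preserve semialgebraicity and continuity, this is harmless, but it should be said explicitly. Also note that some of the products $D_{\mathbf l}'$ may be empty, which is allowed by the statement; and your construction in fact yields a \emph{disjoint} union of graphs, which is slightly stronger than what is asked but does no harm.
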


\noindent 
Towards Theorem~\ref{pwalg} we first prove something stronger:

\begin{theorem}\label{palg}
Let $\epsilon$ be given. Then there are  $N=N(X,d,\epsilon)\in \N^{\ge 1}$, a definable set
$V_j\subseteq (E\times F_j)\times \R^n$ with definable $F_j\subseteq \R^{m_j}$, for $j=1,\dots,N$, and a constant $c=c(X,d,\epsilon)$, such that
for $j=1,\dots,N$ and all $(s,t)\in E\times F_j$:
\begin{enumerate}
\item[$\rm(i)$] $V_j(s,t)\subseteq X(s)$ and $V_j(s,t)$ is connected;
\item[$\rm(ii)$] if $\dim V_j(s,t)\ge 1$, then $V_j(s,t)\subseteq X(s)^{\alg}$,
\end{enumerate}
and such that for all $T$ and $s\in E$, the set  
$X(s)_d(T)$ is covered by at most $cT^\epsilon$ sections 
$V_j(s, t),\ (1\leq j \leq N,\ t\in F_j)$.
\end{theorem}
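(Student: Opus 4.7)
\textbf{Proof proposal for Theorem~\ref{palg}.}

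The plan is to mimic the proof of Theorem~\ref{pwup} almost verbatim, with the semialgebraic maps $\phi_l$ from Lemma~\ref{lalg} playing the role that the map $\pi(\lambda,a)=(\lambda\cdot a_1,\dots,\lambda\cdot a_n)$ played there. The key point is that Lemma~\ref{lalg} lets us encode every algebraic point $\alpha \in \R^n$ with $\H^{\operatorname{poly}}_d(\alpha)<\infty$ as a tuple $\xi \in \Q^{n\times d}$ of minimal polynomial coefficients, via one of finitely many continuous semialgebraic maps $\phi_l : D_l \to \R^n$, and so that $\H(\xi)=\H^{\operatorname{poly}}_d(\alpha)$. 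Thus counting algebraic points of degree $\le d$ in $X(s)$ reduces to counting rational points in suitable pullbacks, to which Theorem~\ref{pwvar} applies.

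First, take $L=L(n,d)$, $D_l \subseteq \R^{n\times d}$ and $\phi_l : D_l\to \R^n$ as in Lemma~\ref{lalg}. For $l=1,\ldots,L$, form the definable set
\[
  X^*_l\ :=\ \{(s,\xi)\in E\times D_l\ :\ \big(s,\phi_l(\xi)\big)\in X\}\ \subseteq\ E\times \R^{n\times d},
\]
viewed as a definable family of subsets of $\R^{nd}$. The central property is the pair
\[
  \phi_l\big(X^*_l(s)\big)\ \subseteq\ X(s), \qquad \bigcup_{l=1}^L \phi_l\big(X^*_l(s)(\Q,T)\big)\ \supseteq\ X(s)_d(T),
\]
the second inclusion being exactly the content of the ``follows'' sentence in Lemma~\ref{lalg}: given $\alpha\in X(s)_d(T)$, some $\xi\in D_l$ with $\phi_l(\xi)=\alpha$ satisfies $\H(\xi)=\H^{\operatorname{poly}}_d(\alpha)\le T$, so $\xi\in X^*_l(s)(\Q,T)$.

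Next, apply Theorem~\ref{pwvar} to each $X^*_l$ with the same $\epsilon$. This gives, for each $l$, a number $N_l=N(X^*_l,\epsilon)$, block families $V^*_{l,j}\subseteq(E\times F_{l,j})\times \R^{nd}$ for $j=1,\dots,N_l$, and a constant $c_l=c(X^*_l,\epsilon)$, with $V^*_{l,j}(s,t)\subseteq X^*_l(s)$ and such that for all $T$ and $s\in E$ the set $X^*_l(s)(\Q,T)$ is covered by at most $c_l T^{\epsilon}$ of the sections $V^*_{l,j}(s,t)$. Push forward by defining
\[
  V_{l,j}\ :=\ \big\{(s,t,\phi_l(\xi))\ :\ (s,t,\xi)\in V^*_{l,j}\big\}\ \subseteq\ (E\times F_{l,j})\times \R^n,
\]
so $V_{l,j}(s,t)=\phi_l\big(V^*_{l,j}(s,t)\big)$. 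These are definable, each $V_{l,j}(s,t)\subseteq X(s)$, and each is connected as the continuous image of the connected block $V^*_{l,j}(s,t)$. Finally, re-index the $V_{l,j}$ as $V_1,\dots,V_N$ with $N:=N_1+\cdots+N_L$, and set $c:=c_1+\cdots+c_L$. Clause (i) then holds, and the covering claim follows at once by applying $\phi_l$ to the coverings of $X^*_l(s)(\Q,T)$ and summing over $l$.

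The only non-routine step is clause (ii): if $\dim V_{l,j}(s,t)\ge 1$, then $V_{l,j}(s,t)\subseteq X(s)^{\alg}$. Here the tool is Corollary~\ref{corpath} applied with $B=V^*_{l,j}(s,t)$, $A=D_l$, $\phi=\phi_l$: since $\dim V_{l,j}(s,t)\ge 1$ forces $\phi_l(B)$ to have more than one point, the corollary yields $\phi_l(B)=\phi_l(B)^{\alg}$, and combining with $V_{l,j}(s,t)\subseteq X(s)$ and Lemma~\ref{ef1} gives $V_{l,j}(s,t)\subseteq X(s)^{\alg}$. This is really the whole conceptual content; the rest is bookkeeping parallel to the proof of Theorem~\ref{pwup}. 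The main obstacle, if any, is simply recognizing that Lemma~\ref{lalg} together with Corollary~\ref{corpath} is exactly the right input to make the ``rational-to-algebraic'' transfer go through.
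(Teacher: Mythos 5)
Your proposal is correct and essentially identical to the paper's own proof: your $X_l^*$ coincides with the paper's $Y_l$, and both arguments apply Theorem~\ref{pwvar} to each $Y_l$, push forward the resulting block families under $\phi_l$, and invoke Corollary~\ref{corpath} to get clause (ii); the paper merely introduces an intermediate set $X_l$ and a projection $\pi$ before arriving at the same characterization of $Y_l$ that you write directly.
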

\begin{proof} Let $\pi:\R^{n\times d}\times \R^n\to \R^{n\times d}$
be the obvious projection map. Take $L$ and
$\phi_1: D_1\to \R^n,\dots, \phi_L: D_L \to \R^n$ as in Lemma \ref{lalg}. Let $l\in \{1,\ldots,L\}$. We set
\begin{align*} X_l\ &:=\ \{(s,\xi,\alpha)\in  E \times D_l\times \R^n:\ \alpha\in X(s),\ \phi_l(\xi)=\alpha\},\\
   Y_l\ &:=\ \{(s,\xi)\in E \times D_l:\ \xi \in \pi\big(X_l(s)\big)\}\ =\ \{(s,\xi)\in E\times D_l:\ \phi_l(\xi)\in X(s)\},  
   \end{align*}
so for $s\in E$ we have $\phi_l\big(Y_l(s)\big)\subseteq X(s)$, and by Lemma~\ref{lalg}, for all $T$, $$X(s)_d(T)\ =\ \bigcup_{l=1}^{L} \phi_l\big(Y_l(s)(\Q,T)\big).$$ We now apply Theorem \ref{pwvar} to $Y_l$ in the role of $X$, and get $N_l=N_l(Y_l,\epsilon)\in \N^{\ge 1}$, a
block family $V_{l,i}\subseteq (E\times F_{l,i})\times \R^{n\times d}$ in $\R^{n\times d}$ with definable $F_{l,i}\subseteq \R^{m_{l,i}}$, for $i=1,\dots,N_l$, and a constant $c_l=c_l(Y_l,\epsilon)\in \R^{>}$ such that: \begin{enumerate}
\item[$\rm(i)$] $V_{l,i}(s,t)\subseteq Y_l(s)$ for $i=1,\dots,N_l$ and $(s,t)$ in $E\times F_{l,i}$; 
\item[$\rm(ii)$] for all $T$ and all $s\in E$, the set  $Y_l(s)(\Q,T)$ is covered by at most $c_lT^\epsilon$ blocks $V_{l,i}(s,t)$, ($1\leq i \leq N_l$, $t\in F_{l,i}$).
\end{enumerate}
Set $N:=N_1+\cdots+N_{L}$, and for $1\le i\le N_l$ and
 $j=N_1+ \cdots + N_{l-1} + i$, set  $F_j:= F_{l,i}$, 
and let $V_j\subseteq (E\times F_j)\times \R^n$ be the definable set given by  
$$V_j(s,t)\ :=\ \phi_l\big(V_{l,i}(s,t)\big), \qquad(s\in E,\ t\in F_j).$$
It is easily verified using Lemma \ref{corpath} that $V_1,\ldots, V_N$ and $c(X,d,\epsilon):=c_1+\cdots+c_{L}$ have the properties stated in the Theorem.
\end{proof}

\noindent
Just as with Theorem~\ref{pwup} this leads to a family version of Theorem \ref{pwalg} as follows. Let $V_1,\dots, V_N$ and $c$ be as in Theorem~\ref{palg}. Take the definable set $V\subseteq E\times \R^n$ such that for all $s\in E$,
$$V(s)\ :=\ \bigcup\{V_j(s, t): \ 1\le j\le N,\ t\in F_j,\ \dim V_j(s,t)\ge 1\}.$$ Then for all $s\in E$ and all $T$ we have
$$V(s)\ \subseteq\ X(s)^{\alg} \quad \text{and} \quad
\oN_{d}(X(s)\setminus V(s), T)\ \le\ cT^{\epsilon}.$$

\newpage

\begin{appendices}
\vspace*{0.01 cm}

\begin{center}
\textbf{APPENDIX ON O-MINIMALITY}
\end{center}

\vspace{0.5 cm}

\noindent
O-minimality as a subject started in \cite{vdD0} and \cite{PS}. Here we focus on material used in this paper. We give the key definitions in full detail, with examples, but state most results without proof. These proofs are in \cite{vdD2} as to general facts about o-minimal fields and the semialgebraic case, and in \cite{B-M, R_an, vdD-M, DMM,  RSW, W1} as to specific examples beyond the semialgebraic case. 
Notation is as in the introduction to this paper, in particular, $l,m,n\in \N=\{0,1,2,\dots\}$. 

We have divided this appendix into two parts. Part A is enough for sections 1--6, but section~\ref{par4} requires some model-theoretic compactness, {\it alias} saturation, which is fully exposed in part B.

\section{O-minimal fields}

\noindent

\subsection*{Structures} Let $M$ be a nonempty set.  We consider the finite cartesian powers  
$$M^n\ :=\ \{a=(a_1,\dots, a_n):\  a_1,\dots, a_n\in M\},$$ 
identifying in the usual way $M^1$ with $M$ and $M^{m+n}$ with $M^m\times M^n$. 
A {\em structure on $M$} is a sequence $\S$=$(\S_n)$ such that for all $n$,

\begin{enumerate}
\item $\S_n$ is a boolean algebra of subsets of $M^n$, that is, all $X\in \S_n$ are subsets of $M^n$, $M^n\in \S_n$, and for all $X,Y\in \S_n$ also $X\cup Y, X\cap Y, X\setminus Y\in \S_n$. 
\item For $n\ge 2$ and $1\le i < j\le n$ the diagonal $\{a\in M^n: a_i=a_j\}\in \S_n$.
\item  If $X\in \S_n$, then $M\times X\in \S_{n+1}$ and $X\times M\in \S_{n+1}$.
\item If $X\in \S_{n+1}$, then $\pi(X)\in \S_n$, where $\pi: M^{n+1}\to M^n$ is the projection map given by $\pi(a_1,\dots, a_n, a_{n+1})=(a_1,\dots, a_n)$.
\end{enumerate}
Let $\S$ be a structure on $M$. The definition of ``structure'' lacks symmetry, but in fact, if $X\in \S_n$ and $\sigma$ is a permutation of $\{1,\dots,n\}$, then $$\{\big(a_{\sigma(1)},\dots, a_{\sigma(n)}\big):\ a=(a_1,\dots, a_n)\in X\}\in \S_n.$$
Given a map $f: X \to M^n$ with $X\subseteq M^m$, we say that {\em $f$ belongs to $\S$\/} (or {\em $\S$ contains $f$}) if its graph, as a subset of $M^{m+n}$, belongs to $\S_{m+n}$; in that case
$X\in \S_m$, $f(X)\in \S_n$, $f^{-1}(Y)\in \S_m$ for every $Y\in \S_n$, and the restriction $f|_{X_0}: X_0\to R^n$ belongs to $\S$ for every $X_0\subseteq X$ in $\S_m$. If $f: X \to M^n$ and $g: Y\to M^l$ belong to $\S$, where $X\subseteq M^m$
and $Y\subseteq M^n$, then the composition $g\circ f: X\cap f^{-1}(Y)\to M^l$ belongs to $\S$.   
The class of all structures on $M$ is partially ordered by $\subseteq$: 
$$\S\subseteq \S'\ :\Longleftrightarrow\ \S_n\subseteq \S_n' \text{ for all }n.$$
Any collection $\mathcal{C}$ of sets $X\subseteq M^n$ for various $n$ gives rise to the {\em least\/} structure $\S$ on $M$ that contains every $X\in \mathcal{C}$, where ``least'' is with respect to $\subseteq$.

\subsection*{Ordered fields} Let $R$ be an ordered field: a field with a (strict) total order $<$ on its underlying set such that for all $a,b,c\in R$ we have $$a<b\Rightarrow a+c < b+c, \qquad a< b,\ 0 < c \Rightarrow ac<bc.$$ The case to keep in mind is the field $\R$ of real numbers with its usual ordering, but in sections
4--8 we work in bigger ambient ordered fields, since results in that setting have consequences for $\R$ that are less easy to obtain otherwise. (This is where model theory comes into play.) The ordered field $\Q$ of rational numbers embeds uniquely into $R$ as an ordered field. We use also the signs $\le, >, \ge$ with the usual meaning derived from $<$, and set 
$R^{>}:=\{a\in R:\ a>0\}$, $R^{\ge}:= \{a\in R:\ a\ge 0\}$. For $a\in R$ we set
$|a|:= a$ if $a\ge 0$ and $|a|:= -a$ if $a\le 0$. For $a=(a_1,\dots, a_n)\in R^n$ we set $|a|:= \max(|a_1|,\dots, |a_n|)\in R^{\ge}$, which by convention equals $0$ if $n=0$.
 
An \textit{interval} in $R$ is a set $(a,b):=\{x\in R:\ a< x < b\}$, where $a,b \in R_{\infty}:= R \cup \{-\infty,\infty\}$, $a< b$, where we extend $<$ to a total ordering on
$R_{\infty}$ by $-\infty < x < \infty$ for all $x\in R$. For $a\le b$ in $R_{\infty}$ we also set $[a,b]:=\{x\in R_{\infty}:\ a\le x \le b\}$, but we do not call this an interval.  
We endow $R$ with the order topology on its underlying set: it has the collection of intervals as a basis, and is a hausdorff topology. We also equip $R^n$ with the corresponding product topology.

 We call $R$ {\em real closed\/} if $R^{>}=\{b^2:\ 0\ne b\in R\}$ and
every polynomial $p(x)\in R[x]$ of odd degree has a zero in $R$. (This is equivalent to
the field $R[\imag]$ with $\imag^2=-1$ being algebraically closed.) In particular, the ordered field $\R$ of real numbers is real closed, and in some precise sense, all real closed fields have the same elementary properties as 
$\R$ (Tarski); we do not explicitly use that fact. Here and below $\R$ denotes the {\em ordered field\/} of real numbers, not just the set of real numbers.

\subsection*{O-Minimal Structures} Let $R$ again be an ordered field.
A {\em structure on $R$\/} is a structure $\S$ on its underlying set such that
\begin{itemize}
\item[(5)] $\{(a,b)\in R^2: a<b\}\in \S_2$ and the graphs of $+, \cdot: R^2\to R$ lie in $\S_3$.
\end{itemize}
Let $\S$ be a structure on $R$ with $\{a\}\in \S_1$ for all $a\in R$. Then
every interval is in $\S_1$, for every polynomial $p\in R[x_1,\dots, x_n]$ the corresponding function $a\mapsto p(a): R^n\to R$ belongs to $\S$, and so $\S_n$ contains the sets
$$\{a\in R^n:\ p(a)=0\} \text{ and\ } \{a\in R^n:\ p(a)>0\}.$$
For real closed $R$, a {\em semialgebraic subset of $R^n$\/} is a
finite union of sets 
$$\{a\in R^n: p(a)=0,\ q_1(a)>0,\dots, q_m(a)>0\}, \quad (p, q_1,\dots, q_m\in R[x_1,\dots, x_n]),$$
and setting $\S_n:=\{\text{semialgebraic subsets of $R^n$}\}$ gives by the Tarski-Seidenberg theorem a structure
$\S=(\S_n)$ on the ordered field $R$. This is the least structure on $R$ containing $\{a\}$ for all $a\in R$. In this case $\S_1$ contains exactly the finite unions of the sets $\{a\}$ with $a\in R$ and intervals. This fact about
$\S_1$ is a surprisingly strong minimality property of $\S$ which we now axiomatize:   

\medskip\noindent
An {\em o-minimal structure on $R$} is a structure on the ordered field $R$ such that

\begin{itemize}
\item[(6)] $\{a\}\in \S_1$ for every $a\in R$ and every element of $\S_1$ is a finite union of one-element subsets of $R$ and intervals.
\end{itemize} 

\medskip
\noindent
One can show that $R$ must be real closed if there is an o-minimal structure on it. The theory of o-minimal structures is a wide ranging generalization of the older subject of semialgebraic sets, and much of the tame properties of semialgebraic sets go through for the sets belonging to an o-minimal structure, as we shall see. The significance of o-minimality for applications is largely
due to the fact that there are interesting o-minimal structures on $\R$ beyond its
structure of semialgebraic sets. The important examples below are by way of illustration; the general facts about o-minimal structures that we focus on in this part of the appendix do not depend on the nontrivial theorems that establish the o-minimality of these examples.  

\medskip\noindent
Terminology: an {\em o-minimal field\/} is an ordered field equipped with an o-minimal structure on it (and this ordered field is then real closed). We let $\S_{\text{alg}}$ be the o-minimal structure on $\R$ consisting of the semialgebraic subsets of $\R^n$, for all $n$. The first examples of o-minimal fields beyond the semialgebraic case are: 

\medskip\noindent
(i) $\R_{\text{an}}$: this is $\R$ equipped with the smallest structure $\S_{\text{an}}$ on it that contains every $f: [-1,1]^n\to \R$ that extends to a real analytic function $U\to \R$ on some open neighborhood $U\subseteq \R^n$ of $[-1,1]^n$, for $n=0,1,2,\dots$. A set $X\subseteq \R^n$ belongs to $\S_{\text{an}}$ iff $X$ is subanalytic in the larger (compact) real analytic manifold $\mathbb{P}(\R)^n$, where $\mathbb{P}(\R)=\R\cup \{\infty\}$
is the real projective line. The study of $\R_{\text{an}}$ is essentially the theory of subanalytic sets due to Hironaka and Gabrielov: see \cite{B-M, R_an}. 

\medskip\noindent
(ii) $\R_{\text{exp}}$: this is $\R$ with the smallest structure $\S_{\exp}$ on it containing $\{r\}$ for all $r\in \R$, and the function 
$\exp: \R\to \R$, $\exp(r):=\ex^r$.
A set $X\subseteq\R^m$ belongs to $\S_{\exp}$ iff 
$X = \pi\big(\{a\in \R^n:\ P(a, \ex^a)=0\}\big)$ for some $n\ge m$ and some polynomial $P\in \R[x_1,\dots, x_n, y_1,\dots, y_n]$, where $\ex^a := (\ex^{a_1},\dots, \ex^{a_n})$ and $\pi: \R^n\to \R^m$ is given by $\pi(a)=(a_1,\dots, a_m)$  for $a=(a_1,\dots, a_n)\in \R^n$. This characterization of
$\S_{\exp}$ is part of  Wilkie's theorem in~\cite{W1}.

\medskip\noindent
(iii) For applications in arithmetic algebraic geometry it is important that we can amalgamate (i) and (ii) into an o-minimal field $\R_{\text{an}, \exp}$: this is $\R$ with the smallest structure
$\S_{\text{an}, \exp}$ on it such that $\S_{\text{an},\exp}\supseteq \S_{\text{an}}, \S_{\exp}$. A characterization of $\S_{\text{an}, \exp}$ in the style of (ii) is in \cite{vdD-M}, and a sharper one in \cite{DMM} where also the description of $\S_{\text{an}}$ in (i) is improved.

\medskip\noindent
In general, amalgamation as in Example (iii)  does not preserve o-minimality: \cite{RSW} describes two o-minimal structures $\S_1$ and $\S_2$ on $\R$ for which the smallest structure
$\S$ on $\R$ with $\S\supseteq \S_1, \S_2$ is not o-minimal. 

As to the appearance of exponentiation in the examples above, \cite{M} proves a striking dichotomy: for any o-minimal structure $\S$ on $\R$, either
$\exp$ belongs to $\S$, or every function $\R\to \R$ belonging to $\S$ is polynomially bounded, as $t\to \infty$. It is not known if there exists an o-minimal structures $\S$ on $\R$ with a function $\R\to \R$ belonging to it that grows faster, as $t\to \infty$, than any finite iterate of the exponential function.

\medskip\noindent
One way that o-minimality can fail (very badly) for a structure
$\S$ on $\R$ with $\{a\}\in \S_1$ for all $a\in \R$ is that $\Z\in \S_1$: one can show that then all closed subsets of all $\R^n$ belong to $\S$, and even
the Lebesgue-measurability of certain sets in $\S$ cannot be settled without unorthodox set-theoretic axioms.
In particular, the sine function on $\R$ cannot belong to any o-minimal structure on $\R$, although its restriction to any bounded interval belongs to the o-minimal structure $\S_{\text{an}}$ on $\R$.

\subsection*{Definable Sets}
In the rest of part A of the appendix we fix an o-minimal field $R$. Its underlying real closed ordered field is also denoted by $R$. For a set $X\subseteq R^m$ we call $X$ {\em definable\/} if $X$ belongs to the given o-minimal structure of $R$, and likewise for maps $X\to R^n$. (This use of the term ``definable''
has its origin in logic, for which see part B of this appendix.) 
In case the given o-minimal structure on $R$ consists just of the semialgebraic sets (in the sense of the real closed field $R$), we write {\em semialgebraic} in place of {\em definable}.

Topological notions like openness and continuity are with respect to the order topology on $R$ and the corresponding product topology on each $R^n$.
If $X\subseteq R^n$ is definable, then so are its closure $\cl(X)$ and its interior $\text{int}(X)$ in $\R^n$. The definable homeomorphism  
$t\mapsto \frac{t}{1+|t|}\ :\  R\to (-1,1)$
extends to an order preserving
bijection $R_{\infty} \to [-1,1]$ sending $-\infty$ to $-1$ and $\infty$ to $1$, and we equip $R_{\infty}$ with the
(hausdorff) topology on it making this bijection into a homeomorphism. 

Till further notice the results below are from \cite[Chapter 3]{vdD2}, where the o-minimal structures considered are more general, with just an underlying nonempty totally ordered set without least or greatest element and such that for any two distinct elements $a < b$ there is an $x$ with $a<x<b$, no field operations being included.  

Here is the key fact about univariate definable functions:

\begin{theorem}[Monotonicity Theorem]\label{mono} Let $I=(a,b)$ be an interval and let $f:(a,b)\to R$ be definable. Then $f$ has the following properties:
\begin{enumerate}
\item[\rm(i)] there are points $a=a_0< a_1<\cdots<a_n< a_{n+1}=b$ such that on each subinterval $(a_j, a_{j+1})$ with $0\le j \le n$ the function $f$ is continuous, and either strictly decreasing, or constant, or strictly increasing.
\item[\rm(ii)] if $f$ is continuous and $f(p) < c < f(q)$ with $p<q$ in $I$, then
$c=f(x)$ for some $x\in (p,q)$. $($Intermediate Value Property.$)$ 
\item[\rm(iii)] $\lim_{t\downarrow a} f(t)$ and $\lim_{t\uparrow b}f(t)$ exists in
$R_{\infty}$. 
\end{enumerate} 
\end{theorem}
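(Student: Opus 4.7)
My plan is to establish (i) first via a key lemma, and then deduce (ii) and (iii) essentially for free from what is established for (i).

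\medskip
\noindent\textbf{The key lemma.} I would begin by proving that for every definable $f : (a,b) \to R$ there is a subinterval on which $f$ is either constant or continuous and strictly monotone. To see this, consider the following four definable subsets of $(a,b)$: the set $A_{=}$ of points $x$ for which $f$ is constant on some neighborhood of $x$; the sets $A_{+}$ and $A_{-}$ of points at which $f$ is strictly increasing (resp. strictly decreasing) on some neighborhood of $x$; and the leftover set $B := (a,b) \setminus (A_{=} \cup A_{+} \cup A_{-})$. These are definable, so by o-minimality each is a finite union of points and intervals, and therefore at least one of $A_=, A_+, A_-, B$ contains a subinterval. The hard part is to rule out the possibility that $B$ contains a subinterval $J$. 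On such a $J$, for every $x$ the set of $y \in J$ with $f(y) = f(x)$ has empty interior near $x$, and similarly $\{y : f(y) > f(x)\}$ and $\{y : f(y) < f(x)\}$ do not contain a one-sided neighborhood of $x$ -- yet by o-minimality applied to these three definable subsets of $J$ viewed pointwise, one of them must contain a one-sided neighborhood of $x$. A careful bookkeeping (applied using Definable Selection to pick ``witness'' points) produces a contradiction. Thus $B$ has empty interior, and on a subinterval where $A_=, A_+,$ or $A_-$ is dense, one shows by a standard chaining argument (using that ``constant on a neighborhood'' propagates across a subinterval via the connectedness of definable intervals in $R$) that $f$ is actually globally constant or globally strictly monotone there. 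Continuity on the monotone pieces is then automatic from the Intermediate Value behavior of strictly monotone definable functions together with o-minimality of the set of discontinuity points.

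\medskip
\noindent\textbf{From the key lemma to (i).} Let $C, I_+, I_- \subseteq (a,b)$ be the definable sets of points $x$ admitting a neighborhood on which $f$ is constant, strictly increasing, or strictly decreasing respectively. By the key lemma applied to every subinterval of $(a,b)$, the union $C \cup I_+ \cup I_-$ is dense in $(a,b)$; since each of $C, I_+, I_-$ is definable and open, o-minimality forces the complement $(a,b)\setminus (C \cup I_+ \cup I_-)$ to be a finite set $\{a_1 < \dots < a_n\}$. On each subinterval $(a_j, a_{j+1})$ the three open definable sets $C, I_+, I_-$ partition an open dense subset; by o-minimality applied once more one of them contains a cofinite open subset of $(a_j, a_{j+1})$, and then a chaining argument as above promotes ``locally constant/monotone on a dense open set'' to ``constant/monotone on the whole subinterval.'' Setting $a_0 := a$ and $a_{n+1} := b$ gives (i).

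\medskip
\noindent\textbf{Deducing (ii) and (iii).} The Intermediate Value Property (ii) follows because any continuous definable $f : (p,q) \to R$ with $f(p) < c < f(q)$ produces the definable set $\{x \in (p,q) : f(x) < c\}$, whose supremum $x_0 \in (p,q)$ exists (the ordered field $R$ is real closed, so definable bounded sets have suprema) and satisfies $f(x_0) = c$ by continuity, using that $f(x_0) < c$ or $f(x_0) > c$ would contradict the definition of $x_0$ via continuity. For (iii), by (i) the function $f$ is continuous and monotone (or constant) on $(a_n, b)$; a bounded monotone definable function on $(a_n,b)$ has a limit at $b$ as its supremum or infimum, again using the supremum property of $R$, and an unbounded monotone definable function has limit $\pm\infty$. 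The same works at the left endpoint $a$, completing the proof.

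\medskip
\noindent\textbf{Main obstacle.} The genuine difficulty is the rigorous exclusion of the ``pathological'' set $B$ in the key lemma, since this is exactly where one must use o-minimality nontrivially -- every other step is either bookkeeping or an easy consequence of the supremum property of real closed fields. I would spell out that step most carefully, using Definable Selection to choose witnesses and invoking the o-minimality axiom on cleverly chosen one-variable definable subsets of $(a,b)$.
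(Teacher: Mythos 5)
The paper does not actually prove the Monotonicity Theorem: it is stated in the Appendix with the remark ``Till further notice the results below are from~\cite[Chapter~3]{vdD2},'' so there is no in-paper argument to compare against. Your plan is the standard one from that reference: reduce to a lemma that every definable unary function is, on some subinterval, constant or continuous and strictly monotone, then bootstrap to a finite decomposition. That is the right overall shape, but two things deserve repair. First, the genuine content of the lemma --- ruling out a subinterval on which none of the three local behaviours hold --- is exactly the part you defer to ``careful bookkeeping,'' and as written it is not a proof; the precise argument (in~\cite{vdD2}) proceeds through intermediate stages (first injective or constant on a subinterval, then monotone, then continuous), each using o-minimality in one variable, and you would need to actually carry out those case distinctions rather than gesture at Definable Selection. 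Second, you twice invoke real-closedness of $R$ to get suprema of bounded definable sets (in~(ii) and again in~(iii)). That is the wrong reason: real closed fields do \emph{not} in general have suprema of bounded sets, even definable ones, without the ordered-field axioms alone. What you need is o-minimality: a bounded definable subset of $R$ is a finite union of points and intervals and therefore has a supremum in $R$. Similarly, your ``chaining via connectedness of intervals'' should be an appeal to definable connectedness, again proved from o-minimality, not to topological connectedness, which fails in non-archimedean $R$. The conclusions of (ii) and (iii) are correct and your route to them works once those justifications are corrected.
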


\noindent
Of course, the intermediate value property (ii) is automatic when the underlying
ordered field is $\R$ and then requires no definability assumption. In the o-minimal setting, and certainly outside the familiar real environment, we confine attention to definable objects. For example, the correct analogue of ``connected'' is as follows: a definable set $X\subseteq R^m$ is said to be {\em definably connected\/} if there are no disjoint nonempty definable open subsets $X_0, X_1$ of $X$ with $X=X_0\cup X_1$. For such $X$ and any definable continuous map $f: X\to R^n$, the image $f(X)\subseteq R^n$ is also definably connected. 
Intervals are definably connected.  

\subsection*{Cells} Towards partitioning an arbitrary definable
set $X\subseteq R^n$ into finitely many nice pieces we introduce {\em cells}. These are definably connected sets of a form that makes them suited to proofs by induction (on $n$, for cells in $R^n$). First some notation. Let $X\subseteq R^n$ be definable. Set 
\begin{align*} C(X)\ &:=\ \{f:X\to R:\ f \text{ is definable and continuous} \},
\\
 C_{\infty}(X)\ &:=\ C(X)\cup \{-\infty,\infty\},
\end{align*}
where $-\infty$ and $\infty$ are viewed as constant functions on $X$. Let $f,g\in C_{\infty}(X)$, and suppose $f<g$, that is, $f(x)<g(x)$ for all $x\in X$.
Then we set
$$(f,g)\ =\ (f,g)_X\ :=\ \{(x,r)\in X\times R:\ f(x)<r<g(x)\},$$
so $(f,g)\subseteq R^{n+1}$ is definable; see next  picture.  

\begin{figure}[h]
\centering
\begin{tikzpicture}

\begin{axis}[thick,smooth,no markers, ticks=none,
    axis x line = center,
    axis y line= left,
    xmin=-0.5,xmax=1.75,
    ymin=-1, ymax=5,
   every axis x label/.style={at={(ticklabel* cs:0.95)},,anchor=north},
    every axis y label/.style={
    at={(ticklabel* cs:0.95)},
    anchor=east},
    xlabel= $R^n$,
    ylabel= $R$
    ]
        \addplot+[name path=A,domain=0:1,black] {x^2+3} node[right,pos=1] {$\Gamma(g)$};
        \addplot+[name path=B,domain=0:1,black] {x-x^2+1} node[right,pos=1] {$\Gamma(f)$};
        \addplot +[dashed, black] coordinates {(1, 0) (1, 4)};
        \addplot +[dashed, black] coordinates {(0,0) (0, 3)};
        
        \addplot +[dashed, black] coordinates {(1, 0) (1, 4)};
        
        \addplot[gray, dashed, pattern= vertical lines] fill between[of=A and B];
        \addplot[mark=none] coordinates {(1,2.5)} node[pin=5:{$(f,g)_X$}]{} ;
        
        \draw [thick,decoration={brace,mirror,raise=5pt},decorate] (axis cs:0,0) --
    node[below=7pt] {$X$} 
  (axis cs:1,0);
        
        
    \end{axis}
    
\end{tikzpicture}
\end{figure}

\medskip
\noindent
Let $n\ge 1$ and $(i_1,\ldots, i_n)$ a sequence of zeros and ones. An $(i_1,\ldots, i_n)$-cell is a definable subset of $R^n$ obtained via the following recursion:

\begin{itemize}
    \item[(i)] Case $n=1$:  a (0)-cell is a one-element subset of $R$, a (1)-cell is an interval;
    \item[(ii)]  An $(i_1,\ldots, i_n,0)$-cell is the graph $\Gamma(f)$ of a function $f\in C(X)$ on an $(i_1,\dots, i_n)$-cell $X$; an $(i_1,\ldots, i_n,1)$-cell is a set $(f,g)_X$ with $f,g\in C_{\infty}(X)$, $f<g$, and $X$ an $(i_1,\dots, i_n)$-cell.
\end{itemize}

\noindent
A {\em cell in $R^n$} is an $\i$-cell, for some (necessarily unique) $\i=(i_1,\ldots, i_n)\in \{0,1\}^n$. A cell in $R^n$ is definably connected, and locally closed (open in its closure in $R^n$). A cell in $R^n$ is open in $R^n$ (and called
an {\em open cell}) iff it is a $(1,\ldots,1)$-cell. 

Important in sections 2 and 8 is that every cell is homeomorphic under a coordinate projection to an open cell. In detail, let $C\subseteq R^n$ be an $\i$-cell, $\i=(i_1,\dots, i_n)$. Let
 $\lambda(1)<\cdots<\lambda(k)$ be the indices $\lambda\in \{1,\ldots, n\}$ with $i_\lambda=1$, and consider the (definable) coordinate projection $p_{\i}:R^n\to R^k$ given by 
$$p_{\i}(x_1,\ldots,x_n)=(x_{\lambda(1)},\ldots,x_{\lambda(k)}).$$
Then $p_{\i}$ maps $C$ homeomorphically onto an open cell in $R^k$. We denote this open cell  $p_{\i}(C)$ also by $p(C)$ and the homeomorphism $p_{\i}|_{C}: C \to p(C)$ by $p_{C}$.

\subsection*{Cell Decomposition} Let $n\ge 1$. A \textit{decomposition of $R^n$} is a partition of $R^n$ into finitely many cells, obtained by the following recursion: 
\begin{itemize}
    \item[(i)] case $n=1$: points $a_1<\cdots<a_m$ in $R$ determine a decomposition of $R=R^1$ consisting of
   $(-\infty,a_1),\ \{a_1\},\ (a_1,a_2),\ldots, (a_{m-1}, a_m),\ \{a_m\},\ (a_m,\infty)$.
    
    \item[(ii)]  a decomposition $\mathcal{D}$ of $R^{n+1}$ is a finite partition of $R^{n+1}$ into cells such that $\pi(\mathcal{D}):=\{\pi(C):\ C\in \mathcal{D}\}$ is a decomposition of $R^n$, where $\pi:R^{n+1}\to R^n$ is the projection map given by $\pi(x_1,\dots,x_n, x_{n+1})=(x_1,\dots, x_n)$.
\end{itemize}

\noindent
With $X(1),\ldots, X(k)$ the distinct cells of a decomposition $\mathcal{D}$ of $R^n$, let  functions $f_{i1}< \cdots <f_{im_i}$  in $C(X_i)$  be given for $i=1,\ldots,k$. Then 
$$\mathcal{D}_i=\{ (-\infty,f_{i1}), \Gamma(f_{i1}), (f_{i1},f_{i2}),\ldots, \Gamma(f_{im_i}),(f_{im_i},\infty) \}$$
is a partition of $X(i)\times R$, and $\mathcal{D}^*=\mathcal{D}_1\cup\cdots \cup \mathcal{D}_k$ is a decomposition of $R^{n+1}$ with $\mathcal{D}=\pi(\mathcal{D}^*)$. See the figure below. Every decomposition of $R^{n+1}$ is obtained in this manner from a decomposition of $R^n$.

\begin{figure}[h]
\centering
\begin{tikzpicture}

\begin{axis}[thick,smooth,no markers, ticks=none,
    axis x line = center,
    axis y line =left,
    xmin=-0.5,xmax=2,
    ymin=-2, ymax=8,
   every axis x label/.style={at={(ticklabel* cs:0.95)},,anchor=north},
    every axis y label/.style={
    at={(ticklabel* cs:0.95)},
    anchor=east},
    xlabel= $R^n$,
    ylabel= $R$
    ]
        \addplot+[name path=A,domain=0:1,black] {x^2+3} node[above,pos=0.4] {$\Gamma(f_{i2})$};
        \addplot+[name path=B,domain=0:1,black] {2*x-2*x^2+1} node[above,pos=0.5] {$\Gamma(f_{i1})$};
        \addplot+[name path=C,domain=0:1,black] {7*x^2-5*x+5.5} node[left,pos=0.8] {$\Gamma(f_{i3})$};
        \addplot +[dotted,black] coordinates {(1, -2) (1, 8)};
        \addplot +[dotted,black] coordinates {(0,-2) (0, 8)};
        \addplot +[dotted,black] coordinates {(1.5,-2) (1.5, 8)};
        
        \addplot[mark=none] coordinates {(-0.5,0) (0,0)} node[below,pos=0.5] {$\cdots$};
        \addplot[mark=none] coordinates {(1,0) (1.5,0)} node[below,pos=0.5] {$\cdots$};

  \draw [thick,decoration={brace,mirror,raise=5pt},decorate] (axis cs:0,0) --
    node[below=7pt] {$X(i)$} 
  (axis cs:1,0);

    \end{axis}
    
\end{tikzpicture}
\end{figure}

\noindent
In these definitions of {\em cell\/} and {\em decomposition\/} we assumed 
$n\ge 1$, but it is convenient to also consider the one-point set $R^0$ as the unique cell in $R^0$, namely as an $\i$-cell where $\i\in \{0,1\}^0$ is the empty tuple of zeros and ones, and $\{R^0\}$ as the unique decomposition of $R^0$. 
In this way clause (i) in these definitions appears as the case $n=0$ of the corresponding clause (ii). So below we allow $n=0$.

A decomposition $\mathcal{D}$ of $R^n$  is said to \textit{partition} a set $X\subseteq R^n$ if each cell in $\mathcal{D}$ is  either contained in $X$ or disjoint from $X$ (so $X$ is a union of cells in $\mathcal{D}$). We can now state the fundamental Cell Decomposition Theorem:

\begin{theorem}\label{celldec} For any definable $X_1,\ldots,X_m\subseteq R^n$ some decomposition of $R^n$ partitions $X_1,\ldots, X_m$. If $X\subseteq R^n$ and $f: X\to R$ are definable, then some decomposition $\mathcal{D}$ of $R^n$ partitions $X$ with continuous $f|_C$ for all cells $C\subseteq X$ in $\mathcal{D}$.  
\end{theorem}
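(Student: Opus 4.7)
The plan is to prove the two statements of Theorem~\ref{celldec} by simultaneous induction on $n$; write $(I_n)$ for the first sentence (partitioning finitely many definable subsets of $R^n$) and $(II_n)$ for the second (continuity of $f$ on each cell contained in $X$). The base case $n=1$ is immediate: $(I_1)$ is essentially the o-minimality axiom, since pooling the finitely many endpoints of the points and intervals making up each $X_i \subseteq R$ yields a finite set of points that determines a decomposition of $R$ partitioning all the $X_i$; and $(II_1)$ follows from the Monotonicity Theorem applied to the restriction of $f$ to each interval of a decomposition of its domain.

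For the induction step assume $(I_k)$ and $(II_k)$ for all $k \le n$. The technical heart of the proof is a \emph{Uniform Finiteness Lemma}: if $A \subseteq R^n \times R$ is definable and every fiber $A_a$ is a finite subset of $R$, then $|A_a|$ is bounded uniformly in $a \in R^n$. Granting this, I would prove $(I_{n+1})$ as follows. For each given $X_i \subseteq R^{n+1}$, form the definable set $\partial X_i$ whose fiber at $a$ consists of the (finitely many) boundary points in $R$ of the intervals comprising $X_i(a)$ together with its isolated points. By Uniform Finiteness the fibers of $\partial X_i$ have bounded cardinality, so the definable level sets $\{a : |\partial X_i(a)| = k\}$ partition $R^n$ into finitely many definable pieces. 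Using $(I_n)$ I refine this into a decomposition of $R^n$ into cells $C$ on which every $|\partial X_i(a)|$ is constant, and definable selection provides functions $\xi_{i,1}(a) < \cdots < \xi_{i,k_i}(a)$ on $C$ enumerating $\partial X_i(a)$. Applying $(II_n)$ to each $\xi_{i,j}$ refines $C$ further so that every $\xi_{i,j}$ is continuous on the refined cells. The graphs $\Gamma(\xi_{i,j})$ together with the strips between consecutive ones then form a decomposition of $R^{n+1}$ partitioning every $X_i$. For $(II_{n+1})$, I would apply $(I_{n+1})$ not just to $X$ but simultaneously to definable auxiliary sets encoding where $f$ fails to be continuous along the last coordinate, and combine this with the Monotonicity Theorem applied fiberwise and with $(II_n)$ on the lower-dimensional projections.

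The main obstacle will be the Uniform Finiteness Lemma itself, since this is where o-minimality is used in an essential way beyond the Monotonicity Theorem. I would prove it by a nested induction on $n$: supposing $|A_a|$ were unbounded, definable selection yields, for each $k$, a definable function $a \mapsto r_k(a)$ picking out the $k$-th element of $A_a$ on the definable set $\{a : |A_a| \ge k\}$; restricting to a definable curve in parameter space and applying $(II_n)$ together with Monotonicity one extracts continuous branches whose existence contradicts the fiberwise finiteness of $A$ itself. Once Uniform Finiteness is in hand, the remaining bookkeeping in the inductive step is largely a matter of organizing definable selections and refining cells, a task for which $(I_n)$, $(II_n)$, and the Monotonicity Theorem together suffice.
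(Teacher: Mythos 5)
The paper does not prove Theorem~\ref{celldec} itself: the surrounding text explicitly says that ``the results below are from \cite[Chapter 3]{vdD2},'' and the theorem is there as a black box. Your sketch follows the architecture of the standard proof in that reference — a simultaneous induction on $n$ on the statements $(I_n)$ and $(II_n)$, with a uniform finiteness lemma driving the step from $n$ to $n+1$ — and your assembly of the decomposition of $R^{n+1}$ via the boundary-point functions $\xi_{i,j}$, their continuity from $(II_n)$, and the resulting graphs and strips, is essentially the argument in \cite{vdD2}.

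The part that does not survive being written out is your proof of the Uniform Finiteness Lemma, which you correctly single out as the crux. The argument you sketch — restrict to a definable curve in parameter space on which $|A_a|$ stays unbounded, then extract continuous branches via $(II_n)$ and Monotonicity that ``contradict fiberwise finiteness'' — has two unfilled holes. First, no such curve is provided: the definable sets $E_k = \{a : |A_a| \ge k\}$ form a decreasing chain of nonempty sets, and without $\aleph_0$-saturation (not assumed at this stage) there is no reason their intersection, or a definable curve threading them, should be nonempty; it is perfectly possible to have all $E_k \neq \emptyset$ while $\bigcap_k E_k = \emptyset$, which is exactly what happens if only pointwise finiteness holds. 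Second, even granting a one-parameter reduction, producing some continuous branches over the curve does not contradict pointwise finiteness — each fiber remains finite; what must be contradicted is the \emph{unboundedness} of the number of branches, and that is again the uniform finiteness statement in one variable, so nothing has been reduced. The genuine argument proves the two-variable Finiteness Lemma (for $A \subseteq R \times R$) directly from Monotonicity by a delicate analysis of how the finite fibers $A_x$ vary as $x$ crosses limit points, and then bootstraps to higher $n$ using $(I_n)$; this is the piece your proposal still needs, and it is not a routine use of definable selection.
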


\noindent
Some  consequences: if the definable set $X\subseteq R^n$ is definably connected, then it is ``definably path connected'': for any points $p,q\in X$ there is a definable continuous $\gamma: [0,1]\to X$ with $\gamma(0)=p, \gamma(1)=q$. If the underlying ordered field of $R$ is
$\R$, then for definable $X\subseteq R^n$, {\em definably connected\/} agrees with {\em connected}. 

A {\em definably connected component\/} of a definable set $X\subseteq R^n$ is a definably connected definable nonempty subset of $X$ that is maximal with respect to inclusion. (So if $X=\emptyset$, it has no definably connected components.)

\begin{cor}\label{dcoc} For definable $X\subseteq R^n$, the definably connected components of $X$ are all open and closed in $X$, and form a finite partition of $X$.
\end{cor}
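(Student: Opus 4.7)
First I would apply the Cell Decomposition Theorem~\ref{celldec} to produce a decomposition $\mathcal{D}$ of $R^n$ partitioning $X$; letting $C_1,\dots,C_k$ enumerate the cells of $\mathcal{D}$ contained in $X$, a second application of the theorem to the finite list $X,\overline{C_1},\dots,\overline{C_k}$ lets me assume that $\mathcal{D}$ also partitions every $\overline{C_i}$. On $\{C_1,\dots,C_k\}$ I would define the symmetric reflexive relation $C\frown C'$ iff $C\subseteq\overline{C'}$ or $C'\subseteq\overline{C}$, take its transitive closure $\sim$, and set $Y_i:=\bigcup_{C\in\mathcal{E}_i}C$, where $\mathcal{E}_1,\dots,\mathcal{E}_r$ are the equivalence classes. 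This gives finitely many nonempty definable sets $Y_1,\dots,Y_r$ partitioning $X$, and the claim is that they are exactly the definably connected components.

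The main work is to show that each $Y_i$ is definably connected and closed in $X$ (its openness in $X$ then follows because the remaining $Y_j$ are closed and their union is $X\setminus Y_i$). For definable connectedness, suppose $Y_i=A_0\sqcup A_1$ with $A_0,A_1$ nonempty and open in $Y_i$. Since every cell is definably connected, each cell in $\mathcal{E}_i$ lies entirely in one of $A_0, A_1$, so any $\frown$-chain within $\mathcal{E}_i$ joining a cell inside $A_0$ to one inside $A_1$ contains a consecutive pair $D\frown D'$ with $D\subseteq A_0$ and $D'\subseteq A_1$. In either subcase of $D\frown D'$ one of these cells lies in the closure of the other; picking $p$ in the former (say $p\in D\subseteq A_0$ when $D\subseteq\overline{D'}$), the condition $p\in\overline{D'}$ together with the openness of $A_0$ in $Y_i$ and the inclusion $D'\subseteq Y_i$ forces $A_0\cap D'\neq\emptyset$, contradicting $A_0\cap A_1=\emptyset$. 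For closedness of $Y_i$ in $X$, if $p\in X\cap\overline{Y_i}$ then $p\in\overline{C'}$ for some $C'\in\mathcal{E}_i$; letting $C\in\mathcal{E}_j$ be the cell of $\mathcal{D}$ containing $p$, the fact that $\mathcal{D}$ partitions $\overline{C'}$ forces $C\subseteq\overline{C'}$, whence $C\frown C'$, $j=i$, and $p\in Y_i$.

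To conclude, any definably connected component $Z$ of $X$ is partitioned by the finitely many disjoint relatively open-and-closed sets $Z\cap Y_i$, so lies in a single $Y_i$; its maximality combined with the definable connectedness of $Y_i$ yields $Z=Y_i$, and conversely each $Y_i$ is such a component. The key technical point, and where I would expect to spend most care, is the initial refinement of $\mathcal{D}$ so that each $\overline{C_i}$ is itself a union of cells of $\mathcal{D}$: without it, the combinatorial adjacency captured by $\frown$ would not translate into the genuine topological adjacency needed in both the definable connectedness argument and the closedness argument for $Y_i$.
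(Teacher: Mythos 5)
The paper states this corollary without proof, citing \cite{vdD2}; your strategy (cell decomposition of $X$ together with a combinatorial adjacency relation on the cells inside $X$) is the standard one and would succeed if the adjacency relation were set up correctly, but there is a genuine gap in the step you yourself flag as the delicate one. A ``second application'' of Cell Decomposition to $X,\overline{C_1},\dots,\overline{C_k}$ produces a \emph{new} decomposition $\mathcal{D}'$, and its cells inside $X$ are in general strictly finer than $C_1,\dots,C_k$. Knowing that $\mathcal{D}'$ partitions $\overline{C_i}$ for the \emph{old} cells does not give that $\mathcal{D}'$ partitions $\overline{C'}$ for its \emph{own} cells $C'\subseteq X$: picture a square cell split into two triangles — the closure of one triangle is not a union of cells of the refinement without further subdivision. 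Obtaining a decomposition that partitions the closures of all of its own cells (the ``frontier condition'') genuinely requires an iterated, dimension-descending refinement, not a single pass, so both your closedness argument and the translation from combinatorial to topological adjacency do not yet go through.

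The cleanest repair — one that removes any need for the frontier condition — is to weaken your relation $C\frown C'$ from the containment ``$C\subseteq\overline{C'}$ or $C'\subseteq\overline{C}$'' to the adjacency ``$C\cap\overline{C'}\neq\emptyset$ or $\overline{C}\cap C'\neq\emptyset$''. With this definition, a single decomposition $\mathcal{D}$ of $R^n$ partitioning $X$ already suffices. Your definable-connectedness argument carries over essentially verbatim: if $D\frown D'$ with $D\subseteq A_0$ and $D'\subseteq A_1$, a point $p\in D'\cap\overline{D}$ together with the openness of $A_1$ in $Y_i$ forces $A_1$ to meet $D\subseteq A_0$, a contradiction. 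Closedness is now immediate with no compatibility hypothesis: if $p\in X\cap\overline{Y_i}$ then $p\in\overline{C'}$ for some $C'\in\mathcal{E}_i$, and the cell $C\in\mathcal{D}$ containing $p$ satisfies $p\in C\cap\overline{C'}$, hence $C\frown C'$ and $p\in Y_i$. This is in substance the argument in \cite[Ch.~3]{vdD2}, to which the paper defers; your final paragraph identifying the $Y_i$ with the definably connected components is correct as stated once the $Y_i$ are known to be definably connected and open-and-closed.
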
 

\subsection*{Definable families} Let $E\subseteq R^m$ and $X\subseteq E\times R^n\subseteq R^{m+n}$ be definable. For  $a\in E$ we set
$$X(a)\ :=\ \{x\in R^n:\ (a,x)\in X\}.$$

\noindent
We view $X$ as describing the family $\big(X(a)\big)_{a\in E}$ of definable subsets of $R^n$. We call this a \textit{definable family}, and the sections $X(a)$ are the members of the family.

\begin{example}
The hypersurfaces in $R^2$ of degree at most 2 are the members of a semialgebraic family: such a hypersurface is the set of solutions in $R^2$ of an equation
$$a_1x^2+a_2xy+a_3y^2+a_4x+a_5y+a_6\ =\ 0 \quad \text{ with }(a_1,a_2,a_2,a_4,a_5,a_6)\in \R^6\setminus \{0\},$$ so here $E=R^6\setminus \{0\}$ and $X$ consists of the points
$(a_1,a_2,a_2,a_4,a_5,a_6,x,y)\in E\times R^2$ satisfying the above equation.  
By the same token, for any $n$ and $d$ the hypersurfaces in $\R^n$ of degree $\le d$ are the members of a semialgebraic family. 
\end{example}

\noindent
Let $\pi:R^{m+n}\to R^m$ be given by $\pi(x_1,\dots, x_{m+n})=(x_1,\dots, x_m)$.  

\begin{prop}\label{fdc} Suppose $\mathcal{D}$ is a decomposition of $R^{m+n}$ partitioning $X$. Then for each $a\in E$, 
$$\mathcal{D}(a):=\{C(a):C\in \mathcal{D}, a\in \pi(C)\}$$
is a decomposition of $R^n$ partitioning $X(a)$. This gives in particular a
finite bound on the number of definably connected components of $X(a)$ independent of $a\in E$. 
\end{prop}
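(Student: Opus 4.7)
The plan is to induct on $n$. The base case $n = 0$ is essentially formal: $R^{m+0} = R^m$, $R^0$ is a single point, and for $a \in E \subseteq R^m$ we have $\mathcal{D}(a) = \{R^0\}$, the unique decomposition of $R^0$, which trivially partitions $X(a)$.

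For the inductive step from $n$ to $n+1$, I would invoke the recursive structure of decompositions described in clause (ii) of the definition. Write $\mathcal{D}$ as a decomposition of $R^{m+n+1}$ produced from a decomposition $\mathcal{D}^-$ of $R^{m+n}$ by specifying, for each $C^- \in \mathcal{D}^-$, finitely many continuous definable functions $f_{C^-,1} < \cdots < f_{C^-,r_{C^-}}$ on $C^-$ and forming the corresponding graphs and between-regions (with $\pm\infty$ at top and bottom). Apply the inductive hypothesis to $\mathcal{D}^-$: for $a \in E$, $\mathcal{D}^-(a)$ is a decomposition of $R^n$. For each $C^- \in \mathcal{D}^-$ with $a \in \pi(C^-)$, the fiber $C^-(a)$ is a cell in $\mathcal{D}^-(a)$, and the restrictions $f_{C^-,j}(a,\cdot)$ are continuous definable functions on $C^-(a)$ that retain the strict ordering. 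The cells of $\mathcal{D}(a)$ lying above $C^-(a)$ are exactly the graphs and between-regions built from these restrictions, so clause (ii) delivers that $\mathcal{D}(a)$ is a decomposition of $R^{n+1}$. The partitioning of $X(a)$ is then automatic: each cell $C \in \mathcal{D}$ is either contained in $X$ or disjoint from $X$, and this dichotomy is preserved by taking the fiber at $a$.

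For the uniform bound, observe that distinct cells $C, C' \in \mathcal{D}$ are disjoint, and hence so are $C(a)$ and $C'(a)$; since $a \in \pi(C)$ is equivalent to $C(a) \neq \emptyset$, the nonempty fibers are pairwise distinct, giving $|\mathcal{D}(a)| \le |\mathcal{D}|$, a number depending only on $\mathcal{D}$. Every cell is definably connected, the cells of $\mathcal{D}(a)$ contained in $X(a)$ form a finite partition of $X(a)$, and any definably connected component of $X(a)$ must be a union of such cells (since the union of two definably connected subsets of $X(a)$ sharing a point is definably connected, so maximality forces any cell meeting a component to be contained in it). Consequently the number of components of $X(a)$ is bounded by $|\mathcal{D}|$ uniformly in $a$. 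The only delicate point is the bookkeeping in the inductive step, but the cellular data on $C^-$ descends to its fiber $C^-(a)$ in a completely transparent manner, so I do not expect any serious obstacle.
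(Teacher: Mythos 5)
Your argument is correct and is essentially the standard one; the paper itself states Proposition~\ref{fdc} without proof, deferring to \cite{vdD2}, where the route is the same induction on $n$ through the recursive definition of decomposition. The only detail worth flagging, which you handle implicitly, is that the correspondence $C^- \mapsto C^-(a)$ from cells of $\mathcal{D}^-$ over $a$ to cells of $\mathcal{D}^-(a)$ is a bijection (distinct cells have disjoint, hence unequal, nonempty fibers), so the cellular data over each $C^-$ descends unambiguously to $C^-(a)$; your bookkeeping for the bound on the number of definably connected components is also correct.
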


\subsection*{Dimension} This subsection is taken from \cite[Chapter 4, section 1]{vdD2}. It is natural to assign to an $(i_1,\dots, i_n)$-cell $C$ the dimension $$\dim C\ :=\ i_1+\cdots +i_n\in \{0,\dots,n\},$$ since 
$p_{(i_1,\dots, i_n)}: R^n \to R^{i_1+\cdots + i_n}$ is definable and maps $C$ homeomorphically onto an open subset of $R^{i_1+\cdots + i_n}$.
Such $C$ does not contain any $(j_1,\dots, j_n)$-cell with
$j_1+\cdots + j_n>\dim(C)$. This fact allows us to extend the above dimension to 
arbitrary nonempty definable $X\subseteq R^n$ by
$$\dim X\ :=\ \max\{\dim C:\ C\subseteq X \text{ is a cell}\}\in \N.$$ 
We also set $\dim \emptyset := -\infty$. Here are some basic facts on dimension:

\begin{prop} Let $X\subseteq R^m$ be definable. Then: \begin{enumerate}
\item[\rm(i)] $\dim X = 0\ \Longleftrightarrow\ X \text{ is finite and nonempty}$;
\item[\rm(ii)] $\dim X =m\ \Longleftrightarrow\ X \text{ has nonempty interior in } R^m$;
\item[\rm(iii)] if $Y\subseteq R^m$ is definable, then $\dim X\cup Y = \max(\dim X, \dim Y)$;
\item[\rm(iv)] if $Y\subseteq R^n$ is definable, then $\dim X\times Y=\dim X + \dim Y$;
\item[\rm(v)] if $f: X \to R^n$ is definable, then $\dim X\ge \dim f(X)$;
\item [\rm(vi)]if $f: X \to R^n$ is definable and injective, then $\dim X = \dim f(X)$;
\item[\rm(vii)] if $X\ne \emptyset$, then $\dim (\cl(X)\setminus X) < \dim X$. 
\end{enumerate} 
\end{prop}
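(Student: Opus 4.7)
The plan is to treat the seven clauses in the order given, using the Cell Decomposition Theorem \ref{celldec} and the definition $\dim X = \max\{\dim C : C\subseteq X \text{ a cell}\}$ as the central tools. Clauses (i) and (ii) will fall out immediately from the observation, stated just before the proposition, that every $(i_1,\dots,i_n)$-cell is mapped by a coordinate projection homeomorphically onto an open cell in $R^{i_1+\cdots+i_n}$, together with the fact that a cell of dimension $m$ in $R^m$ must be a $(1,\dots,1)$-cell and hence open. For (iii), I would decompose $R^m$ so as to partition $X$ and $Y$ simultaneously; every cell in $X\cup Y$ lies in $X$ or in $Y$, so the maximum cell dimension in $X\cup Y$ is the max of the two.

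For (iv), the key step is the lemma that if $C\subseteq R^m$ is a cell of dimension $d_1$ and $D\subseteq R^n$ is a cell of dimension $d_2$, then $C\times D$ can be partitioned into finitely many $(d_1+d_2)$-cells in $R^{m+n}$. This is straightforward by induction on $n$ from the recursive definition of a cell: appending a $(1)$-factor or a $(0)$-factor to a cell's index tuple increases its ambient dimension by one and increases its cell dimension by one or zero respectively. The upper and lower bounds on $\dim(X\times Y)$ then follow by picking a maximal-dimension cell in each of $X$ and $Y$ and noting that conversely a cell in $X\times Y$, decomposed into ``box-shaped'' pieces relative to the projections to $R^m$ and $R^n$, has dimension at most $\dim X + \dim Y$.

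The technical core is (v), from which (vi) follows by applying (v) to $f$ and to a section of $f$ (since $f$ injective means $f^{-1}:f(X)\to X$ exists; decompose the graph so that $f$ is continuous on each cell and use a definable selection on each piece). For (v) itself, I would first reduce via cell decomposition of the graph of $f$ to the case that $X=C$ is a single cell and $f$ is continuous on $C$; then reduce further (using the coordinate projection $p_C$ that makes $C$ homeomorphic to an open cell in $R^{\dim C}$) to the case $X$ is open in $R^{\dim C}$. The claim then is: a definable continuous image of an open subset of $R^d$ into $R^n$ has dimension at most $d$. This I would prove by induction on $d$: decompose $f(X)$ into cells; if some cell $D\subseteq f(X)$ has dimension $>d$, then after coordinate projection we may assume $D$ is open in $R^e$ with $e>d$, and pulling back via a definable section of $f$ (Definable Selection, or the graph cell decomposition once more) produces a definable injection $D\hookrightarrow X$ from an $e$-dimensional open set into a $d$-dimensional ambient, which contradicts (ii) applied to the image — the image is contained in $X\subseteq R^d$ yet has nonempty interior in $R^e$, impossible for $e>d$.

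Finally (vii) will be proved by induction on $n$ (the ambient dimension), after reducing to the case that $X$ itself is a single cell $C$, since $\cl(X)\setminus X$ is contained in the union of $\cl(C)\setminus C$ over the finitely many cells $C$ in a decomposition partitioning $X$, and by (iii) it suffices to show each $\dim(\cl(C)\setminus C) < \dim C$. For a single cell $C$ in $R^{n+1}$ of the form $\Gamma(f)$ or $(f,g)_{C'}$ over a cell $C'$ in $R^n$, the frontier $\cl(C)\setminus C$ lies in the union of $C'\times\{\text{endpoint-graph}\}$ (of cell dimension $\le \dim C'<\dim C$ in the $(f,g)$-case, or handled by the inductive hypothesis for its base in the $\Gamma(f)$-case) together with $(\cl(C')\setminus C')\times R$, whose dimension is $<\dim C'+1\le \dim C$ by the inductive hypothesis and (iv). The main obstacle is (v): the reduction to ``open in $R^d$'' must be done carefully so that the induction on $d$ actually closes without circularity with (vii), so it is cleanest to sequence the proofs as (i)–(vi) first and (vii) last, with (v) using only (i)–(iv).
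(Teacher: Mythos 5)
The paper itself gives no proof of this proposition; it defers wholesale to \cite[Chapter 4, \S1]{vdD2} at the start of the subsection, so there is no argument in the paper against which to match. Read as a self-contained proof attempt, your sketch is fine on (i)--(iv) but has two genuine gaps in the parts you yourself flag as the technical core.

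In (v), after reducing to a definable injection $s\colon D'\hookrightarrow X\subseteq R^d$ with $D'$ open in $R^e$, $e>d$, you say this ``contradicts (ii) applied to the image --- the image is contained in $X\subseteq R^d$ yet has nonempty interior in $R^e$.'' The image $s(D')$ is a subset of $R^d$, not of $R^e$; (ii) tells you nothing about its interior in $R^e$, and indeed $s(D')$ has empty interior in $R^d$ whenever $e<d$ would be false, so there is no contradiction as stated. What you actually need is an o-minimal invariance-of-dimension statement --- that a definable injection cannot map a nonempty open subset of $R^e$ into $R^d$ when $e>d$ --- and that statement is essentially (vi), i.e., what you set out to prove. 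The non-circular route goes through the graph: a cell inside $\Gamma(f)$ has fibers over its domain-projection that are singletons, so its index tuple is trivial in the image coordinates and its dimension equals that of its domain-projection; then one shows $\dim\pi(A)\le\dim A$ for a coordinate projection $\pi$, first forgetting trailing coordinates (where decompositions of $R^{k+1}$ push forward to decompositions of $R^k$) and then handling leading coordinates via a permutation argument or the fiber formula, as in van den Dries's book. Your proposal skips exactly this part.

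In (vii), the $(f,g)_{C'}$ case works as you describe, but the graph case $C=\Gamma(f)$ does not close. There $\dim C=\dim C'$, and $\cl(C)\setminus C\subseteq(\cl(C')\setminus C')\times R$; the inductive hypothesis and (iv) then give only $\dim(\cl(C)\setminus C)\le\dim(\cl(C')\setminus C')+1\le\dim C'$, which is $\le\dim C$, not $<\dim C$. One has to show that the fibers of $\cl(\Gamma(f))$ over boundary points of $C'$ are $0$-dimensional off an even smaller set, and the usual arguments for this (good directions / general position, or the fiber-dimension formula stated two lines below the proposition in the paper) are a genuine extra ingredient, not a routine consequence of (i)--(vi). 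So the sequencing ``(vii) last'' is right, but the step is harder than the sketch acknowledges.
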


\noindent
In (v), (vi) we do not assume $f$ is continuous. Here is a stronger version of (v): 

\begin{prop} Let $f: X \to R^n$ be definable, $X\subseteq R^m$. For $d\le m$, set $$Y(d)\ :=\ \{y\in R^n: \dim f^{-1}(y) = d\}.$$ 
Then $Y(d)$ is definable and $\dim X = \max_{d\le m} d+\dim Y(d)$.
\end{prop}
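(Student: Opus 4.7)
My plan has two main steps: first a uniformity lemma for dimension in definable families, which immediately yields definability of $Y(d)$; then the formula via cell decomposition of the graph of $f$.

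\textbf{Step 1 (Uniformity of dimension).} I would first prove: if $Z\subseteq E\times R^n$ is definable with $E\subseteq R^m$, then for each $d$ the set $E_d:=\{s\in E:\dim Z(s)=d\}$ is definable. Take a decomposition $\mathcal{D}$ of $R^{m+n}$ partitioning $Z$. By Proposition~\ref{fdc}, for every $s\in E$ the induced $\mathcal{D}(s)$ is a decomposition of $R^n$ partitioning $Z(s)$, so $\dim Z(s)$ equals the maximum of $\dim C(s)$ over cells $C\in\mathcal{D}$ with $C\subseteq Z$ and $s\in\pi(C)$. The crucial observation is that if $C$ is an $\mathbf{i}$-cell with $\mathbf{i}=(i_1,\dots,i_{m+n})$, then every nonempty section $C(s)$ is an $(i_{m+1},\dots,i_{m+n})$-cell, and hence $\dim C(s)=i_{m+1}+\cdots+i_{m+n}$ is \emph{constant} in $s$. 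So $\dim Z(s)$ takes only finitely many values, each on a definable Boolean combination of projections $\pi(C)$. Applying this to $Z:=\{(y,x)\in R^n\times X:f(x)=y\}$, viewed as a definable family in $R^m$ indexed by $R^n$, yields definability of $Y(d)$.

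\textbf{Step 2 (The dimension formula).} Using Theorem~\ref{celldec}, partition $X$ into cells $C_1,\dots,C_k$ on which $f|_{C_j}$ is continuous. By part (iii) of the previous proposition both sides of the formula behave as a maximum over this partition (using also the definability of $Y(d)$ to split fibers), so it suffices to prove the formula assuming $X$ is a cell and $f$ continuous. The projection $\Gamma(f)\to X$ is a definable homeomorphism, so $\dim X=\dim\Gamma(f)$ by part (vi). Now decompose $\Gamma(f)\subseteq R^m\times R^n$ into cells $D_1,\dots,D_l$. For each $D_j$ of type $\mathbf{i}^j$, set $d_j:=i_1^j+\cdots+i_m^j$; a direct induction on $m$ using the recursive cell definition shows that the projection $\pi_n(D_j)\subseteq R^n$ is the image of a cell of type $(i_{m+1}^j,\dots,i_{m+n}^j)$ under a definable homeomorphism, of dimension $\dim D_j-d_j$, and that $d_j$ equals the constant fiber dimension $\dim(D_j\cap(R^m\times\{y\}))$ for every $y\in\pi_n(D_j)$.

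\textbf{Finish and main obstacle.} Set $Z_d:=\bigcup_{j:d_j=d}\pi_n(D_j)$. For $y\in Z_d$, writing $f^{-1}(y)$ as the disjoint union of its intersections with the cells $C_j$ and tracking via Step 1, the section $f^{-1}(y)$ has dimension $d$ outside a definable set of lower dimension in $Z_d$; using property (vii) this identifies $Z_d$ with $Y(d)\cap f(X)$ up to such a lower-dimensional discrepancy, and one obtains
\[\dim X\ =\ \dim\Gamma(f)\ =\ \max_j\dim D_j\ =\ \max_d(d+\dim Z_d)\ =\ \max_d(d+\dim Y(d)).\]
The main obstacle is precisely this last matching: overlaps of the $\pi_n(D_j)$ with $d_j\neq d_{j'}$ must be shown to lie in proper lower-dimensional subsets of the relevant $Z_d$, so that the generic fiber dimension assigned by the graph-cell decomposition agrees with the defining fiber dimension of $Y(d)$; this is handled by (iii), (vii), and Step 1.
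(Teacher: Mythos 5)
The paper states this proposition without proof, citing \cite[Ch.~4, \S1]{vdD2}, so I can only assess your argument on its own terms. The overall strategy---cell-decompose a graph and split each cell's dimension into a projection part and a fiber part---is the right one, and Step 1 is correct: if $C$ is an $(i_1,\dots,i_{m+n})$-cell in $R^{m+n}$ and $s$ lies in the projection of $C$ onto the first $m$ coordinates, then $C(s)$ is an $(i_{m+1},\dots,i_{m+n})$-cell; hence $\dim Z(s)$ takes finitely many values, each on a definable part of the parameter space, so $Y(d)$ is definable.  Step 2, however, has a genuine error.

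You decompose $\Gamma(f)\subseteq R^m\times R^n$ into cells $D_j$ of type $(i_1^j,\dots,i_{m+n}^j)$, set $d_j:=i_1^j+\cdots+i_m^j$, and claim that the projection of $D_j$ onto the \emph{last} $n$ coordinates is definably homeomorphic to an $(i_{m+1}^j,\dots,i_{m+n}^j)$-cell of dimension $\dim D_j-d_j$, with fibers of dimension $d_j$. This is false. Take $m=n=1$ and $f:(0,1)\to R$ continuous, definable and strictly monotone. Then $\Gamma(f)$ is itself a $(1,0)$-cell, so $d_j=1$ and $i_2^j=0$; your claim predicts a projection of dimension $0$ and fibers of dimension $1$. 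In reality the projection onto the last coordinate is $f\big((0,1)\big)$, an interval of dimension $1$, and each fiber is a single point of dimension $0$. The two numbers are exactly swapped, because the recursive cell structure (and Proposition~\ref{fdc}) respects projection onto an \emph{initial} block of coordinates, not a terminal one. No ``induction on $m$'' can save the stated claim, since an interval is not a homeomorphic image of a point.

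The fix is exactly what you already did correctly in Step~1: transpose the graph. Decompose $\Gamma^*(f):=\{(f(x),x):x\in X\}\subseteq R^n\times R^m$ into cells $D_j$, and set $d_j:=i_{n+1}^j+\cdots+i_{n+m}^j$. Now the projection $P_j$ of $D_j$ onto the \emph{first} $n$ coordinates is an $(i_1^j,\dots,i_n^j)$-cell of dimension $\dim D_j-d_j$, each section $D_j(y)$ with $y\in P_j$ is an $(i_{n+1}^j,\dots,i_{n+m}^j)$-cell of dimension $d_j$, and $\dim f^{-1}(y)=\max\{d_j:\ y\in P_j\}$. From the two inclusions $Y(d)\subseteq\bigcup_{d_j\ge d}P_j$ and $P_j\subseteq\bigcup_{e\ge d_j}Y(e)$ one obtains $\max_d\big(d+\dim Y(d)\big)=\max_j\big(d_j+\dim P_j\big)=\max_j\dim D_j=\dim\Gamma^*(f)=\dim X$, with no need for property (vii) or a discussion of generic fiber dimension. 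Finally, the reduction at the start of Step~2 to a single cell with $f$ continuous deserves a line or two more than an appeal to (iii) and ``definability of $Y(d)$'': with $X=X_1\cup\cdots\cup X_k$ disjoint and $Y_j(d)$ the analogue for $f|_{X_j}$, one needs \emph{both} $Y(d)\subseteq\bigcup_jY_j(d)$ and $Y_j(d)\subseteq\bigcup_{e\ge d}Y(e)$ to conclude $\max_d(d+\dim Y(d))=\max_j\max_d(d+\dim Y_j(d))$.
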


\noindent
We also have a {\em local\/} dimension: Let $X\subseteq R^m$ be definable and $a\in R^m$.
Then there is a definable neighborhood $V$ of $a$ in $R^m$ such that 
$\dim(X\cap U)=\dim(X\cap V)$ for all definable neighborhoods $U\subseteq V$ 
of $a$ in $R^m$; thus $\dim(X\cap V)$ is independent of the choice of such $V$, and we set $\dim_a X := \dim(X\cap V)$ for such $V$.

\subsection*{Definable Compactness} This subsection and the next are from \cite[Chapter 6, section 1]{vdD2}.  The ordinary notion of compactness from point set topology is useless in our setting, but we do have a good substitute. Call a set $X\subseteq R^m$ {\em bounded\/} if 
$X\subseteq [-r,r]^m$ for some $r\in R^{>}$.

\begin{prop}\label{dc1} If $f: X\to R^n$ is a continuous definable map on a closed and bounded $($definable$)$ set $X\subseteq R^m$, then $f(X)\subseteq R^n$ is also closed and bounded. 
\end{prop}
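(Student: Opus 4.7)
The plan is to establish boundedness and closedness of $f(X)$ separately, in each case by a ``definable curve'' argument: if $f(X)$ fails one property, we will construct a definable $\gamma \colon I \to X$ on some interval $I\subseteq R$ along which the failure manifests as $t$ approaches an endpoint, and then derive a contradiction from continuity of $f$ together with the hypotheses on $X$. This is the standard o-minimal substitute for a sequential compactness argument.

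For boundedness, I would suppose towards a contradiction that $f(X)$ is unbounded, so that the definable function $g \colon X \to R^{\ge}$, $g(x) := |f(x)|$, has unbounded image. By o-minimality $g(X)\subseteq R$ contains some interval $(c,+\infty)$, and Definable Selection (Proposition~\ref{defsel}) produces a definable $\gamma \colon (c,+\infty) \to X$ with $g(\gamma(t)) > t$. Since $X \subseteq [-r,r]^m$ for some $r$, each coordinate $\gamma_i$ is a bounded definable function $(c,+\infty) \to [-r,r]$, so by the Monotonicity Theorem~\ref{mono}(iii) the limit $a := \lim_{t \to +\infty} \gamma(t)$ exists in $[-r,r]^m$; closedness of $X$ places $a$ in $X$. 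Continuity of $f$ at $a$ then gives $|f(\gamma(t))| \to |f(a)| \in R^{\ge}$, contradicting $|f(\gamma(t))| > t$.

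For closedness, I would suppose $b \in \cl(f(X)) \setminus f(X)$. For every $\epsilon \in (0,1)$ there exists $x \in X$ with $|f(x) - b| < \epsilon$, so Definable Selection yields a definable $\gamma \colon (0,1) \to X$ with $|f(\gamma(\epsilon)) - b| < \epsilon$. As in the previous case, each coordinate of $\gamma$ is a bounded definable function on $(0,1)$, so Theorem~\ref{mono}(iii) provides a limit $a := \lim_{\epsilon \downarrow 0} \gamma(\epsilon) \in R^m$, and closedness of $X$ places $a$ in $X$. Continuity of $f$ at $a$ forces $f(a) = b$, contradicting $b \notin f(X)$.

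The main subtlety is ensuring that the coordinatewise limit of $\gamma$ genuinely lies in $X$ rather than escaping to $\pm\infty$ or to a boundary point not in $X$; this is exactly where both hypotheses on $X$ are used. Boundedness confines each coordinate of $\gamma$ to a strongly bounded interval, so the endpoint limits produced by the Monotonicity Theorem land in $R$ rather than $R_\infty$, and closedness then places the limit point in $X$, where continuity of $f$ applies. No other step is much more than bookkeeping.
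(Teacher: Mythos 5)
Your argument is correct and is the standard definable-compactness proof. The paper states Proposition~\ref{dc1} without proof, simply citing \cite[Chapter 6, \S1]{vdD2}, so there is no in-paper argument to compare against; but the route you take --- Definable Selection (Proposition~\ref{defsel}, applied as a right-inverse to the projection of a suitable definable subset of $(c,+\infty)\times X$, resp.\ $(0,1)\times X$, onto the parameter interval) to produce a definable curve into $X$, then Monotonicity~\ref{mono}(iii) on each bounded coordinate to get an endpoint limit lying in $X$ (boundedness keeps it out of $\{\pm\infty\}$, closedness puts it in $X$), then continuity of $f$ to transport the contradiction --- is exactly the argument used in that reference.
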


\noindent
This has the expected consequences:

\begin{cor}\label{dc2} If $f: X \to R$ is a continuous definable function on a nonempty closed bounded set $X\subseteq R^m$, then $f$ has a maximum and a minimum value
on $X$.
\end{cor}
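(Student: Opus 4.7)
The plan is to apply Proposition~\ref{dc1} directly and then use o-minimality to extract a max and min from the resulting subset of $R$. First I would note that $f(X)$ is nonempty (since $X$ is), and by Proposition~\ref{dc1} it is a closed and bounded definable subset of $R$.

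Next I would invoke the defining property of o-minimality (axiom (6)): any definable subset of $R$ is a finite union of one-element sets and intervals. Since $f(X)$ is bounded, all intervals appearing in this decomposition have endpoints in $R$ (not $\pm\infty$), so the finite set of all such endpoints together with the isolated points yields a finite set $F\subseteq R$ whose maximum $M$ and minimum $m$ are, respectively, the supremum and infimum of $f(X)$ in $R$. In particular the sup and inf exist in $R$, even though $R$ need not be order-complete.

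Then I would argue $M\in f(X)$ using that $f(X)$ is closed. If $M\notin f(X)$, then since $R\setminus f(X)$ is open in the order topology, some interval $(M-\epsilon, M+\epsilon)$ is disjoint from $f(X)$; but $M=\sup f(X)$, so there must be an element of $f(X)$ in $(M-\epsilon, M]$, a contradiction. The argument for $m$ is symmetric. Hence $M=f(x_1)$ and $m=f(x_2)$ for some $x_1,x_2\in X$, which are the required maximum and minimum.

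There is essentially no hard step here; the only point requiring a moment of care is that $f(X)$ has a sup and inf in $R$ despite $R$ not being order-complete, which is exactly where the o-minimality axiom is used (rather than a classical completeness property of $\R$). Everything else is routine point-set topology in the order topology on $R$.
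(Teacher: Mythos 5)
The paper itself gives no proof here: Corollary~\ref{dc2} is listed after the words ``this has the expected consequences'' and credited, along with Proposition~\ref{dc1}, to \cite[Chapter~6, \S1]{vdD2}. Your argument is correct and is essentially the intended one: $f(X)$ is nonempty, closed, bounded and definable by Proposition~\ref{dc1}; o-minimality makes it a finite union of points and bounded intervals, whence a supremum $M$ exists in $R$; closedness forces $M\in f(X)$, and symmetrically for the infimum. One minor streamlining you could adopt: since $f(X)$ is closed, each open interval $(b,c)$ occurring in the o-minimal decomposition has $b,c\in\cl(f(X))=f(X)$, so $f(X)$ is in fact a finite union of singletons and closed bounded intervals $[b,c]$, and the maximum is visibly attained without any indirect ``sup plus closedness'' step. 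Either route is fine; the key point is exactly the one you flagged, namely that o-minimality substitutes for order-completeness of $R$.
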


\begin{cor} If $f: X\to R^n$ is an injective continuous definable map on a closed and bounded set $X\subseteq R^m$, then $f: X \to f(X)$ is a homeomorphism.
\end{cor}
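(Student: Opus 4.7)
The map $f: X \to f(X)$ is already a continuous bijection, so it suffices to verify that its inverse $g := f^{-1}: f(X) \to X$ is continuous at an arbitrary point $b \in f(X)$. Set $a := g(b)$ and let $W$ be any open neighborhood of $a$ in $X$, so $W = X \cap W'$ for some open $W' \subseteq R^m$ containing $a$. My plan is to find an open $V \ni b$ in $R^n$ with $g(V \cap f(X)) \subseteq W$.

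Since the open boxes $\prod_{i=1}^m (a_i - \epsilon,\, a_i + \epsilon)$ form a neighborhood basis at $a$ consisting of definable open sets, I may shrink $W'$ and assume that $W'$ is a definable open neighborhood of $a$. Then $U := X \cap W'$ is definable and open in $X$, with $U \subseteq W$, so it is enough to produce an open $V \ni b$ in $R^n$ with $g(V \cap f(X)) \subseteq U$.

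Consider the definable set $C := X \setminus U = X \cap (R^m \setminus W')$. It is closed in $R^m$ (the intersection of two sets closed in $R^m$, using that $X$ is closed in $R^m$) and bounded (being contained in $X$). By Proposition~\ref{dc1} applied to the definable continuous map $f|_{C}$, the image $f(C) \subseteq R^n$ is closed. Since $f$ is injective and $a \in U$, we have $b = f(a) \notin f(C)$. Hence $V := R^n \setminus f(C)$ is an open neighborhood of $b$, and $g(V \cap f(X)) \subseteq X \setminus C = U \subseteq W$, as desired.

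The only point requiring care is the reduction to definable $W'$; this is routine given the definable box neighborhood basis in $R^m$. Once that reduction is in place, the statement is an immediate consequence of Proposition~\ref{dc1}.
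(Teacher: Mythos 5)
Your proof is correct and is the textbook argument (compact-to-Hausdorff continuous bijections are homeomorphisms) transposed to the definable setting via Proposition~\ref{dc1}; the paper itself states this corollary without proof, referring to~\cite[Chapter 6]{vdD2}, and the intended proof is exactly what you wrote. The one point where the definable setting requires care is the reduction to a definable $W'$ so that $C=X\setminus U$ is definable and Proposition~\ref{dc1} applies, and you handle that correctly with the box neighborhood basis.
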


\subsection*{Definable Selection} For any interval $(a,b)$ we can ``definably''
select a point in it:  $(a+b)/2$ if $a, b\in R$; $b-1$ if $a=-\infty$ and $b\in R$; $a+1$ if $a\in R$ and $b=\infty$; $0$ if $a=-\infty$ and $b=\infty$. This can be exploited to give two very useful selection principles, the second a consequence of the first:

\begin{prop}\label{defsel} Any definable equivalence relation
on a definable set $X\subseteq R^n$ has a definable set of representatives, that is, a definable subset of $X$ that has exactly one point in common with each equivalence class. Any definable map
$f: X \to R^n$, $X\subseteq R^m$, has a definable right-inverse $g: f(X)\to X$, that is, $f\circ g=\operatorname{id}_{f(X)}$.
\end{prop}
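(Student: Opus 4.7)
Both parts of the proposition reduce to constructing, for each $n$, a canonical point operation $c$ that assigns to every non-empty definable $Y\subseteq R^n$ a point $c(Y)\in Y$ whose definition depends only on the set $Y$. Given such $c$, the right-inverse is simply $g(y):=c(f^{-1}(y))$, while for an equivalence relation $E$ the set $S:=\{c([a]):a\in X\}$ works: since $c([a])$ depends only on $[a]$, $S$ meets each class in exactly one point, and $S$ is the image of the definable map $a\mapsto c([a])$.

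\noindent\textbf{Construction of $c$.} I would build $c$ by induction on $n$. For $n=1$, the Monotonicity Theorem~\ref{mono} yields that a non-empty definable $Y\subseteq R$ is a finite disjoint union of isolated points and (possibly half-infinite) open intervals. If $Y$ has isolated points, let $c(Y)$ be the smallest; otherwise take the leftmost open interval $(\alpha,\beta)$ and set $c(Y)$ to be $(\alpha+\beta)/2$ if bounded, $\beta-1$ if $\alpha=-\infty$ and $\beta\in R$, $\alpha+1$ if $\alpha\in R$ and $\beta=\infty$, and $0$ if $Y=R$. For $n>1$, with $\pi\colon R^n\to R$ the first projection, set $y_1:=c(\pi(Y))$, note $Y_{y_1}:=\{y'\in R^{n-1}:(y_1,y')\in Y\}$ is non-empty, and put $c(Y):=(y_1,c(Y_{y_1}))$ by the inductive hypothesis; then $c(Y)\in Y$ by construction and depends only on $Y$.

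\noindent\textbf{Main obstacle.} The main work lies in verifying that $c$ is uniformly definable on definable families: for any definable $Y\subseteq T\times R^n$ with every $Y_t$ non-empty, $t\mapsto c(Y_t)$ should be definable. By the inductive structure it suffices to handle $n=1$. Here one applies Theorem~\ref{celldec} to $R^{k+1}\supseteq T\times R$, partitioning compatibly with $Y$; Proposition~\ref{fdc} yields a uniform bound on the number of fiber components of each $Y_t$. The subtle point is that the combinatorial type of $Y_t$---which cells contribute isolated points of $Y_t$ versus interval-components, and how cells abut in the fiber---can vary with $t$: a graph-type cell may lie on the frontier of a strip-type cell in the fiber and so fail to give a truly isolated point of $Y_t$. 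After refining the decomposition using cell frontiers and partitioning $T$ into finitely many definable pieces on which this combinatorial type stabilizes, the endpoints of intervals and positions of isolated points become definable functions of $t$ on each piece, making ``smallest isolated point'' and ``leftmost interval's canonical point'' definable selections and completing the proof.
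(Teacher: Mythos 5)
Your plan is sound: both parts do reduce to a canonical point operation $c$ that must be \emph{uniformly} definable on definable families, and you correctly identify that this uniformity is the real content (it does not follow automatically from the fact that $c(Y)$ depends only on $Y$). But the route you take for the uniformity, via cell decomposition of $T\times R$ followed by a further partition of $T$ into pieces on which the ``combinatorial type'' of the fiber stabilizes, is heavier than needed. Your $n=1$ choice is in fact already given directly by a first-order formula in the parameter: ``$y$ is an isolated point of $Y_t$'' and ``$y$ is least such'' are definable conditions in $(t,y)$, and $\inf Y_t$ and $\sup\{\gamma : (\inf Y_t,\gamma)\subseteq Y_t\}$ are definable $R_\infty$-valued functions of $t$, so $t\mapsto c(Y_t)$ is definable with no cell decomposition and no combinatorial bookkeeping at all; the recursion for $n>1$ then needs only that passing from the family $(Y_t)$ to the family $\big((Y_t)_{y_1(t)}\big)$ preserves definability, which is immediate. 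This direct formula-based argument is what the paper is gesturing at with its preliminary remark about choosing a point in an interval, and is the style of \cite[Chapter 6, section~1]{vdD2}, which the paper cites; besides being shorter, it shows (as the paper notes right before this proposition) that Definable Selection already holds for o-minimal expansions of ordered groups, without multiplication and before the cell decomposition machinery is in place. One small wording slip: a nonempty definable $Y\subseteq R$ need not be a \emph{disjoint} union of isolated points and open intervals (consider $[0,1)$), so in your ``else'' branch you should speak of the interior of the leftmost definably connected component rather than ``the leftmost open interval''; this does not affect the argument.
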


\noindent
In these last two subsections we got to use the underlying additive group of $R$, but not yet its multiplication. Accordingly this material goes through in the more general o-minimal setting of \cite[Chapter 6]{vdD2} (not needed for our purpose). We now turn to a topic where multiplication does come into play. 

\medskip\noindent
The rest of part A is from \cite[Chapter 7]{vdD2}. 

\subsection*{Differentiability} In this subsection we don't need o-minimality or definability, and $R$ can be any ordered field. The elementary facts stated here have the same proofs as for $R=\R$. 
For $a,b\in R^n$ we set $a\cdot b:= a_1b_1+ \cdots + a_nb_n\in R$ (dot product).  
Let $I\subseteq R$ be open. A map $f: I \to R^n$ is said to be differentiable at a point $a\in I$ with derivative $b\in R^n$ if
$$\lim_{t\to 0}\frac{1}{t}\big(f(a+t)-f(a)\big)\ =\ b.$$
In that case $f$ is continuous at $a$ and we set $f'(a):= b$. If $f,g: I \to R^n$
are differentiable at $a$, then so are $f+g: I \to R^n$ and $f\cdot g: I \to R=R^1$, with 
$$(f+g)'(a)\ =\ f'(a)+g'(a),\quad (f\cdot g)'(a)\ =\ f'(a)\cdot g(a) +f(a)\cdot g'(a),$$
and if in addition $n=1$, $g$ is continuous, and $g(a)\ne 0$, then $f/g: I\setminus g^{-1}(0)\to R$
is differentiable at $a$ with $(f/g)'(a)=\big(f'(a)g(a)-f(a)g'(a)\big)/g(a)^2$. 
Constant maps $I\to R^n$ are differentiable at every $a\in I$ with derivative $0\in R^n$, and the inclusion map $I\to R$ is differentiable at every $a\in I$ with derivative $1\in R$. 

{\em Chain Rule}: if $f: I \to R$ is continuous,  differentiable at $a\in I$, and $f(a)\in J$ with open $J\subseteq R$, and $g: J \to R$ is differentiable at
$f(a)$, then $g\circ f: I\cap f^{-1}(J)\to R$ is differentiable at $a$ with
$(g\circ f)'(a)=g'(f(a))\cdot f'(a)$.

\medskip\noindent
Next we consider directional derivatives. We consider a map
$f: U \to R^n$ with open $U\subseteq R^m$. For a point $a\in U$ and a vector $v\in R^m$ we say that {\em $f$ is differentiable at $a$ in the $v$-direction\/}  if the $R^n$-valued map $t\mapsto f(a+tv)$ (defined on an open neighborhood of $0\in R$) is differentiable at $0$, that is,
$\lim_{t\to 0}\frac{1}{t}\big(f(a+tv)-f(a)\big)$ exists in $R^n$, 
in which case we set $$d_af(v)\ :=\ \lim_{t\to 0}\frac{1}{t}\big(f(a+tv)-f(a)\big)\in R^n.$$ 
For the standard basis vectors $e_1,\dots,e_m$ of the $R$-linear space $R^m$ we also write $\frac{\partial f}{\partial x_i}(a)$ for
$d_a f(e_i)$.  

\medskip\noindent
Let $a\in U$ and let $T: R^m \to R^n$ be an $R$-linear map. We call $f$ {\em differentiable at $a$ with differential $T$} if for every $\epsilon\in R^{>}$ we have, for all sufficiently small $v\in R^m$,
$$|f(a+v)-f(a) -T(v)|\ \le\ \epsilon|v|.$$
Then $f$ is continuous at $a$ and $T$ is uniquely determined by $f,a$, so we
can set
$d_af:= T$, a notation consistent with that for directional derivatives:  for each vector $v\in R^m$ the map $f$ is differentiable at $a$ in the $v$-direction
with $d_af(v)=T(v)$ where $d_af(v)$ denotes the directional derivative
defined earlier. For $m=1$ this notion of differentiability at $a$ agrees with
the one defined earlier, with $d_af(1)=f'(a)$. 

The map $f=(f_1,\dots, f_n): U \to R^n$ is differentiable at $a$ iff $f_1,\dots, f_n: U \to R$ are differentiable at $a$. 
In that case all partials $\big(\partial f_i/\partial x_j\big)(a)$ exist and the $n\times m$ matrix $\big(\partial f_i/\partial x_j\big)(a)$ is the matrix of
$d_af$ with respect to the standard basis vectors of $R^m$ and $R^n$. Each $R$-linear map $R^m\to R^n$ is differentiable at each point of $R^m$ with itself
as differential. If the maps $f, g: U\to R^n$ are differentiable at $a\in U$, then
$f+g$ and $cf$ for $c\in R$ are differentiable at $a$ with
 $$d_a(f+g)\ =\ d_af+d_ag,\qquad d_acf\ =\ c\cdot d_a f.$$ 
{\em Chain Rule}: Suppose $U\subseteq R^m,\ V\subseteq R^n$ are open, $a\in U$, 
$f: U \to R^n$ is continuous, $f$ is differentiable at the point $a\in U$,
$f(a)\in V$, and $g: V \to R^l$  is differentiable at $f(a)$. Then
$g\circ f: U\cap f^{-1}(V)\to R^l$ is differentiable at $a$, and
$$d_a(g\circ f)\ =\ \big(d_{f(a)} g\big)\circ d_a f.$$

\subsection*{Preserving Definability and the Mean Value Theorem} We now revert to the setting where $R$ is an o-minimal field and our sets and maps are definable. So let $U\subseteq R^m$ be
open and definable, and let $f: U \to R^n$ be definable. Then
the set $D_f$ of points $(a,v)$ in $U\times R^m\subseteq R^{2m}$ such that $f$ is differentiable at
$a$ in the $v$-direction is definable, and so is the map
$(a,v)\mapsto d_af(v): D_f\to R^n$.

\begin{lemma}\label{mvt1} Let $a<b$ in $R$, and suppose $f:[a,b]\to R$ is definable and continuous, and differentiable at each point of $(a,b)$. Then there is $c\in (a,b)$ with 
$$f(b)-f(a)=f'(c)\cdot(b-a).$$
\end{lemma}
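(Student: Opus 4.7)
The plan is the classical reduction to Rolle's theorem, checking that each ingredient remains available in the o-minimal setting. I would first set
$$g(x)\ :=\ f(x)-f(a)-\frac{f(b)-f(a)}{b-a}\cdot (x-a) \qquad (x\in [a,b]);$$
then $g$ is definable, continuous on $[a,b]$, differentiable at each point of $(a,b)$, and satisfies $g(a)=g(b)=0$ and $g'(x)=f'(x)-\frac{f(b)-f(a)}{b-a}$. So it suffices to prove Rolle's version: under these hypotheses on $g$, some $c\in (a,b)$ satisfies $g'(c)=0$.

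For Rolle, I would invoke definable compactness. By Corollary~\ref{dc2}, the continuous definable function $g:[a,b]\to R$ on the nonempty closed bounded set $[a,b]$ attains a maximum value $M$ and a minimum value $m$ on $[a,b]$, with $m\le 0\le M$. If $M=m=0$, then $g\equiv 0$ on $[a,b]$, hence $g'\equiv 0$ on $(a,b)$, and we are done. Otherwise either $M>0$ or $m<0$; replacing $g$ by $-g$ in the second case we may assume $M>0$, and then since $g(a)=g(b)=0$ the value $M$ is attained at some interior point $c\in (a,b)$.

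The main step is the standard local argument at the interior extremum $c$. For $t\in R^{>}$ small enough that $c+t\in (a,b)$ we have $g(c+t)\le g(c)=M$, so $\big(g(c+t)-g(c)\big)/t\le 0$; passing to the limit as $t\downarrow 0$ and using that this one-sided limit equals $g'(c)$ by differentiability of $g$ at $c$ gives $g'(c)\le 0$. Symmetrically, for negative $t$ with $c+t\in (a,b)$ the quotient is $\ge 0$, yielding $g'(c)\ge 0$. Hence $g'(c)=0$, and substituting back gives $f'(c)=\big(f(b)-f(a)\big)/(b-a)$, as required.

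The only place that calls for something beyond routine real analysis is the extraction of a maximum and minimum of $g$ on $[a,b]$: the order topology on $R$ need not be compact in the usual sense, and this is precisely what definable compactness (Corollary~\ref{dc2}) supplies. The one-sided limit step uses only the definition of differentiability together with the sign of each difference quotient, which transfers verbatim over any ordered field, so no further o-minimal machinery is needed and the main obstacle is already packaged into Corollary~\ref{dc2}.
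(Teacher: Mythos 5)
Your proof is correct. The paper itself does not supply an argument for this lemma (it is cited from van den Dries's book, \cite[Chapter~7]{vdD2}), and the route you take — reduce to Rolle, obtain the interior extremum via definable compactness (Corollary~\ref{dc2}), and conclude by the one-sided difference-quotient sign argument, which works verbatim over any ordered field — is exactly the standard proof in that reference; you have correctly identified definable compactness as the only point where o-minimality is actually needed.
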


\noindent
This ``mean value'' lemma opens the door to {\em continuous differentiability}. Let
$$f=(f_1,\dots, f_n): U\to R^n$$
be a definable map on a (definable) open set $U\subseteq R^m$. 
We say that  $f$ is {\em of class $C^1$} (or just $C^1$) if
$f$ is differentiable at every point $a\in U$ in the directions $e_1,\dots, e_m$, and the resulting (definable) functions $\frac{\partial f}{\partial x_1},\dots, \frac{\partial f}{\partial x_m}: U \to R^n$ are continuous. In the next lemma we take matrices with respect to the standard bases of $R^m$ and $R^n$.

\begin{lemma} If $f$ is $C^1$, then $f$ is differentiable at each point of $U$ and
$$a\mapsto \text{matrix of }d_af\ :\  U \to R^{n\times m}$$
is continuous. Conversely, if $f$ is differentiable at each point of $U$ and the
above matrix-valued map is continuous, then $f$ is a $C^1$-map. 
\end{lemma}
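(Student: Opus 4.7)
The plan is to handle the two implications separately, with nearly all the work in the forward direction. The converse is immediate: if $f$ is differentiable at each $a\in U$, then in particular each directional derivative $d_af(e_j) = \partial f/\partial x_j(a)$ exists, and the $(i,j)$-entry of the matrix of $d_af$ is $\partial f_i/\partial x_j(a)$. Continuity of the matrix-valued map thus gives continuity of each partial, which is the definition of $C^1$.

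For the forward direction I would first reduce to the scalar case $n=1$ by noting that $f=(f_1,\dots,f_n)$ is differentiable at a point iff each $f_i$ is, and that the matrix of $d_af$ stacks the matrices of the $d_af_i$. So assume $f: U \to R$ is $C^1$, fix $a\in U$, and let $T\colon R^m \to R$ be the $R$-linear map given by $T(v)=\sum_{j=1}^m v_j\,(\partial f/\partial x_j)(a)$. Given $\epsilon\in R^{>}$, the goal is a neighborhood of $0$ on which $|f(a+v)-f(a)-T(v)|\le \epsilon|v|$.

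The workhorse is a telescoping sum fed into the Mean Value Theorem (Lemma~\ref{mvt1}). For $v$ small enough that the box-hull of $a$ and $a+v$ lies in $U$, set $a_0=a$ and $a_i=a+v_1e_1+\cdots+v_ie_i$ for $i=1,\dots,m$. Then
\[
f(a+v)-f(a)\ =\ \sum_{i=1}^m\bigl[f(a_i)-f(a_{i-1})\bigr],
\]
and each summand equals $\phi_i(v_i)-\phi_i(0)$ for the definable function $\phi_i(t):=f(a_{i-1}+te_i)$, which is differentiable with derivative $(\partial f/\partial x_i)(a_{i-1}+te_i)$ on an open interval around $[0,v_i]$ (or $[v_i,0]$). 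Lemma~\ref{mvt1} produces $\xi_i = a_{i-1}+t_ie_i$ with $t_i$ between $0$ and $v_i$ so that $f(a_i)-f(a_{i-1})=v_i\cdot(\partial f/\partial x_i)(\xi_i)$. Subtracting $T(v)$ gives
\[
f(a+v)-f(a)-T(v)\ =\ \sum_{i=1}^m v_i\bigl[(\partial f/\partial x_i)(\xi_i) - (\partial f/\partial x_i)(a)\bigr].
\]
Since $|\xi_i - a|\le |v|$, continuity of each $\partial f/\partial x_i$ at $a$ lets me shrink the neighborhood to make every bracketed difference at most $\epsilon/m$ in absolute value, and $|v_i|\le |v|$ yields the desired bound. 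Hence $f$ is differentiable at $a$ with $d_af = T$, and the matrix map $a\mapsto (\partial f_i/\partial x_j)(a)$ is continuous by hypothesis.

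The main obstacle, such as it is, is purely bookkeeping: arranging the neighborhood of $0$ uniformly so that all $m$ points $\xi_i$ lie in a region where the partials are close to their values at $a$. This is easily handled because $|\xi_i - a|\le|v|$ for every $i$, so continuity of each partial at the single point $a$ suffices. No feature of the real field beyond the o-minimal MVT (Lemma~\ref{mvt1}) enters, and definability of everything in sight is automatic from the definability of $f$ and its partials.
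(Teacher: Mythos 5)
The paper itself gives no proof of this lemma: it is stated in the appendix with a reference to \cite[Chapter~7]{vdD2}, so there is no in-paper argument to compare against. Your proof is the standard ``continuous partials imply differentiability'' telescoping argument, and it carries over verbatim to the o-minimal setting because the Mean Value Theorem (Lemma~\ref{mvt1}) is available for definable univariate functions; each $\phi_i(t)=f(a_{i-1}+te_i)$ is definable and differentiable with $\phi_i'(t)=(\partial f/\partial x_i)(a_{i-1}+te_i)$, so Lemma~\ref{mvt1} applies. The estimate $|\xi_i-a|\le |v|$ in the max-norm is correct since $\xi_i-a = v_1e_1+\cdots+v_{i-1}e_{i-1}+t_ie_i$ with $|t_i|\le|v_i|$, and the factor $m$ in $\sum_i|v_i|\cdot\epsilon/m\le\epsilon|v|$ absorbs correctly. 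The reduction to $n=1$ and the converse direction are both as immediate as you say. The argument is correct.
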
 

\noindent
For an $R$-linear map $T: R^m\to R^n$ we put
$|T| := \max\{|Ta|:\ |a|\le 1,\ a\in R^m\}$.
(The maximum exists in $R$ since $T$ is definable and continuous.) Thus $|Ta|\le |T|\cdot |a|$ for all $a\in R^m$. Now we can state an extended mean value result:

\begin{lemma}\label{mvt2} 
Suppose $f:U\to R^n$ is of class $C^1$, and $a,b\in U$ are such that the line segment $[a,b]:=\{(1-t)a +tb:\ 0\leq t \leq 1\}$ is contained in $U$. Then

$$|f(b)-f(a)|\leq |b-a|\cdot \max_{y\in [a,b]} |d_yf|.$$
\end{lemma}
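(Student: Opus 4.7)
The plan is to reduce the multivariate inequality to the one-variable mean value theorem (Lemma~\ref{mvt1}) by composing $f$ with the affine parametrization of the segment $[a,b]$. Set
$$\phi\ :=\ f\circ \gamma\ :\ J \to R^n, \qquad \gamma(t)\ :=\ (1-t)a+tb,$$
where $J\subseteq R$ is an open interval containing $[0,1]$, chosen small enough (using that $U$ is open and $[a,b]\subseteq U$) so that $\gamma(J)\subseteq U$. The $R$-linear map $\gamma$ is differentiable everywhere with $d_t\gamma(s)=s(b-a)$, and $f$ is $C^1$ on $U$, so by the Chain Rule (stated in the subsection on differentiability) $\phi$ is $C^1$ on $J$ with
$$\phi'(t)\ =\ d_{\gamma(t)}f\,(b-a)\qquad \text{for all $t\in J$.}$$

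Write $\phi=(\phi_1,\dots,\phi_n)$ and fix $i\in\{1,\dots,n\}$. The function $\phi_i:[0,1]\to R$ is definable, continuous on $[0,1]$, and differentiable on $(0,1)$. Hence Lemma~\ref{mvt1} provides $c_i\in (0,1)$ with
$$f_i(b)-f_i(a)\ =\ \phi_i(1)-\phi_i(0)\ =\ \phi'_i(c_i),$$
and $\phi'_i(c_i)$ is the $i$th coordinate of $\phi'(c_i)=d_{y_i}f(b-a)$, where $y_i:=\gamma(c_i)\in[a,b]$. Using the operator-norm bound $|Tv|\le|T|\cdot|v|$ for the $R$-linear map $T=d_{y_i}f$, we conclude
$$|f_i(b)-f_i(a)|\ \le\ |d_{y_i}f(b-a)|\ \le\ |d_{y_i}f|\cdot|b-a|\ \le\ |b-a|\cdot\max_{y\in[a,b]}|d_yf|.$$
Taking the maximum over $i=1,\dots,n$ and using $|f(b)-f(a)|=\max_i|f_i(b)-f_i(a)|$ yields the desired inequality.

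The only thing to verify beyond routine bookkeeping is that the maximum $\max_{y\in[a,b]}|d_yf|$ actually exists in $R$. This follows from Corollary~\ref{dc2}: since $f$ is $C^1$, the matrix-valued map $y\mapsto$ matrix of $d_yf$ is definable and continuous on $U$, so the definable function $y\mapsto|d_yf|$ is continuous on the closed bounded definable set $[a,b]\subseteq U$ and therefore attains its maximum. This is the one place where the o-minimal setting (via definable compactness) replaces the classical appeal to compactness; with that in hand the argument proceeds exactly as in the real case.
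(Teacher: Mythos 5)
Your proof is correct, and it is the standard reduction to the one-variable mean value theorem. The paper itself does not supply a proof of Lemma~\ref{mvt2} (it is cited from van den Dries' book), so there is no in-paper argument to compare against; but the route you take — parametrize the segment by $\gamma(t)=(1-t)a+tb$, apply the Chain Rule to get $\phi'(t)=d_{\gamma(t)}f(b-a)$, invoke Lemma~\ref{mvt1} coordinatewise, and bound via the operator norm — is precisely what one expects. The one point genuinely specific to the o-minimal setting, the existence of the maximum, you correctly handle via Corollary~\ref{dc2}, noting that $y\mapsto|d_yf|$ is definable and continuous (since the matrix of $d_yf$ depends continuously on $y$ and the operator norm for the max-norm reduces to $\max_i\sum_j|T_{ij}|$) on the closed, bounded, definable set $[a,b]$.

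One remark on a detail you treat casually but correctly: the existence of an open interval $J\supseteq[0,1]$ with $\gamma(J)\subseteq U$ does require a small argument in an arbitrary o-minimal field (one cannot appeal to metric compactness); it follows because $\gamma^{-1}(U)$ is a definable open subset of $R$ containing the definably connected set $[0,1]$, hence contains it in a single interval. Alternatively, you could avoid this entirely by applying Lemma~\ref{mvt1} to $\phi_i$ directly on $[0,1]$, since continuity on $[0,1]$ and differentiability on $(0,1)$ are all that is needed.
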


\subsection*{Smooth Cell Decomposition} It is convenient to extend the notions of 
$C^1$-map and $C^1$-cell.  We say that a definable map $f:X\to R^n$, $X\subseteq R^m$, is  {\em $C^1$} if there are a definable open $U\subseteq R^m$ such that $X\subseteq U$, and a definable $C^1$-map $F:U\to R^n$ such that $f=F|_X$. We define {\em $C^1$-cells} as in the recursive definition for {\em  cells}, except that  we require the (definable) functions $f$ and $g$ there, when $R$-valued, to be $C^1$ instead of just being continuous.

Every inclusion map $X\to R^m$ for definable $X\subseteq R^m$ is $C^1$.
If the definable map $f: X \to R^n$ with $X\subseteq R^m$ is $C^1$ and the 
definable map $g: Y \to R^l$ with $Y\subseteq R^n$  is $C^1$, then
$g\circ f: f^{-1}(Y)\to R^l$ is $C^1$. For definable $f=(f_1,\dots, f_n): X\to R^n$ with $X\subseteq R^m$, $f$ is $C^1$ iff $f_1,\dots, f_n$ are $C^1$.

The following is a $C^1$-version of cell decomposition.

\begin{theorem}\label{C1celldec} For any definable $X_1,\ldots,X_m\subseteq R^n$ there is a decomposition of $R^n$ into $C^1$-cells partitioning $X_1,\ldots, X_m$. If $X\subseteq R^n$ and $f: X\to R$ are definable, then there is a decomposition of $R^n$ into $C^1$-cells which partitions $X$ such that $f|_C$ is $C^1$ for each cell $C\subseteq X$ of the decomposition.  
\end{theorem}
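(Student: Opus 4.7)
I prove both assertions simultaneously by induction on $n$, bootstrapping from the continuous Cell Decomposition Theorem~\ref{celldec} to upgrade continuity to $C^1$-regularity. The only essentially new analytic ingredient is a one-variable fact: every definable $g:(a,b)\to R$ is of class $C^1$ outside a finite subset of $(a,b)$.

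For the base case $n=1$, the first assertion is trivial since every $1$-cell in $R$ is already a $C^1$-cell. For the second I establish the one-variable regularity stated above. By Theorem~\ref{mono} I partition $(a,b)$ so $g$ is continuous and strictly monotone on each subinterval. On such a subinterval I consider the two definable $R_\infty$-valued functions
$$\ell(x)\ :=\ \liminf_{h\to 0}\frac{g(x+h)-g(x)}{h}, \qquad L(x)\ :=\ \limsup_{h\to 0}\frac{g(x+h)-g(x)}{h},$$
whose existence follows from Theorem~\ref{mono}(iii) applied fiberwise in $h$ to the definable difference-quotient; differentiability at $x$ amounts to $\ell(x)=L(x)\in R$. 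If the definable non-differentiable set contained an interval, then applying Theorem~\ref{mono} to $\ell$, $L$, and $g$ I restrict to a subinterval on which either $\ell<L$ everywhere or one of $\ell,L$ is $\pm\infty$; Lemma~\ref{mvt1} applied to $g$ on nearby subsegments where differentiability does hold then contradicts that. Once the set of points of differentiability is cofinite, one more application of Theorem~\ref{mono} to $g'$ secures $C^1$-regularity outside finitely many further points.

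For the inductive step, apply Theorem~\ref{celldec} to the given data to obtain a continuous decomposition $\mathcal{D}$ of $R^{n+1}$ partitioning $X_1,\dots,X_m$ (or $X$ with $f|_C$ continuous on each cell $C\subseteq X$). With $\pi:R^{n+1}\to R^n$ dropping the last coordinate, each $C\in\mathcal{D}$ is either a graph $\Gamma(\varphi)$ for continuous definable $\varphi:\pi(C)\to R$ or a band $(\varphi,\psi)_{\pi(C)}$ for continuous $\varphi,\psi\in C_\infty(\pi(C))$. Collecting all finite-valued bounding functions and applying the inductive hypothesis at level $n$, I refine $\pi(\mathcal{D})$ to a $C^1$-decomposition over each of whose cells every bounding function is of class $C^1$; lifting this refinement through $\pi$ produces a refinement of $\mathcal{D}$ whose graph-cells and band-cells are $C^1$-cells.

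It remains, for each band-cell $C=(\varphi,\psi)_D\subseteq X$ in the second assertion, to subdivide so that $f|_C$ is $C^1$. Fiberwise in the last coordinate, the base case applied to $t\mapsto f(a,t)$ yields a finite exceptional set in $(\varphi(a),\psi(a))$ for each $a\in D$; fiberwise in the first $n$ coordinates, the inductive hypothesis applied to $a\mapsto f(a,t)$ for each fixed $t$ yields a bad set in the base direction. By Definable Selection (Proposition~\ref{defsel}) and Proposition~\ref{fdc} (bounding the number of exceptional points uniformly) both families assemble into definable subsets of $C$ of dimension $\le n$, which after one further application of the inductive hypothesis in $R^n$ decompose as finite unions of continuous graphs over refined base cells; absorbing these as new graph-cells carves $C$ into $C^1$-cells on which all partials of $f$ exist and are continuous, so $f$ is of class $C^1$ there. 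The principal obstacle throughout is the base case $n=1$: ruling out a nontrivial interval of non-differentiability for a monotone definable function. Once that is settled, the inductive step is essentially bookkeeping that parallels the proof of Theorem~\ref{celldec} itself.
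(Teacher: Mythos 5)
The paper does not actually supply a proof of Theorem~\ref{C1celldec}; it states it in the Appendix and cites \cite[Chapter 7]{vdD2}, so your proposal must be judged on its own. Your global framework (induction on $n$, bootstrapping the continuous Cell Decomposition Theorem~\ref{celldec}, with all the new analytic content concentrated in the one-variable base case) does match the standard approach. But the base case contains a genuine gap. You wish to rule out an interval $I$ on which every point fails to be a point of finite two-sided differentiability, i.e.\ where either $\ell<L$ or one of $\ell,L$ is $\pm\infty$ at every $x\in I$. Your proposed contradiction, invoking Lemma~\ref{mvt1} ``on nearby subsegments where differentiability does hold,'' is circular: once you have restricted to $I$, there are \emph{no} points of differentiability inside $I$, so no such subsegments lie in $I$, and applying MVT on subsegments outside $I$ yields values of $g'$ at points outside $I$, which say nothing about $I$. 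Lemma~\ref{mvt1} presupposes differentiability on the open interval it is applied to, which is precisely what is at issue.

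The working argument does not run that way. One first shows that the right derivative $D^+g(x)=\lim_{h\downarrow 0}\frac{g(x+h)-g(x)}{h}$ (which exists in $R_\infty$ by Theorem~\ref{mono}(iii) applied in $h$) cannot be $+\infty$ on a subinterval: otherwise the inverse of the monotone restriction would have right derivative $\equiv 0$ on an interval, and one shows that a continuous definable function with right derivative $\equiv 0$ on an interval is constant. That last step is a ``supremum of the good set'' / connectedness argument — for $\epsilon>0$ one considers the closed definable set $S=\{z\in[y_1,y_2]:\lvert h(z)-h(y_1)\rvert\le\epsilon(z-y_1)\}$, notes $y_1\in S$ and that $z_0:=\sup S$ lies in $S$, and uses $D^+h(z_0)=0$ to push past $z_0$, forcing $z_0=y_2$. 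It works precisely where no differentiability is known. After both one-sided derivatives are finite (and then continuous on a subinterval by Theorem~\ref{mono}), a one-sided Darboux/MVT estimate, proved by the same sup-argument, gives $D^+g=D^-g$ on a subinterval. This missing connectedness trick, not an MVT appeal, is the idea your proof needs. (Your inductive-step sketch is also hand-wavy on the uniform definability of the fiberwise exceptional sets, but that is bookkeeping by comparison.)
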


\subsection*{$C^k$-maps} Let $k\ge 1$ below.  Let $f=(f_1,\dots, f_n): U \to R^n$
be definable, with (definable) open $U\subseteq R^m$. By recursion on $k$ we specify what it means for $f$ to be a $C^k$-map. For $k=1$ this has been defined earlier, and for $k>1$ we say that $f$ is {\em $C^k$} if $f$ is $C^1$ and every partial $\partial f_i/ \partial x_j: U\to R$ ($1\le i\le n,\ 1\le j\le m$) is $C^{k-1}$.

We also extend being $C^k$ to definable maps $f: X \to R^n$ with $X\subseteq R^m$ not necessarily open, just as we did for $k=1$, and likewise we define the
notion of a $C^k$-cell just like we did for $k=1$. The remarks we made above about this extended notion of definable $C^1$-map go through when ``$C^1$''
is replaced by ``$C^k$'' everywhere. 
The $C^1$-cell decomposition theorem goes through with ``$C^1$'' replaced by
``$C^k$'' everywhere.  

\subsection*{Semialgebraic Functions} We finish part A of this appendix with some facts on real semialgebraic functions, and use this to determine the algebraic part of the set 
$$X\ =\ \{(a,b,c)\in \R^3:\ 1 < a,b< 2,\ c=a^b\}\ \subseteq\ \R^3$$
of the Example in the Introduction to this paper. 

Let $f: I \to \R$ be a continuous function on an interval $I\subseteq \R$. Then $f$  is semialgebraic (that is, its graph as a subset of $\R^2$ is semialgebraic)
iff for some nonzero polynomial $P\in \R[x,y]$ we have $P\big(t,f(t)\big)=0$ for all $t\in I$. (See \cite[Chapter 2]{vdD2} for a detailed treatment of real semialgebraic sets and functions, in particular Exercise 3 in
 (3.7) there, with Hint on p. 169.) Thus if $f$ is real analytic, and its restriction to some subinterval of $I$ is semialgebraic, then $f$ is semialgebraic. 
 
 Suppose $f$ is semialgebraic and $I=(0,b)$ with $b\in (0,\infty]$. Then either $f(t)=0$ for all sufficiently small $t>0$, or for some
 $q\in \Q$ and $c\in \R^{\times}$ we have $f(t)/t^q\to c$ as $t\downarrow 0$. 
 (See for example the end of \cite{R_an}.) Combining these facts we see that the following real analytic functions on $(0,\infty)$ cannot be semialgebraic on any
 subinterval of $(0,\infty)$: 
 for $r\in \R\setminus \Q$ the function $t\mapsto t^r$; for
 $a\in (1,\infty)$ the function $t\mapsto a^t$; 
 the function $t\mapsto \log t$.  

\medskip\noindent
By semialgebraic cell decomposition the
algebraic part $Y^{\alg}$ of any set $Y\subseteq \R^n$ is the union of the sets $\cl(C)\cap Y$ with $C\subseteq Y$ a 1-dimensional semialgebraic cell. 
We can now prove the fact stated in the Introduction for the above $X\subseteq \R^3$ that 
$$X^{\alg}\ :=\ \bigcup_{q\in \Q\cap (1,2)} X_q.$$
The inclusion $\supseteq$ is clear. The sets $X_q$ ($q\in \Q\cap (1,2)$) are closed in $X$, so given any  1-dimensional semialgebraic cell $C\subseteq X$ it suffices to show that $C\subseteq X_q$ for some $q\in \Q\cap (1,2)$. Now $C$ is a $(0,0,1)$-cell, or a $(0,1,0)$-cell, or a $(1,0,0)$-cell. But $X$ does not contain any $(0,0,1)$-cell. Also $C$ cannot be a $(0,1,0)$-cell: if it were, then for a fixed $a\in (1,2)$ and an
interval $I\subseteq (1,2)$ the function $t\mapsto a^t$ on $I$ would be semialgebraic, which is false. Finally, suppose $C$ is a $(1,0,0)$-cell. Then $C=\{\big(t, f(t), t^{f(t)}\big): t\in I\}$
where $f: I \to \R$ is a continuous semialgebraic function on an interval 
$I \subseteq (1,2)$ (and $t\mapsto t^{f(t)}: I \to \R$ is also semialgebraic). 
Consider any interval $J\subseteq I$ on which $f$ is of class $C^1$. Then taking the logarithmic derivative of $t\mapsto t^{f(t)}=\ex^{f(t)\log t}$
on $J$ gives that $f'(t)\log t + f(t)/t$ is semialgebraic as a function of $t\in J$, and so $f'=0$ on $J$. Using several such $J$ we see that $f$ is constant
on $I$. This constant value must be a rational $q\in (1,2)$, so
$C\subseteq X_q$.

\section{Some Model theory}

\noindent
In section ~\ref{par4} we use the notion of an $\aleph_0$-saturated elementary extension, requiring a little excursion into model theory. We shall give precise definitions of the necessary model-theoretic notions, motivating them by examples, and stating carefully a few needed results. For most proofs we refer to \cite[Appendix B]{ADH}; there the basics of model theory are developed in the setting of many-sorted structures, while here we stay with one-sorted structures (which have only one underlying set, while a many-sorted structure has a {\em family\/} of underlying sets).   

\bigskip\noindent 
A {\em language\/} is a set $L$ whose elements are called symbols, each
symbol $s\in L$ being equipped with a natural number $\text{arity}(s)\in \N$. These symbols
are either {\em relation symbols\/} or {\em function symbols}, and $L$ is the disjoint union of 
$\Lr$, its subset of relation symbols, and $\Lf$, its subset of function symbols. 

Below $L$ is a language. Let $\cM$ be an {\em $L$-structure}, that is, a triple 
$$\cM\ =\ \big(M; (R^{\cM})_{R\in L^{\text{r}}},(F^{\cM})_{F\in L^{\text{f}}}\big)$$ consisting of a {\em nonempty} set $M$, an $m$-ary relation $R^{\cM}\subseteq M^m$ on $M$ for each $R\in \Lr$ of arity $m$, and an $n$-ary function $F^{\cM}: M^n \to M$ on $M$ for each $F\in \Lf$ of arity $n$. We call $M$ the {\em underlying set\/} of $\cM$, we think of a symbol $R\in\Lr$ as {\em naming\/} the corresponding relation $R^{\cM}$ on $M$, and likewise for $F\in \Lf$.

Thus a nullary function symbol (also called a {\em constant symbol}) names a function $M^0 \to M$, to be identified with its unique value in $M$, so a constant symbol names a distinguished element of $M$. Usually we drop the superscripts $\cM$ in $R^{\cM}$ and $F^{\cM}$ when $\cM$ is understood from the context, the distinction between the symbols and what they name to be kept in mind. We shall also feel free to denote $\cM$ and its underlying set $M$ by the same letter, when convenient.
The reason we need the distinction between symbols and what they name in a particular $L$-structure is that we have to be able to say that a statement expressed in terms of these symbols holds in, say, two {\em different\/} $L$-structures $\cM$ and $\cN$. 

We need to consider two (unrelated) ways of increasing $\cM$.
The {\em first\/} is when $L$ is a sublanguage of $L'$. Then an {\em $L'$-expansion
of $\cM$} is an $L'$-structure $\cM'$ with the same underlying set
as $\cM$ and where the symbols of $L$ name the same relations and functions in $\cM$ as in $\cM'$. We also say that then $\cM'$ {\em expands\/} $\cM$.

\medskip\noindent
{\bf Example.} The language $L_{\text{OF}}$ of ordered fields has a binary relation symbol $<$, constant symbols $0$ and $1$, a unary function symbol $-$, and binary function symbols $+$ and $\cdot$. Any ordered field
$K$ is viewed as an $L_{\text{OF}}$-structure by having $<$ name the (strict) ordering of the field, and the function symbols name the functions on $K$ that these symbols traditionally denote. Thus an ordered field $K$ {\em expands\/}
its underlying field. Equipping the ordered field $\R$ of real numbers with the
exponential function $\exp: \R\to \R$ gives the  expansion $\R_{\exp}$ of $\R$.
This is not exactly how we specified $\R_{\exp}$ in part A of the appendix, but the difference is immaterial: the two descriptions lead to the same sets 
$X\subseteq \R^n$ being definable in $\R_{\exp}$, see below. For {\em model-theoretic\/} use we take $\R_{\exp}$ as the above expansion of $\R$. 

\medskip\noindent
A {\em second\/} way: {\em $\cM$ is a substructure of $\cN$} (or {\em $\cN$ is an extension of $\cM$}); notation: $\cM\subseteq \cN$. This means: $\cN=(N;\dots)$ is an $L$-structure, $M\subseteq N$, $R^{\cN}\cap  M^m=R^{\cM}$ for $m$-ary $R\in \Lr$, and
$F^{\cM}(a)=F^{\cN}(a)$ for $n$-ary $F\in \Lf$ and $a\in M^n$. For example,
if $K_1$ and $K_2$ are ordered fields viewed as $L_{\text{OF}}$-structures,
$K_1\subseteq K_2$ means that $K_1$ is an ordered subfield of $K_2$ (so the ordering of $K_2$ restricts to the ordering of $K_1$).

\medskip\noindent
The $0$-definable (or absolutely definable) sets of $\cM$ are the sets $X\subseteq M^n$ for $n=0,1,2,\dots$ obtained recursively as follows: \begin{enumerate}
\item[(D1)] $R^{\cM}\subseteq M^{m}$ for $m$-ary $R\in \Lr$ and 
$\text{graph}(F^{\cM})\subseteq M^{n+1}$ for $n$-ary $F\in \Lf$ are $0$-definable.
\item[(D2)] if $X, Y\subseteq M^n$ are $0$-definable, then so are $X\cup Y$ and
$M^n\setminus X$;
\item[(D3)] if $X\subseteq M^n$ is $0$-definable, then so are $X\times M, M\times X\subseteq M^{n+1}$;
\item[(D4)] for any $i < j$ in $\{1,\dots,n\}$ the diagonal
$\{(a_1,\dots, a_n):\ a_i=a_j\}\subseteq M^n$ is $0$-definable;
\item[(D5)] if $X\subseteq M^{n+1}$ is $0$-definable, then so is $\pi(X)\subseteq M^n$ where $\pi: M^{n+1}\to M^n$ is given by $\pi(a_1,\dots, a_n, a_{n+1})=(a_1,\dots, a_n)$. 
\end{enumerate}
Thus the $0$-definable sets $X\subseteq M^n$ are exactly those that belong to the smallest structure on $M$ that contains all sets $R^{\cM}$ with $R\in \Lr$ and all sets $\text{graph}(F^{\cM})$ with $F\in \Lf$. 
If $X\subseteq M^n$ is a $0$-definable set of $\cM$ and the ambient $\cM$ is clear from the context, we also just say that $X$ is $0$-definable. A map $f: X \to M^n$ with $X\subseteq M^m$ is said to be $0$-definable (in $\cM$) if
its graph as a subset of $M^{m+n}$ is $0$-definable in $\cM$. In that case its domain $X$ is $0$-definable and for every $0$-definable $X'\subseteq X$ its image
$f(X')\subseteq M^n$ is $0$-definable. 

We need a notation system
to describe $0$-definable sets in a uniform way in varying $L$-structures. Towards this we assume that in addition to the symbols of $L$ we have an infinite set $\Var$ of symbols, called {\em variables} (with $\Var$ disjoint from the language $L$ and independent of $L$). Given a tuple $x=(x_1,\dots x_m)$ of distinct variables we define $L$-terms $t$ in $x$ recursively as follows:
each $x_i$ with $1\le i\le m$ is an $L$-term in $x$, and for $n$-ary
$F\in \Lf$ and $L$-terms  
$t_1, \dots, t_n$ in $x$, the expression
$F(t_1,\dots, t_n)$ is an $L$-term in $x$. Formally, these expressions are words on some alphabet together with the specification of the tuple $x$, but we prefer not to go into detail on such syntactical matters.  We let $t(x)$ indicate a term $t$ in $x$. 
An $L$-term $t=t(x)$ names in an obvious way a function
$t^{\cM}: M^m\to M$, with $x_i$ naming the function $(a_1,\dots, a_m)\mapsto a_i: M^m \to M$. Functions named by $L$-terms are $0$-definable in $\cM$.    

For example, when $K$ is an ordered field, any $L$-term $t$ in $x=(x_1,\dots, x_m)$
names a function $K^m\to K$ given by a polynomial 
in $\Z[x_1,\dots, x_m]$, and each such polynomial function 
is named by an $L$-term in $x$. (Different terms can name the same function: $x+(-x)$ and $0$ are different as terms
in the single variable $x$, but name the same function
$K \to K$, which takes the constant value $0$.)

Going back to our $L$-structure $\cM$ we introduce for every element $a\in M$ a constant symbol $\underline{a}\notin L$ as a name for $a$, with $\underline{a}\ne \underline{b}$ for all $a\ne b$ in $M$. For every set $A\subseteq M$ we extend $L$ to the language $L_A:= L\cup\{\underline{a}:\ a\in A\}$, and expand $\cM$ to the $L_A$-structure $\cM_A$
with $\underline{a}$ naming $a$ for $a\in A$.
A set $X\subseteq M^n$ is said to be $A$-definable (or definable over $A$) in $\cM$ if $X$ is $0$-definable in $\cM_A$. When $A=M$ we just use write ``definable'' instead of
``$M$-definable''. A set $X\subseteq M^n$ is $A$-definable (in $\cM$) iff
$X=Y(a)$ for some $0$-definable set $Y\subseteq M^{m+n}$ and some $a\in A^m$.
(Here for $Y\subseteq M^{m+n}$ and $a\in M^m$ we set
$Y(a):=\{b\in M^n:\ (a,b)\in Y\}$.)  

\medskip\noindent
{\bf Examples}. For an algebraically closed field $\k$, viewed as an $L$-structure
with $L=\{0,1,-, +, \cdot\}$ (the language of rings), the subsets of $\k^n$ definable in $\k$ are exactly the so-called constructible subsets of $\k^n$: the
finite unions of sets $X\setminus Y$ with $X,Y$ Zariski-closed subsets of $\k^n$.
(This is basically the constructibility theorem of Tarski-Chevalley: 
the image of a constructible subset of $\k^{n+1}$ in $\k^n$ under the projection map $(a_1,\dots, a_{n+1})\mapsto (a_1,\dots, a_n): \k^{n+1}\to \k^n$ is constructible in $\k^n$.) The same holds with ``$A$-definable'' and ``$A$-constructible'' instead of ``definable'' and ``constructible'' for any set $A\subseteq \k$, where an $A$-constructible subset of $\k^n$ is a finite
union of sets $X\setminus Y$ with $X, Y\subseteq \k^n$ given by the vanishing of
polynomials in $D[x_1,\dots,x_n]$ where $D$ is the subring of $\k$ generated by $A$.

For us the more relevant example is when $K$ is an ordered real closed field. 
Then the subsets of $K^n$ definable in $K$ are exactly the semialgebraic subsets of $K^n$, that is, the finite unions of sets (with $f,g_1,\dots, g_m$ ranging over $K[x_1,\dots, x_n]$)
$$\{a\in K^n:\ f(a)=0,\ g_1(a) > 0,\dots, g_m(a)>0\}.$$
This is the Tarski-Seidenberg theorem (like 
the Tarski-Chevalley theorem, but with ``semialgebraic'' instead of ``constructible''). Requiring the polynomials $f, g_1,\dots, g_m$ above to have coefficients in $\Z$, we obtain likewise exactly the subsets of $K^n$ that are $0$-definable in $K$. 

\subsection*{Saturation} This notion functions as a kind of compactness for definable sets. Let $\kappa$ be a cardinal. An $L$-structure $\cM$ is said to be {\em $\kappa$-saturated\/} if for every set $A\subseteq M$ of cardinality $<\kappa$ and any family $(X_i)$ of $A$-definable subsets of $M$ with the finite intersection property we have 
$\bigcap_i X_i\ne \emptyset$. (The {\em finite intersection property for $(X_i)$\/} says that $X_{i_1}\cap \cdots \cap X_{i_n}\ne \emptyset$ for all indices $i_1,\dots, i_n$.) This property of families of definable subsets of $M$ is inherited by
families of definable subsets of $M^m$ for any $m$. One can indeed think of this in terms of compactness: if $\cM$ is $\kappa$-saturated, then for $A\subseteq M$
of cardinality $<\kappa$, the $A$-definable subsets of $M^m$ are a basis for a topology on $M^m$, the $A$-topology, which makes $M^m$ a compact hausdorff space in which these $A$-definable sets are exactly the open-and-closed sets. We need this only for $\kappa=\aleph_0$, which means that in the definition above we restrict to {\em finite\/} $A\subseteq M$. For $\kappa=\aleph_1$ the restriction is to {\em countable\/} $A\subseteq M$.  

For example, any algebraically closed field of infinite transcendence degree over its prime field is $\aleph_0$-saturated, and the field $\C$ of complex numbers
is even $\aleph_1$-saturated. The ordered field $\R$ is not even $1$-saturated, since $\bigcap_n (n,\infty)=\emptyset$.  [The referee asked for an explicit example of an $\aleph_0$-saturated elementary extension of the ordered field of real numbers.  An attractive example is the real closed ordered field of {\em surreal numbers of countable length}, which is actually $\aleph_1$-saturated; for surreal numbers,  see Gonshor~\cite{G}. In this connection we mention that a real closed ordered field $R$
is $\aleph_1$-saturated iff for all countable subsets $A < B$ of $R$ (that is $a<b$ for all $a\in A$ and $b\in B$), there is a $c\in R$
such that $A<c<B$.] Any finite structure (a structure with finite underlying set) is $\kappa$-saturated for every $\kappa$.

Towards the study of a structure $\cM$ of interest we can always pass to an $\aleph_1$-saturated extension $\cN$ with the same elementary properties, do our work in $\cN$ and then pass the information gained back to $\cM$. This will be made precise below. To make sense of ``the same elementary properties'' we need a notation system for definable sets. This is the reason for introducing formulas and sentences below. 

\subsection*{Formulas and sentences} Let $y=(y_1,\dots, y_n)$ be a tuple of distinct variables. We define $L$-formulas $\phi$ in $y$ inductively as follows: 

\smallskip
(i) The {\em atomic\/} $L$-formulas in $y$ are the expressions 
$$\top,\quad \bot,\quad R\big(t_1,\dots, t_m\big),\ \text{ and }t_1=t_2$$ 
\qquad\quad for $m$-ary $R\in \Lr$ and $L$-terms $t_1,\dots, t_m$ in $y$, and $L$-terms $t_1, t_2$ in $y$.

\smallskip 
(ii) Given any $L$-formulas
$\phi$ and $\psi$ in $y$, we have new $L$-formulas in $y$: $$\neg \phi,\quad 
\phi\vee \psi,\ \text{ and } \phi\wedge \psi.$$

\smallskip
(iii) Let $\phi$ be a formula in $(y_1,\dots, y_i, z, y_{i+1},\dots, y_n)$, where $0\le i \le n$ and $z$   

{}\qquad is a variable different from $y_1,\dots, y_n$; then 
$$\exists z \phi\ \text{ and }\
\forall z \phi$$
\qquad\quad  are new $L$-formulas in $y$. 

\smallskip\noindent
Formally, these formulas in $y$ are words on a certain alphabet, together with the specification of the tuple $y$. We also write $\phi(y)$ to indicate that we are
dealing with a formula $\phi$ in $y$. Each $L$-formula $\phi=\phi(y)$ names (we also say: {\em defines}) a $0$-definable set $\phi^{\cM}\subseteq M^n$: the atomic formulas $\top$ and $\bot$ name the subsets $M^n$ and $\emptyset$ of $M^n$, and the atomic formulas $R(t_1,\dots, t_m)$ and $t_1=t_2$ as above name the sets 
$$\{a\in M^n:\ (t_1^{\cM}(a),\dots, t_m^{\cM}(a))\in R^{\cM}\} \text{ and }\{a\in M^n:\ t_1^{\cM}(a)=t_2^{\cM}(a)\},$$ and for $L$-formulas $\phi, \psi$ as above,
$$(\neg \phi)^{\cM}=M^n\setminus \phi^{\cM}, \quad 
(\phi\vee \psi)^{\cM}\ :=\ \phi^{\cM}\cup \psi^{\cM},\  (\phi\wedge \psi)^{\cM}\ =\ \phi^{\cM}\cap \psi^{\cM},$$
and for an $L$-formula $\phi$ in $(y_1,\dots, y_i, z, y_{i+1},\dots, y_n)$ as above: $(\exists z \phi)^{\cM}$ is the set
$$ \{(a_1,\dots, a_n)\in M^n:\ (a_1,\dots, a_i,b,a_{i+1},\dots,a_n)\in \phi^{\cM} \text{ for some }b\in M\},$$
that is, the image of $\phi^{\cM}\subseteq M^{n+1}$ under the projection map $$(a_1,\dots, a_i, b, a_{i+1},\dots, a_n) \mapsto (a_1,\dots, a_n): M^{n+1} \to M^n,$$
and $(\forall z \phi)^{\cM}:=(\neg\exists z \neg\phi)^{\cM}$,
which equals the set
$$ \{(a_1,\dots, a_n)\in M^n:\ (a_1,\dots, a_i,b,a_{i+1},\dots,a_n)\in \phi^{\cM} \text{ for every }b\in M\}.$$
The $0$-definable subsets of $M^n$ are exactly the sets $\phi^{\cM}$ with $\phi$ an $L$-formula in $y$. Likewise for $A\subseteq M$, the $A$-definable subsets of
$M^n$ are exactly the sets $\phi^{\cM_A}$ with $\phi$ an $L_A$-formula in $y$, but for convenience we write this also as $\phi^{\cM}$. 

The $L$-formulas $\phi$ in $y=(y_1,\dots, y_n)$ with $n=0$ have a special status and are called {\em $L$-sentences}, typically denoted by $\sigma$. One can think of a sentence as making an assertion. Formally, a sentence $\sigma$ names a set  $\sigma^{\cM}\subseteq M^0$, so it equals $M^0$, in which case we say that $\sigma$ is {\em true in $\cM$} (notation: $\cM\models \sigma$),  or it is empty, in which case we way that $\sigma$ is {\em false in $\cM$}. For example, if $L$ is the language of rings and $x,y$ are distinct variables, then the $L$-sentence 
$\forall x \exists y (x=y\cdot y)$ is true in exactly those fields in which every element is a square. 

\subsection*{Elementary extensions} An {\em elementary extension\/}
of the $L$-structure $\cM$ is an extension $\cN\supseteq \cM$ of $\cM$ such that
the same $L_M$-sentences are true in $\cM$ and $\cN$ (where of course for $a\in M$ the constant symbol $\underline{a}$ names $a$ in both $\cM$ and $\cN$).
Notation: $\cM\preceq \cN$.  
Here are two wellknown situations where this is the case: any algebraically closed field
is an elementary extension of any algebraically closed subfield, any real closed
field is an elementary extension of any real closed subfield.

Suppose $\cM\preceq \cN$. Then for any $L_M$-formula $\phi=\phi(x_1,\dots, x_n)$ we have
$\phi^{\cM}=\phi^{\cN}\cap M^n$. Moreover, if $\psi=\psi(x_1,\dots, x_n)$ is a second $L_M$-formula and $\phi^{\cM}=\psi^{\cM}$, then $\phi^{\cN}=\psi^{\cN}$, so a definable set $X=\phi^{\cM}\subseteq M^n$ (of $\cM$) yields a definable set
$X(\cN)=\phi^{\cN}\subseteq N^n$ that does not depend on the choice of defining
formula $\phi$.  

To profit from saturation and elementary extensions we use:

\begin{prop} Any $L$-structure has an $\aleph_1$-saturated elementary extension.
\end{prop}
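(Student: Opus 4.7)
The plan is to build $\cN$ as the union of an elementary chain of length $\omega_1$, with each successor step handled by one application of the Compactness Theorem. The only non-trivial inputs are (i) the Compactness Theorem for $L$-structures and (ii) the Tarski--Vaught elementary chain lemma; both I would import as black boxes.

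First I would isolate the key compactness step. Given an $L$-structure $\cM'$, a countable $A \subseteq M'$, an integer $m\ge 1$, and a set $p(x_1,\dots,x_m)$ of $L_A$-formulas that is \emph{finitely satisfiable in $\cM'$} (every finite subset defines a nonempty subset of $(M')^m$), apply the Compactness Theorem to the elementary diagram of $\cM'$ together with $\{\phi(c_1,\dots,c_m):\phi\in p\}$, where $c_1,\dots,c_m$ are new constant symbols; this yields an elementary extension $\cM''\succeq\cM'$ in which $p$ is realized. Now introduce one fresh tuple of constants for \emph{each} triple $(m,A,p)$ with $m\ge 1$, $A\subseteq M'$ countable, and $p$ a finitely satisfiable set of $L_A$-formulas in $m$ variables, and collect all the resulting formulas together with the elementary diagram of $\cM'$ into one (possibly very large) $L$-theory. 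Finite satisfiability of this theory is immediate, because each finite subtheory mentions only finitely many of the fresh constant tuples, each coming from a single $p$ that is finitely satisfiable in $\cM'$. A single application of compactness therefore produces one elementary extension $\cM'^{*}\succeq\cM'$ simultaneously realizing every such $p$.

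Next I would iterate transfinitely: set $\cM_0:=\cM$, $\cM_{\alpha+1}:=(\cM_\alpha)^{*}$, and $\cM_\lambda:=\bigcup_{\alpha<\lambda}\cM_\alpha$ at limit ordinals $\lambda\le\omega_1$. The Tarski--Vaught chain lemma guarantees that this is an elementary chain, so in particular $\cM\preceq\cN$ where $\cN:=\cM_{\omega_1}=\bigcup_{\alpha<\omega_1}\cM_\alpha$. To verify $\aleph_1$-saturation of $\cN$, let $A\subseteq N$ be countable and let $(X_i)_{i\in I}$ be a family of $A$-definable subsets of $N^m$ with the finite intersection property. Since $\omega_1$ has uncountable cofinality, $A\subseteq M_\alpha$ for some $\alpha<\omega_1$. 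Write $X_i=\phi_i^{\cN}$ for $L_A$-formulas $\phi_i(x_1,\dots,x_m)$; because $\cM_\alpha\preceq\cN$ and any finite conjunction $\phi_{i_1}\wedge\cdots\wedge\phi_{i_k}$ is again an $L_A$-formula, the FIP in $\cN$ pulls back to finite satisfiability of $p:=\{\phi_i:i\in I\}$ in $\cM_\alpha$. By construction $\cM_{\alpha+1}=\cM_\alpha^{*}$ realizes $p$, and this realization lies in $N^m$ and witnesses $\bigcap_i X_i\ne\emptyset$.

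The main obstacle will be the Compactness Theorem itself, whose standard Henkin or ultraproduct proof is delicate but well-documented; I would cite it rather than reproduce it. A secondary technical point is making the single successor step realize \emph{all} finitely satisfiable types over \emph{all} countable parameter sets at once, which is why one must be careful to take the fresh constants \emph{disjoint} across different $(m,A,p)$ and to observe that finite subtheories touch only finitely many of them; this is exactly the uniform finite-satisfiability argument indicated above.
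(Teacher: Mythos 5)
Your argument is correct, but it takes a genuinely different route from the paper. The paper dispatches the proposition in one line by appealing to the classical fact that for any nonprincipal ultrafilter $U$ on $\N$, the ultrapower $\cM^{\N}/U$ is an $\aleph_1$-saturated elementary extension of $\cM$ -- a single-step construction, at the cost of importing the Keisler ultrapower-saturation theorem as a black box. You instead build an elementary chain $\cM=\cM_0\preceq\cM_1\preceq\cdots$ of length $\omega_1$, where each successor step $\cM_\alpha\to\cM_{\alpha+1}$ uses one application of the Compactness Theorem to realize simultaneously all finitely satisfiable types over all countable parameter sets from $\cM_\alpha$ (with the crucial bookkeeping that the fresh constant tuples for distinct triples $(m,A,p)$ be disjoint), and then exploits the uncountable cofinality of $\omega_1$ to localize any countable parameter set into some $\cM_\alpha$ and cash in the realization at $\cM_{\alpha+1}$. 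Each approach buys something: the ultrapower is shorter and produces a model of modest cardinality (at most $|M|^{\aleph_0}$), whereas the chain argument is self-contained modulo the Compactness Theorem and Tarski--Vaught, and -- unlike the standard ultrapower theorem, which assumes a countable language -- works verbatim for languages of arbitrary cardinality. Since the paper only invokes this proposition for the finite-language expansions arising from o-minimal fields, that extra generality is not needed here, but it is a genuine robustness advantage of your construction.
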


\noindent
Indeed, for any $L$-structure $\cM$ and nonprincipal ultrafilter $U$ on the set $\N$, the ultrapower $\cM^{\N}/U$ is an $\aleph_1$-saturated elementary extension of $\cM$, where $\cM$ is identified with a substructure of $\cM^{\N}/U$ via the diagonal embedding; this is a remark intended for those who know about ultrapowers.   In Section ~\ref{par4} of this paper we only need 
$\aleph_0$-saturation instead of the stronger $\aleph_1$-saturation. 

\subsection*{Revisiting o-minimality} Let $K$ be an {\em expansion\/} of an ordered field. Among the definable subsets of $K$ in this expansion are at least the open intervals $(a,b)_K$ with $a < b$ in $K\cup\{-\infty, +\infty\}$, and thus the finite unions of such open intervals and singletons $\{a\}$ with 
$a\in K$. We say that $K$ is {\em o-minimal\/} if there are no other subsets of $K$ definable in this expansion. To see how this is related to the concept of o-minimality considered in the first part of this appendix, let $\text{Def}^n(K)$ be the collection of sets $X\subseteq K^n$ that are definable in this expansion $K$. Note that then $K$ is o-minimal if and only if the family $\big(\text{Def}^n(K)\big)$ is an o-minimal structure on the underlying ordered field of $K$. In particular, if $K$ is o-minimal, then the underlying ordered field of $K$ is real closed. 

\begin{lemma} If $K$ is o-minimal, then so is any elementary extension of $K$. 
\end{lemma}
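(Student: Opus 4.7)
The plan is to use a \emph{uniform} form of o-minimality that is expressible as a scheme of $L$-sentences and hence transfers automatically to any elementary extension.

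First I would show the following uniform finiteness statement: for every $L$-formula $\phi(x,y_{1},\dots,y_{m})$ with $x$ a single variable, there is an $N_{\phi}\in\N$ such that for every $b\in K^{m}$ the subset $\phi(K,b):=\{a\in K:K\models\phi(a,b)\}$ is a union of at most $N_{\phi}$ points and open intervals. This follows from Proposition~\ref{fdc} applied to the definable family $\phi^{K}\subseteq K^{m}\times K$ (over the index set $E=K^{m}$): Proposition~\ref{fdc} gives a bound on the number of definably connected components of the sections $\phi(K,b)$ that is independent of $b$; since $K$ is o-minimal, each section is already a finite union of singletons and intervals, and these are precisely its definably connected components, so the component count equals the minimal ``piece count''.

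Next I would encode this bound as a single $L$-sentence $\sigma_{\phi}$. Fix $N=N_{\phi}$. A subset of $K$ consisting of at most $N$ singletons and intervals can be specified by $2N$ candidate break points $z_{1}\le\dots\le z_{2N}$ (with $-\infty$ and $+\infty$ as implicit endpoints) together with a choice $\epsilon\in\{0,1\}^{2N+1}$ recording whether each of the open intervals $(z_{i},z_{i+1})$ and each point $z_{i}$ is included. For each such $\epsilon$ one writes an $L$-formula $\Theta_{\epsilon}(x,z_{1},\dots,z_{2N})$ saying that $x$ lies in the set so described, and then sets
\[
\sigma_{\phi}\;:=\;\forall y\,\exists z_{1}\le\dots\le z_{2N}\;\bigvee_{\epsilon\in\{0,1\}^{2N+1}}\forall x\,\bigl[\phi(x,y)\leftrightarrow\Theta_{\epsilon}(x,z_{1},\dots,z_{2N})\bigr].
\]
Because the disjunction is finite, $\sigma_{\phi}$ is a bona fide $L$-sentence, and by the previous paragraph $K\models\sigma_{\phi}$.

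Finally, let $\cN$ be any elementary extension of $K$ and let $X\subseteq N$ be definable in $\cN$. Then $X=\phi(N,b)$ for some $L$-formula $\phi(x,y_{1},\dots,y_{m})$ and some $b\in N^{m}$. By $K\models\sigma_{\phi}$ and elementarity, $\cN\models\sigma_{\phi}$; instantiating at $y=b$ exhibits $X$ as a union of at most $N_{\phi}$ intervals and singletons in $N$, so $\cN$ is o-minimal. The main obstacle is the first step: one must justify extracting a \emph{uniform} $N_{\phi}$ from Proposition~\ref{fdc} and check that, in the case $n=1$, the bound on definably connected components really does coincide with the bound on pieces in the interval-plus-singleton decomposition. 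The remaining steps are routine syntactic bookkeeping plus a direct appeal to the definition of elementary extension.
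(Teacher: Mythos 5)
Your proposal is correct and takes essentially the same route as the paper: both extract a uniform finiteness bound on the one-variable sections of a definable family from cell decomposition, encode it as an $L$-sentence (or $L_K$-sentence), and transfer by elementarity. One small inaccuracy is worth repairing: the singletons and open intervals in a minimal piece-representation of a subset of $K$ are \emph{not} the same as its definably connected components --- e.g.~$\{0\}\cup(0,1)=[0,1)$ has one definably connected component but needs two pieces --- so ``the component count equals the minimal piece count'' is false. This is harmless: either multiply the component bound by $3$ (a definably connected subset of $K$ is a singleton or an interval of one of four kinds, each covered by at most three singleton/open-interval pieces), or, more cleanly, observe that Proposition~\ref{fdc} already exhibits each section $\phi(K,b)$ as a union of a uniformly bounded number of cells $C(b)$ with $C\in\mathcal{D}$, and a nonempty cell in $K^{1}$ is by definition a singleton or an open interval, which gives $N_{\phi}$ directly without passing through connected components at all. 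The paper's proof does precisely this: it cell-decomposes the $0$-definable $Y\subseteq K^{n+1}$ into $N$ cells and reads off the bound from $N$.
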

\begin{proof} Assume $K$ is o-minimal, and $K\preceq K^*$, so the underlying ordered field of $K^*$ extends the underlying ordered field of $K$. Let $X\subseteq K^*$ be definable.
Then we have a definable set $Y\subseteq K^{n+1}$ and a point $b\in (K^*)^n$
such that $X=Y^{K^*}(b)$. Take $N\in \N$ and cells $C_1,\dots, C_N$ in $K^{n+1}$ such that $Y=C_1\cup\cdots \cup C_N$. Then for every
$a\in K^n$ the set $Y(a)$ is a union of at most $N$ sets
$\{c\}$ with $c\in K$, and intervals of $K$. With $K$ an $L$-structure, this fact can be expressed by a certain $L_K$-sentence being true in $K$, hence in $K^*$,
which then means in particular that $X=Y^{K^*}(b)$ is a union of at most $N$ sets $\{c\}$ with $c\in K^*$, and intervals of $K^*$.    
\end{proof} 

\noindent
Let $K$ be o-minimal and $K\preceq K^*$. To each definable set $X\subseteq K^n$ we associate the set $X^*:= X(K^*)\subseteq (K^*)^n$, which is definable (over $K$) in $K^*$. If $C\subseteq K^n$ is an $(i_1,\dots, i_n)$-cell in the sense of $K$, then $C^*\subseteq (K^*)^n$ is an $(i_1,\dots, i_n)$-cell in the sense of $K^*$. It follows that for definable $X\subseteq K^n$ we have $\dim X=\dim X^*$
where the dimension on the left is in the sense of $K$, and the dimension on the right is in the sense of $K^*$. 

\medskip\noindent
In the main text and in part A of this appendix we used the letter $R$ to refer to an o-minimal field, but in this part B we used $R$ to indicate a relation symbol.
That is why in this part B we prefer the letter $K$ when dealing with o-minimal expansions of ordered fields and o-minimal fields. The distinction between the two concepts ({\em o-minimal expansion of an ordered field\/} and {\em o-minimal field\/}) is often immaterial: we saw that an o-minimal expansion $K$ of an ordered field gives rise to an o-minimal field with the same underlying ordered field and the same definable sets. 

When considering {\em elementary extensions\/} and
{\em $\aleph_0$-saturation}, the distinction is significant: When referring to
an {\em elementary extension of an o-minimal field\/} we really mean an elementary extension of an o-minimal expansion $K$ of an ordered field, so $K$ is then an $L$-structure for a suitable language $L\supseteq L_{\text{OF}}$.  Likewise
when referring to an o-minimal field as being $\aleph_0$-saturated, we mean:
an $L$-structure $K$ that gives rise to this o-minimal field is $\aleph_0$-saturated.

\subsection*{How to use the above?} To illustrate typical uses, we first show how Theorem~\ref{pars} follows from it being true when the ambient o-minimal field is $\aleph_0$-saturated. So let $K$ be an o-minimal field, $X\subseteq K^m$ a strongly bounded definable set, and $k\ge 1$; we need to show that
$X$ has a $k$-parametrization. We can assume $X\ne \emptyset$, and set $l:= \dim X$. Take $N\in \mathbb{N}$ such that $X\subseteq [-N,N]_K^m$. As explained, $K$ is viewed as an $L$-structure for a language $L\supseteq L_{\text{OF}}$. Fix an  $L_K$-formula $\phi(x)$, $x=(x_1,\dots, x_m)$, such that $X=\phi^K$. 
 Take an $\aleph_0$-saturated elementary extension $K^*$ of $K$. Then $K^*$ is an o-minimal field and $$X^*\  =\phi^{K^*}\ \subseteq\ [-N,N]^m_{K^*}$$ is strongly bounded, and so has a $k$-parametrization $\{f_1,\dots, f_M\}$ (with respect to $K^*$), since Theorem~\ref{pars} was established in Section~\ref{par4} when the ambient o-minimal field is $\aleph_0$-saturated. For $\mu=1,\dots, M$ we have $f_{\mu}: (0,1)_{K^*}^l\to (K^*)^m$, so the graph of $f_{\mu}$ is defined in $K^*$ by an $L_{K^*}$-formula 
 $\phi_{\mu}(b,v,x)$
with $\phi_{\mu}=\phi_{\mu}(u,v,x)$ an $L_K$-formula, $u=(u_1,\dots, u_p),\ v=(v_1,\dots, v_l)$ and $b\in (K^*)^p$ (the same $p$ and $b$ for all $\mu$, without loss of generality). The fact that there exists $b\in (K^*)^p$
such that $\phi_1(b,v,x),\dots, \phi_M(b,v,x)$ define in $K^*$
 the graphs of functions of a $k$-parametrization of $X^*$ can be expressed by a certain
$L_K$-sentence $\exists u\theta(u)$
being true in $K^*$. (This sentence is complicated but its construction is routine and just mimicks the definitions of the various notions involved.) Hence this sentence
$\exists u\theta(u)$ is also true in $K$, which then means that for some
$a\in K^m$ the formulas  $\phi_1(a,v,x),\dots, \phi_M(a,v,x)$ define in $K$
 the graphs of functions of a $k$-parametrization of $X$.

\medskip\noindent
In a very similar way Theorem~\ref{parm} follows from the fact, established in Section~\ref{par4}, that it  is true when the ambient o-minimal field is $\aleph_0$-saturated.

\bigskip\noindent
Next we explain the use of ``$\aleph_0$-saturation plus Definable Selection'' in obtaining Corollary~\ref{unifstr}  as a consequence of Corollary~\ref{corstr}. (The other use of this device earlier in Section~\ref{par4} is along the same lines. The argument we give may seem lengthy, but such arguments are utterly routine in model theory and are therefore usually not spelled out but left to the reader.) 

We are now dealing with an o-minimal field $K$ which is $\aleph_0$-saturated when viewed as an $L$-structure for suitable
$L\supseteq L_{\text{OF}}$ as before. We are given $d,k,m,n$ with $k,n\ge 1$ and definable $E\subseteq K^m$ and definable $Z\subseteq E\times [-1,1]^n\subseteq K^{m+n}$ with $\dim Z(s)=d$ for all $s\in E$. Take a finite $A\subseteq K$ such that $E$ and $Z$ are $A$-definable. Corollary~\ref{corstr} yields for every
$s\in E$ a definable $C^k$-map $$f=(f_1,\dots, f_N): (0,1)^d\to (K^n)^N=K^{Nn}$$
with $N=N(s)\in \mathbb{N}$ depending on $s$, such that (i) and (ii) of that corollary hold for $\Phi:=\{f_1,\dots, f_N\}$ and $X(s)$ in the role of $X$. 
For each $L$-formula $\phi=\phi(u,x,y)$, 
$$ u=(u_1,\dots, u_M),\quad x=(x_1,\dots, x_d),\quad y=(y_{11},\dots,y_{Nn})$$
with $M,N\in \mathbb{N}$ depending on $\phi$ we consider the $A$-definable
set $E_{\phi}\subseteq K^m$ of all $s\in E$ such that for some $b\in K^M$ the $L_K$-formula
$\phi(b,x,y)$ defines the graph of a map $f$ parametrizing $X(s)$ as above. Since $K$ is $\aleph_0$-saturated and $E$ is covered by the sets $E_{\phi}$, it is covered by finitely many of them,
say by $E_{\phi_1},\dots, E_{\phi_e}$, 
$$\phi_i\ =\ \phi_i(u_1,\dots, u_{M(i)},x,y_{11},\dots, y_{N(i)n})\qquad (i=1,\dots,e).$$ 
For $i=1,\dots,e$, let $E(i)$ be the definable set of all $s\in E$ with $s\in E_{\phi_i}$ and $s\notin E_{\phi_j}$ for $1\le j<i$.
Definable selection then yields for such $i$ a definable map 
$$s\mapsto b_i(s): E(i)\to K^{M(i)}$$ such that for every $s\in E(i)$ the $L_K$-formula $\phi_i(b_i(s),x,y)$ defines
in $K$ the graph of a $C^k$-map $f: (0,1)^d\to (K^{n})^{N(i)}$ parametrizing
$X(s)$ as specified earlier. Now $E$ is the disjoint union of $E(1),\dots, E(e)$.
Adding suitable constant maps it is routine to obtain from this
for $N=\max_i N(i)$ a definable
set $F\subseteq E\times K^d\times K^{Nn}$  for which the conclusion of 
Corollary~\ref{unifstr} holds.

\end{appendices}

\end{document}